\newcommand\N{{\mathbb N}} \newcommand\R{{\mathbb R}}
\newcommand\Z{{\mathbb Z}} \newtheorem{thm}{Theorem}
\newtheorem{rmq}{Remark}[section] \newtheorem{lemma}{Lemma} 
\newtheorem{cor}{Corollary} \newtheorem{prop}{Proposition}
\newcommand\e{{\varepsilon}}
\newcommand\E{{\mathcal{E}}}
\begin{document}

 \title{Strichartz estimates for the wave equation on a 2D model convex domain}

\author{Oana Ivanovici}
\address{Sorbonne Université, CNRS, Laboratoire Jacques-Louis Lions, LJLL, F-75005 Paris, France} \email{oana.ivanovici@sorbonne-universite.fr}

  \author{Gilles Lebeau}
  \address{Universit\'e Côte d'Azur, CNRS, Laboratoire JAD, France} \email{gilles.lebeau@univ-cotedazur.fr}

  \author{Fabrice Planchon}
  \address{ Sorbonne Université, CNRS, Institut Mathématique de Jussieu-Paris Rive Gauche, IMJ-PRG, F-75005 Paris, France}
  \email{fabrice.planchon@sorbonne-universite.fr}

      \thanks{{\it Key words}  Dispersive estimates, wave equation, Dirichlet boundary condition.\\
    \\
 O.Ivanovici and F. Planchon were supported by ERC grant ANADEL 757 996.
    }

\begin{abstract} We prove better Strichartz type estimates than expected from the (optimal) dispersion we obtained earlier on a 2d convex model domain. This follows from taking full advantage of the  space-time localization of caustics in the parametrix, despite their number increasing like the inverse square root of the distance from the source to the boundary. As a consequence, we improve known Strichartz estimates for the wave equation. Several improvements on our previous parametrix construction are obtained along the way and are of independent interest for further applications.
\end{abstract}
\maketitle

\section{Introduction and main results}

Let us consider the wave equation on a domain $\Omega$ with boundary
$\partial \Omega$ ,
\begin{equation} \label{WE} 
\left\{ \begin{array}{l} (\partial^2_t-
\Delta) u(t, x)=0, \;\; x\in \Omega \\ u|_{t=0} = u_0 \; \partial_t
u|_{t=0}=u_1,\\ Bu=0,\quad x\in \partial\Omega.
 \end{array} \right.
 \end{equation}
Here,  $\Delta$ usually stands for the Laplace-Beltrami operator on
$\Omega$. If $\partial\Omega\neq \emptyset$, the boundary condition
could be either Dirichlet ($B$ is the identity map:
$u|_{\partial\Omega}=0$) or Neumann ($B=\partial_{\nu}$ where $\nu$ is
the unit normal to the boundary.)

Solutions to the wave equation on a smooth manifold without boundaries are known to disperse. In particular, on average the wave amplitude is decaying faster than predicted by Sobolev embedding Theorem from energy estimates, and this yields so-called Strichartz estimates. In turn these estimates have been crucial in dealing with a large range of problems, both linear and nonlinear. They are also closely related to localization and decay of eigenfunctions in the compact case (through square function estimates for the wave equation). 

Let us be more specific and introduce notations: let $d$ be the spatial dimension of $\Omega$ and $\beta=d\Big(\frac 12-\frac 1r\Big)-\frac 1q$, with $(q,r)$ be an admissible pair,
\begin{equation}\label{adm} 
\frac 1q\leq \frac{(d-1)} 2 (\frac{1}{2}-\frac{1}{r}),\quad  q>2.
\end{equation}
Then, when $\Omega$ is a Riemannian manifold with empty boundary, the solution to \eqref{WE} is such that, at least for a suitable $T<+\infty$ depending only on $\Omega$, uniformly for $0<h<1$, 
\begin{equation}\label{stricrd} h^{\beta}\|\chi(hD_t)u\|_{L^q([0,T],
L^r)}\leq C h \Big( \| u_{0}\|_{H^{1}}+\| u_{1}\|_{L^2}\Big),
\end{equation} 
where $\chi\in C^{\infty}_0$ is a smooth truncation in a neighborhood of $1$, $D=-i\partial$ and $H^{1}$ is the Sobolev space associated with $\Delta$ on $\Omega$. One may often state \eqref{stricrd} differently by removing the spectral cut-off $\chi$, removing all $h$ factors and replacing $(H^{1},L^{2})$ by $(H^{\beta},H^{\beta-1})$ for the data on the righthand side, at the expense of introducing fractional Sobolev spaces on $\Omega$. When \eqref{stricrd} holds for $T=\infty$, it is said to be a global in time Strichartz estimate. For $\Omega=\mathbb{R}^d$ with flat metric, the solution  $u_{\mathbb{R}^d}(t,x)$ to
\eqref{WE} with data $(u_0=\delta_{x_{0}}, u_1=0)$ is known to be 
 \[ u_{\mathbb{R}^d}(t,x)=\frac{1}{(2\pi)^d}\int
\cos(t|\xi|)e^{i(x-x_{0})\xi}d\xi
 \]
and by stationary phase the classical dispersion estimate follows:
\begin{equation}\label{disprd}
\|\chi(hD_t)u_{\mathbb{R}^d}(t,.)\|_{L^{\infty}(\mathbb{R}^d)}\leq
C(d) h^{-d}\min\{1, (h/t)^{\frac{d-1}{2}}\}.
\end{equation}
Interpolation between  \eqref{disprd} and energy estimates, together with a duality argument, routinely provides
\eqref{stricrd}. On any boundaryless Riemannian manifold $(\Omega,g)$ one may follow the same path, replacing the exact formula by a parametrix (which may be constructed locally within a small ball, thanks to finite speed of propagation.)

On a manifold with boundary, one may no longer think that light rays are slightly distorted straight lines. There may be rays gliding along a convex part of the boundary, rays grazing a convex obstacle or combinations of both. Strichartz estimates outside a strictly convex obstacle were obtained in \cite{smso95} and turned out to be similar to the free case (see \cite{ildispext} for the more complicated case of the dispersion). Strichartz estimates with losses were obtained later on general domains, \cite{blsmso08}, using short time parametrices constructions from \cite{smso06}, which in turn were inspired by works on low regularity metrics \cite{tat02}. Losses in \cite{blsmso08} are induced by considering only time intervals that allow for no more than one reflection of a given wave packet and as such, one does not see the full effect of dispersion in the tangential directions. 

In our work \cite{Annals}, a parametrix for the wave equation inside a model of strictly convex domain was constructed that provided optimal decay estimates, uniformly with respect to the distance of the source to the boundary, over a time length of constant size. This involves dealing with an arbitrarily large number of caustics and retain control of their order. Our sharp dispersion estimate immediately yields by the usual argument Strichartz estimates with a range of pairs $(q,r)$ such that
\begin{equation}\label{adm-1/4} 
\frac 1q\leq \left(\frac{(d-1)} 2 -\frac 1 4\right)\left(\frac{1}{2}-\frac{1}{r}\right),\quad  q>2\,
\end{equation}
where, informally, the new $1/4$ factor is related to the $1/4$ loss in the dispersion estimate. On the other hand, earlier works \cite{doi}, \cite{doi2} proved that Strichartz estimates inside strictly convex domains of dimension $d\geq 2$ can hold only if $(q,r)$ are such that
\begin{equation}\label{adm-1/4bis} 
\frac 1q\leq \frac{(d-1)} 2 \left(\frac{1}{2}-\frac{1}{r}\right)-\frac 1 {6}\left(\frac{1}{4}-\frac{1}{r}\right),\quad  q>2\,.
\end{equation}
This provides a counterexample for $r>4$ and restricts the dimension to $d\leq 4$. In a recent work \cite{ILP4}, we improved the 2D counterexample, with \eqref{adm-1/4bis} replaced by a stronger requirement:
\begin{equation}\label{adm-1/10} 
\frac 1q\leq \left(\frac{1} 2-\frac 1 {10}\right)\left(\frac{1}{2}-\frac{1}{r}\right)\,,
\end{equation}
which, for $r=+\infty$, yields $q\geq 5$ (to be compared to $q\geq 24/5$ in \eqref{adm-1/4bis}.)

Our purpose now is to improve upon the positive results, in dimension $d=2$. In particular, for suitable micro-localized solutions we close the gap with the recent counterexample from \cite{ILP4}, providing a
near complete picture when $x+ |D_{x}D_{y}^{-1}|^{2}\sim h^{1/3}$, $|D_{y}|\sim h^{-1}$. Before stating our main result, we start by describing the convex model domain under consideration: the Friedlander model domain is the half-space, for $d\geq 2$,
 $\Omega_d=\{(x,y)| x>0, y\in\mathbb{R}^{d-1}\}$ with the metric $g_F$ inherited from the following Laplace operator,
 $\Delta_F=\partial^2_x+(1+x)\Delta_{\mathbb{R}^{d-1}_y}$. 

 \begin{rmq}
For the metric $g=dx^2+(1+x)^{-1}dy^2$, the Laplace-Beltrami operator is
$\triangle_{g}=(1+x)^{1/2}\partial_{x}(1+x)^{-1/2}\partial_{x}+(1+x)\Delta_{y}$
which is self adjoint with volume form $(1+x)^{-1/2}dxdy$. Our Friedlander model uses instead
the Laplace operator associated to the Dirichlet form $\int \vert
\nabla_{g}u\vert^2\,dxdy
=\int (\vert \partial_{x}u\vert^2+(1+x)\vert\partial_{y}u\vert^2)dxdy$, which is self adjoint with volume form $dxdy$. The Friedlander operator $\Delta_{F}$ allows for explicit computations and the difference $\Delta_{g}-\Delta_{F}= -(2(1+x))^{-1}\partial_{x}$ is a first order differential operator: as such, as long as we are dealing with local in time Strichartz estimates for data close to the boundary, it may be treated as a lower order perturbative term and proving local in time Strichartz estimates for $\Delta_{F}$ implies the same set of estimates for $\Delta_{g}$. Moreover, $(\Omega_d,g_F)$ is easily seen to model a strict convex domain, as a first order approximation of the unit disk $D(0,1)$ in polar coordinates $(r,\theta)$, close to the boundary $r=1$: set $r=1-x/2$, $\theta=y$.
\end{rmq}

\begin{thm} \label{thm+1} Strichartz estimates \eqref{stricrd} hold true for the wave equation \eqref{WE} on the domain $(\Omega_{2},g_{F})$, with $\Delta=\Delta_{F}$, for pairs $(q,r)$ such that
\begin{equation}\label{adm-1/6} 
\frac 1q\leq  \left(\frac{1} 2 -\gamma(2)\right)\left(\frac{1}{2}-\frac{1}{r}\right),\quad\text{ with }\quad \gamma(2)=\frac 1 {9}\,.
\end{equation}
In particular, for $r=+\infty$, we have $q\geq 5+1/7$.
\end{thm}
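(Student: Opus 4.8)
The plan is to feed the standard $TT^{*}$ argument not with the pointwise dispersion estimate of \cite{Annals} --- which only yields the range \eqref{adm-1/4} --- but with a refined space-time bound on the parametrix, in which the cost of the caustics (whose number grows like $a^{-1/2}$, where $a$ measures the distance from the source to the glancing region) is diluted by the fact that each caustic is sharply localized in space-time. By the Remark it suffices to treat $\Delta=\Delta_{F}$, and finite speed of propagation fixes a time window $[0,T]$. Writing $\cos(t\sqrt{-\Delta_{F}})$ and $\sin(t\sqrt{-\Delta_{F}})/\sqrt{-\Delta_{F}}$ in terms of the half-wave groups and performing a Littlewood--Paley decomposition, \eqref{stricrd} on the range \eqref{adm-1/6} reduces, by the usual $TT^{*}$ argument, to a fixed-frequency estimate $\|\chi(hD_{t})e^{it\sqrt{-\Delta_{F}}}f\|_{L^{q}([0,T],L^{r})}\lesssim h^{-\sigma(q,r)}\|f\|_{L^{2}}$, with $\sigma$ dictated by \eqref{adm-1/6}. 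Taking the Fourier transform in $y$ and discarding the contributions away from the glancing set (which reduce to essentially the free case, or to \cite{blsmso08}), the only delicate term is $|D_{y}|\sim h^{-1}$ together with $x+|D_{x}D_{y}^{-1}|^{2}$ small; I would split this dyadically in the value $a$ of that last quantity, the main range being $h^{2/3}\lesssim a\lesssim1$. After the dilation $x\mapsto aX$ the natural semiclassical parameter is $\tilde h=h\,a^{-3/2}\le1$, and over $[0,T]$ the bicharacteristics undergo $N_{a}\sim a^{-1/2}$ reflections off $\{x=0\}$, the $N$-th being carried by the time-delay $\tau_{N}\sim N\sqrt a$.

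For each scale $a$ I would invoke the parametrix of \cite{Annals}, in the sharper form developed in this paper, writing the microlocalized propagator as $\sum_{N=1}^{N_{a}}\mathcal{U}_{N}$, where $\mathcal{U}_{N}$ is an explicit oscillatory integral operator describing the $N$-th reflected wave. The structural input is twofold. First, each $\mathcal{U}_{N}$ carries a phase whose caustic set is explicit and of uniformly bounded order --- cusp-like in the worst case, which is exactly what forces the $1/4$ loss in the pointwise dispersion --- and this worst caustic is confined to a space-time set of small measure whose location is governed by $N$. Second, the kernels of the $\mathcal{U}_{N}$ are supported near the pairwise essentially disjoint time-delays $\tau_{N}$, so that the family $(\mathcal{U}_{N})_{N}$ is almost orthogonal. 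These are precisely the two features highlighted in the abstract: many caustics (about $a^{-1/2}$ of them), each sharply localized in space-time.

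The heart of the matter is then a quantitative $L^{q'}([0,T],L^{r'})\to L^{q}([0,T],L^{r})$ bound for $\sum_{N}\mathcal{U}_{N}$. Away from its caustic each $\mathcal{U}_{N}$ satisfies a one-reflection (fold) dispersive estimate, essentially of free type; near the caustic one invokes the sharp stationary-phase and van der Corput estimates for fold and cusp phases, the decisive point being that the $L^{q}L^{r}$ norm records the worst (cusp) behaviour only on a set of small measure, hence an effective loss strictly below the $1/4$ seen by $\|\cdot\|_{L^{\infty}}$. The near-disjointness of the time-delays $\tau_{N}$ --- supplemented, in the spatial variable, by an $L^{r}$ square-function and interpolation argument --- then permits the summation of the $N_{a}\sim a^{-1/2}$ pieces with at most a small polynomial loss in $\tilde h$. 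Running this through scale by scale and optimizing over the dyadic values of $a$, the extremal scale turns out to be $a\sim h^{1/3}$ --- the one appearing in the counterexample of \cite{ILP4}, where $N_{a}\sim h^{-1/6}$ and $\tilde h\sim h^{1/2}$ --- and it produces exactly $\gamma(2)=1/9$; inserting this dispersive datum into the $TT^{*}$ argument yields \eqref{stricrd} on \eqref{adm-1/6}, and in particular $q\geq5+1/7$ for $r=+\infty$.

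The main obstacle --- and the place where the new idea genuinely enters --- is this last step: converting the space-time localization of the cusp caustics, together with the almost orthogonality of the $a^{-1/2}$ reflected pieces, into a real improvement of the $L^{q}L^{r}$ norm. A bare triangle inequality over the $N_{a}$ pieces only recovers the $1/4$ loss, and interpolating pointwise bounds against the measure of the caustic sets is not enough either, since $\gamma(2)=1/9$ is smaller than the exponent any such crude argument would produce; one genuinely needs both the orthogonality and a sharp control of the order of each caustic along the reflections --- which is where the improved parametrix bookkeeping developed along the way, and of independent interest, is essential. Controlling the transition between consecutive dyadic scales of $a$, and the borderline scale $a\sim h^{2/3}$ where the rescaling degenerates to $\tilde h\sim1$, requires additional care but is routine.
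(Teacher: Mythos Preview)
Your overall plan---localize at scale $\gamma$, write the propagator as a sum over reflections $N$, use refined stationary-phase bounds that isolate the space-time location of the worst caustics, and recombine---matches the paper's. But two concrete points are off, and the first is the crux of the argument.

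The mechanism you propose for turning space-time localization of caustics into an $L^{q}L^{r}$ gain---``almost orthogonality of the $\mathcal{U}_{N}$ plus an $L^{r}$ square-function''---is not what the paper does, and it is not clear it would close. The paper's device is to replace the symmetric $TT^{*}$ target $L^{q'}_{t}L^{1}\to L^{q}_{t}L^{\infty}$ by the asymmetric bound $L^{1}_{s,a,b}\to L^{q/2}_{t}L^{\infty}$, then recover the symmetric one by duality and interpolation. After this reduction one may take $f=\delta_{(s,a,b)}$, and the resulting estimate carries $\sup_{a}$ \emph{outside} the time integral. This is exactly what lets one integrate the refined dispersion bounds (Propositions~\ref{propdispNpetitpres}--\ref{propdispNgrand}) in $T$ around each caustic time $T\sim 4N$ \emph{before} taking the supremum in $a$. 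Without it, the caustic times in the original variables are $t\sim 4N\sqrt{a}$, they move with $a$, and any bound uniform in $a$ must absorb the worst pointwise decay---which just reproduces the $1/4$ loss. Your ``orthogonality plus small-measure'' heuristic does not supply this step.

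Your identification of the extremal scale is also incorrect. The scale $\gamma\sim h^{1/3}$ from the counterexample of \cite{ILP4} yields $q>5$ (Proposition~\ref{prop:p5}), and there the argument is essentially sharp. The bound $q=36/7$ of the theorem is forced instead by the range $h^{4/7}<\gamma<h^{1/3}$, with worst case $\gamma\sim h^{4/9}$ (see \eqref{eq:33bis}--\eqref{eq:33}); the governing estimate is \eqref{eq:1ff}, which saturates around $N\sim\lambda^{2/3}$ and gives no extra gain near $T=4N$. So the numerology you sketch does not produce $\gamma(2)=1/9$. Finally, the regime $\gamma\lesssim h^{2/3-\varepsilon}$ is not routine: the parametrix needs $\lambda_{\gamma}=\gamma^{3/2}/h$ large, and below this threshold the paper switches to a separate gallery-mode argument (Theorem~\ref{thGM}, Proposition~\ref{lemmaGMStrichartz}), proving free-type Strichartz for each mode $e_{k}$ uniformly in $k$ and summing by Cauchy--Schwarz over the $O(\lambda_{\gamma})$ modes present.
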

This result improves on known results for strictly convex domains in 2D: for $d=2$, \cite{blsmso08} obtained $\gamma(2)=1/6$ (but for any boundary), while \cite{Annals} only provide the weaker $\gamma(2)=1/4$. Note that for $d\geq 3$, \cite{Annals} had a uniform $\gamma(d)=1/4$, which already improved on \cite{blsmso08}, where $\gamma(3)=2/3$ and $\gamma(d)=(d-3)/2$ for $d\geq 4$ (but again, without restrictions on the boundary). We deliberately chose to restrict to 2D, in order to avoid another layer of technicalities. However, Theorem \ref{thm+1} generalizes to $d\geq 3$ with at least $\gamma(d)\leq 1/6$. This will be dealt with elsewhere (preliminary results appeared in \cite{ILPTunisie}). Moreover, Theorem \ref{thm+1} will extend to a generic strictly convex domain, building upon \cite{ILLP} and using the present work as a blueprint; recently \cite{ILLP} obtained $\gamma(d)> (1-1/d)/4$ for any strictly convex domain, using a weaker version of our model 2D argument. Finally, we expect that the present parametrix construction and its counterpart from \cite{ILLP} to be building blocks for sharper versions of propagation of singularities theorems in the presence of boundaries. This in turn would have important applications to control theory that, to our knowledge, have remained out of reach for a long time.

The proof of Theorem \ref{thm+1} will rely on a complete make-over of the parametrix construction of \cite{Annals}: resulting bounds on the Green function will be improved in several directions together with refinements of estimates on gallery modes from \cite{doi}, all of which are of independent interest:
\begin{itemize}
\item After introducing a localization such that $x+D_{y}^{-2}D_{x}^{2}\sim \gamma\ll 1$, one may further restrict the wave operator to this region of phase space. The worst possible case regarding dispersion will then be $x\sim \gamma$ and $|D_{x}|\lesssim \sqrt \gamma |D_{y}|$, for $\gamma\gtrsim h^{1/3}$;
\item At such fixed $\gamma$, the parametrix construction from \cite{Annals} may be extended to initial data $\delta_{(x=a,y=0)}$ with $h^{2/3-\varepsilon}<\gamma$, for any $\varepsilon>0$ and any $a>0$, improving on the previous requirement $a>h^{4/7}$;
\item the degenerate stationary phase estimates in \cite{Annals} may be refined to isolate precisely the space-time location of the worst case scenario of a swallowtail singularity. It turns out that such singularities only happen at an exceptional, discrete set of times; we then suitably average over such exceptional times, at fixed $\gamma$, before recombining the resulting Strichartz estimates. The singular points in the Green function originating at $x=a$ in the $(x,t)$ plane are $(a,4N\sqrt a\sqrt{1+a})$, for $|N|\leq 1/\sqrt a$, which creates difficulties in averaging over $a>0$, as the usual argument is blind to improvements outside a small neighborhood of this set.
\item gallery modes satisfy the usual Strichartz estimates (as already proved in \cite{doi}) but with uniform constant with respect to the order of the mode: this allows to deal with the $\gamma<h^{2/3-\varepsilon}$ region.
\end{itemize}
These improvements proved to be crucial in recent works \cite{Iv2020KG} and \cite{Iv2020Sch}: refinements over the space-time localization of the degenerate stationary phase are indeed a key point to obtain semi-classical dispersion estimates in the Schrödinger case, where the methods of \cite{Annals} fail to yield improvements over previously known results.

In the remaining of the paper, $A\lesssim B$ means that there exists a constant $C$ such that $A\leq CB$ and this constant may change from line to line but is independent of all parameters. It will be explicit when (very occasionally) needed. Similarly, $A\sim B$ means both $A\lesssim B$ and $B\lesssim A$.
\section{The half-wave propagator: spectral analysis and parametrix construction}
Let $Ai$ denote the standard Airy function, we have $Ai(-z)=A_+(z)+A_-(z)$, where
\begin{equation}
  \label{eq:Apm}
  A_\pm(z)=e^{\mp i\pi/3} Ai(e^{\mp i\pi/3} z)\,,\,\,\text{ for } \,
  z\in \mathbb{C}\,,
\end{equation}
By definition, a function $f(w)$ admits an asymptotic expansion for $w\rightarrow 0$ when there exists a (unique) sequence $(c_{n})_{n}$ such that, for any $n$, $\lim_{w\rightarrow 0} w^{-(n+1)}(f(w)-\sum_{0}^{n} c_{n} w^{n})=c_{n+1}$. We will denote $ f(w)\sim_{w} \sum_{n} c_{n} w^{n}$. Then
\begin{equation}\label{eq:ApmAE}
A_{\pm}(z)=\Psi(e^{\mp i\pi/3} z)e^{\mp\frac 23 i z^{3/2} },\quad \Psi(z)\sim_{1/z} z^{-1/4}\sum_{j=0}^{\infty} a_j z^{-3j/2}, \quad a_0=\frac{1}{4\pi^{3/2}}.
\end{equation}
The following lemma (see \cite{ILP4} for a proof) will be crucial in the analysis of reflected phases :
\begin{lemma}\label{lemL}
Define, for $\omega \in \R$, $L(\omega)=\pi+i\log \frac{A_-(\omega)}{A_+(\omega)}$, then $L$ is real analytic and strictly increasing. We also
have
\begin{equation}
  \label{eq:propL}
  L(0)=\pi/3\,,\,\,\lim_{\omega\rightarrow -\infty} L(\omega)=0\,,\,\,
  L(\omega)=\frac 4 3 \omega^{\frac 3 2}+\frac{\pi}{2}-B(\omega^{\frac 3
    2})\,,\,\,\text{ for } \,\omega\geq 1\,,
\end{equation}
with $ B(u)\sim_{1/u} \sum_{k=1}^\infty b_k u^{-k}$ and $b_k\in\R$, $b_1> 0$. Finally, one may check that
\begin{equation}
  \label{eq:propL2}
  Ai(-\omega_k)=0 \iff L(\omega_k)=2\pi k \text{ and }
  L'(\omega_k)=2\pi \int_0^\infty Ai^2(x-\omega_k) \,dx\,,
\end{equation}
where here and thereafter, $\{-\omega_k\}_{k\geq 1}$ denote the zeros of the Airy function in decreasing order.
\end{lemma}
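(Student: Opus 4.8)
The plan is to deduce everything from two structural facts: $A_\pm$ are the two solutions of $w''+zw=0$ obtained by rotating $Ai$ (this is immediate from \eqref{eq:Apm} and $Ai''(\zeta)=\zeta Ai(\zeta)$), and on $\R$ they are complex conjugate, $A_-=\overline{A_+}$. Since the zeros of $Ai$ lie on $(-\infty,0)$, those of $A_\pm$ lie off $\R$, so $A_+$ has no real zero and $\omega\mapsto A_-(\omega)/A_+(\omega)$ is a real-analytic map $\R\to\{|z|=1\}$; writing $A_-/A_+=e^{i\theta(\omega)}$ with $\theta$ its continuous argument, $L=\pi-\theta$ is real analytic, and the additive normalisation $L(0)=\pi+i\log e^{2i\pi/3}=\pi/3$ fixes $\theta(0)=2\pi/3$. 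Monotonicity of $L$ is monotonicity of $-\theta$.

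For the latter I would differentiate: $L'=i\big(\tfrac{A_-'}{A_-}-\tfrac{A_+'}{A_+}\big)=\dfrac{iW}{A_+A_-}$ with $W=A_+A_-'-A_+'A_-$ the Wronskian of two solutions of $w''+zw=0$, hence a constant. Evaluating at $z=0$ from $A_\pm(0)=e^{\mp i\pi/3}Ai(0)$, $A_\pm'(0)=e^{\mp 2i\pi/3}Ai'(0)$ gives $W=i\sqrt3\,Ai(0)Ai'(0)$, and inserting $Ai(0)=3^{-2/3}/\Gamma(2/3)$, $Ai'(0)=-3^{-1/3}/\Gamma(1/3)$, $\Gamma(1/3)\Gamma(2/3)=2\pi/\sqrt3$ yields $W=-i/(2\pi)$; since $A_+A_-=|A_+|^2>0$ on $\R$ this gives the clean formula $L'(\omega)=\tfrac{1}{2\pi|A_+(\omega)|^2}>0$. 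The quantisation and one limit are then essentially free: $Ai(-\omega)=0\iff A_+(\omega)+A_-(\omega)=0\iff A_-/A_+=-1\iff e^{iL(\omega)}=1$, i.e. $L(\omega)\in2\pi\Z$; because $Ai$ has no zero on $[0,\infty)$, $\theta\ne\pi\ (\mathrm{mod}\ 2\pi)$ on $(-\infty,0]$, which both confirms that $\theta(0)=2\pi/3\in(0,\pi)$ is the correct branch and, together with strict monotonicity, the intermediate value theorem, and $\{-\omega_k\}$ being exactly the zeros of $Ai$ in decreasing order, gives $L(\omega_k)=2\pi k$ by induction on $k$.

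For the derivative identity I specialise $L'=iW/(A_+A_-)$ at $\omega_k$: differentiating $A_++A_-=Ai(-\cdot)$ and using $A_-(\omega_k)=-A_+(\omega_k)$ gives $W=-A_+(\omega_k)Ai'(-\omega_k)$, hence $A_+(\omega_k)A_-(\omega_k)=-A_+(\omega_k)^2=1/(4\pi^2Ai'(-\omega_k)^2)$; integrating the Airy identity $\tfrac{d}{dx}\big((x-\omega)Ai^2(x-\omega)-Ai'^2(x-\omega)\big)=Ai^2(x-\omega)$ over $[0,\infty)$ gives $\int_0^\infty Ai^2(x-\omega)\,dx=\omega\,Ai(-\omega)^2+Ai'(-\omega)^2$, which equals $Ai'(-\omega_k)^2$ at $\omega_k$, and combining yields $L'(\omega_k)=2\pi\int_0^\infty Ai^2(x-\omega_k)\,dx$. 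For $\omega\to-\infty$ I write $A_\pm(\omega)=e^{\mp i\pi/3}Ai(|\omega|e^{\pm 2i\pi/3})$ and feed in the one-term asymptotics of $Ai$ along $\arg\zeta=2\pi/3$ — being careful that this is a Stokes ray, so the asymptotic selects the dominant exponential and the connection-formula remainder $Ai(|\omega|)$ is exponentially small — to get $A_\pm(\omega)\sim\mp\tfrac{i}{2\sqrt\pi}|\omega|^{-1/4}e^{\frac23|\omega|^{3/2}}$, hence $A_-/A_+\to-1$; as $\theta<\pi$ on $(-\infty,0)$ and increases when $\omega$ decreases, necessarily $\theta\to\pi^-$ and $L\to0$. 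For $\omega\ge1$ I substitute \eqref{eq:ApmAE} into $L=\pi+i\log(A_-/A_+)$: the exponential factors give the leading $\tfrac43\omega^{3/2}$, the $z^{-1/4}$ prefactor of $\Psi$ a constant, and $\log$ of the residual series $\sum_j a_jz^{-3j/2}$ — taken at the conjugate rotations $z=e^{\pm i\pi/3}\omega$ and subtracted — an asymptotic series in $(\omega^{3/2})^{-1}$ with real coefficients, i.e. $-B(\omega^{3/2})$, with $b_1=-2a_1/a_0>0$ because $a_1/a_0=-5/48<0$; continuity from $\omega=0$ pins the constant to $\pi/2$.

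The two delicate points, where I expect the real work, are the branch bookkeeping — producing exactly $\pi/3$, $0$ and $\pi/2$ rather than merely these values modulo $2\pi$, which is precisely where strict monotonicity and the exact location of the zeros of $Ai$ enter — and the asymptotics on the Stokes ray $\arg\zeta=2\pi/3$, where one must check that subdominant and connection-formula corrections do not spoil the limit $A_-/A_+\to-1$. Neither is deep; it is careful special-function asymptotics.
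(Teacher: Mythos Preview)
The paper does not prove this lemma; it simply cites \cite{ILP4}. Your proof is correct and self-contained, with all the key steps in place: the Wronskian computation giving $L'(\omega)=1/(2\pi|A_+(\omega)|^2)>0$, the quantisation $L(\omega_k)=2\pi k$ via monotonicity and the intermediate value theorem, the identity $L'(\omega_k)=2\pi Ai'(-\omega_k)^2$ combined with the antiderivative $(x-\omega)Ai^2(x-\omega)-Ai'^2(x-\omega)$, and the two asymptotic regimes. The sign bookkeeping in the large-$\omega$ expansion is right (one checks directly that $A_+(\omega)\sim\frac{1}{2\sqrt\pi}\omega^{-1/4}e^{-i\pi/4}e^{\frac23 i\omega^{3/2}}$, whence $A_-/A_+\sim i\,e^{-\frac43 i\omega^{3/2}}$ and $L\sim\frac43\omega^{3/2}+\frac\pi2$), and your identification $b_1=-2a_1/a_0=5/24>0$ is correct. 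One minor comment: the argument that $L\to0$ as $\omega\to-\infty$ uses the asymptotic of $Ai$ on the ray $\arg\zeta=2\pi/3$, which is a Stokes line where the dominant exponential is maximally dominant, so the one-term asymptotic is indeed safe there; your caution is well placed but no subtlety actually arises.
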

\subsection{Spectral analysis of the Friedlander model}\label{sectspectralloc}
Let $\Omega_2$ be the half-space $\{(x,y)\in\mathbb{R}^2|, x>0,y\in\mathbb{R}\}$ and consider the operator
$\Delta_F=\partial^{2}_{x}+(1+x)\partial^{2}_{y}$ on $\Omega_2$ with Dirichlet boundary condition. After a Fourier transform in the $y$ variable, the operator $-\Delta_{F}$ becomes
$-\partial^2_x+(1+x)\theta^2$. For  $\theta\neq 0$, this is a positive self-adjoint operator 
on $L^2(\mathbb{R}_+)$, with compact resolvent. 

\begin{lemma}\label{lemorthog} (\cite[Lemma 2]{ILP4})
There exist orthonormal eigenfunctions $\{e_k(x,\theta)\}_{k\geq 0}$ with their corresponding eigenvalues $\lambda_k(\theta)=|\theta|^2+\omega_k|\theta|^{4/3}$, which form a Hilbert basis of $L^{2}(\mathbb{R}_{+})$.  These eigenfunctions have an explicit form
\begin{equation}\label{eig_k}
 e_k(x,\theta)=\frac{\sqrt{2\pi}|\theta|^{1/3}}{\sqrt{L'(\omega_k)}}
Ai\Big(|\theta|^{2/3}x-\omega_k\Big),
\end{equation}
where $L'(\omega_k)$ is given by \eqref{eq:propL2}, which yields
$\|e_k(.,\theta)\|_{L^2(\mathbb{R}_+)}=1$.
\end{lemma}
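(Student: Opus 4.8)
The plan is to reduce the eigenvalue equation for the reduced operator $P_\theta:=-\partial_x^2+(1+x)\theta^2$ on $L^2(\R_+)$ with Dirichlet condition to the Airy equation by an affine change of variables, then to invoke the spectral theorem (using that $P_\theta$ is self-adjoint with compact resolvent, as recalled just above the statement) to get completeness, and finally to read off the normalization constant from \eqref{eq:propL2}. Concretely, I would look for $u\in L^2(\R_+)$ with $u(0)=0$ solving $-u''+(1+x)\theta^2 u=\lambda u$. Setting $z=|\theta|^{2/3}x-\omega$ for a real parameter $\omega$ to be fixed and writing $u(x)=v(z)$, the equation becomes $v''=\big(z+\omega+|\theta|^{2/3}-\lambda|\theta|^{-4/3}\big)v$; imposing the relation $\lambda=|\theta|^2+\omega|\theta|^{4/3}$ kills the constant term and turns this into the Airy equation $v''=zv$. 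Its solution space is spanned by $Ai(z)$ and $Bi(z)$; since $Bi(z)\sim e^{\frac23 z^{3/2}}$ grows while $Ai(z)$ decays super-exponentially as $z\to+\infty$, membership in $L^2(\R_+)$ forces $u(x)=c\,Ai(|\theta|^{2/3}x-\omega)$ up to a scalar. The Dirichlet condition $u(0)=0$ then reads $Ai(-\omega)=0$, i.e. $\omega=\omega_k$ for some $k\ge 0$, and hence $\lambda=\lambda_k(\theta)=|\theta|^2+\omega_k|\theta|^{4/3}$.

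This computation shows that the eigenvalues of $P_\theta$ are exactly the $\lambda_k(\theta)$, each with a one-dimensional eigenspace spanned by $x\mapsto Ai(|\theta|^{2/3}x-\omega_k)$: no eigenvalue is missed, because the ODE uniqueness argument above covers every $L^2$ solution of the eigenequation satisfying the boundary condition, and the $\omega_k$ being pairwise distinct the $\lambda_k(\theta)$ are too. Since $P_\theta$ is self-adjoint with compact resolvent, the spectral theorem then guarantees that the normalized eigenfunctions form a Hilbert basis of $L^2(\R_+)$, and orthogonality across distinct $k$ is automatic from the distinctness of the eigenvalues. For the normalization, the change of variables $s=|\theta|^{2/3}x$ gives
\[
\int_0^\infty Ai\big(|\theta|^{2/3}x-\omega_k\big)^2\,dx=|\theta|^{-2/3}\int_0^\infty Ai(s-\omega_k)^2\,ds=\frac{|\theta|^{-2/3}}{2\pi}\,L'(\omega_k),
\]
where the last equality is precisely the identity for $L'(\omega_k)$ in \eqref{eq:propL2}. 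Dividing by the square root of this quantity yields the stated formula \eqref{eig_k} with $\|e_k(\cdot,\theta)\|_{L^2(\R_+)}=1$.

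The only point requiring genuine care is the exclusion of the $Bi$-type solution at $x\to+\infty$ — equivalently, the limit-point classification of $P_\theta$ at infinity — which is what ensures that the list $\{\lambda_k(\theta)\}_{k\ge0}$ is complete rather than merely a subset of the spectrum; once that is in hand, together with the self-adjointness-with-compact-resolvent input supplied above, the rest is bookkeeping: the affine rescaling, the identification of the admissible $\omega=\omega_k$ from the zeros of $Ai$, and the evaluation of the $L^2$ norm via \eqref{eq:propL2}. (Should one wish not to take self-adjointness for granted, the additional work would be the standard quadratic-form/domain argument for $-\partial_x^2+(1+x)\theta^2$ with a Dirichlet condition, but the excerpt already records it.)
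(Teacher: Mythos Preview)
The paper does not prove this lemma itself; it merely states it with a citation to \cite[Lemma 2]{ILP4}, so there is no in-paper argument to compare against. Your proof is correct and is exactly the standard route one would take: the affine change of variables reducing $-\partial_x^2+(1+x)\theta^2$ to the Airy operator, the elimination of the $Bi$ branch by $L^2$ decay at infinity, the Dirichlet condition forcing $\omega=\omega_k$, completeness via the spectral theorem for self-adjoint operators with compact resolvent (which the paper records immediately before the lemma), and the normalization read off from \eqref{eq:propL2}. Nothing is missing.
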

In a classical way, for $a>0$, the Dirac distribution $\delta_{x=a}$ on $\mathbb{R}_+$ may be decomposed in terms of eigenfunctions $\{e_k\}_{k\geq 1}$: $ \delta_{x=a}=\sum_{k\geq 1} e_k(x,\theta)e_k(a,\theta)$. If we consider a data at time $t=s$ such that
$u_0(x,y)=\psi(hD_y)\delta_{x=a,y=b}$, where $h\in (0,1]$ is a small parameter and
 $\psi\in C^{\infty}_0([\frac 12,\frac 3 2])$, we can write the (localized in $\theta$) Green function associated to the half-wave operator on $\Omega_{2}$ as
\begin{equation}
\label{greenfct} G^{\pm}_{h}((x,y,t),(a,b,s))=\sum_{k\geq 1}
\int_{\mathbb{R}}e^{\pm i(t-s)\sqrt{\lambda_k(\theta)}}
e^{i(y-b)\theta}
 \psi(h\theta)e_k(x,\theta)e_k(a,\theta)d\theta\,.
\end{equation}
Notice that, in addition to the cut-off $\psi(h\theta)$, which localizes the Fourier variable dual to $y$, we may add a spectral cut-off $\psi_{1}(h\sqrt{\lambda_{k}(\theta)})$ under the $\theta$ integral, where $\psi_{1}$ is also such that $\psi_{1}\in  C^{\infty}_0([\frac 12,\frac 3 2])$. Indeed, 
$$
-\Delta_{F} ( \psi(h\theta) e^{i \theta y} e_{k}(x,\theta)) =\lambda_{k}(\theta) \psi(h\theta) e^{i \theta y} e_{k}(x,\theta)\,.
$$
As observed in \cite{Annals}, the significant part of the sum over $k$ in \eqref{greenfct}  becomes then a finite sum over $k\lesssim h^{-1}$, considering the asymptotic expansion of $\omega_{k}\sim k^{2/3}$. We now go further and localize with respect to $(x-\theta^{-2}\partial_{x}^{2})$: notice $(x- \theta^{-2} \partial_{x}^{2})e_{k}(x,\theta)=(\theta^{-2}\lambda_{k}(\theta)-1) e_{k}(x,\theta)$.
As such, introducing a new parameter $\gamma\lesssim 1$,  we add a factor $\psi_{2}((\omega_{k} \theta^{-2/3})/\gamma)$ which, considering the asymptotic expansion of the Airy zeros and the $\theta$ localization, is essentially $\psi_{2}( (kh)^{2/3})/\gamma)$. If $\psi_{2}\in C^{\infty}_{0}([-1,1])$, this allows to reduce the sum over $k$ in the definition of the Green function to $k$'s such that $k\lesssim \gamma^{3/2}/h$. In the following we actually choose $\psi_{2}\in C^{\infty}_{0}([\frac 12,\frac 3 2])$ (later on, it will become clear that for $\gamma\ll a$, the corresponding part of $G^{\pm}_{h}$ is irrelevant since the factor $e_k(a,\theta)$ in this case is exponentially decreasing) and set (rescaling the $\theta$ variable for later convenience)
\begin{multline}
\label{greenfctbis} G^{\pm}_{h,\gamma}((x,y,t),(a,b,s))  =  \sum_{k\geq 1}
\frac 1 h \int_{\mathbb{R}}e^{\pm i(t-s)\sqrt{\lambda_k(\eta/h)}}
e^{i(y-b)\eta/h}  \psi(\eta)\psi_{1}(h\sqrt{\lambda_{k}(\eta/h)})\\
{}\times\psi_{2}(h^{2/3}\omega_{k}/(\eta^{2/3}\gamma)) e_k(x,\eta/h)e_k(a,\eta/h)d\eta\,.
\end{multline}
Note that from an operator point of view, abusing notations, if $G^{\pm}$ is the half-wave propagator, we are in fact considering
$$
G^{\pm}_{h,\gamma}=\psi(h D_{y})\psi_{1}(-h\sqrt{-\Delta_{F}})\psi_{2}((x+ D_{x}^{2} (D_{y})^{-2})/\gamma) G^{\pm}\,.
$$
If $\gamma$ is taken to be in $(2^{-m})_{m\in \N}$ and $\psi_{2}$ chosen accordingly (e.g. $\psi_{2}(\xi)=\phi(\xi)-\phi(2\xi)$ with $\phi\in C^{\infty}_{0}[0,3/2)$ and $\phi=1$ on $[0,1]$), we also have $G^{\pm}_{h}=\sum_{\gamma} G^{\pm}_{h,\gamma}$: this sum over $\gamma$ is finite and restricted to $h^{2/3}\lesssim \gamma\lesssim 1$; the lower bound is induced by the knowledge of $-\omega_{1}<0$ and the upper bound follows from the restriction on $k$ placed by the $\eta$ and spectral localizations. Before proceeding we remark that we have
\begin{equation}
  \label{eq:bornesup}
\sup_{x,y,t,a,b,s}  |  G^{\pm}_{h,\gamma}|\lesssim \frac{\sqrt\gamma} {h^{2}}\,.
\end{equation}
Indeed, from \eqref{greenfctbis} we write, using \eqref{eig_k}, $L'(\omega)\sim 2\omega^{1/2}$ and $|\omega_{k}|\sim k^{2/3}$
\begin{align*}
 |G^{\pm}_{h,\gamma}(\cdots) |  & \lesssim   \sum_{1\leq k\lesssim \frac{\gamma^{3/2}}h}
\frac 1 h \int_{\mathbb{R}}\psi(\eta) \frac{|\eta|^{2/3}}{h^{2/3}{\sqrt \omega_k}}\left | Ai\bigl(|\eta/h|^{2/3}x-\omega_k\bigr) Ai\bigl(|\eta/h|^{2/3}a-\omega_k\bigl)\right|\,d\eta\\
    & \lesssim  \frac 1 {h^{5/3}} \int_{\mathbb{R}}\psi(\eta)\Bigl(\sup_{z\in \R}\sum_{1\leq k\lesssim \frac{\gamma^{3/2}}h}
{k^{-\frac 1 3}} \left | Ai\bigl(z-\omega_k\bigr)\right|^{2}\Bigr)\,d\eta\lesssim \frac 1 {h^{5/3}} \left(\frac{\gamma^{3/2}} h\right)^{\frac 1 3}
\end{align*}
using the asymptotics of the Airy function to evaluate the $\sup_{z}$ (see \cite{Annals}, Lemma 3.5).\qed

Successive spectral localizations restrict $G^{\pm}$ to directions of propagation where the tangential component dominates. As already observed in \cite{Annals}, other directions do see at most one reflection (on a suitable fixed time interval) and may be dealt with using already available arguments (e.g. \cite{blsmso08}). We will therefore ignore them from now on. Moreover, we will not only need $\gamma\lesssim 1$ but $\gamma<\gamma_{0}\ll 1$; We conservatively set $\gamma_{0}=1/100$ and this may be related to how we sort out directions. Since we are after fongible local in time estimates, we may restrict ourselves to a fixed small size neighborhood in space-time without loss of generality.

We briefly recall a variant of the Poisson summation formula that will be crucial to analyze the spectral sum defining $G^{\pm}_{h,\gamma}$, see again \cite{ILP4} for a short proof.
\begin{lemma}
  In $\mathcal{D}'(\R_\omega)$, one has $ \sum_{N\in \Z} e^{-i NL(\omega)}= 2\pi \sum_{k\in \N^*} \frac 1  {L'(\omega_k)}\delta(\omega-\omega_k)$, e.g. for $\phi\in C_{0}^{\infty}$,
  \begin{equation}
    \label{eq:AiryPoissonBis}
        \sum_{N\in \Z} \int e^{-i NL(\omega)} \phi(\omega)\,d\omega = 2\pi \sum_{k\in \N^*} \frac 1
    {L'(\omega_k)} \phi(\omega_k)\,.
  \end{equation}
\end{lemma}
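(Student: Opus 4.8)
The plan is to deduce this from the classical Poisson summation formula by pulling it back along the change of variables $\omega\mapsto L(\omega)$. Recall that in $\mathcal{D}'(\R_\theta)$ (in fact in $\mathcal{S}'$) one has the Dirac comb identity $\sum_{N\in\Z} e^{-iN\theta}=2\pi\sum_{k\in\Z}\delta(\theta-2\pi k)$. By Lemma~\ref{lemL}, $L$ is real analytic and strictly increasing, with $\lim_{\omega\to-\infty}L(\omega)=0$ and, from \eqref{eq:propL}, $L(\omega)=\tfrac43\omega^{3/2}+\pi/2-B(\omega^{3/2})\to+\infty$ as $\omega\to+\infty$; hence $L$ is a bijection from $\R$ onto $(0,+\infty)$. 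To make the pullback of distributions legitimate I would first check that $L'(\omega)>0$ for every real $\omega$: differentiating $L(\omega)=\pi+i\log(A_-(\omega)/A_+(\omega))$ gives $L'(\omega)=-i\,W(A_-,A_+)/\big(A_+(\omega)A_-(\omega)\big)$, where $W(A_-,A_+)=A_-A_+'-A_-'A_+$ is the Wronskian of two solutions of $u''+\omega u=0$, hence constant; since $\overline{A_+(\omega)}=A_-(\omega)$ for $\omega\in\R$ and the zeros of $A_\pm$ lie off the real axis, one has $A_+(\omega)A_-(\omega)=|A_+(\omega)|^2>0$, so $L'$ has constant sign, necessarily positive because $L$ increases. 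Thus $L\colon\R\to(0,+\infty)$ is a real-analytic diffeomorphism, and by \eqref{eq:propL2} its inverse maps $2\pi k$ to $\omega_k$ for $k\geq1$.

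Now fix $\phi\in C^\infty_0(\R_\omega)$ and set $\psi(\theta):=\phi(L^{-1}(\theta))/L'(L^{-1}(\theta))$. Since $\operatorname{supp}\phi$ is a compact subset of $\R$ and $L$ is a diffeomorphism onto $(0,+\infty)$, we get $\psi\in C^\infty_0(\R_\theta)$ with $\operatorname{supp}\psi\subset L(\operatorname{supp}\phi)$, a compact subset of $(0,+\infty)$. The change of variables $\theta=L(\omega)$, $d\theta=L'(\omega)\,d\omega$, yields for each $N$
\[
\int_{\R} e^{-iNL(\omega)}\phi(\omega)\,d\omega=\int_{\R} e^{-iN\theta}\psi(\theta)\,d\theta .
\]
Summing over $N\in\Z$ and applying the classical Poisson summation formula to $\psi$,
\[
\sum_{N\in\Z}\int_{\R} e^{-iNL(\omega)}\phi(\omega)\,d\omega=2\pi\sum_{k\in\Z}\psi(2\pi k)=2\pi\sum_{k\in\N^*}\psi(2\pi k),
\]
the terms with $k\leq 0$ dropping out because $\operatorname{supp}\psi\subset(0,+\infty)$. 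Since $\psi(2\pi k)=\phi(\omega_k)/L'(\omega_k)$, this is precisely \eqref{eq:AiryPoissonBis}, and the distributional statement $\sum_{N\in\Z}e^{-iNL(\omega)}=2\pi\sum_{k\in\N^*}L'(\omega_k)^{-1}\delta(\omega-\omega_k)$ is the mere reformulation of this pairing, the right-hand side being a well-defined element of $\mathcal{D}'(\R_\omega)$ as only finitely many $\omega_k$ meet any compact set.

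The only genuinely delicate point is the verification that $L$ is a global diffeomorphism with nowhere-vanishing derivative onto exactly $(0,+\infty)$ — all the rest is a one-line change of variables and an invocation of Poisson summation. Once $L'>0$ is secured (via the Wronskian identity above, or by quoting directly the structural properties of $L$ in Lemma~\ref{lemL}), no convergence issue remains: on the test-function side every sum is finite, and on the distributional side the right-hand side is locally finite by the spacing of the Airy zeros.
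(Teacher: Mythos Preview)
Your proof is correct. The paper does not actually supply a proof of this lemma in the text; it simply refers to \cite{ILP4} for ``a short proof.'' Your argument---pulling back the classical Dirac comb identity $\sum_{N}e^{-iN\theta}=2\pi\sum_{k}\delta(\theta-2\pi k)$ along the diffeomorphism $L\colon\R\to(0,+\infty)$---is exactly the natural route, and is in all likelihood what the cited reference does as well. Your care in verifying $L'>0$ everywhere via the Wronskian identity (rather than merely quoting ``strictly increasing,'' which a priori allows isolated vanishing of $L'$) is well placed, since the change of variables and the definition of $\psi$ both require it; note, though, that you could also shortcut this by observing that Lemma~\ref{lemL} already records $L$ as real analytic and strictly increasing with the stated range, and \eqref{eq:propL2} gives $L'(\omega_k)>0$ explicitly, which is all that is strictly needed for the right-hand side to make sense.
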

\subsection{A parametrix construction}

From \eqref{greenfctbis} we consider the sum in $k$, without the $\eta$ integration, with $\hbar=h/\eta$ and expanding the eigenmodes
\begin{equation}
    \label{eq:ter46}
  v_{\hbar}(t,x)=\sum_{k\geq 1} 
\frac{\psi_{1}\left({h\over \hbar}\sqrt{1+\omega_{k} \hbar^{2\over 3}}\right)\psi_{2}(\hbar^{2\over 3}\omega_{k}/\gamma)}{\hbar^{2\over 3}L'(\omega_{k})}  Ai(\hbar^{-{2 \over 3}}x-\omega_{k}) Ai(\hbar^{-{2\over 3}}a-\omega_{k})   e^{i\frac t \hbar \sqrt {1+\omega_{k}\hbar^{2 \over 3}} } \,.
\end{equation}
Using the Airy-Poisson formula \eqref{eq:AiryPoissonBis}, we can transform the sum over $k$ into a sum over $N\in \Z$ as follows
\begin{multline}
  v_{\hbar}(t,x)= \frac 1 {2\pi} \sum_{N\in \Z} \int_{\R} e^{-i NL(\omega)} \hbar^{-{2\over 3}} e^{i\frac t \hbar \sqrt{1+\omega h^{{2\over 3}}}}
\chi_{1}(\omega) \psi_{1}\left({h \over \hbar}\sqrt{1+\hbar^{{2\over 3}}\omega}\right)\psi_{2}(\hbar^{\frac 2 3}\omega/\gamma)\\
{}\times Ai (\hbar^{-\frac 2 3} x-\omega) Ai(h^{-\frac 2 3}a-\omega)  \, d\omega\,.
\end{multline}
Here, $\chi_{1}(\omega)=1$ for $\omega>2$ and $\chi_{1}(\omega)=0$ for $\omega<1$, and obviously $\chi_{1}(\omega_{k})=1$ for all $k$, as $\omega_{1}>2$. Recall that
\begin{equation}
  \label{eq:bis47}
  Ai(\hbar^{-2/3} x-\omega)=\frac 1 {2\pi \hbar^{1/3} } \int e^{\frac i \hbar (\frac{\sigma^{3}}{3}+\sigma(x-\hbar^{2/3}\omega))} \,d\sigma\,.
\end{equation}
Rescaling $\omega$ with $\alpha=\hbar^{2/3} \omega$ yields
\begin{equation}
  \label{eq:bis48}
  v_{\hbar}(t,x)= \frac 1 {(2\pi)^{3}} \sum_{N\in \Z} \int_{\R}\int_{\R^{2}} e^{\frac i \hbar  \tilde \Phi_{N}(t,x,a,\sigma,s,\alpha)}  \hbar^{-2} 
  \chi_{1}(\hbar^{-2/3}\alpha)\psi_{1}(h\hbar^{-1}\sqrt{1+\alpha}) \psi_{2}(\alpha/\gamma)  \, ds d\sigma d\alpha\,,
\end{equation}
where
\begin{equation}
  \label{eq:bis49}
    \tilde \Phi_{N}(t,x,a,\sigma,s,\alpha,\eta)=\frac{\sigma^{3}} 3+\sigma(x-\alpha)+\frac {s^{3}} 3+s(a-\alpha)-N\hbar L(\hbar^{-2/3} \alpha)+t \sqrt{1+\alpha}\,.
\end{equation}
At this point, it is worth noticing that, as $h\hbar^{-1}=\eta$ and $\eta\in [\frac 1 2,\frac 3 2]$, we may drop the $\psi_{1}$ localization in \eqref{eq:bis48} by support considerations (slightly changing any cut-off support if necessary). Therefore, 
\begin{equation}
      \label{eq:bis48bis}
  G^{+}_{h,\gamma}((t,x,y),(a,0,0))= \frac 1 {(2\pi h)^{3}} \sum_{N\in \Z} \int_{\R^{2}}\int_{\R^{2}} e^{i \frac \eta h  (y+\tilde \Phi_{N})}  \eta^{2} \psi({\eta}) 
  \chi_{1}(\hbar^{-2/3}\alpha) \psi_{2}(\alpha/\gamma) \, ds d\sigma d\alpha d\eta\,.
\end{equation}
\section{The parametrix regime in 2D}
\label{sec:parametrix-regime}

\subsection{Localizing waves for $\gamma >h^{2/3-\varepsilon}$}
\label{sec:local-waves}

In \cite{Annals} and for $a\gg h^{4/7}$, a parametrix was carefully constructed by gluing together waves that were mostly located between two consecutive reflections. With a finite number of waves overlapping each other, the supremum of the sum became the supremum of each wave on its own support. In our setting, it is convenient to replace $a$ by the localization parameter $\gamma$. Moreover, we replace the parametrix from \cite{Annals} by the (exact) sum we introduced in the previous section. 

Let $\Phi_{N,a}(t,x,y,\sigma,s,\alpha,\eta):=y+ \tilde \Phi_{N}(t,x,a,\sigma,s,\alpha,\eta)$ with $ \tilde \Phi_{N}$ defined in \eqref{eq:bis49}, 
\begin{equation}
  \label{eq:bis499}
    \Phi_{N,a}=y+\frac{\sigma^{3}} 3+\sigma(x-\alpha)+\frac {s^{3}} 3+s(a-\alpha)-N\hbar L(\hbar^{-2/3} \alpha)+t \sqrt{1+\alpha}\,.
\end{equation}
We obtain
\begin{equation}
  \label{eq:bis488bis}
    G^{+}_{h,\gamma}((t,x,y),(a,0,0))= \frac 1 {(2\pi h)^{3}} \sum_{N\in \Z} V_{N,\gamma}(t,x,y),
\end{equation}
where we have set
\begin{equation}\label{defVNgamma}
V_{N,\gamma}(t,x,y):=\int_{\R^{2}}\int_{\R^{2}} e^{i \frac \eta h  \Phi_{N,a}}  \eta^{2} \psi({\eta}) \chi_{1}(\hbar^{-2/3}\alpha) \psi_{2}(\alpha/\gamma) \, ds d\sigma  d\alpha d\eta\,.
\end{equation}

Observe that $\chi_{1}$ and $\psi_{2}$ induce $\hbar^{2/3}\lesssim \alpha \sim \gamma$, which we assume from now on.
\begin{lemma}\label{lemmeNtpetit}
  At fixed $|t|\lesssim 1$, the sum defining $G^{+}_{h,\gamma}$ is only significant for $|N|\lesssim |t|\gamma^{-1/2}$, $\gamma>a/2$ and $x<2\gamma$, e.g. $\sum_{\sqrt \gamma |N| \not\in O(t)\cup \{x>2\gamma\}\cup \{a<2\gamma\}} V_{N,\gamma}$ is $O(h^{\infty})$.
\end{lemma}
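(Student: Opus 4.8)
The statement is essentially a non-stationary phase argument applied to the oscillatory integral $V_{N,\gamma}$ in \eqref{defVNgamma}, exploiting the fact that when one of the three excluded conditions fails, the phase $\Phi_{N,a}$ has no critical point in the region of integration cut out by the amplitude, and moreover the gradient is bounded below by a fixed power of $h/\eta\sim\hbar$. The plan is to compute the partial derivatives of $\Phi_{N,a}$ in $\sigma$, $s$, $\alpha$ and $\eta$, and show that in each of the three excluded regimes at least one of these is $\gtrsim$ (a power of $\gamma$), so that repeated integration by parts in that variable produces arbitrarily many powers of $\hbar/\gamma^{?}$; since we are in the regime $\gamma>h^{2/3-\varepsilon}$ this is a negative power of $h$ raised to a small exponent, but each integration by parts is paid for only by derivatives of the (compactly supported, $\gamma$-localized) amplitude, which cost at most $\gamma^{-1}$ each — so one still wins $h^{\infty}$ after enough steps provided $\varepsilon$ is small; and in fact the cleanest route is to first handle $x>2\gamma$ and $a<2\gamma$ by the Airy decay already noted in the text (the factors $Ai(\hbar^{-2/3}x-\omega)$, $Ai(\hbar^{-2/3}a-\omega)$ with $\omega\sim\gamma\hbar^{-2/3}$ are $O(h^\infty)$ once $x-\alpha$, resp. $\alpha-a$, is $\gtrsim\gamma$), and then handle the $N$ range by genuine non-stationary phase in $\alpha$.

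Concretely: $\partial_\sigma\Phi_{N,a}=\sigma^2+x-\alpha$ and $\partial_s\Phi_{N,a}=s^2+a-\alpha$. Since $\alpha\sim\gamma$, if $x>2\gamma$ then $x-\alpha\gtrsim\gamma$ and, because $\sigma^2\geq0$, we get $|\partial_\sigma\Phi_{N,a}|\gtrsim\gamma$ on the whole support; integrating by parts in $\sigma$ repeatedly (noting $\sigma$ ranges over a compact set fixed by stationary-phase truncations in the amplitude, each $\sigma$-derivative of the symbol costing $O(1)$) gains a factor $(\hbar/\gamma)^{M}\lesssim h^{M/3+M\varepsilon/\ldots}$ — wait, more carefully, $(\hbar\gamma^{-1})^M$ and $\hbar\sim h$, $\gamma>h^{2/3-\varepsilon}$ gives $\hbar\gamma^{-1}<h^{1/3+\varepsilon}$, hence $h^\infty$. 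The case $a<2\gamma$ is symmetric using $\partial_s$. For the $N$ count, compute $\partial_\alpha\Phi_{N,a}=-\sigma-s-N\hbar\,\hbar^{-2/3}L'(\hbar^{-2/3}\alpha)+\frac{t}{2\sqrt{1+\alpha}}$. By Lemma \ref{lemL}, for the argument $\omega=\hbar^{-2/3}\alpha\sim\gamma\hbar^{-2/3}\gg1$ we have $L'(\omega)=2\omega^{1/2}+O(\omega^{-1/2})$, so $\hbar^{1/3}L'(\hbar^{-2/3}\alpha)=2\sqrt\alpha+O(\hbar^{2/3}/\sqrt\alpha)\sim2\sqrt\gamma$; meanwhile on the support $\sigma,s$ are $O(1)$ (indeed, once stationary in $\sigma,s$ one has $\sigma^2\sim\alpha-x$, $s^2\sim\alpha-a$, both $\lesssim\gamma$, so $\sigma,s=O(\sqrt\gamma)$) and $t=O(1)$. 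Hence $\partial_\alpha\Phi_{N,a}=-2N\sqrt\alpha+O(\sqrt\gamma)+O(1)$; if $|N|\sqrt\gamma\gg|t|$, i.e. $|N|\sqrt\alpha\gg1$ and $\gg$ everything else, then $|\partial_\alpha\Phi_{N,a}|\gtrsim|N|\sqrt\gamma$, and integrating by parts in $\alpha$ $M$ times — each $\alpha$-derivative of $\psi_2(\alpha/\gamma)$ and of $L$ costing $\gamma^{-1}$, and each lower bound improving — produces $(\hbar/(|N|\sqrt\gamma\cdot\gamma))^M\lesssim(\hbar\gamma^{-3/2})^M|N|^{-M}$, summable in $N$ and $O(h^\infty)$ since $\gamma^{3/2}>h^{1-3\varepsilon/2}$ so $\hbar\gamma^{-3/2}<h^{3\varepsilon/2}$.

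The main obstacle is bookkeeping the $\sigma,s$ integrations: a priori these range over all of $\R$, so before doing non-stationary phase in $\alpha$ one must first perform stationary phase (or a non-stationary phase cutoff) in $(\sigma,s)$ to confine them to $|\sigma|\lesssim\sqrt\gamma$, $|s|\lesssim\sqrt\gamma$ (outside of which $\partial_\sigma$ or $\partial_s$ is large and one integrates by parts there), and then carefully track how the resulting $\alpha$-dependent amplitude's derivatives behave — the critical points $\sigma=\pm\sqrt{\alpha-x}$ depend on $\alpha$, so $\alpha$-derivatives of the post-stationary-phase symbol pick up factors $\gamma^{-1/2}\cdot(\partial_\alpha\sqrt{\alpha-x})\sim\gamma^{-1}$ near $x=\alpha$, consistent with the estimate above but requiring the degenerate (Airy-type) stationary phase in $\sigma$ near $\alpha=x$ rather than ordinary stationary phase. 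This is exactly the mechanism by which the factors $Ai(\hbar^{-2/3}x-\omega)$ reappear, so the cleanest exposition is probably to keep the Airy functions (work from \eqref{eq:ter46}/the pre-$\sigma$-integration form) for the $x>2\gamma$ and $a<2\gamma$ claims, using the standard super-exponential decay $|Ai(z)|\lesssim e^{-cz^{3/2}}$ for $z\to+\infty$ with $z=\hbar^{-2/3}(x-\alpha)\gtrsim\hbar^{-2/3}\gamma\gtrsim h^{-2\varepsilon'}\to\infty$; and to use the integral form \eqref{defVNgamma} with the $\alpha$-integration by parts only for the $N$-range claim, where the phase in $\alpha$ is genuinely non-degenerate thanks to the $-2N\sqrt\alpha$ term dominating. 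I would carry out the three claims in the order: first $x<2\gamma$ (Airy decay), then $a>2\gamma$ hence $\gamma>a/2$ (Airy decay), then finally $|N|\lesssim|t|\gamma^{-1/2}$ (non-stationary phase in $\alpha$), each time absorbing the finitely many "bad" $(\sigma,s)$ contributions by preliminary integration by parts in those variables.
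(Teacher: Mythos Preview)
Your proposal is correct and follows essentially the same approach as the paper: compute $\partial_\sigma\Phi_{N,a}=\sigma^2+x-\alpha$, $\partial_s\Phi_{N,a}=s^2+a-\alpha$, and $\partial_\alpha\Phi_{N,a}=\frac{t}{2\sqrt{1+\alpha}}-\sigma-s-2N\sqrt\alpha(1-\tfrac34 B'(\cdot))$, then use non-stationary phase in $\sigma$ (resp.\ $s$) to first confine $|\sigma|,|s|\lesssim\sqrt\gamma$ and to kill the regimes $x>2\gamma$, $a>2\gamma$, and finally non-stationary phase in $\alpha$ to kill $|N|\sqrt\gamma\gg|t|$. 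Your alternative suggestion of reverting to the Airy factors $Ai(\hbar^{-2/3}x-\omega)$, $Ai(\hbar^{-2/3}a-\omega)$ and invoking their super-exponential decay for positive argument is equivalent (that Airy factor \emph{is} the $\sigma$- or $s$-integral) and arguably cleaner, but the paper just does the integration by parts directly; your bookkeeping of the gain $(\hbar\gamma^{-3/2})^M|N|^{-M}$ per integration by parts in $\alpha$ is exactly right.
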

We focus on the variables $\alpha$, $s$ and $\sigma$. Using the asymptotic expansion for $L(\omega)$, we find
$$
\partial_{\alpha}\Phi_{N,a}=\frac t {2\sqrt{1+\alpha}}-\sigma-s-2N\alpha^{1/2}(1-\frac 3 4 B'(\alpha^{3/2}/\hbar))
$$
where the $B'$ term is small compared to $1$, provided $\alpha^{3/2}/\hbar$ is sufficiently large ($>2$ is already enough). On the other hand, we have $\partial_{s}\Phi_{N,a}=s^{2}+a-\alpha$ and $\partial_{\sigma}\Phi_{N,a}=\sigma^{2}+x-\alpha$.
If either $|s|\geq 3 N^{\varepsilon} \gamma^{1/2}$ or $|\sigma|\geq 3 N^{\varepsilon} \gamma^{1/2}$, non-stationary phase in one of these variables provides both enough decay to sum in $N$ and an $O(h^{\infty})$ contribution.
If $|s|,|\sigma|<3 N^{\varepsilon}\gamma^{1/2}$, then, for $|N|\geq 1$, $\Phi_{N,a}$ will not be stationary in $\alpha$ if $|t|\geq (3N+6N^{\varepsilon})\gamma^{1/2}$ and non-stationary phase in $\alpha$ provides enough decay to sum in $N$ and an $O(h^{\infty})$ contribution.
Hence, the only non trivial contribution comes from $|N|\lesssim |t|\gamma^{-1/2}$. Now, if $a-\alpha>0$, the phase $\Phi_{N,a}$ cannot be stationary in $s$. As $\alpha<2\gamma$, we get the desired result. Observe that, furthermore, by the same reasoning in $\sigma$, the only significant contribution of $G^{+}_{h,\gamma}$ is restricted to $x<2\gamma$. \qed

At fixed $t\geq C\gamma^{1/2}$, we can further bound the cardinal of those $N$ that contribute significantly among the $C|t|\gamma^{-1/2}$ which are left. Observe that our previous computation tells us that for $t=0$, only the $N=0$ term may contribute, and from Lemma \ref{lemmeNtpetit},  we can and will restrict ourselves to $|t|\geq C\gamma^{1/2}$.

We need to introduce some notations : for a given space-time location $(x,y,t)$, let $\mathcal{N}(x,y,t)$ the set of $N$ with significant contributions in \eqref{eq:bis488bis} (e.g. for which there exists at least a stationary point for the phase in all variables),
\[
\mathcal{N}(t,x,y)=\{N\in\mathbb{Z}, (\exists)(\sigma,s,\alpha,\eta) \text{ such that } \nabla_{(\sigma,s,\alpha,\eta)}\Phi_{N,a}(t,x,y,\sigma,s,\alpha,\eta)=0 \}
\]
 Call $\mathcal{N}_{1}(t,x,y)$ the set of $N$ such that $N\in \mathcal{N}(t',x',y')$ for some $(t',x',y')$ such that $|t'-t|\leq \sqrt{\gamma}$, $|x-x'|< \gamma$ and $|y'+t'\sqrt{1+\gamma}-y-t\sqrt{1+\gamma}|<\gamma^{3/2}$,
 \[
 \mathcal{N}_1(t,x,y)=\cup_{\{(t',x',y')||t'-t|\leq \sqrt{\gamma}, |x-x'|< \gamma, |y'+t'\sqrt{1+\gamma}-y-t\sqrt{1+\gamma}|<\gamma^{3/2}\}}\mathcal{N}(t',x',y').
 \]
\begin{prop}\label{propcardN} The set $\mathcal{N}_{1}(t,x,y)$ is bounded, with optimal bound
\begin{equation}
  \label{eq:113}
  \left| \mathcal{N}_{1}(t,x,y)\right| \lesssim O(1)+{|t|}{\gamma^{-1/2} (\gamma^{3}/h^{2})^{-1}}\,,
\end{equation}
Moreover, the contribution of the sum over $N\notin \mathcal{N}_1(t,x,y)$ in \eqref{eq:bis488bis} is $O(h^{\infty})$.
\end{prop}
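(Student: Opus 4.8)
\medskip

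\noindent\textbf{Plan of proof.}

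\emph{Reduction and description of the critical set.} By Lemma~\ref{lemmeNtpetit} we may assume $|N|\lesssim |t|\gamma^{-1/2}$, and the cut-offs $\chi_1,\psi_2$ in \eqref{defVNgamma} force $a,x<2\gamma$ and $\alpha\sim\gamma$. The stationarity equations for the integral \eqref{defVNgamma}, i.e. $\nabla_{(\sigma,s,\alpha,\eta)}\bigl(\tfrac{\eta}{h}\Phi_{N,a}\bigr)=0$, read
\[
\sigma^{2}=\alpha-x,\qquad s^{2}=\alpha-a,\qquad \frac{t}{2\sqrt{1+\alpha}}=\sigma+s+2N\alpha^{1/2}\Bigl(1-\tfrac34 B'(\alpha^{3/2}\eta/h)\Bigr),
\]
and, after eliminating the first three, the remaining ($\eta$-)equation becomes
\[
y=\tfrac23\sigma^{3}+\tfrac23 s^{3}+\tfrac{4N}{3}\alpha^{3/2}-t\sqrt{1+\alpha}-N\alpha^{3/2}B'(\alpha^{3/2}\eta/h).
\]
The structural point is that, after the scaling of \eqref{eq:bis48}, the leading term $\tfrac43\omega^{3/2}$ in the expansion \eqref{eq:propL} of $L$ is \emph{linear in $\eta$} and hence drops out of the phase: $\eta$ survives in these equations only through $B(\alpha^{3/2}\eta/h)$ and its derivatives, and $B'(u)=O(u^{-2})$, $B''(u)=O(u^{-3})$ give to this dependence the relative weight $(\alpha^{3/2}/h)^{-2}\sim h^{2}/\gamma^{3}\le1$. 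Solving $\sigma,s$ from the first two equations ($O(1)$ sign branches), $\alpha=\alpha^{\ast}(\eta)$ from the third, and reading off $y$ from the last, one sees that $N\in\mathcal N(t',x',y')$ exactly when $y'$ lies, for some sign branch, in the interval $J_N(t',x')$ swept out as $\eta$ runs over the support of $\psi$. In the sheared variable $\tilde y=y+t\sqrt{1+\gamma}$ this is an interval of half-width $w_N$ centred at
\[
\tilde c_N(t',x')=\tfrac23\sigma^{\ast3}+\tfrac23 s^{\ast3}+\tfrac{4N}{3}\alpha^{\ast3/2}+t'\bigl(\sqrt{1+\gamma}-\sqrt{1+\alpha^{\ast}}\bigr)+O\bigl(|N|h^{2}/\gamma^{3/2}\bigr).
\]

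\emph{The two quantitative inputs.} Differentiating the defining relations of $J_N$ in $\eta$ and following the $B$-dependence through $\alpha^{\ast}(\eta)$, the dominant variation is the explicit one in $N\alpha^{3/2}B'(\alpha^{3/2}\eta/h)$, whence $w_N\lesssim |N|\,h^{2}/\gamma^{3/2}\lesssim |t|\,h^{2}/\gamma^{2}$. Differentiating $\tilde c_N$ in $N$, using $\frac{d\alpha^{\ast}}{dN}=-2\alpha^{\ast}/N+O(\alpha^{\ast}/N^{2})$ (from the third equation) and observing that all the $\sigma^{\ast}+s^{\ast}$ contributions cancel, one gets $\partial_N\tilde c_N=\tfrac43\alpha^{\ast3/2}\bigl(1+o(1)\bigr)$, which stays $\sim\gamma^{3/2}$ with a fixed sign on each $N$-block with $|N|\ge N_1$ for an absolute constant $N_1$; the $O(N_1)$ exceptional small $|N|$ only feed the $O(1)$ term in \eqref{eq:113}. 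The same computation yields $|\partial_{t'}\tilde c_N|\lesssim\gamma$ and $|\partial_{x'}\tilde c_N|\lesssim\gamma^{1/2}$, so over the footprint $|t'-t|\le\sqrt\gamma$, $|x'-x|<\gamma$ the centre $\tilde c_N$ moves by at most $C\gamma^{3/2}$.

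\emph{Counting.} Putting these together, $N\in\mathcal N_1(t,x,y)$ forces $|\tilde y-\tilde c_N(t,x)|\le C'(\gamma^{3/2}+w_N)$. If $N_0$ and $N_0+K$ (same sign, $|N_0|\ge N_1$) both lie in $\mathcal N_1(t,x,y)$, then $|\tilde c_{N_0}-\tilde c_{N_0+K}|\le 2C'(\gamma^{3/2}+w_{\max})$ with $w_{\max}\lesssim|t|h^{2}/\gamma^{2}$, while the mean value theorem and $\partial_N\tilde c_N\sim\gamma^{3/2}$ of fixed sign give $|\tilde c_{N_0}-\tilde c_{N_0+K}|\gtrsim K\gamma^{3/2}$; hence $K\lesssim 1+w_{\max}/\gamma^{3/2}\lesssim 1+|t|h^{2}/\gamma^{7/2}$. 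Adding the two blocks and the $O(1)$ exceptions, and using $|t|h^{2}/\gamma^{7/2}=|t|\gamma^{-1/2}(\gamma^{3}/h^{2})^{-1}$, gives \eqref{eq:113}. This is sharp: when $\gamma\lesssim(|t|h^{2})^{2/7}$ the largest admissible $|N|$ satisfy $w_N\gtrsim\gamma^{3/2}$, so the intervals $J_N$ overlap and $\tilde y$ can be chosen to lie in $J_N$ for $\sim|t|h^{2}/\gamma^{7/2}$ consecutive values of $N$.

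\emph{The tail, and the main difficulty.} For $N\notin\mathcal N_1(t,x,y)$, by construction $\tfrac{\eta}{h}\Phi_{N,a}$ has no stationary point anywhere over the tube, which — because $(\sqrt\gamma,\gamma,\gamma^{3/2})$ are exactly the Planck scales of the phase after rescaling $\sigma,s\sim\gamma^{1/2}$, $\alpha\sim\gamma$ — means that one of $\partial_\sigma,\partial_s,\partial_\alpha,\partial_\eta$ of $\tfrac{\eta}{h}\Phi_{N,a}$ is bounded below on the support of the amplitude in \eqref{defVNgamma} by a power of $h$ beating the $O(h^{-1})$ number of admissible $N$; repeated non-stationary phase in that variable, exactly as in the proof of Lemma~\ref{lemmeNtpetit}, then produces an $O(h^{\infty})$ contribution to \eqref{eq:bis488bis}. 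I expect the second step to be the main obstacle: one must verify the exact cancellation keeping $\partial_N\tilde c_N$ comparable to $\gamma^{3/2}$ with a fixed sign — a spurious defect $\sim\gamma^{3/2}/|N|$ of the wrong sign would make the centres $\tilde c_N$ accumulate and would break \eqref{eq:113} — and, simultaneously, keep sharp enough track of the $B$-generated $\eta$-dependence to secure $w_N\lesssim|N|h^{2}/\gamma^{3/2}$; both rest on the refined asymptotics of $L$ in Lemma~\ref{lemL}.
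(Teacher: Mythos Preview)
Your approach to the cardinality bound \eqref{eq:113} is essentially the differential form of the paper's difference argument, and it is correct. The paper fixes $N_1,N_2\in\mathcal N_1$, subtracts the two instances of \eqref{critA}, and controls the resulting $T_2|A_2-A_1|$ term via the identity \eqref{critYT}, which eliminates both $N$ and the $B'$ contribution. Your computation of $\partial_N\tilde c_N$ is exactly the infinitesimal version of this: the cancellation you find, where the bracket $\sigma+s+2N\alpha^{1/2}-\tfrac{t}{2\sqrt{1+\alpha}}$ collapses to $\tfrac32 N\alpha^{1/2}B'$ via the third stationary equation, is \eqref{critYT} differentiated in $N$, and gives the same $\tfrac43\alpha^{3/2}$ leading term. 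Your identification of $w_N\lesssim |N|h^2/\gamma^{3/2}$ matches the paper's observation that the $B'$-difference in \eqref{difN1N2} is $\sim(N_1+N_2)/\lambda_\gamma^2$ and is sharp for the same reason (no control on $|\eta_1-\eta_2|$). So the counting argument is sound and morally identical to the paper's; the paper's organisation via $F_\gamma(A)$ in \eqref{critYsurT} is perhaps more robust for small $|N|$, where your treatment of $\tilde c_N$ as differentiable in $N$ is formal.

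The $O(h^\infty)$ tail, however, has a genuine gap. You assert that $N\notin\mathcal N_1(t,x,y)$ forces one of the partial derivatives of the phase to be bounded below at $(t,x,y)$, invoking only that ``$(\sqrt\gamma,\gamma,\gamma^{3/2})$ are the Planck scales''. But absence of a critical point over the tube does not by itself yield a \emph{quantitative} lower bound on $|\nabla\Psi_{N,a,\gamma}(T,X,Y,\cdot)|$ at the centre: one needs to convert the geometric statement ``the integral curve $(\tilde T_\varepsilon,\tilde X_\varepsilon,\tilde Y_\varepsilon)(\Sigma,A,\eta)$ misses the unit ball $B_1(T,X,Y)$'' into the analytic statement ``$|\nabla\Psi^\varepsilon|\ge 1/10$''. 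The paper does this by the two-sided comparison \eqref{TXYbornderiv}, which shows that $|\nabla_{(\Sigma,A,\eta)}\Psi^\varepsilon|$ is uniformly equivalent to the distance $|(T,X,Y)-(\tilde T_\varepsilon,\tilde X_\varepsilon,\tilde Y_\varepsilon)|$; this is the missing step in your argument. In addition, you do not address the case $a\sim\gamma$, where the stationary phase in $S$ can degenerate (the critical points $S_\pm=\pm\sqrt{A-a/\gamma}$ may coalesce near $A=1$); the paper handles this by writing the $S$-integral as an Airy function and splitting $Ai=\chi_0 Ai+(1-\chi_0)Ai$, treating the tangential piece with phase $\Psi^0_{N,a}$ separately.
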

\begin{rmq}
This result generalizes \cite[Lemma 2.17, Lemma 2.18]{Annals}: there,  $|\mathcal{N}_1(t,x,y)|$ is bounded by an absolute constant $N_{0}$ and such that $\mathcal{N}(t,x,y)\subset [1,t/(2\sqrt{a})+N_0]$. In fact, when $a\sim \gamma \gg h^{4/7}$ one can easily see that $\frac{|t|}{\gamma^{1/2} (\gamma^{3}/h^{2})}=O(1)$ for bounded $t$.
\end{rmq}
\begin{rmq}
Here our parametrix is a sum over reflected waves for all $\gamma \gg h^{2/3}$; while for $a\gg h^{4/7}$ \cite{Annals} provides a different (and less straightforward) construction. When $\gamma \lesssim h^{4/7}$, Proposition \ref{propcardN} is crucial for dispersion, providing a sharp bound on the (large) number of overlapping waves.
\end{rmq}
\begin{proof}(of Proposition \ref{propcardN})
We first re-scale variables as follows
\begin{equation}\label{txyrescale}
 t=\sqrt{\gamma}T,\quad x=\gamma X, \quad y+t\sqrt{1+\gamma}=\gamma^{3/2}Y.
\end{equation}
Notice that $y$ behaves like $-t$  when the phase $\eta\Phi_{N,a}$ is stationary in $\eta$, implying that $y+t\sqrt{1+\gamma}$ should be small near significant contributions. The relevance of the $\gamma^{3/2}$ factor in rescaling will make itself clear later. On the support of $\psi_2(\alpha/\gamma)$ we have $\alpha\sim \gamma$, therefore we also re-scale $\alpha=\gamma A$: then $A\sim 1$ on the support of $\psi_2(A)$. Since the phase $\Phi_{N,a}$ is stationary in $s,\sigma$ at $s^2+a=\alpha$, $\sigma^2+x=\alpha$, it follows that we may restrict $s$ and $\sigma$ to $|s|,|\sigma|<3\gamma^{1/2}$, otherwise non-stationary phase in either variables provides an $O(h^{\infty})$ contribution. We then also let $s=\sqrt{\gamma}S$, $\sigma=\sqrt{\gamma}\Sigma$, where $|S|,|\Sigma|<3 $ are bounded.

Recall that $\gamma<1/100$, $|N|>0$ and $|t|\gtrsim\gamma^{1/2}$. the parameter $h$ is intended to be small and we define a large parameter $\lambda_{\gamma}=\gamma^{3/2}/h$ (hence restricting $\gamma>h^{2/3-\varepsilon}$.) Set
\begin{multline}\label{PhiNagammadef}
  \Psi_{N,a,\gamma}(T,X,Y,\Sigma,S,A,\eta)=\eta\left(Y+\Sigma^3/3+\Sigma(X-A)+S^3/3+S(\frac{a}{\gamma}-A)\right.\\
    {} +T\frac{(A-1)}{\sqrt{1+\gamma A}+\sqrt{1+\gamma}}-\left.\frac 43 NA^{3/2}\right)+\frac{N}{\lambda_{\gamma}}B(\eta\lambda_{\gamma}A^{3/2}).
\end{multline}
then $\gamma^{3/2}\Psi_{N,a,\gamma}(T,X,Y,\Sigma,S,A,\eta)=\eta \Phi_{N,a}(\sqrt{\gamma}T,\gamma X,\gamma^{3/2}Y-\sqrt{\gamma}\sqrt{1+\gamma}T,\sqrt{\gamma}\Sigma,\sqrt{\gamma}S,\gamma A)$, and, in the new variables, the phase function in \eqref{defVNgamma} becomes $\lambda_{\gamma}\Psi_{N,a,\gamma}$. When $\gamma\sim a$ we write $\Psi_{N,a,a}$ and $\lambda_{\gamma}=\lambda=a^{3/2}/h$. The critical points of $\Psi_{N,a,\gamma}$ with respect to $\Sigma,S, A,\eta$ are such that
\begin{gather}\label{critSig}
  \Sigma^2+X=A,\quad S^2+a/\gamma=A\\
\label{critA}
T=2\sqrt{1+\gamma A}\Big(\Sigma+S+2N\sqrt{A}(1-\frac 34 B'(\eta\lambda_{\gamma}A^{3/2}))\Big)\\
\begin{multlined}
\label{criteta}  
Y+T\frac{(A-1)}{\sqrt{1+\gamma A}+\sqrt{1+\gamma}}+\Sigma^3/3+\Sigma(X-A)+S^3/3+S(\frac{a}{\gamma}-A)\\
{}=\frac 43 NA^{3/2}(1-\frac 34 B'(\eta\lambda_{\gamma}A^{3/2})).
\end{multlined}
\end{gather}
Introducing the term $2N\sqrt{A}(1-\frac 34 B'(\eta\lambda_{\gamma}A^{3/2}))$ from \eqref{critA} in \eqref{criteta} provides a relation between $Y$ and $T$ that doesn't involve $N$ nor $B'$ as follows:
\begin{equation}\label{critYT}
  Y+T\frac{(A-1)}{\sqrt{1+\gamma A}+\sqrt{1+\gamma}}+\frac{S^{3}+\Sigma^3} 3+\Sigma(X-A)+S(\frac{a}{\gamma}-A)
=  \frac 23 A\Big(\frac{T}{2\sqrt{1+\gamma A}}-(\Sigma+S)\Big).
\end{equation}
We first estimate the cardinal of $\mathcal{N}_1(t,x,y)$, with $t$ sufficiently large: let $j\in \{1,2\}$ and $N_j\in \mathcal{N}_1(t,x,y)$  be any two elements of $\mathcal{N}_1(t,x,y)$. Then there exist $(t_j,x_j,y_j)$ such that $N_j\in\mathcal{N}(t_j,x_j,y_j)$; writing $t_j=\sqrt{\gamma}T_j$, $x_j=\gamma X_j$, $y_j+t_j\sqrt{1+\gamma}=\gamma^{3/2}Y_j$ and rescaling $(t,x,y)$ as in \eqref{txyrescale}, we have
$|T_j-T|\leq 1$, $|X_j-X|\leq 1$, $|Y_j-Y|\leq 1$. We now prove \eqref{eq:113}. From $N_j\in\mathcal{N}(t_j,x_j,y_j)$, there exist $\Sigma_j,A_j,\eta_j,S_j$ such that \eqref{critSig}, \eqref{critA}, \eqref{criteta} holds with $T$,$X$,$Y$,$\Sigma$,$S$,$A$,$\eta$ replaced by $T_j$,$X_j$,$Y_j$,$\Sigma_j$,$S_j$,$A_j$,$\eta_j$, respectively, and we also have $S_j^2+\frac{a}{\gamma}=A_j$. We re-write \eqref{critA} as follows
\begin{equation}\label{critAj}
2N_j\sqrt{A_j}(1-\frac 34 B'(\eta_j\lambda_{\gamma}A^{3/2}_j))=\frac{T_j}{2\sqrt{1+\gamma A_j}}-(\Sigma_j+S_j).
\end{equation}
Multiplying \eqref{critAj} by $\sqrt{A_{j'}}$, where $j'\neq j$, taking the difference and dividing by $\sqrt{A_1A_2}$ yields
\begin{multline}\label{difN1N2}
  2(N_1-N_2)=\frac 32 \Big(N_1B'(\eta_1\lambda_{\gamma}A_1^{3/2})-N_2B'(\eta_2\lambda_{\gamma}A_2^{3/2})\Big)
  -\frac{\Sigma_1+S_1}{\sqrt{A_1}}+\frac{\Sigma_2+S_2}{\sqrt{A_2}}\\
{}+\frac{T_1}{2\sqrt{A_1}\sqrt{1+\gamma A_1}}-\frac{T_2}{2\sqrt{A_2}\sqrt{1+\gamma A_2}}.
\end{multline}
We need to estimate $|N_1-N_2|$. Using that $\Sigma_j,S_j<3$, $A_j\sim 1$, it follows that $\frac{\Sigma_j+S_j}{\sqrt{A_j}}=O(1)$, for $j\in \{1,2\}$. The first difference, involving $B'$, in the right hand side of \eqref{difN1N2} behaves like $(N_1+N_2)/\lambda_{\gamma}^2$:  use $B'(\eta\lambda A^{3/2})\sim -\frac{b_1}{\eta^2\lambda^2A^3}$ and $\eta,A\sim 1$. We cannot take advantage of the difference itself: each term  $N_jB'(\eta_j\lambda_{\gamma}A_j^{3/2})$ corresponds to some $\eta_j,A_j$ (close to $1$) and the difference $\frac{1}{\eta_1A_1^{3/2}}-\frac{1}{\eta_2A_2^{3/2}}$ is bounded but has no reason to be very small (the difference between $A_j$ turns out to be $O(1/T)$, but we don't have any information about the difference between $\eta_j$ which is simply bounded by a small constant on the support of $\psi$). Therefore the bound $(N_1+N_2)/\lambda_{\gamma}^2$ for the terms involving $B'$ in \eqref{difN1N2} is sharp. Since $N_j\sim T_j$, and $|T_j-T|\leq 1$, it follows that this contribution is $\sim |T|/\lambda^2_{\gamma}$. We are reduced to proving that the difference in the second line of \eqref{difN1N2} is $O(1)$. Write
\begin{multline}\label{estimdifT1T2}
\Big|\frac{T_1}{\sqrt{A_1}\sqrt{1+\gamma A_1}}-\frac{T_2}{\sqrt{A_2}\sqrt{1+\gamma A_2}}\Big|\leq \frac{|T_1-T_2|}{\sqrt{A_1}\sqrt{1+\gamma A_1}} 
+\frac{T_2}{\sqrt{A_{1}A_{2}}}\Big|\frac{\sqrt{A_2}}{\sqrt{1+\gamma A_1}}-\frac{\sqrt{A_1}}{\sqrt{1+\gamma A_2}}\Big|\\
\leq \frac{2}{\sqrt{A_1}\sqrt{1+\gamma A_1}}+\frac{T_2|A_2-A_1|(1+\gamma(A_1+A_2))}{\sqrt{A_1A_2(1+\gamma A_1)(1+\gamma A_2)}(\sqrt{A_1(1+\gamma A_1)}+\sqrt{A_2(1+\gamma A_2)})}\\
\leq C(1+T_2|A_2-A_1|),
\end{multline}
for some absolute constant $C$ and we only used $|T_2-T_1|\leq 2$ and $A_j\sim 1$. For $T$ bounded we can conclude since $|T_2-T|\leq 1$. We are therefore reduced to bound $T_2|A_2-A_1|$ when $T_2$ is sufficiently large. In order to do so, we need the $Y$ variable. We use \eqref{critYT} with $T,X,Y,\Sigma,S,A,\eta$ replaced by $T_j,X_j,Y_j,\Sigma_j,S_j,A_j,\eta_j$, $j\in\{1,2\}$ to eliminate the terms containing $N$ and $B'$ as follows:
\begin{multline}
Y_j+\Sigma^3_j/3+\Sigma_j(X_j-A_j)+S^3_j/3+S_j(\frac{a}{\gamma}-A_j)+\frac 23 A_j(\Sigma_j+S_j)\\
=T_j\Big(\frac{A_j}{3\sqrt{1+\gamma A_j}}-\frac{(A_j-1)}{\sqrt{1+\gamma A_j}+\sqrt{1+\gamma}}\Big).
\end{multline}
For $|T|$ sufficiently large we also have $|T_j|$ large and we divide by $T_j$ in order to estimate the difference $A_1-A_2$ in terms of $Y_1/T_1-Y_2/T_2$:
\begin{equation}\label{critYsurT}
\frac{Y_j}{T_j}+O(\frac{A_j^{3/2}}{T_j})=F_{\gamma}(A_j), \quad F_{\gamma}(A)=\frac{A}{3\sqrt{1+\gamma A}}-\frac{(A-1)}{\sqrt{1+\gamma A}+\sqrt{1+\gamma}}.
\end{equation}
Taking the difference of \eqref{critYsurT} with itself for $j=1,2$ gives
\[
(\frac{Y_2}{T_2}-\frac{Y_1}{T_1})+O(\frac{1}{T_1})+O(\frac{1}{T_2})=(A_2-A_1)\int_0^1\partial_A F_{\gamma}(A_1+o(A_2-A_1))do.
\]
One may check that, as $1/4<A<4$ and $\gamma\ll 1$, $F_{\gamma}$ is a decreasing function of $A$ and $-\partial_AF_{\gamma}(A)\geq 1/6-(1+A) \gamma$; it follows that we bound $A_2-A_1$ with $A_2-A_1\lesssim (\frac{Y_2}{T_2}-\frac{Y_1}{T_1})+O(\frac{1}{T_1},\frac{1}{T_2})$,

and replacing the last expression in the last line of \eqref{estimdifT1T2} yields
\begin{align}
T_2|A_2-A_1| & \lesssim T_2\Big|\frac{Y_2}{T_2}-\frac{Y_1}{T_1}\Big|+O(T_2/T_1)+O(1)\\
 & \lesssim |Y_2-Y_1|+Y_1|1-T_2/T_1|+O(T_2/T_1)+O(1)=O(1),\nonumber
\end{align}
where we have used $|Y_1-Y_2|\leq 2$, $|T_1-T_2|\leq 2$ and that $Y_1/T_1$ is bounded (which can easily be seen from \eqref{critYsurT}). This ends the proof of \eqref{eq:113}.\qed

We next proceed with proving the contribution outside of $\mathcal{N}_{1}(t,x,y)$ to be $O(h^{\infty})$. Consider first $2a\lesssim \gamma$, then critical points in $S$ are such that $S^2+a/\gamma=A$, with $A$ near $1$, therefore $S_{\pm}=\pm \sqrt{A-a/\gamma}$. Then $\Psi_{N,a,\gamma}\in\{\Psi^{\varepsilon}_{N,a,\gamma}\}$, where
\begin{multline}\label{Psivarepsdfn}
\Psi^{\varepsilon}_{N,a,\gamma}(T,X,Y,\Sigma,A,\eta):=\eta\Big(Y+\Sigma^3/3+\Sigma(X-A)+\varepsilon \frac 23(A-\frac{a}{\gamma})^{3/2}\\
+T\frac{(A-1)}{\sqrt{1+\gamma A}+\sqrt{1+\gamma}}-\frac 43 NA^{3/2}\Big)+\frac{N}{\lambda_{\gamma}}B(\eta\lambda_{\gamma}A^{3/2}).
\end{multline}
Denote $\mathcal{N}^{\varepsilon}(T,X,Y):=\{(N\in\mathbb{Z}, \exists (\Sigma,A,\eta) \text{ such that } \nabla_{(\Sigma,A,\eta)}\Psi^{\varepsilon}_{N,a,\gamma}(T,X,Y,\Sigma,A,\eta)=0\}$, then for $(t,x,y)=(\sqrt{\gamma}T,\gamma X,\gamma^{3/2}Y-\sqrt{\gamma}\sqrt{1+\gamma}T)$, we have $\mathcal{N}(t,x,y)=\cap_{\varepsilon=\pm}\mathcal{N}^{\varepsilon}(T,X,Y)$. Indeed, if $N\in \mathcal{N}(t,x,y)$, then there exists a critical point $\Sigma_c,S_c,A_c,\eta_c$ for the phase $\Psi_{N,a,\gamma}$ with $S_c^2=A_c-a/\gamma$, hence $S_c\in \{S_{\pm}\}$. It follows that $\Sigma_c,A_c,\eta_c$ is a critical point for both $\Psi^{\pm}_{N,a,\gamma}$. On the other hand, if $\Sigma_{\pm},A_{\pm},\eta_{\pm}$ is a critical point for $\Psi^{\pm}_{N,a,\gamma}$, then $(\Sigma_{\pm},\pm \sqrt{A_{\pm}-a/\gamma}, A_{\pm},\eta_{\pm})$ are critical point for $\Psi_{N,a,\gamma}$. We have to prove that
\begin{equation}\label{N1out}
\sum_{N\notin \mathcal{N}_1}W_{N,\gamma}(T,X,Y)=O(h^{\infty}),
\end{equation}
where we define $W_{N,\gamma}(T,X,Y):=V_{N,\gamma}(t,x,y)$.
For $\varepsilon\in \{\pm\}$, set
$$
\mathcal{N}^{\varepsilon}_1(T,X,Y)=\cup_{(T',X',Y')\in B_1(T,X,Y)}\mathcal{N}^{\varepsilon}(T',X',Y')\,,
$$
we have $\mathcal{N}_1=\cap_{\pm}\mathcal{N}^{\pm}_1$ and therefore, $(\mathcal{N}_1)^c=\cup_{\varepsilon\in \{\pm 1\}}(\mathcal{N}^{\varepsilon}_1)^c$, where the notation $(\mathcal{N}_1)^c$ denotes the complement of $\mathcal{N}_{1}$.
Hence \eqref{N1out} follows from proving that
\begin{equation}\label{WNdecaytoprove}
\forall \varepsilon\in\{\pm\},\,\,\,\,\,\sum_{N\notin \cup_{\pm}\mathcal{N}^{\pm}_1}W^{\varepsilon}_{N,\gamma}(T,X,Y)=O(h^{\infty})\,,
\end{equation}
where $W^{\pm }_{N,\gamma}(T,X,Y)$ have phase functions $\Psi^{\pm}_{N,a,\gamma}$ and symbols are obtained from the symbol of $W_{N,\gamma}$ multiplied by the symbol of $A_{\pm}((\eta\lambda_{\gamma})^{2/3}(A-\frac{a}{\gamma}))$. Go back to the system \eqref{critSig}, \eqref{critA}, \eqref{criteta} and define the integral curves corresponding to $\Psi^{\varepsilon}_{N,a,\gamma}$ as follows
\begin{gather}\nonumber
\tilde X_{\varepsilon}(\Sigma,A):=A-\Sigma^2,\\
\nonumber
\tilde T_{\varepsilon}(\Sigma, A,\eta):=2\sqrt{1+\gamma A}\Big(\Sigma+\varepsilon(A-\frac{a}{\gamma})^{1/2}+2N\sqrt{A}(1-\frac 34B'(\eta\lambda_{\gamma}A^{3/2}))\Big),\\
\nonumber
\tilde Y_{\varepsilon}(\Sigma,A,\eta):=-2\sqrt{1+\gamma A}\frac{(\Sigma+\varepsilon(A-\frac{a}{\gamma})^{1/2})(A-1)}{\sqrt{1+\gamma A}+\sqrt{1+\gamma}}+\frac 23\Sigma^3-\varepsilon(A-\frac{a}{\gamma})^{3/2}\\
\nonumber
\quad\quad\quad\quad\quad\quad\quad{}+4N\sqrt{A}\Big(1-\frac 34 B'(\eta\lambda_{\gamma}A^{3/2})\Big)\Big(\frac A3-\frac{(A-1)\sqrt{1+\gamma A}}{\sqrt{1+\gamma A}+\sqrt{1+\gamma}}\Big).
\end{gather}
Let $\varepsilon\in\{\pm\}$ and $N\notin \mathcal{N}^{\varepsilon}_1$. Then $N\notin \mathcal{N}^{\varepsilon}(T',X',Y')$ for all $(T',X',Y')\in B_1(T,X,Y)$, which translates into
\begin{equation}\label{criPsiNout}
\forall (T',X',Y')\in B_1(T,X,Y),\quad\nabla_{(\Sigma,A,\eta)}\Psi^{\varepsilon}_{N,a,\gamma}(T',X',Y',\Sigma,A,\eta)\neq 0\,. 
\end{equation}
Now, by design of our integral curves,
\[
\nabla_{(\Sigma,A)}\Psi^{\varepsilon}_{N,a,\gamma}(T',X',Y',\Sigma,A,\eta)=\Big(X'-\tilde X_{\varepsilon}(\Sigma,A),\frac{T'-\tilde T_{\varepsilon}(\Sigma,A,\eta)}{2\sqrt{1+\gamma A}}\Big),
\]
while
\begin{multline*}
  \partial_{\eta}\Psi^{\varepsilon}_{N,a,\gamma}(T',X',Y',\Sigma,A,\eta)=Y'-\tilde Y_{\varepsilon}(\Sigma,A,\eta)+\Sigma (X'-\tilde X_{\varepsilon}(\Sigma,A))
  \\ {}+\frac{(T'-\tilde T_{\varepsilon}(\Sigma,A,\eta))(A-1)}{\sqrt{1+\gamma A}+\sqrt{1+\gamma}}.
\end{multline*}
It follows that, for all $(T',X',Y')$ and all $(\Sigma,A,\eta)$ on the support of the symbol, we have
\begin{multline}\label{TXYbornderiv}
(|\partial_{\Sigma}\Psi^{\varepsilon}_{N,a,\gamma}|+|\partial_{A}\Psi^{\varepsilon}_{N,a,\gamma}|+|\partial_{\eta}\Psi^{\varepsilon}_{N,a,\gamma}|)(T',X',Y',\Sigma,A,\eta) \leq \\
10 (|X'-\tilde X_{\varepsilon}(\Sigma,A)|+|T'-\tilde T_{\varepsilon}(\Sigma,A,\eta)|+|Y'-\tilde Y_{\varepsilon}(\Sigma,A,\eta)|)\\
\leq 100(|\partial_{\Sigma}\Psi^{\varepsilon}_{N,a,\gamma}|+|\partial_{A}\Psi^{\varepsilon}_{N,a,\gamma}|+|\partial_{\eta}\Psi^{\varepsilon}_{N,a,\gamma}|)(T',X',Y',\Sigma,A,\eta).
\end{multline}
Using \eqref{criPsiNout} and the first inequality in \eqref{TXYbornderiv}, for all $(T',X',Y')\in B_1(T,X,Y)$ and all $(\Sigma,A,\eta)$, 
\begin{equation}\label{TXYcurves}
|X'-\tilde X_{\varepsilon}(\Sigma,A)|+|T'-\tilde T_{\e}(\Sigma,A,\eta)|+|Y'-\tilde Y_{\e}(\Sigma,A,\eta)|\neq 0
\end{equation}
( or $\Psi^{\varepsilon}_{N,a,\gamma}$ would have a critical point.) Hence, $(\tilde T_{\e}(\Sigma,A,\eta),\tilde X_{\e}(\Sigma,A),\tilde Y_{\e}(\Sigma,A,\eta))$ is not in $B_1(T,X,Y)$ (otherwise taking $(T',X',Y')=(\tilde T_{\e}(\Sigma,A,\eta),\tilde X_{\e}(\Sigma,A),\tilde Y_{\e}(\Sigma,A,\eta))$ would contradict \eqref{TXYcurves}). In other words, \eqref{TXYcurves} implies that for all $(\Sigma,A,\eta)$,
\[
|X-\tilde X_{\e}(\Sigma,A)|+|T-\tilde T_{\e}(\Sigma,A,\eta)|+|Y-\tilde Y_{\e}(\Sigma,A,\eta)|\geq 1,
\]
and using the second inequality in \eqref{TXYbornderiv} with $(T',X',Y')=(T,X,Y)$, for all $(\Sigma,A,\eta)$,
\[
(|\partial_{\Sigma}\Psi^{\varepsilon}_{N,a,\gamma}|+|\partial_{A}\Psi^{\varepsilon}_{N,a,\gamma}|+|\partial_{\eta}\Psi^{\varepsilon}_{N,a,\gamma}|)(T,X,Y,\Sigma,A,\eta)\geq \frac{1}{10}\, .
\]
Therefore non-stationary phase always applies in at least one variable among $\Sigma,A,\eta$  and each $W^{\varepsilon}_{N,\gamma}$ with $N\notin \mathcal{N}^{\varepsilon}_1$ provides a $O(\lambda_{\gamma}^{-M})$ contribution for any $M\geq 1$ (where $M$ corresponds to the number of integrations by parts.) We conclude using that the sum over $N$ in $G^+_{h,\gamma}$ restricts to  $|N|\lesssim \gamma^{-1/2}$ from Lemma \ref{lemmeNtpetit}.

Let us now deal with the remaining case: $a\sim \gamma$. We re-scale slightly differently for convenience, with $\lambda=a^{3/2}/h$ and
$t=\sqrt{a}T$, $x=aX$, $y+t\sqrt{1+a}=a^{3/2}Y$, $s=\sqrt{a}S$, $\sigma=\sqrt{a}\Sigma$, $\alpha=aA$. The phase $\Psi_{N,a,\gamma\sim a}=:\Psi_{N,a,a}$ has the same form as in \eqref{PhiNagammadef} where $\gamma$ is now replaced by $a$ and $\lambda_{\gamma}$ by $\lambda=\lambda_a$. We rewrite the integral in $S$ as an Airy function: in the new variables, 
\[
\int e^{i\eta \lambda(S^3/3+S(1-A))}dS=(\eta\lambda)^{-1/3}Ai\Big((\eta\lambda)^{2/3}(1-A)\Big).
\]
Let $\chi_0\in C^{\infty}$ be such that $\chi_{0}\equiv 1$ on $[0,\infty]$ and $\chi_{0}\equiv 0$ on $[-\infty,-2]$. Then $(\chi_0Ai)((\eta\lambda)^{2/3}(1-A))$ is a symbol of order $2/3$ supported on $(\eta\lambda)^{2/3}(A-1)\leq 2$ and $(1-\chi_0)Ai((\eta\lambda)^{2/3}(1-A)))$ is supported on  $A\geq 1$ with value $1$ on $(\eta\lambda)^{2/3}(A-1)\geq 2$. Setting again $W_{N,a}(T,X,Y):=V_{N,a}(t,x,y)$, we rewrite each integral in \eqref{eq:bis488bis} as follows $W_{N,a}(T,X,Y):=W^{0}_{N,a}(T,X,Y)+\tilde W_{N,a}(T,X,Y)$:
\begin{equation}
W^0_{N,a}(T,X,Y):=\frac{\gamma^{3/2}}{(2\pi h)^3}\int \frac{e^{i\lambda \Psi^0_{N,a}}}{(\eta\lambda)^{1/3}}(\chi_0Ai)\Big((\eta\lambda)^{2/3}(1-A)\Big) 
 \chi_1((\eta\lambda)^{2/3}A)\psi_2\Bigl(\frac \alpha \gamma A\Bigr)\,d\Sigma dA d\eta,
\end{equation}
with
\begin{equation}
  \label{eq:18}
    \Psi^{0}_{N,a}
  :=\eta\Big(Y+\Sigma^3/3+\Sigma(X-A)
+T\frac{(A-1)}{\sqrt{1+\gamma A}+\sqrt{1+a}}-\frac 43 NA^{3/2}\Big)+\frac{N}{\lambda}B(\eta\lambda A^{3/2})\,,
\end{equation}
and 
\begin{equation*}
    \tilde W_{N,a}(T,X,Y):=\frac{\gamma^{3/2}}{(2\pi h)^3}\int \frac{e^{i\lambda \Psi_{N,a,a}}}{(\eta\lambda)^{1/3}}(1-\chi_0)((\eta\lambda)^{2/3}(1-A)) 
  \chi_1((\eta\lambda)^{2/3}A)\psi_2\Bigl(\frac \alpha \gamma A\Bigr)\,d\Sigma dS dA d\eta.
\end{equation*}
To prove that $ \sum_{N\notin \mathcal{N}_1}W_{N,a}(T,X,Y)=O(h^{\infty})$ it will be enough to prove 
\begin{equation}\label{N1outtansum}
\sum_{N\notin \mathcal{N}_1}\tilde W_{N,a}(T,X,Y)=O(h^{\infty}) \text{ and } \sum_{N\notin \mathcal{N}_1} W^0_{N,a}(T,X,Y)=O(h^{\infty}).
\end{equation}
We proceed with the first sum in \eqref{N1outtansum}. By design, on the support of $(1-\chi_0)((\eta\lambda)^{2/3}(1-A))$ we have $A\geq 1$. If we set $A=1+\mu^2$, we can perform the standard stationary phase in $S$ with critical points $S=\pm \mu$ (alternatively, we may rewrite the Airy function as $A_{+}+A_{-}$ and use the associated oscillatory integrals to recover the new phases, indexed by $\pm$). From there, we proceed as we did with the case $2a\lesssim \gamma$. The only difference is that we now have a symbol of order $2/3$ in $A$ (coming from $1-\chi_{0}$), so one integration by parts with respect to $A$ provides a factor $\lambda^{-1}\lambda^{2/3}=\lambda^{-1/3}$; an integration by parts with respect to $\Sigma$ yields a factor $\lambda^{-1}$ and with respect to $\eta$ a factor $\lambda^{-1}\lambda^{2/3}$. Therefore each $\tilde W_{N,a}$ with $N\notin \mathcal{N}_1$ has at least an $O(\lambda_{\gamma}^{-M/3})$ contribution for any $M\geq 1$ (where $M$ corresponds to the number of integrations by parts) and we are done, as the sum is over $|N|\lesssim \gamma^{-1/2}$. Consider now the second sum in \eqref{N1outtansum}.
Since the phase function of $W^0_{N,a}$ does not depend on $S$, we always have $\partial_S\Psi^0_{N,a}=0$. We proceed exactly like in the case $2a\leq \gamma$, the only difference being that we may take $\varepsilon=0$.\end{proof}

\subsection{Tangential waves for $\gamma >h^{2/3(1-\varepsilon)}$}

We start with the (most difficult) case $\gamma/4\leq a\leq 2 \gamma $ and this restricts $a\geq  h^{2/3(1-\varepsilon)}$ due to the $\chi_{1}$ cut-off. We re-scale variables:
\begin{equation}
  \label{eq:83ff}
  x=aX\,, \alpha=a A\,, t=\sqrt a\sqrt{1+a} T\,, s=\sqrt a S\,, \sigma=\sqrt a \Upsilon\,, y+t\sqrt{1+a}=a^{3/2} Y\,.
\end{equation}
Define $\lambda=a^{3/2}/h$ to be our large parameter, then we write
\begin{multline}
  \label{eq:bis488terff}
  G^+_{h,\gamma}(t,x,y,a,0,0)= \frac {a^{2}} {(2\pi h)^{3}} \sum_{N\in \Z} \int_{\R^{4}} e^{i  \lambda \Psi_{N,a,a}}  \eta^{2} \psi({\eta})
  \chi_{1}(\lambda^{2/3}A) \psi_{2}\Bigl(\frac a \gamma A\Bigr) \, dS d\Upsilon  dA d\eta\,,
\end{multline}
where $\Psi_{N,a,a}$ has been introduced in \eqref{PhiNagammadef} and $\gamma\sim a$. 
From the compact support of $\psi_2$ we have $A\leq 6$; since critical points are such that $S^2=A-1$, $\Upsilon^2=A-X$,  we can restrict ourselves to $|S|,|\Upsilon|< 3$ without changing the contribution modulo $O(h^{\infty})$; we therefore insert a suitable cut-off $\chi_{2}$,  and obtain (modulo $O(h^{\infty})$) a slightly modified operator,
\begin{equation}
  \label{eq:bis488ter9999}
G^{+,\sharp}_{h,\gamma}(t,x,y,a,0,0)= \frac {a^{2}} {(2\pi h)^{3}} \sum_N \int_{\R^{4}} e^{i \lambda  \Psi_{N,a,a}}  \eta^{2}\psi({\eta}) \chi_{1}(\lambda^{\frac 2 3}A)  \chi_{2}(S,\Upsilon) \psi_{2}\Bigl(\frac a \gamma A\Bigr) \, dS d\Upsilon  dA d\eta\,,
\end{equation}
and $G^{+}_{h,\gamma}=G^{+,\sharp}_{h,\gamma}+O(h^{\infty})$ (remembering that the significant part of the sum over $N$ is for $N\lesssim 1/\sqrt a$.) Since for $X>A$ there are no real critical points with respect to $\Upsilon$, we may restrict to $X\lesssim 1$. As the Green function $G^{+}_{h,\gamma}$ is symmetric with respect to $x$ and $a$, we may even restrict ourselves to $x<a$, and we will obtain slightly better bounds in this case.

Given that $\partial_{S}\Psi_{N,a,a}=\eta(S^{2}+1-A)$, by integration by parts in $S$ we may restrict ourselves to $A>9/10$ and therefore  $G^+_{h,a}(t,x,y,a,0,0)=\sum_N W^+_{N,a}(T,X,Y)+O(h^{\infty})$ with
\begin{equation}
  \label{eq:bis488ter999999}
  W_{N,a}(T,X,Y): = \frac {a^{2}} {(2\pi h)^{3}}  \int_{\R^{2}}\int_{\R^{2}} e^{i \lambda \Psi_{N,a,a}}  \eta^{2} \psi({\eta})
  \chi_{2}(S,\Upsilon) \psi_{3}(A) \, dS d\Upsilon  dA d\eta\,,
\end{equation}
where $\psi_{3}$ has support in $[9/10,6]$ (and actually includes dependence on an harmless factor $a/\gamma$, which we are hiding since it will be irrelevant from now on.)
\begin{rmq}\label{remNpetitlambda}
When $N\lesssim \lambda$, the phase factor $e^{iNB(\eta \lambda A^{3/2})}$ does not oscillate: we can move it into the symbol. Indeed, we have $|NB(\eta\lambda A^{3/2})|\sim |\frac{b_1 N}{\eta\lambda A^{3/2}}|\sim |N/\lambda|\lesssim1 $. The remaining phase is linear in $\eta$, stationary phase in $\eta$ does not produce decay but localizes the wave front in physical space. When $N\geq \lambda$, the term $NB(\eta\lambda A^{3/2})$ produces oscillations, but it will allow to perform stationary phase in both $A$ and $\eta$, also providing decay with respect to $\eta$ (useful especially for  $N\geq \lambda^2$.)
\end{rmq}
\begin{prop}
  \label{Prop2}
Let $1\leq |N|\lesssim\lambda$ and let $W_{N,a}(T,X,Y)$ be defined in \eqref{eq:bis488ter999999}. Then the stationary phase theorem applies in $A$ and yields
\begin{multline}
  \label{eq:bis488ter999999machin}
    W_{N,a}(T,X,Y)= \frac {a^{2}} {h^{3}(N\lambda)^{\frac 1 2}}  \int_{\R}\int_{\R^{2}} e^{i \lambda\eta (Y+\phi_{N,a}(T,X,\Upsilon,S))}  \eta^{2} \psi({\eta})\\
{}\times   \chi_{3}(S,\Upsilon,a,1/N,h,\eta) \, dS d\Upsilon d\eta+O(h^{\infty})\,,
\end{multline}
where $\chi_{3}$ has compact support in $(S,\Upsilon)$ and harmless dependency on the parameters $a$, $h$, $1/N$, $\eta$. The critical point $A_c$ solves
\begin{equation}
  \label{eq:89ff<}
  \frac{T\sqrt{1+a}}{2N\sqrt{1+a A}}-\frac{\Upsilon+S} N-2A^{1/2}=0\,.
  \end{equation}
and the phase function $ \phi_{N,a}$ is given by
\begin{equation}\label{critphaNpetit}
\phi_{N,a}(T,X,\Upsilon,S)=\frac{\Upsilon^3}{3}+X\Upsilon+\frac{S^3}{3}+S-(S+\Upsilon)A_c+T\frac{(A_c-1)}{1+\sqrt{\frac{1+aA_c}{1+a}}}-\frac 43 NA_c^{3/2}.
\end{equation}
\end{prop}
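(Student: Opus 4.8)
The plan is to prove \eqref{eq:bis488ter999999machin} by applying the one‑dimensional stationary phase theorem to the $A$‑integral in \eqref{eq:bis488ter999999}, the decisive point being that the effective large parameter is $\lambda|N|$, not $\lambda$; this is precisely what yields the gain $(N\lambda)^{-1/2}$. Since $1\leq|N|\lesssim\lambda$ and $\eta\lambda A^{3/2}\sim\lambda\gg 1$ on the support, the asymptotics of $B$ in \eqref{eq:propL} give $NB(\eta\lambda A^{3/2})=O(|N|/\lambda)=O(1)$, with all its $A,\eta$‑derivatives $O(1)$ as well, so by Remark \ref{remNpetitlambda} the factor $e^{iNB(\eta\lambda A^{3/2})}$ may be moved into the symbol and the phase to which we apply stationary phase is $\lambda\eta\,\Phi^\sharp$ with
\[
\Phi^\sharp=Y+\frac{\Upsilon^3}3+\Upsilon(X-A)+\frac{S^3}3+S(1-A)+T\frac{A-1}{1+\sqrt{(1+aA)/(1+a)}}-\frac43 NA^{3/2},
\]
the $T$‑coefficient being the one produced by the rescaling $t=\sqrt a\sqrt{1+a}\,T$ in \eqref{eq:83ff}. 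Using $\frac{A-1}{1+\sqrt{(1+aA)/(1+a)}}=\frac{1+a}{a}\bigl(\sqrt{(1+aA)/(1+a)}-1\bigr)$ one computes $\partial_A(\eta\Phi^\sharp)=\eta\bigl(-\Upsilon-S+\frac{T\sqrt{1+a}}{2\sqrt{1+aA}}-2NA^{1/2}\bigr)$; as $\eta\neq 0$, the critical point $A_c$ is exactly the solution of \eqref{eq:89ff<}, and it does not depend on $\eta$. Substituting $A=A_c$ in $\Phi^\sharp-Y$ and regrouping $\Upsilon(X-A_c)+S(1-A_c)$ gives precisely $\phi_{N,a}$ of \eqref{critphaNpetit}; observe $\eta\Phi^\sharp|_{A=A_c}=\eta(Y+\phi_{N,a})$ stays linear in $\eta$.

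The heart of the argument — and the step I expect to be the main obstacle — is the non‑degeneracy of the $A$‑Hessian at the sharp scale $|N|$, uniformly in $N$. Differentiating once more, $\partial_A^2(\eta\Phi^\sharp)=\eta\bigl(-\frac{aT\sqrt{1+a}}{4(1+aA)^{3/2}}-NA^{-1/2}\bigr)$, so one must discard the curvature correction $\tfrac{aT\sqrt{1+a}}{4(1+aA)^{3/2}}$ against $NA^{-1/2}$. Since we work at bounded $|t|$, the rescaling gives $|T|\lesssim a^{-1/2}$ (equivalently, at critical points, Lemma \ref{lemmeNtpetit} together with \eqref{eq:89ff<} and $|\Upsilon|,|S|\lesssim 1$, $A\sim 1$ yields $|N|\lesssim|T|\lesssim|N|$ and $|N|\lesssim a^{-1/2}$); hence $a|T|\lesssim a^{1/2}\ll 1\leq|N|$, and because $A\sim 1$, $\eta\sim 1$ on $\operatorname{supp}\psi_3$ this gives, on the whole support,
\[
\partial_A^2(\eta\Phi^\sharp)=-\eta NA^{-1/2}\bigl(1+O(a^{1/2})\bigr),\qquad |\partial_A^2(\eta\Phi^\sharp)|\sim|N|,
\]
and the same bookkeeping gives $|\partial_A^k(\eta\Phi^\sharp)|\sim|N|$ for every $k\geq 2$. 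In particular $\partial_A(\eta\Phi^\sharp)$ is strictly monotone in $A$, so $A_c$ is the unique critical point on $\operatorname{supp}\psi_3$ when it lies there and, by the implicit function theorem, is smooth in $(T,X,\Upsilon,S)$ and in $a,1/N$, each differentiation of $A_c$ costing at most a harmless factor $1/N$.

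With this in hand I would rescale the phase as $(\lambda|N|)\bigl(\eta\Phi^\sharp/|N|\bigr)$: the large parameter is $\lambda|N|$, the Hessian and all higher $A$‑derivatives of $\eta\Phi^\sharp/|N|$ are $\sim 1$, and the amplitude $\eta^2\psi(\eta)\chi_2(S,\Upsilon)\psi_3(A)$ is smooth with compact $A$‑support, so the classical one‑dimensional stationary phase theorem applies. On the region where $A_c$ lies in the interior of $\operatorname{supp}\psi_3$ it produces $\dfrac{e^{i\lambda\eta(Y+\phi_{N,a})}}{(\lambda|N|)^{1/2}}$ times a classical symbol in $(\lambda|N|)^{-1}$ collecting $\psi_3(A_c)$, the bounded Hessian factor, the $e^{\pm i\pi/4}$, the reinstated bounded factor $e^{iNB(\eta\lambda A_c^{3/2})}$ and all lower‑order terms; together with $\eta^2\psi(\eta)\chi_2(S,\Upsilon)$ and the prefactor $a^2/(2\pi h)^3$ this is exactly \eqref{eq:bis488ter999999machin}, with $\chi_3$ compactly supported in $(S,\Upsilon)$ and harmlessly dependent on $a,h,1/N,\eta$. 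On the complementary region — $\partial_A(\eta\Phi^\sharp)$ without a zero on $\operatorname{supp}\psi_3$, hence $|\partial_A(\eta\Phi^\sharp)|$ bounded below while $|\partial_A^k(\eta\Phi^\sharp)|\sim|N|$ — repeated integration by parts in $A$ gives $O(\lambda^{-M})$ for every $M$; since $\lambda=a^{3/2}/h\gtrsim h^{-\varepsilon}$ and only $\lesssim a^{-1/2}\lesssim h^{-1/3}$ values of $N$ contribute (Lemma \ref{lemmeNtpetit}), summing over $N$ gives the announced $O(h^\infty)$ remainder. The only genuinely delicate point beyond routine stationary/non‑stationary phase is the one flagged above: securing $|\partial_A^2(\eta\Phi^\sharp)|\sim|N|$ uniformly for $1\leq|N|\lesssim\lambda$, which is exactly what upgrades the crude $\lambda^{-1/2}$ to the decisive $(N\lambda)^{-1/2}$.
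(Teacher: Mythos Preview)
Your proof is correct and follows essentially the same route as the paper: move $e^{iNB(\eta\lambda A^{3/2})}$ into the symbol via Remark~\ref{remNpetitlambda}, compute $\partial_A$ to get \eqref{eq:89ff<}, check $|\partial_A^2|\sim|N|$, and apply one-dimensional stationary phase with large parameter $\lambda|N|$. The only minor difference is in bounding the curvature correction $\tfrac{aT\sqrt{1+a}}{4(1+aA)^{3/2}}$ against $NA^{-1/2}$: the paper observes directly from \eqref{eq:89ff<} that $T/(4N)\sim\sqrt{A_c}$ is bounded at the critical point, hence $aT/N=O(a)$, whereas you use the global bound $|T|\lesssim a^{-1/2}$ from $|t|\lesssim 1$ to get $a|T|\lesssim a^{1/2}$; both give the same conclusion, and your version has the slight advantage of holding on the whole support of $\psi_3$ rather than only at $A_c$.
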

\begin{proof}
Using Remark \ref{remNpetitlambda}, we immediately move  the factor $e^{iNB(\eta\lambda A^{3/2})}$ into the symbol. Therefore the phase of $W_{N,a}$ becomes $\Psi_{N,a,a}(T,X,Y,\Upsilon,S,A,\eta)-\frac N \lambda B(\eta \lambda A^{3/2})$ which is linear in $\eta$ and stationary in $A$ where its derivative in $A$ vanishes, which is nothing but \eqref{eq:89ff<}. It immediately follows from \eqref{eq:89ff<} that  $T/(4N)$ is bounded when the phase is stationary in $A$, and even that $T/(4N)\sim \sqrt{A_{c}}$. The second derivative of the phase with respect to $A$ equals $-\frac{N}{A^{1/2}}(1+\frac{aTA^{1/2}\sqrt{1+a}}{4N(1+aA)^{3/2}})$; as $aT/N=O(a)$, the second derivative does not vanish and has size comparable to $|N|$; from this we may apply stationary phase to get the desired expansion.
\end{proof}
Setting  $K:=\frac{T}{4N}$ and $w:=\frac{S+\Upsilon}{2N}$, \eqref{eq:89ff<} for the critical point $A_c$ becomes 
\begin{equation}\label{eq:Acsol}
A^{1/2}=K\sqrt{\frac{1+a}{1+aA}}-w.
\end{equation}
\begin{lemma}\label{lemAcsol}
The critical point $A_c(K,w,a)$ satisfying \eqref{eq:Acsol} is $A^{1/2}_c(\cdot)=K_{\infty}(a)-w(1-a\E(\cdot))$, where $K_{\infty}(a): =K\sqrt{\frac{2(1+a)}{1+\sqrt{1+4K^2a(1+a)}}}$ and where $\E$ is a smooth function, uniformly bounded and defined as follows :
\[
\E(K,w,a)=\int_0^1\frac{A_c^{1/2}(K,\theta w,a)\frac{(A_c^{1/2}(K,\theta w,a)+\theta w)^3}{K^2(1+a)} d \theta}{1+aA_c^{1/2}(K,\theta w,a)\frac{(A_c^{1/2}(K,\theta w,a)+\theta w)^3}{K^2(1+a)}}.
\]
Moreover $K\rightarrow K_{\infty}(a)$ is a smooth bijection near $1$ and $K_{\infty}(a)-1=(K-1)(1+O(a))$. 
\end{lemma}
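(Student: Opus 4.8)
I would solve \eqref{eq:Acsol} by the implicit function theorem and then read off the stated expansion by integrating the $w$-derivative of the solution along the segment $\theta\mapsto\theta w$. Squaring \eqref{eq:Acsol} (legitimate since its right-hand side is positive in the parameter range we work with, which also selects the relevant branch), the critical point $B:=A_c^{1/2}$ is characterized by
\begin{equation*}
F(B,K,w,a):=(B+w)^2(1+aB^2)-K^2(1+a)=0\,.
\end{equation*}
At the base point $(B,K,w,a)=(1,1,0,0)$ one has $F=0$ and $\partial_BF=2(B+w)(1+aB^2)+2aB(B+w)^2=2\neq0$, so the implicit function theorem produces a smooth branch $B=A_c^{1/2}(K,w,a)$ with $A_c^{1/2}(1,0,0)=1$; since $B>0$ on this branch, $A_c=B^2$ is smooth as well, and a continuity/bootstrap argument extends the branch to all $(K,w,a)$ in the relevant range while keeping $B$ positive and $B+w$ bounded away from $0$.

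Next I would treat the two pieces of the formula separately. For $w=0$ the equation degenerates to the quadratic $aB^4+B^2-K^2(1+a)=0$ in $B^2$, with positive root $B^2=\bigl(\sqrt{1+4aK^2(1+a)}-1\bigr)/(2a)$; rationalizing the fraction defining $K_\infty(a)$ shows $K_\infty(a)^2=\bigl(\sqrt{1+4K^2a(1+a)}-1\bigr)/(2a)$, hence $A_c^{1/2}(K,0,a)=K_\infty(a)$. For general $w$, I would differentiate $F\bigl(A_c^{1/2}(K,w,a),K,w,a\bigr)=0$ in $w$, cancel the nonzero factor $B+w$, and use $(B+w)^2(1+aB^2)=K^2(1+a)$ to eliminate $1+aB^2$; this yields the compact identity
\begin{equation*}
\partial_w A_c^{1/2}(K,w,a)=-\frac{1}{1+u}=-1+a\,\frac{B(B+w)^3/(K^2(1+a))}{1+aB(B+w)^3/(K^2(1+a))}\,,\qquad u:=\frac{aB(B+w)^3}{K^2(1+a)}\ge0\,.
\end{equation*}
Then $A_c^{1/2}(K,w,a)-K_\infty(a)=\int_0^1\frac{d}{d\theta}A_c^{1/2}(K,\theta w,a)\,d\theta=w\int_0^1(\partial_wA_c^{1/2})(K,\theta w,a)\,d\theta$, and inserting the displayed identity gives exactly $A_c^{1/2}=K_\infty(a)-w(1-a\E)$ with $\E$ the integral in the statement. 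Uniform boundedness of $\E$ is then immediate, since its integrand is $v/(1+av)$ with $v=B(B+w)^3/(K^2(1+a))\ge0$ and $v=O(1)$ because $B=A_c^{1/2}$, $w$ and $K$ stay in a fixed compact range with $B$ bounded below away from $0$; hence $0\le\E\lesssim1$.

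For the properties of $K_\infty$ I would exploit the algebraic identity $1+4a(1+a)=(1+2a)^2$, which gives $K_\infty(a)|_{K=1}=\sqrt{2(1+a)/(2(1+a))}=1$ for every $a$, and differentiate $K_\infty^2=\bigl(\sqrt{1+4K^2a(1+a)}-1\bigr)/(2a)$ to obtain $\partial_KK_\infty|_{K=1}=(1+a)/(1+2a)>0$ for $a$ small. Joint smoothness of $K_\infty$ in $(K,a)$ together with this positive derivative give, by the inverse function theorem, that $K\mapsto K_\infty(a)$ is a smooth bijection near $1$; Hadamard's lemma then writes $K_\infty(a)-1=(K-1)c(K,a)$ with $c$ smooth and $c(1,a)=(1+a)/(1+2a)=1+O(a)$, so $c=1+O(a)$ after shrinking the neighborhood of $K=1$, which is the asserted expansion.

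The only genuinely delicate step is the bookkeeping of the admissible range of $(K,w,a)$ that is needed to keep the branch $A_c^{1/2}$ positive and $B+w$ bounded away from $0$ \emph{along the whole segment} $\theta\in[0,1]$ — this is where the localizations in $\eta$, $N$ and $\gamma\sim a$ (forcing $K$ near $1$, $a$ small, $w$ in a controlled range) must be invoked. Everything else is elementary: one quadratic, one implicit differentiation, and one integration along a segment.
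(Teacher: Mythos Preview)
Your proof is correct and follows essentially the same route as the paper: apply the implicit function theorem to \eqref{eq:Acsol}, compute $\partial_w A_c^{1/2}$ (your formula $-1/(1+u)$ with $u=aB(B+w)^3/(K^2(1+a))$ is exactly the paper's \eqref{deriveeAc} after using \eqref{eq:Acsol} to rewrite $1/\sqrt{1+aA_c}$), and integrate along $\theta\mapsto\theta w$. Your squaring to a polynomial in $B$ and your use of Hadamard's lemma for the $K_\infty(a)-1$ expansion are minor presentational variations on the paper's direct manipulations; both are equally elementary.
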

\begin{proof}
At $w=0$, $A_c^{1/2}$ is the unique solution to $Z=K\sqrt{\frac{1+a}{1+aZ^2}}$, therefore we immediately get $A_c^{1/2}(K,0,a)=K_{\infty}(a)$. Next, $A_{c}^{1/2}(K,w,0)=K-w$ and $\partial_{a}(A^{1/2}-K\sqrt{1+a}/\sqrt{1+aA}-w)=K/(2(\sqrt{1+a}(1+aA)^{3/2})\sim K/2$, so that we can apply the implicit function theorem to get a unique solution $A_{c}^{1/2}(K,w,a)$. We write $A_c^{1/2}(K,w,a)=A_c^{1/2}(K,0,a)+w\int_0^1\partial_w (A_c^{1/2})(K,\theta w,a) d\theta$, and it remains to compute $\partial_w (A_c^{1/2})$. Taking the derivative of \eqref{eq:Acsol} with respect to $w$ yields
\begin{equation}
  \label{deriveeAc}
  \partial_w(A_c^{1/2})\Big(1+aA_c^{1/2}\frac{K\sqrt{1+a}}{\sqrt{1+aA_c}^3}\Big)=-1
\end{equation}
and using again \eqref{eq:Acsol} to replace $\frac{1}{\sqrt{1+aA_c}}=\frac{A_c^{1/2}+w}{K\sqrt{1+a}}$ provides the desired $\E(K,w,a)$. Finally,
\begin{multline*}
  K_{\infty}(a)-1=(K-1)\frac{\sqrt{2(1+a)}}{\sqrt{1+\sqrt{(1+2a)^{2}+(K^{2}-1)4a(a+1)}}}\\
  {}+\frac{\sqrt{2(1+a)}}{\sqrt{1+(1+2a)\sqrt{1+(K^{2}-1)O(a)}}}-1
\end{multline*}
and the second line is indeed $(K^{2}-1)O(a)$.
\end{proof}
\begin{prop}
  \label{prop3}
Let  $|N|\geq \lambda$ and $W_{N,a}(T,X,Y)$ be defined in \eqref{eq:bis488ter999999}, then the stationary phase applies in both $A$ and $\eta$ and yields
\begin{equation}
  \label{eq:bis488ter999999machin>}
    W_{N,a}(T,X,Y)= \frac {a^{2}} {h^{3} N} \int_{\R^{2}} e^{i \lambda \Psi_{N,a,a}(T,X,Y,\Upsilon,S,A_c,\eta_c)}  \chi_{3}(S,\Upsilon,a,1/N,h) \, dS d\Upsilon +O(h^{\infty})\,,
\end{equation}
where $\chi_{3}$ has compact support in $(S,\Upsilon)$ and harmless dependency on the parameters $a,h,1/N$. 
\end{prop}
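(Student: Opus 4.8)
The plan is to apply the stationary phase theorem to \eqref{eq:bis488ter999999} first in $A$ and then in $\eta$, with $\lambda=a^{3/2}/h$ as the large parameter; since in this regime $\lambda\gtrsim h^{-\varepsilon}$, any error of the form $O(\lambda^{-\infty})$ is $O(h^{\infty})$. In contrast to Remark \ref{remNpetitlambda}, when $|N|\geq\lambda$ the factor $e^{iNB(\eta\lambda A^{3/2})}$ genuinely oscillates, so it must be kept inside the phase $\lambda\Psi_{N,a,a}$ given by \eqref{PhiNagammadef}.

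For the first stationary phase, in $A$: differentiating $\Psi_{N,a,a}$ in $A$ and using $B'(u)\sim -b_1u^{-2}$ from Lemma \ref{lemL} together with $\eta\lambda A^{3/2}\sim\lambda$ on the support of the symbol, one sees that $\partial_A\Psi_{N,a,a}=0$ is exactly \eqref{eq:89ff<} up to a correction of relative size $O(\lambda^{-2})$; hence, as in Proposition \ref{Prop2} and by the implicit function theorem (cf. Lemma \ref{lemAcsol}), there is a unique critical point $A_c\sim 1$ on the support of $\psi_3$, whose $\eta$-dependence enters only through that $O(\lambda^{-2})$ term, so $\partial_\eta A_c=O(\lambda^{-2})$. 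As in Proposition \ref{Prop2}, $\partial^2_A\Psi_{N,a,a}$ has size comparable to $|N|$ and does not vanish (the $B$-contribution being only $O(N\lambda^{-2})$), so the effective curvature $\lambda|\partial^2_A\Psi_{N,a,a}|\sim\lambda|N|\gtrsim\lambda^{2}$ is large. Stationary phase in $A$ then produces a factor $\sim(\lambda|N|)^{-1/2}$ and leaves an integral over $(S,\Upsilon,\eta)$ with phase $\lambda\widetilde\Psi_N$, $\widetilde\Psi_N:=\Psi_{N,a,a}(\cdot,A_c,\eta)$, and a symbol of order $0$, compactly supported in $(S,\Upsilon)$, that absorbs $\eta^2\psi(\eta)$, the remaining cut-offs and all terms of the asymptotic expansion; if no critical point in $A$ exists the contribution is already $O(h^{\infty})$.

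For the second stationary phase, in $\eta$: by the envelope theorem $\partial_\eta\widetilde\Psi_N=\partial_\eta\Psi_{N,a,a}|_{A=A_c}=\bigl(Y+\phi_{N,a}(T,X,\Upsilon,S)\bigr)+NA_c^{3/2}B'(\eta\lambda A_c^{3/2})$, with $\phi_{N,a}$ as in \eqref{critphaNpetit}. Since $B'(u)\sim -b_1u^{-2}$ with argument of size $\lambda$, the map $\eta\mapsto B'(\eta\lambda A_c^{3/2})$ is monotone on $\mathrm{supp}\,\psi$, so there is at most one critical point $\eta_c$ there; if none, non-stationary phase in $\eta$ gives $O(h^{\infty})$, and if one, it forces $Y+\phi_{N,a}$ to have size $N\lambda^{-2}$, with $\eta_c$ smooth in the parameters. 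Using $B''(u)\sim 2b_1u^{-3}$ with $b_1>0$ (Lemma \ref{lemL}) and $\partial_\eta A_c=O(\lambda^{-2})$, one gets $\partial^2_\eta\widetilde\Psi_N=N\lambda A_c^{3}B''(\eta\lambda A_c^{3/2})+O(N\lambda^{-4})\sim N\lambda^{-2}\neq 0$, so the effective curvature is $\lambda|\partial^2_\eta\widetilde\Psi_N|\sim|N|/\lambda\geq 1$. Stationary phase in $\eta$ contributes a factor $\sim(\lambda/|N|)^{1/2}$; combined with the earlier $(\lambda|N|)^{-1/2}$ the prefactor becomes $\tfrac{a^{2}}{h^{3}N}$, $\eta$ is replaced by $\eta_c$, and one obtains \eqref{eq:bis488ter999999machin>} with $\chi_3$ collecting $\eta_c^{2}\psi(\eta_c)$, the Hessian prefactors and the lower order terms of both expansions. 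Finally the sum over $N$ is recombined using that only $|N|\lesssim\gamma^{-1/2}$ contribute (Lemma \ref{lemmeNtpetit}).

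I expect the main obstacle to be the second stationary phase, whose effective large parameter is only $|N|/\lambda$: it is $\geq 1$ because $|N|\geq\lambda$, but is genuinely large only when $|N|\gg\lambda$ (the case $N\geq\lambda^{2}$ singled out in Remark \ref{remNpetitlambda}), so one has to check that the residual expansion error stays $O(h^{\infty})$ using $\lambda\gtrsim h^{-\varepsilon}$; in the borderline range $|N|\sim\lambda$ one falls back on Proposition \ref{Prop2}, where $|NB(\eta\lambda A^{3/2})|=O(|N|/\lambda)$ is bounded and can simply be absorbed into the symbol while retaining the $\eta$-integral, the two descriptions being consistent. A further point requiring care is to track the $O(\lambda^{-2})$ dependence of $A_c$ on $\eta$ and confirm it affects neither the $\eta$-critical point equation (exact by the envelope theorem) nor the leading size $N\lambda^{-2}$ of $\partial^2_\eta\widetilde\Psi_N$.
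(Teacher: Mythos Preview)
Your approach is correct and essentially equivalent to the paper's, but the paper takes a slightly different route: rather than performing stationary phase sequentially (first in $A$, then in $\eta$), it applies stationary phase \emph{simultaneously} in $(A,\eta)$. The paper computes the full $2\times 2$ Hessian at the joint critical point, with $\partial^2_A\Psi_{N,a,a}\sim N$, $\partial^2_\eta\Psi_{N,a,a}\sim N/\lambda^2$ and $\partial^2_{A\eta}\Psi_{N,a,a}\sim N/\lambda^2$ (at $\partial_A\Psi=0$), giving $\det\mathrm{Hess}\sim N^2/\lambda^2$ and hence the prefactor $\lambda^{-1}(N^2/\lambda^2)^{-1/2}=1/N$. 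This is more direct: no envelope theorem, no tracking of $\partial_\eta A_c$. Your sequential version recovers the same prefactor as $(\lambda N)^{-1/2}\cdot(\lambda/N)^{1/2}$, and your use of the envelope theorem is correct; the two routes are of course equivalent here since the joint Hessian is non-degenerate.

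Your honest discussion of the borderline $|N|\sim\lambda$ is well placed: the effective large parameter in the ``small'' direction is $N/\lambda$, which is only $\gtrsim 1$ there. The paper does not dwell on this; it simply states that stationary phase applies for $N\geq\lambda$, implicitly relying on the overlap with Proposition~\ref{Prop2} (valid for $N\lesssim\lambda$) where the $B$-factor can be absorbed into the symbol and the $\eta$-integral kept. Your explicit fallback to Proposition~\ref{Prop2} in that range is a cleaner way to handle this. One minor point: your final sentence about recombining the sum over $N$ is not part of Proposition~\ref{prop3}, which concerns a single $W_{N,a}$.
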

\begin{proof}
The phase $\Psi_{N,a,a}$ from \eqref{PhiNagammadef} is stationary in $A,\eta$ when $\partial_A \Psi_{N,a,a}=0$, $\partial_{\eta} \Psi_{N,a,a}=0$, where
\begin{align}
    \label{eq:86bis}
  \partial_{A}\Psi_{N,a,a}& =\eta\Big(-\Upsilon-S+\frac T 2 \frac{\sqrt{1+a}}{\sqrt{1+aA}}-2N A^{1/2}(1-\frac 3 4  B'(\eta \lambda A^{3/2}))\Big)\\
  \label{eq:86e}
  \partial_{\eta}\Psi_{N,a,a} & =Y+\frac{\Upsilon^{3}} 3+\Upsilon(X-A)+\frac {S^{3}} 3+S(1 -A)\\
 & \;\;\;\;\;\;\;\;\;\;\;{}+T\frac{\sqrt{1+a} (A-1)}{\sqrt{1+a}+\sqrt{1+aA}}-\frac 4 3 N A^{3/2}(1-\frac 3 4  B'(\eta \lambda A^{3/2}))\nonumber
\end{align}
where 
$B'(\eta\lambda A^{3/2})\sim O(1/\lambda^2)$.
The second order derivatives are given by 
\begin{gather}\nonumber
\partial^2_{A}\Psi_{N,a,a}\sim -\eta\frac{N}{A^{1/2}}\,,\quad\quad\quad \partial^2_{\eta}\Psi_{N,a,a}=N\lambda A^3B''(\eta\lambda A^{3/2})\sim \frac{N}{\lambda^2},\\
\nonumber
\partial^2_{\eta,A}\Psi_{N,a,a}=\eta^{-1}\partial_A\Psi_{N,a,a}+\frac 32\eta \lambda NA^{2}B''(\eta\lambda A^{3/2})).
\end{gather}
As $B''\sim O(1/\lambda^{3})$, $\partial^{2}_{\eta,A}\Psi_{N,a,a}\sim N/\lambda^{2}$ ; at the critical points, we have
$\text{det Hess }\Psi_{N,a,a}\sim \frac{N^2}{\lambda^2}$, for  $N\geq \lambda$, and we may apply stationary phase in $(A,\eta)$.
\end{proof}
We now study critical points in $(A,\eta)$ for $|N|>\lambda$. Applying the implicit function theorem to the system $\partial_A\Psi_{N,a,a}=0,\partial_{\eta}\Psi_{N,a,a}=0$ around $(S=0,\Upsilon=0)$ yields at most a pair of critical points $(A_c,\eta_c)$ belonging to the support of the symbol and depending on all variables. In the following we will only need the derivatives of $A_c$ with respect to the two remaining variables $S,\Upsilon$. For that, we take the derivatives of \eqref{eq:86bis}, \eqref{eq:86e} with respect to $S$, $\Upsilon$. This gives
\[
\left(
\begin{array}{cc}
\partial_S A_c , &  \partial_S \eta_c    \\
\end{array}
\right)
\left(
\begin{array}{cc}
 \partial^2_{A}\Psi_{N,a,a} &  \partial^2_{\eta,A}\Psi_{N,a,a}    \\
\partial^2_{\eta,A}\Psi_{N,a,a}  &      \partial^2_{\eta} \Psi_{N,a,a}
\end{array}
\right)=
\left(
\begin{array}{c}
   -1  \\
   S^2+1-A_c\\   
\end{array}
\right),
\]
\[
\left(
\begin{array}{cc}
\partial_{\Upsilon} A_c , &  \partial_{\Upsilon} \eta_c    \\
\end{array}
\right)
\left(
\begin{array}{cc}
 \partial^2_{A}\Psi_{N,a,a} &  \partial^2_{\eta,A}\Psi_{N,a,a}    \\
\partial^2_{\eta,A}\Psi_{N,a,a}  &      \partial^2_{\eta} \Psi_{N,a,a}\end{array}
\right)=
\left(
\begin{array}{c}
   -1  \\
   \Upsilon^2+X-A_c\\   
\end{array}
\right).
\]
\
From this system we obtain the following Lemma :
\begin{lemma}
The critical point $A_c$ is such that, with $A_{c}(S=\Upsilon=0)=A_{c|0}$,
\begin{gather*}
A_c=A_{c|0}+(S,\Upsilon)\cdot \int_0^1\nabla_{(S,\Upsilon)}(A_c(T,X,Y,\theta\Upsilon,\theta S,a))d\theta\,,\\
\nabla_{(S,\Upsilon)}A_c=\frac{N\lambda A_c^3B''(\eta_c\lambda A_c^{3/2})}{\text{det Hess }\Psi_{N,a,a}({A_c,\eta_c})}\Big(-1+\frac {3\eta_c(S^2+1-A_c)}{2A_c},-1+\frac {3\eta_c(\Upsilon^2+X-A_c)}{2 A_c}\Big).
\end{gather*}
Moreover, $A_{c|0}=1+4(\frac{3Y}{T}-1)(1+O(a))$ belongs to a neighborhood of $1$ and at $S=\Upsilon=0$,
\begin{gather}\label{TNAc0}
T=4N\sqrt{A_{c|0}}(1-\frac 34B'(\eta_{c|0}\lambda A^{3/2}_{c|0})\sqrt{\frac{1+aA_{c|0}}{1+a}}\\
Y=4N\sqrt{A_{c|0}}(1-\frac 34B'(\eta_{c|0}\lambda A_c^{3/2}|_0)\Big(\frac{A_{c|0}}{3}-\frac{(A_c|_{0}-1)\sqrt{1+aA_{c|0}}}{\sqrt{1+a}+\sqrt{1+aA_{c|0}}}\Big)\\
A^{1/2}_{c|0}=\frac{T}{4N}(1+O(a))(1+O(\lambda^{-2}))\,.
\end{gather}
\end{lemma}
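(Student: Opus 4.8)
The plan is to read everything off from the two $2\times2$ linear systems displayed just before the statement, together with the second-derivative formulae for $\Psi_{N,a,a}$ recorded in the proof of Proposition~\ref{prop3}. First I would solve those two systems (which share the same matrix $\mathrm{Hess}_{(A,\eta)}\Psi_{N,a,a}$ and differ only in their right-hand sides $(-1,S^2+1-A_c)$ and $(-1,\Upsilon^2+X-A_c)$) by Cramer's rule for $\nabla_{(S,\Upsilon)}A_c$. At a critical point $\partial_A\Psi_{N,a,a}=0$, so the off-diagonal entry collapses to $\partial^2_{\eta,A}\Psi_{N,a,a}=\tfrac32\eta_c\lambda NA_c^2B''(\eta_c\lambda A_c^{3/2})$ while $\partial^2_{\eta}\Psi_{N,a,a}=N\lambda A_c^3B''(\eta_c\lambda A_c^{3/2})$; substituting these and factoring out the common $N\lambda A_c^3B''(\eta_c\lambda A_c^{3/2})/\det\mathrm{Hess}\,\Psi_{N,a,a}$ produces the stated expression for $\nabla_{(S,\Upsilon)}A_c$, the two bracketed components being exactly the contributions of the two right-hand sides. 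Since we are in the regime $|N|>\lambda$, the proof of Proposition~\ref{prop3} gives $\det\mathrm{Hess}\,\Psi_{N,a,a}\sim N^2/\lambda^2$ while $B''=O(\lambda^{-3})$, so this gradient is $O(1/N)$; that smallness is also precisely the input that makes the implicit function theorem (applied around $S=\Upsilon=0$) yield a unique critical pair $(A_c,\eta_c)$ on the symbol support, remaining in a fixed neighbourhood of its value at $S=\Upsilon=0$.

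The first display is then just the fundamental theorem of calculus along $\theta\mapsto A_c(T,X,Y,\theta\Upsilon,\theta S,a)$ on $[0,1]$, using $\tfrac{d}{d\theta}A_c(T,X,Y,\theta\Upsilon,\theta S,a)=(S,\Upsilon)\cdot\nabla_{(S,\Upsilon)}A_c(T,X,Y,\theta\Upsilon,\theta S,a)$.

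It remains to identify $A_{c|0}$ and prove \eqref{TNAc0}. Setting $S=\Upsilon=0$ in $\partial_A\Psi_{N,a,a}=0$, i.e. in \eqref{eq:86bis}, gives the first identity of \eqref{TNAc0}; setting $S=\Upsilon=0$ in $\partial_\eta\Psi_{N,a,a}=0$, i.e. in \eqref{eq:86e}, and using the first identity to eliminate $\tfrac43NA_{c|0}^{3/2}(1-\tfrac34B')$ gives the second. Dividing the two identities cancels both $N$ and the $B'$-factor and leaves a relation $Y/T=\sqrt{1+a}\,F_a(A_{c|0})+O(A_{c|0}^{3/2}/T)$, where $F_\gamma$ is the function from \eqref{critYsurT}; since $-\partial_A F_a\geq\tfrac16-(1+A)a\gtrsim1$ for $A$ near $1$ and $a\ll1$ (as noted right after \eqref{critYsurT}), this relation is invertible near $A=1$, $Y/T=\tfrac13$, and solving for $A_{c|0}$ gives its expression as an affine function of $\tfrac{3Y}{T}-1$ up to $O(a)$ corrections, with $A_{c|0}$ close to $1$. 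Finally, solving the first identity of \eqref{TNAc0} for $A_{c|0}^{1/2}$ and using $B'(\eta_{c|0}\lambda A_{c|0}^{3/2})=O(\lambda^{-2})$ together with $\sqrt{(1+aA_{c|0})/(1+a)}=1+O(a)$ yields $A_{c|0}^{1/2}=\tfrac{T}{4N}(1+O(a))(1+O(\lambda^{-2}))$. The only genuinely non-mechanical ingredients are the two non-degeneracy facts — $\det\mathrm{Hess}\,\Psi_{N,a,a}\sim N^2/\lambda^2\neq0$ and $\partial_A F_a\neq0$ — which legitimize respectively the implicit function theorem step and the inversion of the $Y/T$-relation; everything else is careful bookkeeping of the lower-order $B',B''$ terms ($O(\lambda^{-2})$, $O(\lambda^{-3})$) and of the $O(a)$ corrections.
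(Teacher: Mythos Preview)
Your proposal is correct and follows exactly the route the paper intends: the paper itself gives no proof beyond ``From this system we obtain the following Lemma'', and you have filled in the natural details --- Cramer's rule on the two displayed $2\times 2$ systems (using that $\partial_A\Psi_{N,a,a}=0$ kills the $\eta^{-1}\partial_A\Psi$ part of the off-diagonal entry), the fundamental theorem of calculus for the first display, and direct substitution of $S=\Upsilon=0$ into \eqref{eq:86bis}--\eqref{eq:86e} for the identities in \eqref{TNAc0}. One small remark: at $S=\Upsilon=0$ the relation you obtain between $Y/T$ and $A_{c|0}$ is exact, not merely up to $O(A_{c|0}^{3/2}/T)$ --- that error term in \eqref{critYsurT} came from the nonzero $S,\Upsilon$ contributions, which are absent here --- so your inversion via the nonvanishing of $\partial_A F_a$ is even cleaner than you indicate.
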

We are left with an integral over $(S,\Upsilon)$. In the remaining part of this section we prove the following dispersive estimates that will be crucial in order to obtain better Strichartz estimates than those implied by the usual duality argument and dispersion (recall $t=\sqrt a T$):
\begin{prop}\label{propdisptangN=0}
   For $|T|\leq \frac 52$, we have
  \begin{equation}
    \label{eq:decW0a}
    |W_{0,a}(T,X,Y)|+|G^{+,\sharp}_{h,\gamma}(t,x,y,a,0,0)|\lesssim  \frac{\sqrt \gamma}{h^2}\inf \left( 1, \left(\frac{h}{\gamma t}\right)^{1/2}\right)\,.
  \end{equation}
\end{prop}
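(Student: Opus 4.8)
The plan is to treat the two terms in \eqref{eq:decW0a} in parallel, since once the bound on $W_{0,a}$ is obtained the bound on $G^{+,\sharp}_{h,\gamma}$ follows by summing the $W_{N,a}$ over the finitely many $N\in\mathcal N_1(t,x,y)$ (which by Proposition \ref{propcardN} is $O(1)$ when $t=\sqrt a T$ and $|T|\le 5/2$) and noting that the same oscillatory-integral analysis applies uniformly in $N$, with the first term already proved $O(h^\infty)$ for $N\notin\mathcal N_1$. So the heart of the matter is the single integral
\[
W_{0,a}(T,X,Y)=\frac{a^{2}}{(2\pi h)^{3}}\int_{\R^{4}} e^{i\lambda\eta\bigl(Y+\Sigma^3/3+\Sigma(X-A)+S^3/3+S(1-A)+T\sqrt{1+a}(A-1)/(\sqrt{1+a}+\sqrt{1+aA})\bigr)}\,\eta^{2}\psi(\eta)\chi_{2}(S,\Sigma)\psi_{3}(A)\,dS\,d\Sigma\,dA\,d\eta,
\]
for which one must show $|W_{0,a}|\lesssim \sqrt\gamma\,h^{-2}\min(1,(h/(\gamma t))^{1/2})$, equivalently (after $t=\sqrt a T$, $\gamma\sim a$, $\lambda=a^{3/2}/h$) $|W_{0,a}|\lesssim (a^{2}/h^{3})\,\lambda^{-1}\min(1,(\lambda|T|)^{-1/2})$ modulo harmless constants.

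First I would dispatch the trivial bound: when $|T|\lesssim 1/\lambda$ (i.e. $t\lesssim h/\sqrt\gamma$) there is nothing to gain and one simply estimates the $(S,\Sigma,A)$ integral by the volume of the (compact) support and the $\eta$ integral trivially, giving the uniform $\sqrt\gamma/h^2$. For $|T|\gtrsim 1/\lambda$ the phase is $\lambda\eta$ times a function of $(S,\Sigma,A)$ only; the $\eta$ integration contributes nothing but does not hurt, so it suffices to bound the $(S,\Sigma,A)$ oscillatory integral with large parameter $\lambda\eta\sim\lambda$. I would then do stationary phase: the phase $\Phi_0=Y+\Sigma^3/3+\Sigma(X-A)+S^3/3+S(1-A)+T\,\phi(A)$ (with $\phi(A)=\sqrt{1+a}(A-1)/(\sqrt{1+a}+\sqrt{1+aA})$, so $\phi'(A)\sim 1/2$) has $\partial_\Sigma\Phi_0=\Sigma^2+X-A$, $\partial_S\Phi_0=S^2+1-A$, $\partial_A\Phi_0=-(\Sigma+S)+T\phi'(A)$; since $A\sim 1$, $X\lesssim 1$, and $|S|,|\Sigma|\lesssim 1$ on the support, a critical point requires $T=O(1)$, and then $\Sigma=\pm\sqrt{A-X}$, $S=\pm\sqrt{A-1}$. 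The degeneracy is exactly the one analyzed in \cite{Annals}: the $S$ (resp. $\Sigma$) fold can collapse when $A\to 1$ (resp. $A\to X$), and two such collapses colliding with the $A$-equation produces the swallowtail. The key computation is the Hessian: $\partial^2_S\Phi_0=2S$, $\partial^2_\Sigma\Phi_0=2\Sigma$, $\partial^2_A\Phi_0=T\phi''(A)=O(a)$ is negligible, $\partial^2_{SA}=\partial^2_{\Sigma A}=-1$, $\partial^2_{S\Sigma}=0$, so $\det\mathrm{Hess}=2S\cdot 2\Sigma\cdot O(a)-2S-2\Sigma=-2(S+\Sigma)+O(a)\sim -2(S+\Sigma)=-2\cdot T\phi'(A)\cdot(\text{stuff})\sim -T$ along the critical set; more precisely from $\partial_A\Phi_0=0$ one gets $S+\Sigma=T\phi'(A)\sim T/2$. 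Hence generically $|\det\mathrm{Hess}|\sim |T|$ and non-degenerate stationary phase in the three variables $(S,\Sigma,A)$ gives a gain $\lambda^{-3/2}|T|^{-1/2}$, i.e. $|W_{0,a}|\lesssim (a^2/h^3)\lambda^{-3/2}|T|^{-1/2}=(a^2/h^3)(h/a^{3/2})^{3/2}|T|^{-1/2}$; rewriting with $t=\sqrt a T$, $\gamma\sim a$, this is $\sqrt\gamma/h^2\cdot(h/(\gamma t))^{1/2}$, as claimed. When $|T|\gtrsim 1$ is small or $S+\Sigma$ is small one instead picks up cancellation from the fold/cusp: near $A=1$ (or $A=X$) the $S$- (or $\Sigma$-) integral is an Airy-type integral which is $O(\lambda^{-1/3})$ rather than $O(\lambda^{-1/2})$, but this region is $|S|\lesssim\lambda^{-1/3}$ in measure, so it contributes $\lambda^{-1/3}\cdot\lambda^{-1/3}\cdot\lambda^{-1/2}$ (the latter from $A$, whose second derivative is still $\sim 1$ there since $\partial^2_{A}$ nondegenerate once $S=\Sigma=0$... ) and one checks this is $\lesssim\lambda^{-3/2}|T|^{-1/2}$ as long as $|T|\gtrsim 1$, while for $|T|\lesssim 1$ it is absorbed into the uniform bound $\sqrt\gamma/h^2$.

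The main obstacle is the bookkeeping of the degenerate regions: I must show that even in the worst case, the swallowtail point $(S=\Sigma=0$, $A=1$, $X=1$, $T$ discrete in $\{4N\sqrt a\}$, here $N=0$ so $T=0$ — conveniently outside the range $|T|\gtrsim 1/\lambda$ of interest for the nontrivial bound), the various pieces recombine to $\lambda^{-3/2}|T|^{-1/2}$; equivalently, I have to rule out any loss worse than $|T|^{-1/2}$ uniformly in $(X,Y)$. Concretely one partitions the $(S,\Sigma,A)$ domain according to whether $|S|,|\Sigma|$ are $\gtrsim$ or $\lesssim\lambda^{-1/3}$, does genuine stationary phase where all three variables are non-degenerate (gain $|T|^{-1/2}$ from $|\det\mathrm{Hess}|\sim|S+\Sigma|\sim|T|$), uses the Airy/van der Corput estimate of order $\lambda^{-1/3}$ in the collapsed fold variable together with the $\lambda^{-1/2}$ from the transverse $A$-integral in the partially degenerate zones, and estimates the fully degenerate cube $\{|S|,|\Sigma|,|A-1|\lesssim\lambda^{-1/3}\}$ trivially by its volume $\lambda^{-1}$ against a $\lambda^{-1/3}$ decay in at most one more direction — checking in each case that the outcome never exceeds $(a^2/h^3)\lambda^{-1}\min(1,(\lambda|T|)^{-1/2})$. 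The uniformity in $Y$ is free because $Y$ enters $\Phi_0$ only through the $\eta$-linear term and fixing the critical value of $A$ via $\partial_A\Phi_0=0$ already pins down $A_{c|0}=1+4(3Y/T-1)(1+O(a))$; the extra constraint that $A_{c|0}\sim 1$ simply confines $Y/T$ to a bounded set, outside of which there is no critical point and one integrates by parts to get $O(h^\infty)$. Finally, the $G^{+,\sharp}$ bound follows by adding the $|\mathcal N_1|=O(1)$ terms $W_{N,a}$, each estimated by the identical argument (the phases $\Psi_{N,a,a}$ differ from $\Phi_0$ only by $-\tfrac43 NA^{3/2}(1+O(\lambda^{-2}))$, which merely shifts the location of the critical $T$ to $\sim 4N\sqrt a$ and does not affect the size of the Hessian, which is still $\sim|T|$ there since $S+\Sigma=T/2-2N\sqrt A+O(\cdot)$ must vanish, i.e. $T\sim 4N\sqrt A$ and the determinant $\sim S+\Sigma+2N\sqrt A \cdot(\text{from }\partial^2_A\text{ of the }N\text{-term}) \sim N\sim T$), so the sum is again $\lesssim\sqrt\gamma\,h^{-2}\min(1,(h/(\gamma t))^{1/2})$.
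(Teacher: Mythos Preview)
Your overall strategy and target bound are correct, but there is a genuine gap in how you handle the degenerate regime for $N=0$, and your treatment of $N=\pm 1$ is wrong.

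For $N=0$ you correctly compute $\det\mathrm{Hess}_{(S,\Sigma,A)}\sim -2(S+\Sigma)$ and that $S+\Sigma\sim T/2$ on the critical set. But invoking ``non-degenerate stationary phase in three variables with $\det\mathrm{Hess}\sim|T|$'' to get $\lambda^{-3/2}|T|^{-1/2}$ is not legitimate when $|T|$ is small: the stationary-phase expansion is only uniform when the Hessian eigenvalues are bounded away from zero, and here one eigenvalue is $\sim|T|$. Your fallback---partitioning according to $|S|,|\Sigma|\lessgtr\lambda^{-1/3}$ and invoking Airy behaviour near the $S$- and $\Sigma$-folds---addresses the wrong degeneracy. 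Those folds (at $A=1$ and $A=X$) are transverse to the degenerate direction, which is $S+\Sigma$; an Airy bound in $S$ or $\Sigma$ separately gives $\lambda^{-1/3}$ factors that do \emph{not} recombine to $(\lambda|T|)^{-1/2}$. The paper isolates the degenerate direction by rotating to $\xi_1=S+\Sigma$, $\xi_2=S-\Sigma$: the $(\xi_1,A)$ sub-Hessian has $\partial^2_{\xi_1,A}=-1$ and determinant $\sim 1$, so non-degenerate stationary phase in $(\xi_1,A)$ gives $\lambda^{-1}$; then $\partial^2_{\xi_2}$ of the critical value is $4\xi_{1,c}\sim 2T$, and a second stationary phase in $\xi_2$ with parameter $\lambda|T|$ gives the extra $(\lambda|T|)^{-1/2}$. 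This is how one makes your heuristic $\lambda^{-3/2}|T|^{-1/2}$ rigorous.

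For $N=\pm 1$ your claim that the determinant is $\sim N\sim T$ is false. With the extra $-\tfrac43 NA^{3/2}$ term, $\partial^2_A\Psi_{1,a,a}\sim -1/\sqrt A$, and the $3\times 3$ determinant becomes $-2\bigl(S+\Sigma+2S\Sigma/\sqrt A\bigr)+O(a)$. On the critical set with $|T|\le 5/2$ one has $|S+\Sigma|\ge 3/8$ (from $2\sqrt A-|S+\Sigma|\le |T|/2\le 5/4$), but the $2S\Sigma/\sqrt A$ term can have the opposite sign and comparable size, so boundedness away from zero is not automatic. The paper handles this by a case analysis: if $\min(|S|,|\Sigma|)\le 1/16$ the $S\Sigma$ term is small and $|S+\Sigma|\ge 3/8$ wins; if both are $\ge 1/16$ one checks directly that $2S\Sigma/\sqrt A+(S+\Sigma)=0$ has no solution on the critical set $S=-\sqrt{A-1}$, $\Sigma=-\sqrt{A-X}$ (it would force $(1-1/A)^{-1/2}+(1-X/A)^{-1/2}=2$, impossible since each summand exceeds $1$). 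The upshot is $|\det\mathrm{Hess}|\ge c>0$ uniformly, hence a clean $\lambda^{-3/2}$ for $W_{\pm 1,a}$---no $|T|^{-1/2}$ factor appears or is needed here, since $|T|\le 5/2$ already makes $\lambda^{-3/2}\lesssim(h/t)^{1/2}/h^2$.
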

\begin{prop}
\label{propdispNpetitpres}
For $1\leq |N|<\lambda^{1/3}$ and $|T-4N|\lesssim 1/N$, we have
  \begin{equation}
    \label{eq:2hh}
       \left| W_{N,a}(T,X,Y)\right| \lesssim  \frac {h^{1/3}} {h^{2}((N/\lambda^{1/3})^{1/4}+|N(T-4N)|^{1/6})}\,.
  \end{equation}
\end{prop}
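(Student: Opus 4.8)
The plan is to estimate $W_{N,a}(T,X,Y)$ by starting from the reduced representation \eqref{eq:bis488ter999999machin} provided by Proposition \ref{Prop2}, valid in the regime $1\leq|N|\lesssim\lambda$, and then perform a careful (degenerate) stationary phase analysis in the remaining variables $(S,\Upsilon,\eta)$. First I would record the gain already present: the $A$-integration contributed the factor $(N\lambda)^{-1/2}$, so that $|W_{N,a}|\lesssim \frac{a^2}{h^3(N\lambda)^{1/2}}$ times the size of the oscillatory integral in $(S,\Upsilon,\eta)$ of $e^{i\lambda\eta(Y+\phi_{N,a}(T,X,\Upsilon,S))}$. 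Since the phase is linear in $\eta$ and $\eta\sim 1$ on the support of $\psi$, the $\eta$-integral localizes $Y+\phi_{N,a}$ to size $O(1/\lambda)$ but produces no decay; all remaining decay must come from the $(S,\Upsilon)$ variables. I would then substitute $K=T/(4N)$, $w=(S+\Upsilon)/(2N)$ as in the setup following Proposition \ref{Prop2} and use Lemma \ref{lemAcsol} to express $A_c$ as an explicit smooth function of $(K,w,a)$, so that $\phi_{N,a}$ becomes an explicit function of $(T,X,\Upsilon,S)$ with all the $N$-dependence made visible. The hypothesis $|T-4N|\lesssim 1/N$ means $K-1=O(1/N^2)$, i.e. $A_c$ is within $O(1/N^2)$ of $1$ up to the $w$-dependent shift, which is exactly the degenerate regime where the Hessian of $\phi_{N,a}$ in $(S,\Upsilon)$ degenerates.

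Next I would analyze the critical points of $\phi_{N,a}(T,X,\Upsilon,S)$ in $(S,\Upsilon)$: differentiating \eqref{critphaNpetit} and using $\partial_S A_c,\partial_\Upsilon A_c$, the stationary conditions reduce to $S^2+1=A_c$, $\Upsilon^2+X=A_c$ (as in \eqref{critSig}), consistent with $A_c$ near $1$. Writing $A_c=1+\mu$ with $\mu$ small and controlled by $T-4N$ and $w$, one finds $S\sim\pm\sqrt{\mu}$, $\Upsilon\sim\pm\sqrt{\mu-(X-1)}$, and crucially the two critical points in each variable collide as $\mu\to 0$: this is a fold-type (Airy) degeneracy in each of $S$ and $\Upsilon$. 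The size estimate then follows from the standard one-dimensional van der Corput / Airy bound: an oscillatory integral $\int e^{i\lambda\eta(c_0 + c_1 S + c_2 S^2 \pm \frac13 S^3+\cdots)}\,dS$ with the quadratic coefficient of size $|A_c-1|\sim |T-4N|$ (times factors of order one) is bounded by $\lambda^{-1/3}(|A_c-1|^{1/2}+\lambda^{-1/3})^{-1/2}\sim \lambda^{-1/3}|T-4N|^{-1/4}$ when $|T-4N|\gtrsim\lambda^{-2/3}$, and by $\lambda^{-1/3}$ otherwise. Doing this in both $S$ and $\Upsilon$, and then integrating trivially in $\eta$ (picking up only the localization, no decay), gives a factor $\lambda^{-2/3}\cdot(\text{Airy factor})$; combined with the prefactor $a^2 h^{-3}(N\lambda)^{-1/2}$ and $\lambda=a^{3/2}/h$, after bookkeeping with $h^{1/3}$ one should recover \eqref{eq:2hh}. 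The restriction $|N|<\lambda^{1/3}$ ensures the van der Corput scale $\lambda^{-2/3}$ is relevant relative to the available window $|T-4N|\lesssim 1/N$, and turns the $\lambda^{-2/3}$ gains into the stated $(N/\lambda^{1/3})^{1/4}$ term via $\lambda^{-2/3}=(N/\lambda^{1/3})^{\text{exponent}}\cdot(\cdots)$ once the $h$-powers are collected.

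There is one more subtlety: the relevant degeneracy in $(S,\Upsilon)$ is not a simple product of two folds but may upgrade to a higher singularity (a cusp, or at worst a swallowtail, as advertised in the introduction) when $A_c-1$, $X-1$, and the cubic coefficients all conspire; the key point for Proposition \ref{propdispNpetitpres} is precisely that we are \emph{away} from the worst such point but near a fold, so I would argue that at most one of the two critical points in $S$ (resp.\ $\Upsilon$) is confluent and the other is non-degenerate and can be handled by ordinary stationary phase, while the confluent pair is handled by the Airy estimate above. The main obstacle, then, is bookkeeping the interplay between the three small parameters $|T-4N|$, $N/\lambda^{1/3}$, and $1/\lambda$, and checking that the $\eta$-integration genuinely produces no decay but also costs nothing, so that the two one-dimensional Airy-type bounds multiply cleanly. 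Concretely, the hard part is justifying that the effective quadratic coefficient governing the $S$ and $\Upsilon$ oscillations is indeed $\sim |T-4N|$ uniformly (using $K-1=(K_\infty-1)(1+O(a))$ from Lemma \ref{lemAcsol} and $A_{c|0}^{1/2}=\frac{T}{4N}(1+O(a))(1+O(\lambda^{-2}))$), and that the error terms from $w$-dependence and from $B'\sim O(1/\lambda^2)$ do not spoil the fold structure — after which \eqref{eq:2hh} follows by combining $\frac{a^2}{h^3(N\lambda)^{1/2}}$, the two factors of $\lambda^{-1/3}(|T-4N|^{1/2}+\lambda^{-1/3})^{-1/2}$, and $\lambda=a^{3/2}/h$.
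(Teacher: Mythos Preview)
Your plan has a genuine gap: you cannot obtain \eqref{eq:2hh} by treating the $(S,\Upsilon)$-integral as a product of two independent Airy-type (fold) integrals. The critical point $A_c$ depends on $w=(S+\Upsilon)/(2N)$, so the variables are coupled; taking a one-dimensional Airy bound in $S$ (for fixed $\Upsilon$) destroys the oscillation needed to gain decay in the subsequent $\Upsilon$-integration. Even if one could justify iterating the bounds, two folds would give at best $\lambda^{-2/3}$ for the $(S,\Upsilon)$-integral, and after the prefactor $a^{2}h^{-3}(N\lambda)^{-1/2}$ this yields $\tfrac{h^{1/3}}{h^{2}}\lambda^{1/6}N^{-1/2}$, which for $1\le N<\lambda^{1/3}$ is strictly larger than the target $\tfrac{h^{1/3}}{h^{2}}(N/\lambda^{1/3})^{-1/4}=\tfrac{h^{1/3}}{h^{2}}\lambda^{1/12}N^{-1/4}$. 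Your quoted one-dimensional bound $\lambda^{-1/3}(|T-4N|^{1/2}+\lambda^{-1/3})^{-1/2}$ is also off at the most degenerate point: the correct Airy bound there is $\lambda^{-1/3}$, not $\lambda^{-1/6}$.

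What actually happens is that the coupling upgrades the singularity: it is not $A_2\times A_2$ but (after reduction) an $A_3$. The paper first rescales $S=q/N$, $\Upsilon=p/N$, so that the large parameter becomes $\Lambda=\lambda/N^{3}$ and the relevant quantities are $P=N^{2}(K_\infty^{2}(a)-X)$, $Q=N^{2}(K_\infty^{2}(a)-1)$; the hypothesis $|T-4N|\lesssim 1/N$ corresponds precisely to $|(P,Q)|\le r_0$. One then rotates to $\xi_1=(p+q)/2$, $\xi_2=(p-q)/2$: since $\partial^2_{\xi_1}h_N\approx -4K_\infty$ is bounded away from zero for $|\xi_1|$ small, non-degenerate stationary phase in $\xi_1$ contributes $\Lambda^{-1/2}$, and the reduced one-dimensional phase $\tilde h_N(\xi_2)=h_N(\xi_{1,c},\xi_2)$ satisfies $\partial_{\xi_2}\tilde h_N=M_2-4\xi_{1,c}\xi_2$ with $\xi_{1,c}\approx (M_1/2-\xi_2^{2})/(2K)$. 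Hence $|\partial^4_{\xi_2}\tilde h_N|\gtrsim 1$ always, while $|\partial^3_{\xi_2}\tilde h_N|\gtrsim \sqrt{|M_1|}$ on the dangerous set; Van der Corput then gives $\min\bigl(\Lambda^{-1/4},(|M_1|^{1/2}\Lambda)^{-1/3}\bigr)$. Since near the degenerate cusp $M_1=P+Q\sim 2Q\sim N(T-4N)$, the resulting bound $\Lambda^{-1/2}\min(\Lambda^{-1/4},|M_1|^{-1/6}\Lambda^{-1/3})$ converts exactly to \eqref{eq:2hh}. The rotation to $(\xi_1,\xi_2)$ is the missing idea in your proposal; without it, the non-degenerate direction hidden in the coupled fold pair is never exploited and the required $\Lambda^{-3/4}$ decay cannot be reached.
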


\begin{prop}
\label{propdispNpetitloin}
For $1\leq |N|<\lambda^{1/3}$ and $|T-4N|\gtrsim 1/N$, we have
  \begin{equation}
    \label{eq:2ff}
       \left| W_{N,a}(T,X,Y)\right| \lesssim   \frac{h^{1/3}}{{h^{2}}(1+|N(T-4N)|^{1/4})}\,.
  \end{equation}
\end{prop}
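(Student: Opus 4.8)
The plan is to run a reduced stationary phase analysis on the oscillatory integral furnished by Proposition~\ref{Prop2}, after uncovering a favourable factorisation of the phase $\phi_{N,a}$.

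First I would record two envelope identities. Differentiating \eqref{critphaNpetit} and using the critical point equation \eqref{eq:89ff<} together with $\partial_A\bigl(\tfrac{A-1}{1+\sqrt{(1+aA)/(1+a)}}\bigr)=\tfrac{\sqrt{1+a}}{2\sqrt{1+aA}}$, one gets $\partial_S\phi_{N,a}=S^{2}+1-A_c$ and $\partial_\Upsilon\phi_{N,a}=\Upsilon^{2}+X-A_c$, where $A_c=A_c(K,w,a)$ with $K=T/(4N)$, $w=(S+\Upsilon)/(2N)$ is the function of Lemma~\ref{lemAcsol}; in particular $A_c$ depends on $(S,\Upsilon)$ only through $u:=S+\Upsilon$. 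Passing to variables $u=S+\Upsilon$, $v=S-\Upsilon$, the cubic part $\tfrac13(S^{3}+\Upsilon^{3})$ becomes $\tfrac{u^{3}}{12}+\tfrac{uv^{2}}{4}$ (the $v^{3}$ cancels), so that $Y+\phi_{N,a}=\tfrac u4 v^{2}+\tfrac{1-X}2 v+g_0(u)$ with $g_0'(u)=\tfrac{u^{2}}4+\tfrac{X+1}2-A_c(u)$: \emph{the phase is quadratic in $v$ with leading coefficient $u/4$, plus a function of $u$ alone}. Set $\delta_0:=K_\infty(a)^{2}-1$ (the value of $A_c-1$ at $u=0$, by Lemma~\ref{lemAcsol}); then $\delta_0=\tfrac{T-4N}{2N}(1+O(a))+O\bigl((\tfrac{T-4N}N)^{2}\bigr)$ and $A_c(u)=1+\delta_0-\tfrac uN+O(\tfrac{u^{2}}{N^{2}})$, so on the only nontrivial range $1/N\lesssim|T-4N|\lesssim N$ one has $|\delta_0|\sim|T-4N|/N$, hence $|N(T-4N)|\sim N^{2}|\delta_0|$ (for larger $|T-4N|$ there is no critical point and $W_{N,a}=O(h^{\infty})$).

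Next I would integrate successively in $v$, then $\eta$, then $u$. The $v$-integral is a Fresnel integral: completing the square yields the critical point $v_*(u)=(X-1)/u$, and on $\{\lambda|u|\gtrsim1,\ v_*(u)\in\mathrm{supp}\,\chi_3\}$ stationary phase in $v$ produces a factor $(\lambda\eta|u|)^{-1/2}$, a new smooth symbol, and the reduced phase $\lambda\eta\Phi(u)$ with $\Phi(u)=g_0(u)-\tfrac{(1-X)^{2}}{4u}$, $\Phi'(u)=\tfrac{u^{2}}4+\tfrac{X+1}2-A_c(u)+\tfrac{(1-X)^{2}}{4u^{2}}$; on $\lambda|u|\lesssim1$, or where $v_*$ leaves $\mathrm{supp}\,\chi_3$, one bounds the $v$-integral trivially (resp. by non-stationary phase) and checks this contributes a lower-order term. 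Since $\Phi$ does not depend on $\eta$, the $\eta$-integral has a phase linear in $\eta$ with smooth compactly supported symbol and yields rapid decay $(1+\lambda|\Phi(u)|)^{-M}$. Hence
\begin{equation*}
|W_{N,a}(T,X,Y)|\lesssim \frac{a^{2}}{h^{3}N^{1/2}\lambda}\int_{\R}\frac{(1+\lambda|\Phi(u)|)^{-M}}{|u|^{1/2}}\,du\ +\ (\text{lower order})\ +\ O(h^{\infty}).
\end{equation*}

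Finally I would estimate this $u$-integral by a sublevel set argument, the worst case being $|X-1|$ small and $\delta_0$ small, where $\Phi'(u)\approx\tfrac{u^{2}}4+\tfrac uN-\delta_0$. For $\delta_0>0$ with $\delta_0\gg1/N^{2}$ there are two nondegenerate critical points near $u\approx\pm2\sqrt{\delta_0}$ with $|\Phi''|\sim\sqrt{\delta_0}$ and weight $|u|^{-1/2}\sim\delta_0^{-1/4}$, contributing $\lesssim\delta_0^{-1/4}(\lambda\sqrt{\delta_0})^{-1/2}=\lambda^{-1/2}\delta_0^{-1/2}$, while the region near $u=0$ (singular weight, $|\Phi'|\gtrsim|\delta_0|$) contributes $\lesssim(\lambda|\delta_0|)^{-1/2}$; for $\delta_0<0$ only the $u=0$ contribution survives, again $\lesssim(\lambda|\delta_0|)^{-1/2}$. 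At the junction $\delta_0\sim1/N^{2}$ (i.e. $|T-4N|\sim1/N$, bordering Proposition~\ref{propdispNpetitpres}) these regimes merge: the relevant range is $|u|\lesssim1/N$, $|u|^{-1/2}\lesssim N^{1/2}$, $\Phi'''\approx\tfrac12$, and van der Corput in $u$ (performed before the $\eta$-integration) gives $\lesssim N^{1/2}\lambda^{-1/3}$. Inserting $\int\frac{(1+\lambda|\Phi|)^{-M}}{|u|^{1/2}}\,du\lesssim\lambda^{-1/2}|\delta_0|^{-1/2}$ (resp. $N^{1/2}\lambda^{-1/3}$), using $|\delta_0|\sim|N(T-4N)|/N^{2}$, $\lambda=a^{3/2}/h$, and, crucially, the hypothesis $|N|<\lambda^{1/3}$ — which is exactly what makes $\tfrac{a^{2}N^{1/2}}{h^{3}\lambda^{3/2}}\lesssim\tfrac{h^{1/3}}{h^{2}}$ and the two bounds match at $|T-4N|\sim1/N$ — yields $|W_{N,a}|\lesssim\tfrac{h^{1/3}}{h^{2}(1+|N(T-4N)|^{1/4})}$, in fact with the stronger decay $|N(T-4N)|^{-1/2}$ away from the junction. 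Cases with $|X-1|$ not small (recall $X<1$), or $|T-4N|\sim N$, are strictly better: $v_*=(X-1)/u\in\mathrm{supp}\,\chi_3$ then forces $|u|\gtrsim|X-1|$, so $|u|^{-1/2}$ is bounded and one is away from the $u=0$ caustic, with $\Phi$ having a nondegenerate critical point. The hard part will be the bookkeeping around $u=0$: this is where the $v$-Fresnel estimate degenerates and where $|u|^{-1/2}$ is singular, and one must show this combines with the (possibly nearby) critical points of $\Phi$ with no worse than the stated loss, uniformly down to $|T-4N|\sim1/N$; keeping the $\eta$-integral available in the near-degenerate regime, and carefully tracking the error terms of the $v$-stationary phase, is the genuinely delicate point.
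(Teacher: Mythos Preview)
Your approach is correct and genuinely different from the paper's. The paper, for this proposition, does \emph{not} use the rotation to $(u,v)=(S+\Upsilon,S-\Upsilon)$: instead it rescales $(S,\Upsilon)=(q/N,p/N)$, sets $\Lambda=\lambda/N^{3}$, $P=N^{2}(K_\infty^{2}-X)$, $Q=N^{2}(K_\infty^{2}-1)$, passes to polar coordinates $(P,Q)=r(\cos\theta,\sin\theta)$ with $r\gtrsim r_{0}$, rescales $(p,q)=r^{1/2}(p',q')$, and does a case analysis on $\sin\theta$: either non-stationary in $q'$, or non-degenerate stationary phase in $p'$ followed by van der Corput of order three in $q'$ (giving $(r^{3/2}\Lambda)^{-1/2-1/3}$), or non-degenerate stationary phase in both (giving $(r^{3/2}\Lambda)^{-1}/(\cos\theta\sin\theta)^{1/4}$). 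The paper never integrates in $\eta$; it simply bounds uniformly on $\mathrm{supp}\,\psi$.

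Your route exploits two structural facts the paper leaves on the table here: the exact quadratic dependence of $\phi_{N,a}$ on $v$, and the linearity in $\eta$. The first turns one integration into a genuine Fresnel integral with no case analysis; the second replaces the sup over $Y$ by a localisation $(1+\lambda|\Phi(u)|)^{-M}$ that reduces the remaining $u$-integral to a sublevel-set estimate. The payoff is that you land directly on the stronger bound $|N(T-4N)|^{-1/2}$ valid for $X<1$ (the paper also restricts to $x<a$, i.e.\ $X<1$, by symmetry of the Green function, and records this improvement only as a remark after the proposition). It is also worth noting that the paper \emph{does} perform exactly your $(u,v)$ rotation---there written $(\xi_{1},\xi_{2})$ after the $N$-rescaling---but only for the companion Proposition~\ref{propdispNpetitpres} (the regime $|(P,Q)|\leq r_{0}$); your proposal effectively unifies both regimes under one scheme. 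The price is the bookkeeping you flag near $u=0$: the Fresnel factor $|u|^{-1/2}$ and the possible coalescence of critical points of $\Phi$ must be handled together, and one must check that the region where $v_{*}\notin\mathrm{supp}\,\chi_{3}$ or $\lambda|u|\lesssim1$ is genuinely lower order (for the former, the linear-in-$v$ part of the phase gives decay in $\lambda|1-X|$; for the latter, the region has measure $O(1/\lambda)$). Your sublevel computations for the main term are correct, and the matching at $|T-4N|\sim1/N$ via $\Phi'''\sim1/2$ is exactly what connects to Proposition~\ref{propdispNpetitpres}.
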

\begin{rmq}
  If $X<1$, we can replace $|N(T-4N)|^{1/4}$ by $N^{1/2}|T-4N|^{1/2}$ in \eqref{eq:2ff}.
\end{rmq}
\begin{prop}
\label{propdispNgrand}
For $N\gtrsim \lambda^{1/3}$ (hence $a\lesssim h^{1/3}$), we have:
\begin{enumerate}
\item when $\lambda^{1/3}\lesssim N\lesssim \lambda$,
  \begin{equation}
    \label{eq:1ff}
       \left| W_{N,a}(T,X,Y)\right|\lesssim    \frac {h^{1/3}} {h^{2}((N/\lambda^{1/3})^{1/2} +(N|T-4N|)^{1/4})}\, ;
  \end{equation}
    \item when $\lambda\lesssim N\lesssim 1/\sqrt a$, (hence $a\lesssim h^{1/2}$),
    \begin{equation}
    \label{eq:1ff>}
       \left| W_{N,a}(T,X,Y)\right|\lesssim   \frac {h^{1/3}\lambda^{2/3}} {h^{2}N }\,.
  \end{equation}
  \end{enumerate}
\end{prop}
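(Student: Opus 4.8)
The plan is to derive both estimates from the reduced oscillatory integrals produced by Propositions~\ref{Prop2} and \ref{prop3}: in each case $W_{N,a}$ is, up to an explicit prefactor, a two‑dimensional oscillatory integral in the variables $(S,\Upsilon)$, to which one applies van der Corput / stationary phase estimates. The worst case throughout is a superposition of two fold (Airy) singularities, which produces the gain $\lambda^{-2/3}$, with $\lambda=a^{3/2}/h$; combined with the prefactor this will give exactly the claimed bounds.

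Consider first the range $\lambda\lesssim|N|\lesssim 1/\sqrt a$. Proposition~\ref{prop3} writes $W_{N,a}(T,X,Y)$ as $\tfrac{a^{2}}{h^{3}N}$ times $\int_{\R^{2}}e^{i\lambda\Psi_{N,a,a}(T,X,Y,\Upsilon,S,A_c,\eta_c)}\chi_3\,dS\,d\Upsilon$ modulo $O(h^{\infty})$. Since $(A_c,\eta_c)$ is a critical point, the envelope theorem gives $\partial_S[\Psi_{N,a,a}(\dots,A_c,\eta_c)]=\eta_c(S^{2}+1-A_c)$ and $\partial_\Upsilon[\dots]=\eta_c(\Upsilon^{2}+X-A_c)$; hence $\partial_S^{3}$ and $\partial_\Upsilon^{3}$ of the reduced phase equal $2\eta_c$ modulo $O(1/N^{2})$, because $\nabla_{(S,\Upsilon)}A_c$ and $\nabla_{(S,\Upsilon)}\eta_c$ are $O(1/N)$ by the Lemma following Proposition~\ref{prop3}. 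The third derivatives are thus bounded below uniformly, and the mixed second derivative $\partial_S\partial_\Upsilon[\dots]=-\eta_c\,\partial_\Upsilon A_c=O(1/N)$, so the non‑product part of the phase is $O(1)$ after multiplication by $\lambda$ on boxes of side $\lambda^{-1/3}$; one therefore gains $\lambda^{-1/3}$ by the third‑order van der Corput estimate in $S$, then $\lambda^{-1/3}$ by the same estimate in $\Upsilon$ (carrying the $S$‑critical value through a stationary phase / Airy representation so that the residual $\Upsilon$‑phase is still genuinely oscillatory with non‑vanishing third derivative). This yields $|W_{N,a}|\lesssim \tfrac{a^{2}}{h^{3}N}\lambda^{-2/3}=\tfrac{h^{1/3}\lambda^{2/3}}{h^{2}N}$, i.e.\ \eqref{eq:1ff>}.

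For $\lambda^{1/3}\lesssim|N|\lesssim\lambda$ one starts from Proposition~\ref{Prop2}, which writes $W_{N,a}$ as $\tfrac{a^{2}}{h^{3}(N\lambda)^{1/2}}$ times an integral over $(S,\Upsilon,\eta)$ with phase $\lambda\eta\bigl(Y+\phi_{N,a}(T,X,\Upsilon,S)\bigr)$, linear in $\eta$. Integrating in $\eta$ first produces only a rapidly decaying cutoff of $\lambda(Y+\phi_{N,a})$ — no decay, just the localization $|Y+\phi_{N,a}|\lesssim 1/\lambda$ — so one is again left with a two‑dimensional integral in $(S,\Upsilon)$. By the envelope theorem $\partial_S\phi_{N,a}=S^{2}+1-A_c$ and $\partial_\Upsilon\phi_{N,a}=\Upsilon^{2}+X-A_c$, with $A_c=A_c(\tfrac{T}{4N},\tfrac{S+\Upsilon}{2N},a)$ depending on $(S,\Upsilon)$ only through $w=\tfrac{S+\Upsilon}{2N}$, whence $\nabla_{(S,\Upsilon)}A_c=O(1/N)$; from Lemma~\ref{lemAcsol}, $A_c-1=\tfrac{T-4N}{2N}(1+O(a))+O(1/N)$. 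One then applies van der Corput in $S$: when $|A_c-1|\lesssim\lambda^{-2/3}$ the $S$‑fold is (nearly) degenerate and the Airy bound gives $\lambda^{-1/3}$; when $|A_c-1|\gtrsim\lambda^{-2/3}$ stationary phase at $S=\pm\sqrt{A_c-1}$ gives $\lambda^{-1/2}|A_c-1|^{-1/4}$ if $A_c>1$, while the contribution is $O(\lambda^{-\infty})$ if $A_c<1$; uniformly, $\bigl|\int_S\bigr|\lesssim \lambda^{-1/3}\bigl(1+\lambda^{2/3}|A_c-1|\bigr)^{-1/4}$. Integrating in $\Upsilon$ then gains a further $\lambda^{-1/3}$ by the third‑order van der Corput estimate (again after representing the $S$‑integral as an oscillatory function of $\Upsilon$). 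Therefore $|W_{N,a}|\lesssim \tfrac{a^{2}}{h^{3}(N\lambda)^{1/2}}\,\lambda^{-2/3}\bigl(1+\tfrac{\lambda^{2/3}|T-4N|}{N}\bigr)^{-1/4}$, and using $\lambda=a^{3/2}/h$ this is exactly \eqref{eq:1ff}: the term $(N/\lambda^{1/3})^{1/2}$ comes from the range $|T-4N|\lesssim N\lambda^{-2/3}$ and $(N|T-4N|)^{1/4}$ from its complement; when $X<1$ the $\Upsilon$‑fold is likewise non‑degenerate away from $A_c\sim1$, which is the source of the slightly better bound mentioned before the Proposition.

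The main difficulty is to turn the two successive one‑dimensional estimates into a genuine two‑dimensional bound without losing the oscillation in the second variable: this forces one to represent the $S$‑integral, after the Airy or stationary phase analysis, as $\lambda^{-1/3}$ (resp.\ $\lambda^{-1/2}|A_c-1|^{-1/4}$) times a symbol multiplied by $e^{i\lambda\eta\,\phi_{N,a}(S_0(\Upsilon),\Upsilon)}$ (or by the analogous factor at $S=\pm\sqrt{A_c-1}$), and to verify that the resulting $\Upsilon$‑phase retains a non‑vanishing third derivative. This, and the harmlessness of the $S$–$\Upsilon$ coupling through $A_c$, both rest on that coupling being a bounded perturbation of the phase at the relevant (Airy or stationary phase) scale — which is precisely where the hypothesis $|N|\gtrsim\lambda^{1/3}$ enters, since at the Airy scale $|S|,|\Upsilon|\sim\lambda^{-1/3}$ the coupling contributes $\lambda\cdot O\bigl(\tfrac{(S+\Upsilon)^{2}}{N}\bigr)=O(\lambda^{1/3}/N)=O(1)$. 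A secondary, purely bookkeeping point is to track the several sub‑cases in the van der Corput estimates (degenerate versus non‑degenerate fold, $A_c$ above or below $1$, the $\Upsilon$‑fold near or away from degeneracy, with exponential decay when $A_c<1$ and $|A_c-1|\gg\lambda^{-2/3}$) and to check that in each the claimed worst‑case bound is respected.
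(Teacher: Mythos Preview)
Your approach and the paper's agree on the essential mechanism: after Proposition~\ref{Prop2} (resp.\ \ref{prop3}), one is left with a two-variable $(S,\Upsilon)$ oscillatory integral whose phase has $\partial_S\phi=S^2+1-A_c$, $\partial_\Upsilon\phi=\Upsilon^2+X-A_c$, with $A_c$ depending on $(S+\Upsilon)/(2N)$; the hypothesis $N\gtrsim\lambda^{1/3}$ is precisely what makes this coupling a bounded perturbation at the Airy scale, so the worst case is a pair of fold singularities giving the gain $\lambda^{-2/3}$. The implementations differ. The paper rescales $(S,\Upsilon)=\lambda^{-1/3}(\tilde q,\tilde p)$, introduces $Q=\lambda^{2/3}(K_\infty^2-1)$, $P=\lambda^{2/3}(K_\infty^2-X)$, and performs a genuine two-dimensional analysis in polar coordinates on $(P,Q)$: a trivial bound when $|(P,Q)|$ is bounded, and for large $r=|(P,Q)|$ a case split (non-stationary; stationary phase in $p$ followed by third-order van der Corput in $q$; or non-degenerate stationary phase in both). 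Your iterated one-dimensional route (Airy/stationary phase in $S$, then van der Corput in $\Upsilon$) is more elementary to state and is exactly what the paper's middle case amounts to; but near $N\sim\lambda^{1/3}$ the coupling at the Airy scale is $O(1)$ rather than $o(1)$, so the ``Airy representation in $S$'' step is not immediate without e.g.\ first completing the square in $S$, whereas the paper's polar-coordinate argument handles this range uniformly. One specific slip: you write ``integrating in $\eta$ first produces only a rapidly decaying cutoff''; taken literally this destroys the oscillation and the subsequent van der Corput is meaningless. What one actually does (and what the paper does, cf.\ \eqref{eq:90}) is bound the $(S,\Upsilon)$-integral \emph{uniformly in $\eta\in\operatorname{supp}\psi$} and only then integrate trivially in $\eta$.
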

\begin{rmq}
  If $X<1$ we can replace $|N(T-4N)|^{1/4}$ by $\lambda^{1/6}|T-4N|^{1/2}$ in \eqref{eq:1ff}.
\end{rmq}

\begin{cor}\label{cordispNgrand}
For $|T|\geq \lambda^2$, (hence $a\lesssim h^{4/7}$), we have
\begin{equation}
|G^{+,\sharp}_{h,\gamma}(t,x,y,a,0,0)|\lesssim \frac{h^{1/3}}{h^{2}\lambda^{4/3}}\,.
\end{equation}
\end{cor}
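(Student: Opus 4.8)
The plan is to bound $G^{+,\sharp}_{h,\gamma}$ by summing the individual tangential wave estimates of Proposition~\ref{propdispNgrand}(2) over the (few) indices $N$ that contribute, the count being controlled by Proposition~\ref{propcardN}. Recall that, modulo $O(h^\infty)$, $G^{+,\sharp}_{h,\gamma}(t,x,y,a,0,0)=\sum_N W_{N,a}(T,X,Y)$ with $\gamma\sim a$, $\lambda=a^{3/2}/h$, and that by Lemma~\ref{lemmeNtpetit} the sum is in any case restricted to $|N|\lesssim 1/\sqrt a$. The first step is a translation of hypotheses: since $N\sim T/4$ at any stationary point of $\Psi_{N,a,a}$ in $A$ (by \eqref{eq:89ff<}, i.e. $T/(4N)\sim\sqrt{A_c}$ with $A_c$ near $1$) and $|N|\lesssim 1/\sqrt a$, the regime $|T|\geq\lambda^2$ can only be non-empty within the local-in-time window if $\lambda^2\lesssim 1/\sqrt a$, i.e. $a^{7/2}\lesssim h^2$, i.e. $a\lesssim h^{4/7}$ — which is precisely why the corollary is stated in that regime, and what makes the full spectrum $\lambda^2\lesssim N\lesssim 1/\sqrt a$ available.

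Next I would identify the contributing indices and apply the per-wave bound. By Proposition~\ref{propcardN}, only $N\in\mathcal{N}_1(t,x,y)$ contribute (the rest is $O(h^\infty)$), and each such $N$ has a stationary point at some $(t',x',y')$ in the relevant ball, so again $|N|\sim |T'|\sim|T|\geq\lambda^2$; combined with $|N|\lesssim 1/\sqrt a$ and $\lambda\geq 1$ this gives $\lambda\leq\lambda^2\lesssim|N|\lesssim 1/\sqrt a$. Hence Proposition~\ref{propdispNgrand}(2) applies to every contributing $N$ and yields, uniformly,
\[
|W_{N,a}(T,X,Y)|\ \lesssim\ \frac{h^{1/3}\lambda^{2/3}}{h^2 N}\ \lesssim\ \frac{h^{1/3}\lambda^{2/3}}{h^2 |T|}\,,
\]
where the last inequality uses $N\sim |T|$.

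Finally I would count and sum. Proposition~\ref{propcardN} with $\gamma\sim a$ and $|t|\sim\sqrt a\,|T|$ gives $|\mathcal{N}_1(t,x,y)|\lesssim 1+|t|\gamma^{-1/2}(\gamma^3/h^2)^{-1}=1+|T|\lambda^{-2}\lesssim |T|\lambda^{-2}$, using $|T|\geq\lambda^2$. Summing the previous bound over $N\in\mathcal{N}_1(t,x,y)$ and discarding the $O(h^\infty)$ remainder,
\[
|G^{+,\sharp}_{h,\gamma}(t,x,y,a,0,0)|\ \lesssim\ |\mathcal{N}_1(t,x,y)|\cdot\frac{h^{1/3}\lambda^{2/3}}{h^2|T|}\ \lesssim\ \frac{|T|}{\lambda^2}\cdot\frac{h^{1/3}\lambda^{2/3}}{h^2|T|}\ =\ \frac{h^{1/3}}{h^2\lambda^{4/3}}\,,
\]
which is the claim; observe that the potentially large factor $|T|$ cancels exactly. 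The main (and essentially only) obstacle is checking that this cancellation really happens — i.e. that the per-wave gain $1/N\sim 1/|T|$ in \eqref{eq:1ff>} is just strong enough to absorb the worst-case number $|T|\lambda^{-2}$ of overlapping waves furnished by Proposition~\ref{propcardN}. This is exactly the balance that pins down the exponent $4/3$, and it degenerates outside $a\lesssim h^{4/7}$, where the window $\lambda^2\lesssim N\lesssim 1/\sqrt a$ is empty so that \eqref{eq:1ff>} is no longer available for all contributing $N$.
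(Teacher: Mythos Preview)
Your proof is correct and follows essentially the same approach as the paper: combine the per-wave bound \eqref{eq:1ff>} from Proposition~\ref{propdispNgrand}(2) with the cardinality bound on $\mathcal{N}_1$ from Proposition~\ref{propcardN}, and observe the exact cancellation of the $|T|$ factors. The paper's own argument is the same one-line computation, just stated more tersely; your additional remarks verifying that the hypothesis $\lambda\lesssim N$ holds for every contributing $N$ (via $N\sim T\geq\lambda^2\geq\lambda$) and explaining the origin of the constraint $a\lesssim h^{4/7}$ are welcome elaborations rather than a different route.
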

The corollary follows at once from the last bound in Proposition \ref{propdispNgrand} and Proposition \ref{propcardN}:
\begin{multline}
|G^{+,\sharp}_{h,\gamma}(t,x,y,a,0,0)|\lesssim \sum_{N\in \mathcal{N}_{1}(x,y,t)} |W_{N,a}(T,X,Y)|\lesssim \frac{h^{1/3}\lambda^{2/3}}{h^{2}N} \frac{|T|}{\lambda^{2}}
\lesssim \frac{h^{1/3}}{h^{2}  \lambda^{4/3}}\,.
\end{multline}
We will prove the three propositions in reverse order.
\subsubsection{Proof of Proposition \ref{propdispNgrand}}
We start with $\lambda^{1/3}\lesssim N\lesssim \lambda$: we have $W_{N,a}(T,X,Y)$ from Proposition \ref{prop3}, \eqref{eq:bis488ter999999machin}, and $A_c=A_c(\frac{T}{4N}, \frac{S+\Upsilon}{2N},a)$ from Lemma \ref{lemAcsol}. For $\phi_{N,a}$ given in \eqref{critphaNpetit}, we will prove the following, uniformly in $\eta\in\text{supp}(\psi)$
\begin{equation}
  \label{eq:90}
   \left| \int_{\R^{2}} e^{i \lambda \eta \phi_{N,a}(T,X,\Upsilon,S)}  \chi_{3}(S,\Upsilon,a,1/N,h,\eta) \, dS d\Upsilon \right| \lesssim \frac{\lambda^{-2/3}}{1+\lambda^{1/6}|K_{\infty}^2(a)-1|^{1/4}}\,.
\end{equation}
Re-scale variables with $\Upsilon=\lambda^{-1/3}\tilde p$ and $S=\lambda^{-1/3}\tilde q$ and define $P=\lambda^{2/3}(K_{\infty}(a)^{2}-X)$  and $Q=\lambda^{2/3}(K_{\infty}(a)^{2}-1)$. We actually prove, uniformly in $(P,Q)$:
\begin{equation}
  \label{eq:91}
   \left| \int_{\R^{2}} e^{i\eta \tilde\Phi_{N,a,\lambda}}  \chi_{3}(\lambda^{-1/3} \tilde q,\lambda^{-1/3}\tilde p ,a,1/N,h,\eta) \, d\tilde p d\tilde q \right| \lesssim \frac{1}{1+|P|^{1/4}+|Q|^{1/4}}\,,
\end{equation}
where the rescaled phase $\tilde \Phi_{N,a,\lambda}(T,X,\tilde q,\tilde p)=\lambda\phi_{N,a}(T,X,\lambda^{-1/3}\tilde p,\lambda^{-1/3} \tilde q)$,    
is such that
\begin{align}
  \partial_{\tilde q}\tilde \Phi_{N,a,\lambda} &= 
\lambda^{2/3}\Big(S^2+1-A_c(K,\frac{S+\Upsilon}{2N},a)\Big)|_{(S,\Upsilon)=(\lambda^{-1/3}\tilde q,\lambda^{-1/3}\tilde p)},\\
 \partial_{\tilde p}\tilde \Phi_{N,a,\lambda} 
&= \lambda^{2/3}\Big(\Upsilon^2+X-A_c(K,\frac{S+\Upsilon}{2N},a)\Big)|_{(S,\Upsilon)=(\lambda^{-1/3}\tilde q,\lambda^{-1/3}\tilde p)}.
\end{align}
Using Lemma \ref{lemAcsol}, in the new variables
\[
A_c(K,\lambda^{-1/3}\frac{(\tilde q+\tilde p)}{2N}, a)=\Big(K_{\infty}(a)-\lambda^{-1/3}\frac{(\tilde q+\tilde p)}{2N}(1-a\E)\Big)^2.
\]
With these notations, the first order derivatives of $\tilde \Phi_{N,a,\lambda}$ read as 
\[
\partial_{\tilde q}\tilde \Phi_{N,a,\lambda}=\tilde q^2-Q+\frac{\lambda^{1/3}}{N}K_{\infty}(a)(\tilde p+\tilde q)(1-a\E)-\frac{(\tilde p+\tilde q)^2}{4N^2}(1-a\E)^2,
\]
\[
\partial_{\tilde p}\tilde \Phi_{N,a,\lambda}=\tilde p^2-P+\frac{\lambda^{1/3}}{N}K_{\infty}(a)(\tilde p+\tilde q)(1-a\E)-\frac{(\tilde p+\tilde q)^2}{4N^2}(1-a\E)^2,
\]
where we recall that $\E$ is a smooth, uniformly bounded function. As $\lambda^{1/3}\lesssim N$, if $Q,P$ are bounded, then \eqref{eq:91} obviously holds for bounded $(\tilde p,\tilde q)$  and by integration by parts if $|(\tilde p,\tilde q)|$ is large. So we can assume that $|(Q,P)|\geq r_{0}$ with $r_{0}\gg1$. Set $(Q,P)=r \exp(i\theta)=r(\sin\theta, \cos\theta)$ and re-scale again $(\tilde p,\tilde q)=r^{1/2} (p,q)$: we aim at proving \eqref{eq:91} in that range, which is now
\begin{equation}
  \label{eq:93}
    \left| \int_{\R^{2}} e^{i r^{3/2}  \eta\Phi_{N,a,\lambda}}  \chi_{3}(\lambda^{-1/3}r^{1/2}  q,\lambda^{-1/3}r^{1/2} p ,a,1/N,h,\eta) \, d p d q \right| \lesssim \frac{1}{r^{5/4}}\,.
\end{equation}
Now $r_{0}<r\lesssim \lambda^{2/3}$ (indeed, $r\sim |(P,Q)|\lesssim \lambda ^{2/3}$), $r^{3/2}$ is our large parameter,  we have $\Phi_{N,a,\lambda}(T,X,q,p)=r^{-3/2}\tilde \Phi_{N,a,\lambda}(T,X,r^{1/2}q,r^{1/2}p)$ and
\begin{align}\label{derivphipq}
  \partial_{ p} \Phi_{N,a,\lambda} & = p^{2}-\cos\theta+ \frac{\lambda^{1/3} K_{\infty}(a)( p+ q)}{ N r^{1/2}}(1-a\E)-\frac{( p+ q)^{2}}{4 N^{2}}(1-a\E)^2, \\
 \partial_{ q} \Phi_{N,a,\lambda} & = q^{2}-\sin\theta+ \frac{\lambda^{1/3} K_{\infty}(a)( p+ q)}{ N r^{1/2}}(1-a\E)-\frac{( p+ q)^{2}}{4 N^{2}} (1-a\E)^2,
\end{align}
where, abusing notations, $\E$ is now  $\E(K,r^{1/2}\frac{\lambda^{-1/3}}{2N}(p+q),a)$. On the support of the symbol $\chi_3(\lambda^{-1/3}r^{1/2}p,\lambda^{-1/3}r^{1/2}q,a,1/N,h,\eta)$ we have $|( p, q)|\lesssim \lambda^{1/3} r^{-1/2}< \lambda^{1/3} r_0^{-1/2}$, and therefore, for $\lambda^{1/3}\lesssim N$, the last term in both derivatives is $O(r_{0}^{-1})$, while the next to last term is $r_{0}^{-1/2}O(p+q)$. Hence, when $|(p,q)|>M$ with $M$ sufficiently large, the corresponding part of the integral is $O(r^{-\infty})$ by integration by parts. So we are left with restricting our integral to a compact region in $(p,q)$, renaming the symbol $\chi_{4}$. We remark that everything is symmetrical with respect to $P$ and $Q$ and we deal with $P\geq Q$, that is to say, $\cos \theta \geq \sin \theta$ and therefore $\theta\in (-\frac{3\pi}{4},\frac{\pi}{4})$. We proceed depending upon the size of $Q=r\sin\theta$. If $\sin \theta<-C/r^{1/2}$ for a sufficiently large $C>0$, then $\partial_{q} \Phi_{N,a,\lambda}>c/(2r^{1/2})$ for some $C>c>0$ and the phase is non stationary.
Indeed, in this case we have
\[
\partial_{ q}\Phi_{N,a,\lambda}\geq  q^{2}+\frac{C}{2 r^{1/2}}+\frac{\lambda^{1/3}}{N} K_{\infty}(a)\frac{(p+ q)}{r^{1/2}}(1-a\E)-\frac{(p+q)^{2}}{4 N^{2}}(1-a\E)^2\,;
\]
using boundedness of $(p, q)$,  $|r^{1/2}(p, q)|\lesssim \lambda^{1/3}$ (from the support of $\chi_{3}$), and $1\ll\lambda^{1/3}\lesssim N$, we then have
\[
\frac{\lambda^{1/3}}{r^{1/2}N} (p+q)\Big[K_{\infty}(a)-\frac{r^{1/2}(p+ q)}{N\lambda^{1/3}}(1-a\E)\Big]\leq \frac{C}{4r^{1/2}}.
\]
It follows that $\partial_{ q}\Phi_{N,a,\lambda}>C/(4r^{1/2})$. Next, let $\sin \theta >-C/r^{1/2}$ and assume $P>0$ (otherwise apply non-stationary phase), which in turn implies $P>r_{0}/2$. Indeed, $\cos \theta \geq \sin \theta>-C/r^{1/2}$ implies that $\theta\in (-\frac{C}{\sqrt{r_0}},\frac{\pi}{4})$ and therefore in this regime we have $\cos\theta\geq \frac{\sqrt{2}}{2}$. Finally, consider the case $|\sin \theta |<C/r^{1/2}$ for $C>0$ like before.
Non degenerate stationary phase applies in $p$, at two (almost) opposite values of $ p$, such that $| p_{\pm}|\sim |\pm\sqrt{\cos\theta}|\geq 1/4$, which can be written as follows
\begin{multline}
  \label{eq:95}
      r \int_{\R^{2}} e^{i r^{3/2} \eta\Phi_{N,a,\lambda}} \chi_{4}(p, q ,a,1/N,h,\eta) \, d p d q \\ 
      = \frac r {r^{3/4}} \left( \int_{\R} e^{i r^{3/2} \eta\Phi_{+,N,a,\lambda}}  \chi^{+}( q ,a,1/N,h,\eta) \,  d q\right. 
{}+\left. \int_{\R} e^{i r^{3/2} \Phi_{-,N,a,\lambda}}  \chi^{-}(q ,a,1/N,h,\eta) \, d q\right)\,.
\end{multline}
Indeed, the phase is stationary in $p$ when
\[
p^2=\cos\theta-\frac{\lambda^{1/3}K_{\infty}(a)}{Nr^{1/2}}(p+q)(1-a\E)+\frac{(p+q)^2}{4N^2}(1-a\E)^2,
\]
and from $\cos\theta\geq \frac{\sqrt{2}}{2}$ and $\frac{1}{r}\leq \frac{1}{r_0}\ll 1$, $\partial_p\Phi_{N,a,\lambda}=0$  has exactly two (separate) solutions, that we denote $p_{\pm}=\pm\sqrt{\cos\theta}+O(r^{-1/2})$. Using \eqref{derivphipq}, at these critical points,
\[
\partial^2_{p}\Phi_{N,a,\lambda}|_{
p_{\pm}}=2p+\frac{\lambda^{1/3}K_{\infty}(a)}{Nr^{1/2}}(1+O(a))+O(N^{-2})|_{p_{\pm}},
\]
where we used boundedness of $(p,q)$,  $\partial_{p}\E=O(\frac{r^{1/2}\lambda^{-1/3}}{N})$ to deduce smallness of all the terms except the first one. Then $\lambda^{1/3}\lesssim N$, $r^{-1/2}\ll 1$, boundedness of $K_{\infty}(a)$ (close to $1$) together imply that for $p\in\{p_{\pm}\}$, we have $\partial^2_{p}\Phi_{N,a,\lambda}|_{p_{\pm}}= 2p_{\pm}+O(r^{-1/2})$ and $|p_{\pm}|\geq \frac 14-O(r^{-1/2})$; therefore stationary phase applies. The critical values, denoted $\Phi_{\pm,N,a,\lambda}$, are such that
\begin{align}\label{derivPhi3q}
\partial_q \Phi_{\pm,N,a,\lambda}(q,.) & =\partial_q\Phi_{N,a,\lambda}(q,p_{\pm},.)\nonumber \\
 & = (q^{2}-\sin\theta+ \frac{\lambda^{1/3} K_{\infty}(a)( p+ q)}{ N r^{1/2}}(1-a\E)-\frac{( p+ q)^{2}}{4 N^{2}} (1-a\E)^2)({p=p_{\pm}}).
\end{align}
As $|\sin \theta |<C/r^{1/2}$, the remaining phases $\Phi_{-,N,a,\lambda}$ and $\Phi_{+,N,a,\lambda}$ may be stationary but degenerate. However, taking two derivatives in \eqref{derivPhi3q}, one easily checks that $|  \partial^{3}_{q} \Phi_{\pm,N,a,\lambda}|\geq 2-O(r_{0}^{-1/2})$. Hence we get, by Van der Corput lemma,
\begin{equation}
  \label{eq:97}
\left|  \int_{\R} e^{i r^{3/2}\eta\Phi_{\pm,N,a,\lambda}}  \chi^{\pm}(q ,a,1/N,h,\eta) \,  d q\right| \lesssim (r^{3/2})^{-1/3}\,,
\end{equation}
which in turn implies the remaining part of \eqref{eq:93} for our current range of $(P,Q)$,
\begin{equation}
  \label{eq:98}
  \left|   \int_{\R^{2}} e^{i r^{3/2} \eta \Phi_{N,a,\lambda}}  \chi_{4}(p, q, a,1/N,h,\eta) \, d p d q\right|\lesssim r^{-5/4}.
\end{equation}
Notice moreover that $|Q|=|r\sin\theta|\leq C r^{1/2}$, hence from $r^{2}=P^{2}+Q^{2}$, we have $P\sim r$ and $ 1/ {r^{1/4}}\lesssim 1/{(1+|Q|^{1/2})}$; under the restriction $P>Q$, e.g. $X<1$, a better estimate holds,
\begin{equation}
  \label{eq:1001}
     \left| \int_{\R^{2}} e^{i \lambda \eta\phi_{N,a}}  \chi(s,\sigma,a,1/N,h,\eta) \, ds d\sigma \right| \lesssim \frac{\lambda^{-2/3}}{(1+|Q|^{1/2})}\,.
\end{equation}
In the last case $\sin \theta>C/r^{1/2}$, which means $P\geq Q\geq Cr^{1/2}$, stationary phase  holds in $(p,q)$ : the determinant of the Hessian matrix is at least $C\sqrt{\cos \theta}\sqrt{\sin \theta}$ and we get the following bound for the integral in \eqref{eq:98}
$$
\frac C {(\sqrt{\cos \theta}\sqrt{\sin \theta})^{1/2} r^{3/2}}  \lesssim\frac 1 r \frac 1  {(r \sqrt{\cos \theta}\sqrt{\sin \theta})^{1/2}}\lesssim \frac 1 r \frac{1}{|PQ|^{1/4}},
$$
so in this case we get (compare to \eqref{eq:90})
\begin{equation}
  \label{eq:100}
     \left| \int_{\R^{2}} e^{i \lambda \phi_{N,a}}  \chi(\lambda^{-1/3} \tilde q,\lambda^{-1/3} \tilde p,a,1/N,h,\eta) \, d\tilde p d\tilde q \right| \lesssim \frac 1 {\lambda^{2/3}|PQ|^{1/4}}\leq \frac 1 {\lambda^{2/3}r^{1/4}}\,.
   \end{equation}
   Of course with $P\geq Q$ the $r^{1/4}$ factor may be improved to $|Q|^{1/2}$. However, we need to consider the symmetrical case, in which case we ultimately retain the $r^{1/4}$ factor, which always yields $|Q|^{1/4}$ irrespective of the relative positions of $P$ and $Q$. Hence, in all cases, with $|Q|\leq r$, recalling that $Q=\lambda^{2/3}(K^2_{\infty}(a)-1)=\lambda^{2/3}(K_{\infty}(a)+1)(K-1)(1+O(a))$ and $K=T/(4N)$,
   \[
|W_{N,a}(T,X,Y)|\lesssim \frac{h^{1/3}}{h^2}\frac{\lambda^{5/6}}{N^{1/2}} \frac 1 {\lambda^{2/3}(1+r^{1/4})}\lesssim \frac{h^{1/3}}{h^2}\frac{1}{(N/\lambda^{1/3})^{1/2}+N^{1/4}|T-4N|^{1/4}}\,,
\]
and in the particular case $X<1$,
\[
  |W_{N,a}(T,X,Y)|\lesssim \frac{h^{1/3}}{h^2}\frac{\lambda^{5/6}}{N^{1/2}} \frac 1 {\lambda^{2/3}(1+|Q|^{1/2})}\lesssim  \frac{h^{1/3}}{h^2}\frac{1}{(N/\lambda^{1/3})^{1/2}+\lambda^{1/6}|T-4N|^{1/2}}\,.
\]
When $|N|\geq \lambda$ the proof proceeds similarly : just replace $(K_{\infty}(a)^2-1)$ by $A_{c|0}-1$ in \eqref{eq:90}. When $|N|\leq\lambda^2$ we use \eqref{TNAc0} to replace $A_{c|0}-1$ in \eqref{eq:90} by $\frac{T}{4N}(1+O(\lambda^{-2}))-1$. When $|N|\geq \lambda^2$, we cannot take advantage of \eqref{TNAc0} anymore since $|T-4N(1+O(\lambda^{-2}))|=|T-4N+O(N/\lambda^2)|$ and the last term can be large. We may use $A_{c|0}-1=4(\frac{3Y}{T}-1)(1+O(a))$ whose infimum is always $0$; notice that in this case in the first order derivatives of $\tilde\Phi_{N,a,\lambda}$ we can keep only the first two terms $(\tilde q^2-Q,\tilde p^2-P)$, $Q=\lambda^{2/3}(A_{c|0}-1)$, $P=\lambda^{2/3}(A_{c|0}-X)$, since the part of $A_c$ that depends on $\tilde p,\tilde q$ is too small to oscillate and we can bring it in the symbol. In fact, discarding the terms depending on $\tilde p,\tilde q$ in $A_c$ gives essentially a product of two Airy functions whose worst decay is $(1+|P|)^{-1/4}(1+|Q|)^{-1/4}$. This concludes the proof of Proposition \ref{propdispNgrand}.\qed

\subsubsection{Proof of Propositions \ref{propdispNpetitloin} and \ref{propdispNpetitpres}}
As $1\leq N<\lambda^{1/3}\ll \lambda$, we move $\exp(iNB(\lambda A^{3/2}))$ (from the phase $\Psi_{N,a,a}$ of $W_N$) into the symbol; the critical point $A_c$ is given in Lemma \ref{lemAcsol}, the critical value for the phase is $\eta\phi_{N,a}$ and $\phi_{N,a}$ (defined in \eqref{critphaNpetit}) does not depend on $\eta$. 
The following bound is proved in \cite{Annals} (for a phase that was constructed differently), uniformly for $\eta\in \text{supp}(\psi)$,
\begin{equation}
  \label{eq:90bis}
   \left| \int_{\R^{2}} e^{i \lambda \eta\phi_{N,a}(T,X,S,\Upsilon)}  \chi_{3}(S,\Upsilon,a,1/N,h,\eta) \, dS d\Upsilon \right| \lesssim N^{1/4}\lambda^{-3/4}\,.
 \end{equation}
 Informally, the decay should be understood as resulting from a non degenerate stationary phase in one variable, followed by an application of Van der Corput lemma (with a non vanishing fourth derivative in the remaining variable). This accounts for the $1/2+1/4=3/4$ exponent on the large parameter. We now obtain better bounds, either because in the remaining variable the phase has non vanishing derivative of order three (generating $1/2+1/3=5/6$ decay) or two. In doing so, we uncover the geometry of the curves on which the phase may degenerate.
 
Set $\Lambda=\lambda/N^{3}$ to be the new (large) parameter. Re-scale again variables with  $S={q}/N$ and $\Upsilon={p}/N$ and set $\Lambda \tilde\Phi_{N,a}(T,X,{p},{q})=\lambda \phi_{N,a}(T,X,{p}/N,{q}/N)$. On the support of $\chi$ we then have $|({q},{p})|\lesssim N$. We are reduced to proving
\begin{equation}
  \label{eq:91hh}
   \left| \int_{\R^{2}} e^{i \Lambda \eta\tilde\Phi_{N,a}}  \chi({q}/N,{p}/N ,a,1/N,h,\eta) \, d{p} d{q} \right| \lesssim \Lambda^{-3/4}\,.
\end{equation} 
We have $
\nabla_{({q},{p})}\tilde\Phi_{N,a}  = \Big({q}^2+N^2(1-A_c), {p}^2+N^2(X-A_c)\Big)$, where, using Lemma \ref{lemAcsol}, 
\begin{equation}\label{eq:alpha_c22}
A_c\left(K,{\frac{({q}+{p})}{2N^2}},a\right)=\left(K_{\infty}(a)-\frac{({q}+{p})}{2N^2}(1-a\E(K,w,a))\right)^2_{\bigl|w=\frac{({q}+{p})}{2N^2}}.
\end{equation}
We define ${P}=(K^{2}_{\infty}(a)-X)N^{2}$ and ${Q}=(K^{2}_{\infty}(a)-1)N^{2}$. With these notations, 
\begin{align*}
  \partial_{{q}}\tilde \Phi_{N,a} & ={q}^2-{Q}+K_{\infty}(a)({q}+{p})(1-a\E)-\frac{1}{4N^2}({q} +{p})^2(1-a\E)^2\,\\
\partial_{{p}}\tilde\Phi_{N,a} & ={p}^2-{P}+K_{\infty}(a)({q}+{p})(1-a\E)-\frac{1}{4N^2}({q} +{p})^2(1-a\E)^2\,.
\end{align*}
\begin{rmq}
For $N$ sufficiently small even the terms with $\frac 1N$ may provide important contributions. At this stage and given that all variables were properly rescaled with respect to $a$, the reader may, at first, set $a=0$ (and even $N=1$ !) to make all subsequent computations more straightforward while capturing the correct asymptotics.
\end{rmq}
We start with $|({P},{Q})|\geq r_{0}$ for some large, fixed $r_{0}$, in which case we can follow the same approach as in the previous case. Set again ${P}=r\cos \theta$ and ${Q}=r\sin \theta$. If $|({p},{q})|<r_{0}/2$, then the corresponding integral is non stationary and we get decay by integration by parts. We change variables $({p},{q})=r^{1/2}( p', q')$ with $r_0\leq r\lesssim N^2$ and aim at proving
\begin{equation}
  \label{eq:101}
     \left| r  \int_{\R^{2}} e^{i r^{3/2} \Lambda \eta\Phi_{N,a}}  \chi(r^{1/2} q'/N,r^{1/2}  p'/N ,a,1/N,h,\eta) \, d p' d q' \right| \lesssim r^{-1/4 }\Lambda^{-5/6}\,,
\end{equation}
where $\chi$ is compactly supported and $\Phi_{N,a}(T,X, p', q'):=r^{-3/2}\tilde\Phi_{N,a}(T,X,r^{1/2} p',r^{1/2} q')$. Compute 
\begin{align*}
  \partial_{ p'} \Phi_{N,a} & =p'^{2}-\cos\theta+ \frac{ K_{\infty}(a)}{r^{1/2}}(q'+p')(1-a\E)-\frac{(q'+p')^2(1-a\E)^{2}}{4 N^{2}}, \\
 \partial_{ q'} \Phi_{N,a} & = q'^{2}-\sin\theta+ \frac{ K_{\infty}(a)}{r^{1/2}}(q'+p')(1-a\E)-\frac{(q'+p')^2(1-a\E)^{2}}{4 N^{2}}\,.
 \end{align*}
As in the previous case,  $\cos\theta$ and $\sin\theta$ play symmetrical parts: hence we set ${P}\geq  {Q}$, $\cos\theta\geq \sin \theta$. If $|( p', q')|\geq M$ for some large $M\geq 1$, then, for critical points, $ p'^2_c\geq  q'^2_c$ and if $M$ is sufficiently large non-stationary phase applies in $p'$. Therefore we are reduced, again, to bounded $|(p',q')|$. We deal with three cases, depending upon ${Q}=r\sin\theta $: if $\sin \theta<-\frac{C}{\sqrt{r}}$ for some sufficiently large constant $C>0$, then
\[
\partial_{ q'} \Phi_N \geq  q'^{2}+\frac{C}{r^{1/2}}+ \frac{ K_{\infty}(a)}{r^{1/2}}(q'+p')(1-a\E)-\frac{(q'+ p')^2(1-a\E)^{2}}{4 N^{2}}\,.
\]
As $|( p',  q')|$ is bounded, $\E$ is bounded, $N$ is sufficiently large (from $N>\sqrt{r}\geq \sqrt{r_0}$) and  $\frac{1}{\sqrt{r}}\geq \frac 1N$, it follows that non-stationary phase applies: the sum of the last three terms in the previous inequality is greater than $C/(2r^{1/2})$ for $C$ large enough. If $|\sin\theta|\leq \frac{C}{\sqrt{r}}$ then, again, $\theta\in (-\frac{C}{\sqrt{r_0}},\frac{\pi}{4})$ and $\cos\theta\geq \frac{\sqrt{2}}{2}$. We have $|{Q}| =|r\sin\theta|\leq C \sqrt{r}$; if $|{Q}|<C$, then $1+|{Q}|\lesssim r^{1/2}$, while $|{P}|\sim r$. As in the previous case the stationary phase applies in $p'$ with non-degenerate critical points $p'_{\pm}$ and yields a factor $(r^{3/2}\Lambda)^{-1/2}$; the critical values $ \Phi_{\pm,N,a}$ of the phase function at these critical points are such that $|\partial^3_{q'} \Phi_{\pm,N,a}|\geq 2-O(\frac{1}{\sqrt{r_0}})$ and therefore the integral with respect to $ q'$ is bounded by $(r^{3/2}\Lambda)^{-1/3}$ by Van der Corput lemma. We obtain \eqref{eq:101}. If we are interested solely in $X<1$, we may bound $r^{-1/4}\lesssim (1+|{Q}|^{1/2})^{-1}$. Finally, if $\sin\theta > \frac{C}{\sqrt{r}}$, then ${Q}=r\sin\theta>C\sqrt{r}$ and therefore $N^2|K^2_{\infty}(a)-1|>Cr^{1/2}$. We directly perform stationary phase with large parameter $r^{3/2}\Lambda$ as the determinant of the Hessian matrix at the critical point is at least $C\sqrt{\cos\theta\sin \theta}$: this yields the following bound for the left hand side term in \eqref{eq:101}, where the last inequality holds only for $X<1$
\[
\frac{c r}{(\sqrt{\sin\theta}\sqrt{\cos\theta})^{1/2}r^{3/2}\Lambda}=\frac{1}{\Lambda}\frac{1}{({P}{Q})^{1/4}}\leq \frac{1}{\Lambda}\frac{1}{{Q}^{1/2}}\,.
\]
Considering we may exchange $P$ and $Q$, we have just proved Proposition \ref{propdispNpetitloin}: for $N<\lambda^{1/3}$ and $|T-4N|\gtrsim 1/N$,
\begin{align*}
|W_{N,a}(T,X,Y)|&=\frac{h^{1/3}\lambda^{4/3}}{h^2\sqrt{N}\sqrt{\lambda}N^2}\Big | r  \int_{\R^{2}} e^{i r^{3/2} \Lambda\eta \Phi_{N,a}}  \chi(r^{1/2} p'/N,r^{1/2}  q'/N ,a,1/N,h,\eta) \, d p' d q'\Big |\\
&\lesssim \frac{h^{1/3}\lambda^{5/6}}{h^2N^{5/2}}r^{-1/4}\Big(\frac{\lambda}{N^3}\Big)^{-5/6}\lesssim \frac{h^{1/3}}{h^2}\frac{1}{(1+|{Q}|^{1/4})}\\
& \lesssim\frac{h^{1/3}}{h^2}\frac{1}{(1+N^{1/4}|T-4N|^{1/4})}\,.
\end{align*}
In the special case $X<1$, we may replace $N^{1/4}|T-4N|^{1/4}$ by $N^{1/2}|T-4N|^{1/2}$.

We now move to the most delicate case $|({P},{Q})|\leq r_{0}$. For $|({p},{q})|$ large, the phase is non stationary and integrations by parts provide $O(\Lambda^{-\infty})$ decay. So we may replace $\chi$ by a cutoff, that we still call $\chi$, that is compactly supported in $|({p},{q})|<R$.
We will improve the estimates from \cite{Annals} that were sharp only at ${p}={q}=0$ on this limiting case and prove
\begin{equation}\label{toexpect}
   \left| I:= \int_{\mathbb{R}^{2}} e^{i \Lambda \eta \tilde\Phi_{N,a}}  \chi({q}/N,{p}/N ,a,1/N,h,\eta) \, d{p} d{q} \right| \lesssim \frac{\Lambda^{-5/6}}{\Lambda^{-1/12}+|{Q}|^{1/6}}\,.
\end{equation}
In \cite{Annals}, we proved a general lemma covering the most degenerate cases. Here, as explained earlier, we just proceed by identifying one variable where the usual stationary phase may be performed, and then evaluate the remaining 1D oscillating integral using Van der Corput lemma with different decay rates depending on the lower bounds on derivatives of order at most $4$. Replacing $A_c$ using \eqref{eq:alpha_c22} in $\partial_{{q}}\tilde \Phi_{N,a}  ={q}^2+N^2(1-A_{c\bigl|w=\frac{({p}+{q})}{2N^2}})$ and $\partial_{{p}}\tilde \Phi_{N,a} ={p}^2+N^2(X-A_{c\bigl|w=\frac{({p}+{q})}{2N^2}})$,
\begin{align*}
  \partial_{{q}}\tilde \Phi_{N,a} & ={q}^2-{Q}+K_{\infty}(a)({q}+{p})(1-a\E)-\frac{1}{4N^2}({q} +{p})^2(1-a\E)^2,\\
\partial_{{p}}\tilde\Phi_{N,a} & ={p}^2-{P}+K_{\infty}(a)({q}+{p})(1-a\E)-\frac{1}{4N^2}({q} +{p})^2(1-a\E)^2.
\end{align*}
Define $H_N$ such that $-{p}{P}-{q}{Q}+H_N({q},{p},X,T)=\tilde \Phi_{N,a}({q},{p},X,T)$, then
\begin{align}
  \label{eq:H_Nderivq}
\partial_{{q}}H_N  & ={q}^2+K_{\infty}(a)({q}+{p})(1-a\E)-\frac{1}{4N^2}({q} +{p})^2(1-a\E)^2\,,\\
\label{eq:H_Nderivp}
\partial_{{p}}H_N  & ={p}^2+K_{\infty}(a)({q}+{p})(1-a\E)-\frac{1}{4N^2}({q} +{p})^2(1-a\E)^2\,.
\end{align}
Moreover, the second order derivatives of $H_N$ follow directly from those of $\tilde\Phi_{N,a}$ :
\begin{align}\label{secondderH_NG_N}
 \partial^2_{{q}}H_N & =\partial^2_{qq}\tilde\Phi_{N,a} =2{q}-2N^2A_c^{1/2}\partial_{w}(A_c^{1/2})\frac{\partial w}{\partial {q}}=
2q-A_c^{1/2}\partial_{w}(A_c^{1/2})_{\bigl|w=\frac{({p}+{q})}{2N^2}},\\
\partial^2_{{p}}H_N &=\partial^2_{pp}\tilde\Phi_{N,a} =2{p}-2N^2A_c^{1/2}\partial_{w}(A_c^{1/2})\frac{\partial w}{\partial {p}}=
2p-A_c^{1/2}\partial_{w}(A_c^{1/2})_{\bigl|w=\frac{({p}+{q})}{2N^2}},\\
 \partial^2_{{q},{p}}H_N &=\partial^2_{qp}\tilde\Phi_{N,a}  =2N^2A_c^{1/2}\partial_{w}(A_c^{1/2})\frac{\partial w}{\partial {q}}=
-A_c^{1/2}\partial_{w}(A_c^{1/2})_{\bigl|w=\frac{({p}+{q})}{2N^2}},
\end{align}
and we recall from \eqref{deriveeAc} that the derivative of $A_c$ with respect to $w$ is given by 
\[
\partial_{w}(A_c^{1/2})(K,w,a)=-\frac{1}{1+aA_c^{1/2}\frac{(A_c^{1/2}+w)^3}{K^2(1+a)}}.
\]
The determinant of the Hessian matrix of $H_N$ reads as follows
\begin{align}
  \label{eq:103}
  \nonumber
 \text{det Hess }H_N & =4{p}{q}-2 ({p}+{q})A_c^{1/2}\partial_{w}(A_c^{1/2})_{\bigl|w=\frac{({p}+{q})}{2N^2}}\,\\
  &=4{p}{q}+2 ({p}+{q})\frac{(K_{\infty}(a)-\frac{({q}+{p})}{2N^2}(1-a\E))}{1+aA_c^{1/2}\frac{(A_c^{1/2}+w)^3}{K^2(1+a)}} {}_{\bigl|w=\frac{({p}+{q})}{2N^2}}\,.
\end{align}
When $\text{ det Hess} H_N$ is away from $0$, the usual stationary phase applies, so we expect the worst contributions to occur in a neighborhood of $\mathcal{C}_N=\{({q},{p}),\text{det Hess }H_N=0\}$. Informally, for $|N|\neq 0$, the equation defining $\mathcal{C}_{N}$ will be close to either a parabola ($|N|=1)$ or an hyperbola ($|N|\geq 2)$:
\begin{equation*}
\mathcal{C}_{\pm 1\bigl|a=0}=\Big\{ (p-q)^2=2K(p+q)\Big\}\,\,\text{ and, for}\,|N|\geq 2\,,\,\,\mathcal{C}_{N\bigl|a=0}=\Big\{ 4p q +2 K(p+q)=\frac{(p+q)^{2}}{N^{2}}\Big\}\,.
\end{equation*}
These curves suggest to rotate variables: let $\xi_{1}=({p}+{q})/2$ and $\xi_{2}=({p}-{q})/2$. Then ${p}=\xi_{1}+\xi_{2}$ and ${q}=\xi_{1}-\xi_{2}$, and setting $h_N(\xi_1,\xi_2):=-\tilde \Phi_{N,a}({p},{q})$, from the above definition of $H_{N}$ we get
\begin{align*}
    h_N(\xi_1,\xi_2)&=(\xi_1+\xi_2){P}+(\xi_1-\xi_2){Q}-H_N(\xi_1-\xi_2,\xi_1+\xi_2) \\
    & =\xi_{1}M_{1}+\xi_{2} M_{2}-H_N(\xi_1-\xi_2,\xi_1+\xi_2),
\end{align*}
where we set $M_{1}={P}+{Q}$ and $M_{2}={P}-{Q}$. 
Using $\Big|\frac{\partial({p},{q})} {\partial(\xi_1,\xi_2)}\Big|=2$, 
\begin{align}
  \label{eq:106}
 \frac 1 2 \text{ det Hess}_{(\xi_1,\xi_2)} h_N & =  
\Big(4{p}{q}+2({p}+{q})\frac{(K_{\infty}(a)-\frac{({q}+{p})}{2N^2}(1-a\E))}{1+aA_c^{1/2}\frac{(A_c^{1/2}+w)^3}{K^2(1+a)}}\Big)_{\bigl|w=\frac{({p}+{q})}{2N^2}, {p}=\xi_1+\xi_2, {q}=\xi_1-\xi_2}\\
\nonumber   & = 4\Big[\xi_{1}^{2}\Big(1-\frac{1}{N^2}(1-a\underline \E)\Big)-\xi_{2}^{2}+K_{\infty}(a)\xi_{1}\Big(\frac{1-a\underline \E}{1-a\E}\Big)\Big]_{\bigl|w=\frac{\xi_1}{N^2}},
\end{align}
where we used \eqref{eq:103} and set $1-a\underline \E(K,w,a):=({1-a\E(K,w,a)})({1+aA_c^{1/2}\frac{(A_c^{1/2}+w)^3}{K^2(1+a)}})^{-1}$,  for $w=\frac{\xi_1}{N^2}$. Outside a small neighborhood of the set $\{\text{det Hess }_{(\xi_1,\xi_2)}h_N= 0\}$, the stationary phase applies in both $(\xi_1,\xi_2)$; in fact in \cite{Annals} we focused on degenerate critical points from this set. Here, we let $|N|\geq 1$ and we will consider all cases irrespective of the Hessian. It should nevertheless be clear from the proof that the most degenerate cases are in a small neighborhood of $\text{det Hess}_{(\xi_1,\xi_2)}h_N=0$.
We compute first $\partial^2_{\xi_2}h_N=-4\xi_1$: using that $\frac{\partial {p}}{\partial\xi_2}=-\frac{\partial {q}}{\partial\xi_2}=-1$, 
\begin{align}
\label{firstderivH_Nxi2}
\partial_{\xi_2}(H_N(\xi_1-\xi_2,\xi_1+\xi_2)) & =(\partial_{{p}}H_N-\partial_{{q}}H_N)_{\bigl|{p}=\xi_1+\xi_2, {q}=\xi_1-\xi_2}\\
\partial^2_{\xi_2}(H_N(\xi_1-\xi_2,\xi_1+\xi_2)) & =\Big(\partial^2_{{p}}H_N-2\partial^2_{{p}, {q}}H_N+\partial^2_{{q}}H_N\Big)_{\bigl|{p}=\xi_1+\xi_2, {q}=\xi_1-\xi_2}\,.
\end{align}
Using now \eqref{secondderH_NG_N} and replacing $({p},{q})$ by $(\xi_1+\xi_2, \xi_1-\xi_2)$ yields
\begin{equation}\label{secderivxi1xi1}
\partial^2_{\xi_2}h_N(\xi_1,\xi_2)=-\partial^2_{\xi_2}(H_N(\xi_1-\xi_2,\xi_1+\xi_2))=-4\xi_1.
\end{equation}
\paragraph{\bf Case $|\xi_1|\geq c>0$}
From \eqref{secderivxi1xi1},  stationary phase in $\xi_2$ applies, with large parameter $\Lambda$. Using \eqref{firstderivH_Nxi2}, the critical point is such that $M_{2}=\partial_{{p}}H_N({q},{p})-\partial_{{q}}H_N({q},{p})$. Using \eqref{eq:H_Nderivq} and \eqref{eq:H_Nderivp} and replacing ${p}, {q}$ by $(\xi_1+\xi_2),(\xi_1-\xi_2)$, we get $M_{2}=4\xi_1\xi_2$, so that $\xi_{2,c}=\frac{M_{2}}{4\xi_1}$. Next, compute higher order derivatives for the critical value of the phase $h_N(\xi_1,\xi_2)$ at $\xi_2=\xi_{2,c}$:
\begin{align}\label{critptnondegxi1}
\partial_{\xi_1}(h_N(\xi_1,\xi_{2,c}))& =\partial_{\xi_1}h_N(\xi_1,\xi_2)|_{\xi_2=\xi_{2,c}}+\frac{\partial\xi_{2,c}}{\partial\xi_1}\partial_{\xi_2}h_N(\xi_1,\xi_2)|_{\xi_2=\xi_{2,c}}
=\partial_{\xi_1}h_N(\xi_1,\xi_2)|_{\xi_2=\xi_{2,c}}\\
\nonumber
&=M_{1}-(\partial_{{p}}H_N+\partial_{{q}}H_N)_{\bigl|{q}=\xi_1-\xi_2,{p}=\xi_1+\xi_2}\\
\nonumber
&=M_{1}-2\xi_{1}^{2}\Big(1-\frac{1}{N^2}(1-a\E)^2\Big)-2\xi_{2,c}^{2}-4K_{\infty}(a)\xi_{1}(1-a\E)_{\bigl|w=\frac{\xi_{1}}{N^2}}\,.
\end{align}
The second order derivative of $h_N(\xi_1,\xi_{2,c})$ with respect to $\xi_1$ equals
\begin{align}\label{seccritptnondegxi1}
\partial^2_{\xi_1}(h_N(\xi_1,\xi_{2,c})) & =-4\Big[\xi_1\Big(1-\frac{1}{N^2}(1-a\E)^2-\frac{a}{N^2}(1-a\E)\xi_1\partial_{\xi_{1}}\E\Big)-\frac{M_{2}^2}{16\xi_1^3}
\\ & \quad\quad\quad\quad\quad{}+K_{\infty}(a)\Big((1-a\E)-a\xi_1\partial_{\xi_{1}}\E\Big)\Big] \nonumber\\
& =-4\Big[\xi_1\Big(1-\frac{1}{N^2}(1-a\tilde \E)\Big)-\frac{M_{2}^2}{16\xi_1^3}+K_{\infty}(a)\frac{1-a\tilde \E}{1-a\E}\Big]\,,\nonumber
\end{align}
where $\tilde \E$ is defined as
\begin{equation}\label{deftildef}
1-a\tilde \E(K,\xi_{1}/N^{2},a):=(1-a\E(K,\xi_{1}/N^{2},a)(1-a\E(K,\xi_{1}/N^{2},a) -a\xi_{1}\partial_{\xi_{1}}(E(K,\xi_{1}/N^{2},a))\,.
\end{equation}
\begin{lemma}
For $|N|\geq 1$, the critical point $\xi_1$ of $h_N(\xi_1,\xi_{2,c})$ can be degenerate of order at most $2$.
\end{lemma}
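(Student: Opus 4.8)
The plan is to differentiate the last line of \eqref{seccritptnondegxi1} once more in $\xi_1$ and to show that the resulting third derivative cannot vanish wherever $\partial^2_{\xi_1}\big(h_N(\xi_1,\xi_{2,c})\big)=0$. Performing the differentiation and collecting into an error term all contributions coming through the chain rule from the $\xi_1$-dependence of $\E$ and $\tilde\E$ (both smooth in $(K,w,a)$ with $w=\xi_1/N^2$ and, on the relevant compact region $c\le|\xi_1|\lesssim 1$, $K$ close to $1$, $a\ll1$, with uniformly bounded derivatives by Lemma~\ref{lemAcsol}, together with the factor $\partial_{\xi_1}\E=N^{-2}\partial_w\E$), one arrives at a formula of the shape
\begin{equation*}
\partial^3_{\xi_1}\big(h_N(\xi_1,\xi_{2,c})\big)=-4\Big[\Big(1-\tfrac{1}{N^2}(1-a\tilde\E)\Big)+\frac{3M_2^2}{16\xi_1^4}+a\,R\Big],\qquad |R|\lesssim 1.
\end{equation*}
The two explicit bracketed terms are nonnegative, so they cannot compensate one another; only the $O(a)$ remainder could produce a cancellation, and this is excluded separately in the two regimes below.

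For $|N|\ge 2$ one has $1-\tfrac1{N^2}(1-a\tilde\E)\ge \tfrac34-O(a)\ge\tfrac12$ for $a$ small, hence $\big|\partial^3_{\xi_1}(h_N(\xi_1,\xi_{2,c}))\big|\gtrsim 1$ \emph{everywhere} on the support of the symbol, so $\partial^3_{\xi_1}$ never vanishes. For $|N|=1$ the first bracketed term is $1-(1-a\tilde\E)=a\tilde\E=O(a)$, so $\partial^3_{\xi_1}(h_N(\xi_1,\xi_{2,c}))=-4\big[\tfrac{3M_2^2}{16\xi_1^4}+O(a)\big]$, and it suffices to show that $M_2$ is bounded away from $0$ whenever $\partial^2_{\xi_1}(h_N(\xi_1,\xi_{2,c}))=0$. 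But at such a point, \eqref{seccritptnondegxi1} with $N=1$ reads $\tfrac{M_2^2}{16\xi_1^3}=\xi_1\,a\tilde\E+K_\infty(a)\tfrac{1-a\tilde\E}{1-a\E}$; since $\xi_1 a\tilde\E=O(a)$ and $K_\infty(a)$ is close to $1$ (Lemma~\ref{lemAcsol}), the right-hand side is close to $1$, so $M_2^2=16\xi_1^3\,(1+O(a))$ is bounded below (using $c\le|\xi_1|\lesssim1$; this in passing forces $\xi_1>0$). Substituting back, $\tfrac{3M_2^2}{16\xi_1^4}=\tfrac{3}{\xi_1}(1+O(a))\gtrsim 1$, hence $\big|\partial^3_{\xi_1}(h_N(\xi_1,\xi_{2,c}))\big|\gtrsim1$. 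In all cases $\partial^3_{\xi_1}(h_N(\xi_1,\xi_{2,c}))\ne 0$ whenever $\partial^2_{\xi_1}(h_N(\xi_1,\xi_{2,c}))=0$, which is exactly the assertion that the critical point in $\xi_1$ is degenerate of order at most $2$.

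The one genuinely delicate point is the derivation of the displayed formula for $\partial^3_{\xi_1}$: one must check that every term obtained by differentiating the $a$-dependent factors $\E$, $\tilde\E$ and the quotient $\tfrac{1-a\tilde\E}{1-a\E}$ through the chain rule is indeed $O(a)$ uniformly in $N\ge1$ and in $\xi_1$ on the compact range, so that they all can be absorbed into $a\,R$ with $R$ bounded. This rests on the smoothness and the uniform bounds for $\E$ and its derivatives given by Lemma~\ref{lemAcsol}, together with the extra $N^{-2}$ gained each time a $\xi_1$-derivative hits the argument $w=\xi_1/N^2$; once this bookkeeping is done, the two estimates above are elementary.
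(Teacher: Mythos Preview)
Your proof is correct and follows essentially the same approach as the paper: compute $\partial^3_{\xi_1}(h_N(\xi_1,\xi_{2,c}))$ from \eqref{seccritptnondegxi1}, observe that for $|N|\ge 2$ the term $1-\tfrac{1}{N^2}(1+O(a))$ alone bounds the bracket away from zero, and treat $|N|=1$ separately using that $K_\infty(a)$ dominates in \eqref{seccritptnondegxi1}. The only cosmetic difference is in the $|N|=1$ case: you argue the direct implication ($\partial^2_{\xi_1}=0\Rightarrow M_2$ bounded below $\Rightarrow \partial^3_{\xi_1}\ne 0$), whereas the paper phrases it as the contrapositive (if $M_2$ is small enough that $\partial^3_{\xi_1}$ could vanish, then $\partial^2_{\xi_1}\ne 0$); these are logically equivalent and rest on the same computation.

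One small quibble: your sentence ``the two explicit bracketed terms are nonnegative'' is not literally true for $|N|=1$, where the first term equals $a\tilde\E$ and may have either sign. This does not affect the argument, since your subsequent case split absorbs that term into the $O(a)$ remainder anyway, but you may want to reword the preamble.
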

We need to prove that, when $\partial_{\xi_1}(h_N(\xi_1,\xi_{2,c}))=\partial^2_{\xi_1}(h_N(\xi_1,\xi_{2,c}))=0$, the third order derivative doesn't vanish. Taking the derivative of \eqref{seccritptnondegxi1} with respect to $\xi_1$ yields
\begin{equation}\label{thrdcritptnondegxi1}
\partial^3_{\xi_1}(h_N(\xi_1,\xi_{2,c}))
=-4\Big[\Big(1-\frac{1}{N^2}(1-a\partial_{\xi_1}(\xi_1\tilde \E))\Big)+\frac{3M_{2}^2}{16\xi_1^4}+\frac{aK_{\infty}(a)\breve{\E}}{(1-a\E)^2}\Big],
\end{equation}
where we set $\breve{\E}:=(1-a\tilde \E)\partial_{\xi_1}\E-(1-a \E)\partial_{\xi_1}\tilde \E$. For all $|N|\geq 2$, the bracket in the right hand side term of \eqref{thrdcritptnondegxi1} is strictly positive and we may apply Van der Corput lemma with non-vanishing third order derivative. We are left with $|N|=1$ where the third order derivative may vanish when $M_{2}$ is very small : but in this case, the second derivative given in \eqref{seccritptnondegxi1} doesn't vanish and we can apply the usual stationary phase. Indeed, recall that  $(\xi_1,\xi_2)$ is  bounded (from $({p},{q})$ bounded). Therefore, when $N^2=1$, the coefficient of $\xi_1$ in \eqref{seccritptnondegxi1} is $O(a)$ and for $M_{2}$ such that $|M_{2}|\leq 4c^2$ we have $|\frac{M_{2}^2}{16\xi_1^3}|\leq c$ for every $|\xi_1|\geq c$. We conclude using that $K_{\infty}(a)\sim 1$. As such, the contribution of the set $|\xi_{1}|>c$  \eqref{toexpect} is at most $C(c)\times 1/\Lambda^{1/2} \times 1/\Lambda^{1/3}$, where the first factor is the non degenerate stationary phase in $\xi_{2}$ and the second one is coming from Van der Corput in $\xi_{1}$, irrespective of the value of $N$. We stress that we made no use of the value of the Hessian here.\qed
\paragraph{\bf Case $|\xi_1|\leq c$}
Let now $\xi_1$ belong to a small neighborhood of $0$, $|\xi_1|\leq c<1/2$, then stationary phase (with non-degenerate critical point) applies in $\xi_1$: using \eqref{eq:H_Nderivq} and \eqref{eq:H_Nderivp}, compute (see \eqref{critptnondegxi1})
\begin{align}
\partial_{\xi_1}h_N(\xi_1,\xi_2) 
\label{eq:stath_N} & = M_{1}-2\xi_{1}^{2}\Big(1-\frac{1}{N^2}(1-a\E)^2\Big)-2\xi_{2}^{2}-4K_{\infty}(a)\xi_{1}(1-a\E)_{\bigl|w=\frac{\xi_1}{N^2}}\\
\label{eq:secderxi1hN}
\partial^2_{\xi_1}h_N(\xi_1,\xi_2) & =-4\Big(\xi_1(1-\frac{1}{N^2}(1-a\tilde \E))+K_{\infty}(a)\frac{1-a\tilde \E}{1-a\E}\Big),
\end{align}
where $\tilde \E$ was defined in \eqref{deftildef}.
As  $a\ll 1$ and $\tilde \E, \E$ are uniformly bounded, $\partial^2_{\xi_{1}} h_{N}$  cannot vanish provided $\xi_1$ is sufficiently small.
We denote $\xi_{1,c}$ the solution to $\partial_{\xi_1}h_N(\xi_1,\xi_2)=0$, then, using \eqref{eq:stath_N}, 
\begin{equation}\label{xi1c}
 \xi_{1,c}^{2}\Big(1-\frac{1}{N^2}(1-a\E)^2\Big)+2K_{\infty}(a)\xi_{1,c}(1-a\E)\Big|_{w=\frac{\xi_{1,c}}{N^2}}=\frac  {M_{1}}2-\xi_{2}^{2}.
 \end{equation}
\begin{rmq}\label{rmqboundsxi1cxi2}
Since $|M_{1}|=|{P}+{Q}|\leq 2 r_0$ with $r_0$ fixed, it follows from \eqref{xi1c} that
 both $\xi_{1,c}$ and $\xi_2$ remain bounded at stationary points and $|M_{1}/2-\xi_2^2|\leq 4c$.
\end{rmq}
Recall that $\E$ is a function of $(K,\frac{\xi_{1}}{N^2},a)$ (and, in particular, independent of $\xi_2$), uniformly bounded, which implies that the same holds within parenthesis in \eqref{xi1c}. Consider $a=0$, then \eqref{xi1c} has an unique, explicit solution that reads as follows (we chose the solution that is closer to zero, as both $\xi_{1}$ and $M_{1}/2-\xi_{2}^{2}$ are small)
\[
\xi_{1,c|{a=0}}=(M_{1}/2-\xi_2^2)F_0(M_{1}/2-\xi_2^2,1/N^2,K),
\]
\[
F_0(M_{1}/2-\xi_2^2,1/N^2,K)=\frac{1}{K+\sqrt{K^2+(M_{1}/2-\xi_2^2)(1-1/N^2)}},\quad \forall N\neq 0.
\]
\begin{lemma}
The solution to $\partial_{\xi_1}h_N=0$ reads as follows
\begin{equation}\label{eq:formxi1cxi2}
\xi_{1,c}=(M_{1}/2-\xi_2^2)F(M_{1}/2-\xi_2^2,1/N^2,K,a),
\end{equation}
where $F$ is a smooth function in all the variables such that $|\partial^k_{\xi_2}F|\leq C_k$, for all $k\geq 0$, where $C_k$ are positive constants. Moreover, $F(M_{1}/2-\xi_2^2,1/N^2,K,0)=F_0(M_{1}/2-\xi_2^2,1/N^2,K)$.
\end{lemma}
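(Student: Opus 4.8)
The plan is to read equation \eqref{xi1c} as a (mildly perturbed quadratic) equation for $\xi_{1,c}$ whose right-hand side is the small quantity $z:=M_{1}/2-\xi_2^{2}$, and to solve it by the implicit function theorem in the regime $z$ small. Concretely, I would set $u=1/N^{2}\in(0,1]$ (so that $w=\xi_1/N^{2}=u\xi_1$) and introduce
\[
\Phi(\xi_1,u,K,a):=\xi_1^{2}\Bigl(1-u\bigl(1-a\E(K,u\xi_1,a)\bigr)^{2}\Bigr)+2K_{\infty}(a)\,\xi_1\bigl(1-a\E(K,u\xi_1,a)\bigr),
\]
which is smooth for $|\xi_1|$ small, $u\in(0,1]$, $K$ near $1$ and $|a|$ small, since $\E$ is smooth and uniformly bounded there. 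With these notations \eqref{xi1c} is exactly $\Phi(\xi_{1,c},u,K,a)=z$. Since $\Phi(0,u,K,a)=0$ and $\partial_{\xi_1}\Phi(0,u,K,a)=2K_{\infty}(a)\bigl(1-a\E(K,0,a)\bigr)$, which by Lemma \ref{lemAcsol} equals $2K(1+O(a))(1+O(a))$ and is therefore bounded above and away from $0$ \emph{uniformly} in $u\in(0,1]$ and in $a$ small, the implicit function theorem produces $\delta>0$ and a unique smooth $\xi_{1,c}=\Xi(z,u,K,a)$, defined for $|z|<\delta$, with $\Xi(0,u,K,a)=0$ and with all derivatives bounded uniformly in $u$.

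Next I would extract the factor $z$: by Hadamard's lemma, $\Xi(z,u,K,a)=z\,F(z,u,K,a)$ with $F(z,u,K,a):=\int_0^1(\partial_z\Xi)(tz,u,K,a)\,dt$, which is smooth in all variables, has uniformly bounded derivatives, and satisfies $F(0,u,K,a)=1/\partial_{\xi_1}\Phi(0,u,K,a)$. This is \eqref{eq:formxi1cxi2}. The estimate $|\partial_{\xi_2}^{k}F|\le C_k$ then follows from the chain rule: $F$ depends on $\xi_2$ only through $z=M_{1}/2-\xi_2^{2}$, so by Faà di Bruno $\partial_{\xi_2}^{k}F$ is a finite sum of terms $c_{k,j}\,\xi_2^{\,2j-k}\,(\partial_z^{\,j}F)$ with $k/2\le j\le k$, and $\xi_2$ stays bounded (Remark \ref{rmqboundsxi1cxi2}) while the $\partial_z^{\,j}F$ are bounded. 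One should also check consistency with the case $|\xi_1|\le c$ under consideration: for $c$ (hence $\delta$) small, $|\Xi|\le c$, and by Remark \ref{rmqboundsxi1cxi2} the relevant $z=M_{1}/2-\xi_2^{2}$ satisfies $|z|\le 4c<\delta$, so the construction does apply and returns precisely the critical point $\xi_{1,c}$.

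Finally, to identify $F(\cdot,\cdot,\cdot,0)$ I would set $a=0$ directly: all $\E$-terms vanish and $K_{\infty}(0)=K$, so $\Phi(\xi_1,u,K,0)=\xi_1^{2}(1-u)+2K\xi_1$, and $\Xi(z,u,K,0)$ is the root through the origin of $\xi_1^{2}(1-u)+2K\xi_1=z$, namely $\Xi(z,u,K,0)=\dfrac{-K+\sqrt{K^{2}+z(1-u)}}{1-u}=\dfrac{z}{K+\sqrt{K^{2}+z(1-u)}}$, the second form being valid for all $u\in(0,1]$ (it equals $z/(2K)$ when $u=1$). Hence $F(z,u,K,0)=\bigl(K+\sqrt{K^{2}+z(1-u)}\bigr)^{-1}=F_0(z,1/N^{2},K)$.

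The main obstacle here is bookkeeping rather than conceptual: one must ensure that the non-degeneracy $\partial_{\xi_1}\Phi(0,\cdot)\neq 0$, the size $\delta$ of the neighbourhood, and the bounds on the derivatives of $\Xi$ and $F$, are all uniform in $N$ — in particular at the borderline $N=1$, where the coefficient of $\xi_1^{2}$ in $\Phi$ degenerates. This is exactly why the argument is organized around the linear coefficient $2K_{\infty}(a)(1-a\E)$, which stays comparable to $1$ for every $N$ by Lemma \ref{lemAcsol}, rather than around the explicit quadratic formula.
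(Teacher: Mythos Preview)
Your proof is correct and follows essentially the same route as the paper's: both apply the implicit function theorem to the equation \eqref{xi1c} near $\xi_1=0$, using that the linear coefficient $2K_\infty(a)(1-a\E)$ is uniformly nonzero, and then identify the $a=0$ case with the explicit $F_0$. The only cosmetic difference is order: the paper first writes $\xi_1=zF$ and applies the IFT to $F$, whereas you apply the IFT to $\xi_1$ and then factor via Hadamard's lemma---these are equivalent.
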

\begin{proof}
From $\E$ being a function of $ (K,\xi_{1}/N^2,a)$, we let
\[
H(\xi_{1},1/N^2,K,a,z):=\xi_{1}^{2}\Big(1-\frac{1}{N^2}(1-a\E)^2\Big)+2K_{\infty}(a)\xi_{1}(1-a\E)-z
\]
and therefore \eqref{xi1c} translates into $H(\xi_{1,c},1/N^2,K,a,M_{1}/2-\xi_2^2)=0$. As both $\xi_{1}$ and $z$ are small, we may set $\xi_{1}=z F(z)$, and apply the implicit function theorem to $\tilde H(F,z,a)=z F^{2}(1-(1-a\E^{2}(zF))/N^{2})+2K_{\infty}(a)F (1-a\E(zF))-1$: notice that $\tilde H(1/(2K),0,0)=0$ and $\partial_{F}\tilde H(1/(2K),0,0)=2K$. Let $z=M_{1}/2-\xi_2^2$. We get a function $F(z,1/N^2,K,a)$ such that
\[
\xi_{1,c}=zF(z,\cdot), \quad \partial_{z} (\xi_{1,c})=\frac{1}{\partial_{z}H(F(z,\cdot),\cdot)+1}.
\]
In doing so, we may possibly reduce the size of the constant $c$ without loss of generality. Knowing the explicit formula for $a=0$ yields $F(\cdot,0)=F_{0}$.
\end{proof}
Let $\tilde h_N(\xi_2):=h_N(\xi_{1,c},\xi_2)$, with $\xi_{1,c}$ given in \eqref{eq:formxi1cxi2}: as
\[
\partial_{\xi_2}\tilde h_N(\xi_2)=\partial_{\xi_2}\xi_{1,c}\partial_{\xi_1}h_N(\xi_{1,c},\xi_2)+\partial_{\xi_2}h_N(\xi_1,\xi_2)|_{\xi_1=\xi_{1,c}}=\partial_{\xi_2}h_N(\xi_1,\xi_2)|_{\xi_1=\xi_{1,c}},
\]
it follows from \eqref{firstderivH_Nxi2} that
$\partial_{\xi_2}\tilde h_N=0$ if and only if $M_{2}-(\partial_{{p}}H_N-\partial_{{q}}H_N)({q},{p})|_{(\xi_1,\xi_2)}=0$. Using \eqref{eq:H_Nderivq} and \eqref{eq:H_Nderivp}, we have  $\partial_{\xi_2}\tilde h_N=0$ if and only if $M_{2}-4\xi_{1,c}\xi_2=0$. We need the second derivative of $\tilde h_N$ that we will compute using $\partial_{\xi_2}\tilde h_N(\xi_2)=M_{2}-4\xi_{1,c}\xi_2$. Since 
\[
\partial^2_{\xi_2}\tilde h_N =-4(\partial_{\xi_2}\xi_{1,c}\xi_2+\xi_{1,c}),
\]
we first compute the derivatives of $\xi_{1,c}$ with respect to $\xi_2$. Using \eqref{eq:formxi1cxi2}, we have, for $z=M_{1}/2-\xi_2^2$,
\[ 
\partial_{\xi_2}\xi_{1,c}=\frac{dz}{d\xi_2}\partial_z(zF(z,\cdot))=-2\xi_2\tilde F(M_{1}/2-\xi_2^2,\cdot),
\]
where we have set $\tilde F(z,\cdot)=F(z,\cdot)+z\partial_z F(z,\cdot)$.
The second derivative of $\tilde h_N$ is then
\begin{align}
\partial^2_{\xi_2}\tilde h_N & =-4(\partial_{\xi_2}\xi_{1,c}\xi_2+\xi_{1,c})\nonumber\\
 \label{eq:d2hN}                                 &=-4\Big((M_{1}/2-\xi_2^2)F(M_{1}/2-\xi_2^2,\cdot)-2\xi_2^2\tilde F(M_{1}/2-\xi_2^2,\cdot)\Big)\\
                                    &=-4\Big((M_{1}/2-\xi_2^2)(F+2\tilde F)(M_{1}/2-\xi_2^2,\cdot)-M_{1}\tilde F(M_{1}/2-\xi_2^2,\cdot)\Big)\nonumber \,.
\end{align}
As $|z|<4c$ and $a\ll 1$, if $|M_{1}|>15c$ we have $|\partial^{2}_{\xi_{2}}\tilde h_{N}|\geq c/K$ and the usual stationary phase in $\xi_{2}$ will hold (notice how this is consistent with being away from a small neighborhood of $\mathcal{C}_{N}$, see \eqref{eq:106}). Since $z$ is small, critical points are degenerate if and only if 
\begin{equation}\label{eqgedxi2}
M_{1} F(z,\cdot)-3 z F(z,\cdot)+ z(M_{1}-2z)\partial_{z}F(z,\cdot)=0\,.
\end{equation}
As both $M_{1}$ and $z$ are now of size at most $15c$, the third term is $O(c^{2})$ and we set $z=M_{1}Z(M_{1},a)$: we let $H(Z,M_{1},a)=F(M_{1}Z,\cdot)-3ZF(M_{1}Z,\cdot)+Z(M_{1}-2M_{1}Z)(\partial_{z} F)(M_{1}Z,\cdot)$, so that $H(1/3,0,0)=0$ and $\partial_{Z}H(1/3,0,0)\sim F(0,\cdot,0)\neq 0$, so that $z=M_{1}Z(M_{1},a)$, with $Z(0,0)=1/3$. Moreover, we may compute exactly $Z(M_{1},0)=Z_{0}(M_{1},K,1/N^{2})$. Therefore, we have
\begin{equation}\label{implicitxi2}
(M_{1}/2-\xi_2^2)=M_{1} Z(M_{1},1/N^2,K,a),\quad Z(0,1/N^{2},K,0)=1/3.
\end{equation}
Therefore, degenerate critical points $\xi_{2,\pm}$ (solution to $\partial^2_{\xi_2}\tilde h_N=0$) only exist if $M_{1}\geq 0$ (otherwise, non degenerate stationary phase applies and provides better decay $1/\sqrt{(-M_{1}\Lambda)}$, but this case will be subsumed in a later application of Van der Corput). These $\xi_{2,\pm}$ are functions of $\sqrt{M_{1}}$ that coincide at $M_{1}=0$ (at which they are both equal to $0$.) In fact, from\eqref{implicitxi2} we get
\begin{equation}\label{implicitAxi2}
\xi_{2,\pm}=\pm\frac{\sqrt{M_{1}}}{\sqrt{6}}(1+O(M_{1},a))\,.
\end{equation}
We now compute the second and third derivatives of $\xi_{1,c}$ with respect to $\xi_2$ :
\begin{align*}
\partial^2_{\xi_2}\xi_{1,c} & =\frac{d^2z}{d\xi_2^2}\partial_z (zF(z))+(\frac{dz}{d\xi_2})^2\partial^2_{z}(zF(z))|_{z=(M_{1}/2-2\xi_2^2)}\\
 & =-2\tilde F\big({M_{1}}/2-2\xi_2^2\big)+4\xi_2^2\partial_{z} \tilde F\big({M_{1}}/22-2\xi_2^2\big),
\end{align*}
and therefore
\begin{multline}\label{thirdxi2}
  \partial^3_{\xi_2}\tilde h_N(\xi_2)=-4(2\partial_{\xi_2}\xi_{1,c}+\xi_2\partial^2_{\xi_2}\xi_{1,c})\\
  =24\xi_2\Big(\tilde F(M_{1}/2-2\xi_2^2,\cdot)+\frac 23 \xi_2^2\partial_{z}\tilde F(M_{1}/2-2\xi_2^2,\cdot)\Big)
  =\frac{12 \xi_2}{K}(1+O(\xi_2^2,a)).
\end{multline}
At $M_{1}=0$ the order of degeneracy is higher as $\xi_{2,\pm}|_{M_{1}=0}=0$ and $\partial^3_{\xi_2}\tilde h_N(\xi_{2,\pm})|_{M_{1}=0}=0$. However, the fourth derivative doesn't cancel at $\xi_{2,\pm}$ as it stays close to $12/K$ for small $a$ (recall the $O(\cdot)$ of \eqref{thirdxi2} is a smooth function of its arguments, hence we do not need to compute further derivatives of $\xi_{1,c}$ to get the leading term). Going back to $\partial_{\xi_{2}}\tilde h_{N}=0$, we deduce that $M_{2}=O(c^{3/2})$. When degenerate critical points exist,
from $M_{2}=4\xi_{1,c}|_{\xi_{2,\pm}}\xi_{2,\pm}$, using \eqref{eq:formxi1cxi2} and \eqref{implicitAxi2}, we have
\begin{equation}
  \label{eq:17}
  M_{2}=4\frac{\xi_{2,\pm}^3}{K}(1+O(M_{1},a))=\pm\frac{\sqrt{2}}{3\sqrt{3}K}M_{1}^{3/2}(1+O(M_{1},a))\,.
\end{equation}
Hence, at degenerate critical points, we must have \eqref{eq:17}, which is at leading order the equation of a cusp. We may now conclude in the small neighborhood $B_{c}=\{ |\xi_{1}|+|\xi_{2}|^{2}+|M_{1}|+|M_{2}|^{2/3}\}\lesssim c$ (as outside the usual stationary phase in both variables applies) by using Van der Corput lemma on the remaining oscillatory integral in $\xi_{2}$, with phase $h_{N}(\xi_{2})$: $\partial^{4}_{\xi_{2}} h_{N}$ is bounded from below, which yields an upper bound $\Lambda^{-1/4}$ (uniformly in all parameters). When moreover $M_{1}\neq 0$, the third derivative of the phase is bounded from below by $|\xi_{2}|/K$: either $|\xi_{2}^{2}-M_{1}/6|<|M_{1}|/12$ and then  $|\partial^{3}_{\xi_{2}} h_{N}|$ is bounded from below by $\sqrt{|M_{1}|}/(12K)$, or $|\xi_{2}^{2}-M_{1}/6|>(|M_{1}|/12)$ and, going back to \eqref{eq:d2hN}, $\partial^{2}_{\xi_{2}} h_{N}$ is bounded from below by $|M_{1}|/(12K)$:

this yields an upper bound $(\sqrt{|M_{1}|} \Lambda)^{-1/3}+({|M_{1}|} \Lambda)^{-1/2}$. Both terms are better than $\Lambda^{-1/4}$ provided that $|M_{1}|>\Lambda^{-1/2}$, and then $(\sqrt{|M_{1}|} \Lambda)^{-1/3}\geq ({|M_{1}|} \Lambda)^{-1/2}$. Finally, we obtained
\begin{equation}
  \label{eq:112}
  |I|\lesssim \frac 1 {\Lambda^{1/2}}\inf\left(\frac 1 {\Lambda^{1/4}},  \frac{1}{|M_{1}|^{1/6} \Lambda ^{1/3}}\right)\,.
\end{equation}
From $M_{1}={P}+{Q}$ and $M_{2}={P}-{Q}\sim \pm |M_{1}|^{3/2}\ll |M_{1}|$, ${P}\sim {Q}$ and therefore $|M_{1}|\sim 2 {|Q|}$, which is our desired bound, as the non degenerate stationary phase in $\xi_{1}$ provided the factor $\Lambda^{-1/2}$. \qed
\begin{rmq}
  The same rotation of the $(S,\Upsilon)$ variables may also be applied when $|N|>\lambda^{1/3}$ (and $|N|<\lambda^{1/3}$, $(P,Q)$ large). While longer than the direct argument we used, it does provide more insight on the geometry of the wave front. However, the worst possible non degenerate stationary phase scenario, around  $N\sim \lambda^{2/3}$ provides the same $\lambda^{2/3}$ decay as the most degenerate case : singular points cannot be singled out in decay estimates, even if we know precisely where they are. This set of $N$'s turns out to be the worst case scenario for Strichartz estimates later.
\end{rmq}

\subsubsection{Proof of Proposition \ref{propdisptangN=0}}
We first prove that for $|T|\leq \frac 52$ only $W_{N,a}(T,X,Y)$ with $N\in \{0,\pm 1\}$ provide a non-trivial contribution. 
The phase $\Psi_{N,a,a}$ is stationary for $A$  such that
\begin{equation}\label{criteqAN=0}
\frac{T\sqrt{1+a}}{2\sqrt{1+aA}}=2\sqrt{A}N+\Upsilon+S,\quad N\in\mathbb{Z},
\end{equation}
and stationary for $\Upsilon,S$ such that $S^2=A-1$ and $\Upsilon^2=A-X$, with $X\geq 0$; therefore we must have ${|\Upsilon+S|^{2}}<{4A}$.
Let $|T|\leq \frac 52$ and consider $|N|\geq 2$: if $\Psi_{N,a,a}(T,\cdot)$ is stationary in $(A,\Upsilon,S)$ then 
\[
\frac{5\sqrt{1+a}}{8\sqrt{1+aA}}\geq\Big|\frac{T\sqrt{1+a}}{2\sqrt{1+aA}}\Big|\geq 2\sqrt{A}|N|-|\Upsilon+S|\geq 2\sqrt{A}\Big(2-\frac{|\Upsilon+S|}{2\sqrt{A}}\Big)\geq 2\sqrt{A},
\]
which yields $\sqrt{A}\leq \frac{5}{16}$, but for such small values of $A$ the phase would be non-stationary in $S$ (and we are out of the support of $\psi_{3}(A)$.) Therefore, for such $T$'s, the sum over $N$ in $G^{+,\sharp}_{h,\gamma}(T,\cdot)$ reduces to $N\in \{0,\pm 1\}$. Let $N=0$: we will apply the stationary phase in $(A,\Upsilon,S)$ as long as $|\Upsilon+S|\geq c$ for some small constant $c>0$. Compute
\begin{equation}
  \label{eq:11}
  \det\text{Hess}_{A,\Upsilon,S}\Psi_{0,a,a}(T,\cdot)|_{\nabla \Psi_{0,a,a}=0}=2|\Upsilon+S|\Big[1+4a\frac{S\Upsilon(\Upsilon+S)^2}{(1+a)T^2}\Big],
\end{equation}
and the critical points satisfy \eqref{criteqAN=0} with $N=0$, which yields $4a\frac{S\Upsilon(\Upsilon+S)^2}{(1+a)T^2}\leq aA\ll 1$. Therefore \eqref{eq:11} only vanishes for $\Upsilon+S=0$. Write $1=\chi_0(\Upsilon+S)+(1-\chi_0)(\Upsilon+S)$, where $\chi_0\in C^{\infty}_{0}(-2c,2c)$ and $\chi_{0}=1$ on $[-c,c]$ for some small $c$ and write $W_{0,a}:=W_{0,a,\chi_0}+W_{0,a,1-\chi_0}$ accordingly, by splitting the symbol. Then in $W_{0,a,1-\chi_0}$ the usual stationary phase applies with large parameter $\lambda$ and yields
\[
|W_{0,a,1-\chi_0}(T,X,Y)|\lesssim \frac{a^2}{h^3}\lambda^{-3/2}\sim\frac{1}{h^2}\frac{h^{1/2}}{a^{1/4}}\leq \frac{1}{h^2}\Big(\frac ht\Big)^{1/2}.
\]
On the support of $W_{0,a,\chi_0}$ we have to sort out more cases: using \eqref{criteqAN=0}, $\Psi_{0,a,a}$ is non-stationary in $A$ on the support of $\chi_0(S+\Upsilon)$ if $|T|\geq 4c$ or if $|X-1|\geq 4c$. Set $\xi_1=\Upsilon+S$, $\xi_2=S-\Upsilon$, and
\begin{align}
g(\xi_1,\xi_2,A)  := & \Psi_{0,a,a}(T,X,Y,(\xi_1-\xi_2)/2,(\xi_1+\xi_2)/2, A,1)\nonumber\\
  = & \begin{multlined}[t] \frac{(\xi_1-\xi_2)^3}{3}+X\frac{(\xi_1-\xi_2)}2+\frac{(\xi_1+\xi_2)^3}{3}+\frac{(\xi_1+\xi_2)}2\\ {}-\xi_1A +T\frac{\sqrt{1+a}(A-1)}{\sqrt{1+aA}+\sqrt{1+a}}\,,
    \end{multlined}
\end{align}
with $\xi_1\leq 2c$, $|T|\leq 4c$, $|X-1|\leq 4c$. Then the stationary phase applies in $(\xi_1,A)$: we have
\begin{equation}\label{xi1AcritN=0}
\partial_{\xi_1}g=2\xi_1^2+2\xi_2^2+(1+X-2A)/2,\quad \partial_A g=-\xi_1+\frac{T\sqrt{1+a}}{2\sqrt{1+aA}},
\end{equation}
then $\partial^2_{\xi_1,A}g=-1$, while $|(\partial^2_{\xi_1}g)(\partial^2_{A}g)|\sim |aT\xi_1|=O(a)$ and by stationary phase we get a decay factor $\lambda^{-1}$ and are left with the integration in $\xi_2$. If we denote $\xi_{1,c}, A_c$ the critical points satisfying $\nabla_{\xi_1,A}g=0$, we have
\[
\partial_{\xi_2}g(\xi_{1,c},\xi_2,A_c)=4\xi_{1,c}\xi_2+(1-X)/2,\quad \partial^2_{\xi_2}g(\xi_{1,c},\xi_2,A_c)=4\xi_{1,c}+4\xi_2\partial_{\xi_2}\xi_{1,c}.
\]
Using \eqref{xi1AcritN=0} we compute
\[
\xi_{1,c}^2\Big(1+2a(\xi_{1,c}^2+\xi_2^2+(1+X)/4)\Big)=\frac{T^2}{4}(1+a),
\]
therefore $\xi_{1,c}=\frac T2(1+O(a)$, $\partial_{\xi_2}\xi_{1,c}=-2a\xi_2\xi_{1,c}(1+O(a))$ and $\partial^2_{\xi_2}g(\xi_{1,c},\xi_2,A_c,\cdot)=4\xi_{1,c}(1+O(a\xi_2^2))$. If $\lambda |T|\geq M$, for some large $M>1$ then stationary phase applies in $\xi_2$ and yields an additional factor $(\lambda |T|)^{-1/2}$. We eventually find for some large $M>1$,
\[
|W_{0,a,\chi_0}(T,X,Y)|\lesssim \frac{a^2}{h^3}
\left\{ \begin{array}{l}
\lambda^{-1}(\lambda |T|)^{-1/2},\quad \text{ if }  M/\lambda\leq |T| (\leq 5c), \\ 
\lambda^{-1}, \quad \text{ if } |T|\leq M/\lambda.
 \end{array} \right.
\]
For $|T|\in [M/\lambda, 5c]$, we have $\frac{a^2}{h^3}\lambda^{-1}(\lambda |T|)^{-1/2}=\frac{1}{h^2}(\frac ht)^{1/2}$. For $|T|=\frac{|t|}{\sqrt{a}}\leq M/\lambda=M h/a^{3/2}$ we have $\frac{a^2}{h^3}\lambda^{-1}=\frac{\sqrt{a}}{h^2}\leq \frac{M}{h^2}(\frac ht)^{1/2}$ since $t>Mh/a$. This yields $|W_{0,a,\chi_0}(T,X,Y)|\lesssim \frac{1}{h^2}(\frac ht)^{1/2}$. Let now $N=1$ and $|T|\leq \frac 52$ : using \eqref{criteqAN=0} with $N=1$ yields
\begin{equation}\label{ineqTN=1}
\frac{5\sqrt{1+a}}{8\sqrt{1+aA}}\geq \frac{|T|\sqrt{1+a}}{2\sqrt{1+aA}}\geq 2\sqrt{A}-|\Upsilon+S|,
\end{equation}
and therefore $|\Upsilon+S|\geq \frac 38$ when the phase is stationary in $A$. Compute
\[
\det\text{Hess}_{A,\Upsilon,S}\Psi_{1,a,a}(T,\cdot)=\Big|4S\Upsilon\partial^2_{A}\Psi_{1,a,a}(T,\cdot)-2(\Upsilon+S)\Big|,
\]
where $\partial^2_{A}\Psi_{1,a,a}=-\frac{1}{\sqrt{A}}+\partial^2_{A}\Psi_{0,a,a}=-\frac{1}{\sqrt{A}}+O(a)$. If $|S|\leq \frac{1}{16}$ or $|\Upsilon|\leq \frac{1}{16}$, then using $|\Upsilon+S|\geq \frac 38$ we apply stationary phase in $(A,\Upsilon,S)$ since $2|\Upsilon+S|-4\frac{|S\Upsilon|}{\sqrt{A}}\geq \frac 34-\frac{|\Upsilon|}{2\sqrt{A}} \geq \frac 14$. If both $|S|,|\Upsilon|\geq\frac{1}{16}$ and $|2\frac{S\Upsilon}{\sqrt{A}}+\Upsilon+S|\geq c$ for some $c>0$, stationary phase applies. Consider the case $|2\frac{S\Upsilon}{\sqrt{A}}+\Upsilon+S|\leq 2c$ for some small $c$ and $|S|,|\Upsilon|\geq\frac{1}{16}$: then $S,\Upsilon<0$ and the critical points are $S_c=-\sqrt{A-1}$, $\Upsilon_c=-\sqrt{A-X}$. $A$ should be solution to
\[
2\frac{\sqrt{A-X}\sqrt{A-1}}{\sqrt{A}}=\sqrt{A-1}+\sqrt{A-X}\quad \Leftrightarrow\quad \frac{1}{\sqrt{1-1/A}}+\frac{1}{\sqrt{1-X/A}}=2
\]
and since $\frac{1}{\sqrt{1-X/A}}\geq 1$ and $\frac{1}{\sqrt{1-1/A}}= 1+1/(2A)+O(1/(2A)^2)> 1+1/A\geq 1+1/9$, there is no solution. Therefore for critical $A,S,\Upsilon$, 
$|\det\text{Hess}_{A,\Upsilon,S}\Psi_{1,a,a}(T,\cdot)|\geq c>0$. Stationary phase then applies, providing a factor $\lambda^{-3/2}$ and $|W_{1,a}(T,\cdot)|\lesssim \frac{1}{h^2}(\frac ht)^{1/2}$.    
Notice that if $|T|$ takes larger values, \eqref{ineqTN=1} doesn't help anymore to lower bound $|\Upsilon+S|$ : when $|T|\sim 4$ we start to see the first swallowtail in the wavefront set. This completes the proof of Proposition \ref{propdisptangN=0}.\qed
\subsection{Transverse waves for $\gamma >h^{2/3-\varepsilon}$}
We now consider $a<\gamma/4$: re-scale variables,
\begin{equation}
  \label{eq:83}
  x=\gamma X\,,\quad \alpha=\gamma A\,,\quad t=\sqrt \gamma T\,,\quad s=\sqrt \gamma S\,,\quad \sigma=\sqrt \gamma \Upsilon\,,\quad y+t\sqrt{1+\gamma}=\gamma^{3/2} Y\,.
\end{equation}
Let $\lambda_{\gamma}=\gamma^{3/2}/h$ be our large parameter, with $\Psi_{N,a,\gamma}$ introduced in \eqref{PhiNagammadef}, we have
\begin{equation}
  \label{eq:bis488ter9999bis}
G^+_{h,\gamma}(t,x,y,a,0,0)= \frac {\gamma^{2}} {(2\pi h)^{3}} \sum_N \int_{\R^{2}}\int_{\R^{2}} e^{i \lambda  \Psi_{N,a,\gamma}}  \eta^{2} \psi({\eta})  \chi_{1}(\lambda^{2/3}A)  \psi_{2}(A) \, dS d\Upsilon  dA d\eta\,,
\end{equation}
where $\mathrm{supp} \,\psi_{2}\subset [\frac 12,\frac 3 2]$. As critical points in $S,\Upsilon$ are such that $S^2=A-\frac{a}{\gamma}$, $\Upsilon^2=A-X$, using that $A\leq 3/2$ we restrict ourselves to $|S|,|\Upsilon|< 3/2$ without changing the contribution modulo $O(h^{\infty})$. As for $X>A$ there are no real critical points with respect to $\Upsilon$, we may restrict to $X\lesssim 1$ (hence $x\lesssim \gamma$. Actually from symmetry of the Green function $G^{+}_{h,\gamma}$, we could even restrict to $x<a$, e.g. $X<a/\gamma<1/4$ but we will not need it).

Therefore  $G^+_{h,\gamma}(t,x,y,a,0,0)=G^{+,\flat}_{h,\gamma}+O(h^{\infty})$ with $G^{+,\flat}_{h,\gamma}=\sum_N W_{N,\gamma}(T,X,Y)$ and
\begin{equation}
  \label{eq:bis488ter999999bis}
    W_{N,\gamma}(T,X,Y): = \frac {\gamma^{2}} {(2\pi h)^{3}}  \int_{\R^{2}}\int_{\R^{2}} e^{i \lambda \Psi_{N,a,\gamma}}  \eta^{2} \psi({\eta})
 \chi_{2}(S)\chi_{2}(\Upsilon) \psi_{2}(A) \, dS d\Upsilon  dA d\eta\,.
\end{equation}
Here we dropped the $\chi_{1}$ cut-off that is irrelevant since we consider $\lambda_{\gamma}$ to be large. We will prove the following propositions (recall $t=\sqrt \gamma T$, therefore $t\gamma/h=T\lambda_{\gamma}$):
\begin{prop}
  \label{transverseTpp1}
  For $|T|\leq 1$, we have
  \begin{equation}
    \label{eq:decW0}
    |W_{0,\gamma}(T,X,Y)|+|G^{+,\flat}_{h,\gamma}(,x,y,a,0,0)|\lesssim  \frac{\sqrt \gamma}{h^2}\inf \left( 1, \left(\frac{h}{\gamma t}\right)^{1/2}\right)\,.
  \end{equation}
\end{prop}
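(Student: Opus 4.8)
The plan is to follow the scheme of the proof of Proposition~\ref{propdisptangN=0} (its analogue in the regime $a\sim\gamma$), exploiting the fact that in the transverse regime $a<\gamma/4$ the $S$-critical points, given by $S^{2}=A-a/\gamma$, stay bounded away from $0$ (since $A\geq \tfrac12$ on $\mathrm{supp}\,\psi_{2}$ and $a/\gamma<\tfrac14$), which removes one source of degeneracy present when $a\sim\gamma$. First, the range $|T|\leq 1/\lambda_{\gamma}$, i.e. $\gamma t\leq h$, is disposed of at once: there $\inf(1,(h/(\gamma t))^{1/2})=1$, and $|G^{+,\flat}_{h,\gamma}|\lesssim\sqrt\gamma/h^{2}$ is exactly the uniform bound \eqref{eq:bornesup}, while $|W_{0,\gamma}|\lesssim\sqrt\gamma/h^{2}$ will follow from the $N=0$ analysis below (which, when $\lambda_{\gamma}|T|\lesssim 1$, only yields the factor $\lambda_{\gamma}^{-1}$); so from now on $1/\lambda_{\gamma}\leq|T|\leq 1$. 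Next, I would reduce the sum over $N$ to $N\in\{-1,0,1\}$: on the region of $(S,\Upsilon)$ where $\partial_{S}\Psi_{N,a,\gamma}$ and $\partial_{\Upsilon}\Psi_{N,a,\gamma}$ are both small one has $|S|,|\Upsilon|\leq\sqrt A+o(1)$, hence $|\Upsilon+S|\leq 2\sqrt A+o(1)$; since $\partial_{A}\Psi_{N,a,\gamma}=\eta\bigl(-\Upsilon-S+Tf_{\gamma}'(A)-2N\sqrt A(1+O(\lambda_{\gamma}^{-2}))\bigr)$ with $f_{\gamma}(A)=\tfrac{A-1}{\sqrt{1+\gamma A}+\sqrt{1+\gamma}}$ and $f_{\gamma}'(A)=\tfrac12+O(\gamma)$, for $|N|\geq 2$ and $|T|\leq 1$ this derivative is $\gtrsim 2|N|\sqrt A-|\Upsilon+S|-|T|\geq 2\sqrt A-1>0$; together with non-stationary phase in $S$ (or $\Upsilon$) off that region and the bound $|N|\lesssim\gamma^{-1/2}$ from Lemma~\ref{lemmeNtpetit}, this yields $\sum_{|N|\geq 2}W_{N,\gamma}=O(h^{\infty})$.

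For $N=\pm1$ I expect non-degenerate stationary phase in $(S,\Upsilon,A)$ to apply (after moving $e^{iNB(\eta\lambda_{\gamma}A^{3/2})}$ into the symbol as in Remark~\ref{remNpetitlambda}, so that the phase is affine in $\eta$ and the $\eta$-integration only localizes the wavefront and contributes $O(1)$). Indeed, at a critical point with $|T|\leq1$ the $A$-equation gives $|\Upsilon+S|\geq 2\sqrt A-\tfrac12|T|(1+o(1))\geq \sqrt2-\tfrac12>0$, which, combined with $|S|=\sqrt{A-a/\gamma}<\sqrt A$, forces $S<0$ and keeps $|\Upsilon|$ bounded away from $0$; a direct computation gives $\det\mathrm{Hess}_{S,\Upsilon,A}\Psi_{\pm1,a,\gamma}\propto -\tfrac{4S\Upsilon N}{\sqrt A}(1+o(1))-2(\Upsilon+S)$, and one checks this can be of size $<\delta$ only if $X=O(\delta)$ and $a/\gamma=O(\delta)$, in which case $|T|=O(\delta)+O(\lambda_{\gamma}^{-2})$; hence for $|T|\geq 1/\lambda_{\gamma}$ the Hessian has two eigenvalues of size $\sim 1$ and one of size $\sim|T|$, and stationary phase gives $|W_{\pm1,\gamma}|\lesssim\tfrac{\gamma^{2}}{h^{3}}\lambda_{\gamma}^{-3/2}|T|^{-1/2}=\tfrac{\sqrt\gamma}{h^{2}}(\lambda_{\gamma}T)^{-1/2}$.

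For $N=0$ I would split $1=\chi_{0}(\Upsilon+S)+(1-\chi_{0})(\Upsilon+S)$ with $\chi_{0}$ supported near $0$. On the $(1-\chi_{0})$ piece $|\Upsilon+S|\geq c$, hence at critical points $|T|\gtrsim c$ and $\det\mathrm{Hess}_{S,\Upsilon,A}\Psi_{0,a,\gamma}$ (which is $-2(\Upsilon+S)+O(\gamma T)$, up to the harmless factor $\eta^{3}$) is bounded below, so non-degenerate stationary phase gives $|W_{0,\gamma,1-\chi_{0}}|\lesssim\tfrac{\gamma^{2}}{h^{3}}\lambda_{\gamma}^{-3/2}\leq\tfrac{\sqrt\gamma}{h^{2}}(\lambda_{\gamma}T)^{-1/2}$ (using $T\leq1$). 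On the $\chi_{0}$ piece, $\partial_{A}\Psi_{0,a,\gamma}=\eta(Tf_{\gamma}'(A)+O(c))$ is non-vanishing, hence the contribution is $O(h^{\infty})$, once $|T|$ exceeds a fixed constant; for the remaining range $|T|\leq c'$ I would rotate $\xi_{1}=\Upsilon+S$, $\xi_{2}=S-\Upsilon$, run a non-degenerate stationary phase in $(\xi_{1},A)$ (the relevant $2\times2$ Hessian having off-diagonal entry $-1+O(\gamma)$ and product of diagonal entries $O(\gamma)$), gaining $\lambda_{\gamma}^{-1}$, and finally estimate the residual oscillatory integral in $\xi_{2}$: along the critical curve $\xi_{1,c}$ is of size $\sim|T|$, so the $\xi_{2}$-phase has second derivative of size $\sim|T|$, yielding $(\lambda_{\gamma}|T|)^{-1/2}$ when $\lambda_{\gamma}|T|\geq M$ and nothing worse than $\lambda_{\gamma}^{-1}$ otherwise; since $|T|\geq 1/\lambda_{\gamma}$, in either case $|W_{0,\gamma,\chi_{0}}|\lesssim\tfrac{\gamma^{2}}{h^{3}}\lambda_{\gamma}^{-3/2}|T|^{-1/2}=\tfrac{\sqrt\gamma}{h^{2}}(\lambda_{\gamma}T)^{-1/2}$. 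Summing the contributions $N\in\{-1,0,1\}$ gives the stated bound for $|G^{+,\flat}_{h,\gamma}|$.

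The main obstacle is that last sub-case: the $\chi_{0}$ piece of $W_{0,\gamma}$ (and, for small $|T|$, the terms $W_{\pm1,\gamma}$) in the regime where $|T|$ and $|X-a/\gamma|$ are both small, which is precisely where the first cusp of the transverse wavefront set begins to form. One must show that after the $(\xi_{1},A)$-stationary phase the surviving $\xi_{2}$-phase degenerates at worst to a fold whose curvature is of the \emph{exact} size $\sim|T|$ — so that only the loss $|T|^{-1/2}$, and nothing worse, enters — and that all the Hessian/curvature estimates hold uniformly in $\gamma\leq\gamma_{0}$, in $a/\gamma<\tfrac14$, in $h$, and in $\eta\in\mathrm{supp}\,\psi$. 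This is the transverse counterpart of the corresponding step in the proof of Proposition~\ref{propdisptangN=0}, with $1$ replaced by $a/\gamma$ throughout, and is where the bookkeeping has to be done with care.
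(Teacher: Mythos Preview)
Your overall plan works, but it takes a more complicated route than the paper and contains one incorrect intermediate step. The paper's proof exploits the single structural simplification of the transverse regime more directly: since $a<\gamma/4$ and $A\geq\tfrac12$, one has $|S_{\pm}|=\sqrt{A-a/\gamma}>\tfrac12$ \emph{uniformly}, so the paper performs non-degenerate stationary phase in $S$ \emph{first}, splitting $W_{N,\gamma}=W^{+}_{N,\gamma}+W^{-}_{N,\gamma}$ with phases $\Psi^{\pm}_{N,a,\gamma}$, then non-degenerate stationary phase in $A$ (since $|\partial_{A}^{2}\Psi^{\pm}_{N,a,\gamma}|\gtrsim 1$ by \eqref{secderphiN}), and finally handles the remaining $\Upsilon$-integral via Lemmas~\ref{lemN=0} and~\ref{lemNpm1}, which give $\partial_{\Upsilon}^{2}\Psi^{\pm}_{0,a,\gamma}|_{A_{c}}=T(1+O(\gamma))$ and the analogous statement for $\Psi^{\pm}_{\pm1,a,\gamma}$, while $\Psi^{\mp}_{\pm1,a,\gamma}$ simply has no stationary point for $|T|<1$. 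This avoids both the $\chi_{0}$-splitting/rotation machinery (borrowed from Proposition~\ref{propdisptangN=0}, where it is genuinely needed because there $S$ can vanish) and any three-dimensional Hessian analysis. Your $N=0$ treatment via rotation is correct but strictly more work than the paper's three one-dimensional steps.

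For $N=\pm1$ your justification is flawed: the claim that $|\det\mathrm{Hess}_{S,\Upsilon,A}\Psi_{\pm1,a,\gamma}|<\delta$ forces $X=O(\delta)$ \emph{and} $a/\gamma=O(\delta)$ separately is false. Writing $u=\sqrt{A-a/\gamma}$, $v=\sqrt{A-X}$, $w=\sqrt A$, at a critical point (which for $N=1$, $|T|\leq1$ forces $S=-u$, $\Upsilon=-v$) one has $u+v=2w-T/2+O(\gamma)$ and the determinant rewrites as $T-\tfrac{T^{2}}{4w}+\tfrac{(u-v)^{2}}{w}=T(1+O(T))+\tfrac{(X-a/\gamma)^{2}}{w(2w-T/2)^{2}}$; so smallness of the determinant only gives $|X-a/\gamma|=O(\sqrt\delta)$, not smallness of each. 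However, your \emph{conclusion} survives: since for $N=1$ critical points require $T\geq 2(w-u)>0$, the two displayed terms are both nonnegative and hence $\det\gtrsim T$, which is exactly what you need for the bound $\lambda_{\gamma}^{-3/2}|T|^{-1/2}$. So the gap is in the reasoning, not the final estimate; replacing your incorrect chain of implications by the direct identity above (or, more simply, by the paper's sequential reduction and Lemma~\ref{lemNpm1}) closes it.
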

\begin{prop}
    \label{transverseTpp9}
  For $1\leq |T|\leq 9$, we have
  \[
\sum_{|N|\leq 2} |W_{N,\gamma}(T,X,Y)|\lesssim  \frac{\sqrt \gamma}{h^2}\inf \left(1, \left(\frac{h}{\gamma t}\right)^{1/3}\right)\sim  \frac{\gamma^{1/6}}{h^2}\left(\frac{h}{ t}\right)^{1/3}\,.
\]
\end{prop}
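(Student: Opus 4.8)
The plan is to mirror the treatment of the tangential case in Proposition~\ref{propdisptangN=0}, extended to the transverse regime $a<\gamma/4$ and to the larger range $1\le|T|\le 9$, in which the first fold caustics of the reflected waves enter the picture but no cusp or swallowtail yet does. Throughout, $\lambda_\gamma=\gamma^{3/2}/h$ is the large parameter and all bounds must be uniform in $\eta\in\mathrm{supp}\,\psi$ and in $(X,Y)$. Since $|N|\le 2\ll\lambda_\gamma$, I would first absorb the term $\frac N{\lambda_\gamma}B(\eta\lambda_\gamma A^{3/2})$ from $\Psi_{N,a,\gamma}$ into the amplitude, as in Remark~\ref{remNpetitlambda}; after this $\Psi_{N,a,\gamma}$ is linear in $\eta$, so the $\eta$-integral produces no gain and only pins the wave front, and $\eta$ may be regarded henceforth as a harmless bounded parameter, the effective oscillatory integral being over $(S,\Upsilon,A)$.

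Next I would perform a non-degenerate stationary phase in $S$: the critical points solve $S^2=A-a/\gamma$, and since $A\in[\tfrac12,\tfrac32]$ on $\mathrm{supp}\,\psi_2$ while $a/\gamma<\tfrac14$, we have $|S_\pm|\ge\tfrac12$ and $\partial^2_S\Psi_{N,a,\gamma}=2\eta S\neq 0$ uniformly — this is the clean feature of the regime $a<\gamma/4$, in contrast to $a\sim\gamma$. This gains a factor $\lambda_\gamma^{-1/2}$ and splits $W_{N,\gamma}$ into two pieces carrying reduced phases $\Psi^\pm_{N,a,\gamma}(T,X,Y,\Upsilon,A,\eta)$ (still linear in $\eta$). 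It then suffices to bound each remaining two-dimensional integral in $(\Upsilon,A)$ by $O(\lambda_\gamma^{-5/6})$: the prefactor is $\gamma^2/h^3$, and $\lambda_\gamma^{-1/2}\lambda_\gamma^{-5/6}=\lambda_\gamma^{-4/3}=\lambda_\gamma^{-1}(\lambda_\gamma T)^{-1/3}T^{1/3}$ with $T\sim 1$, which is exactly $\tfrac{\sqrt\gamma}{h^2}\bigl(\tfrac h{\gamma t}\bigr)^{1/3}$. The $\inf(1,\cdot)$ in the statement is then automatic from the trivial bound $\sqrt\gamma/h^2$ of \eqref{eq:bornesup}, and for $|T|\ge 1$ the infimum is attained by the dispersive term, consistent with the ``$\sim$'' at the end.

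I would then dichotomize on the position of $X$ relative to the caustic envelope $A$. If $A-X\ge c$ for a small fixed $c>0$, then $\Upsilon^2=A-X$ gives $|\Upsilon_\pm|\ge\sqrt c$, so a second non-degenerate stationary phase in $\Upsilon$ gains another $\lambda_\gamma^{-1/2}$ and reduces matters to a one-dimensional integral in $A$; the point is to show that the third derivative in $A$ of the twice-reduced phase is bounded below uniformly for $1\le|T|\le 9$, $|N|\le 2$, so Van der Corput yields $\lambda_\gamma^{-1/3}$ and the total is $\lambda_\gamma^{-5/6}$. If $A-X<c$, the $\Upsilon$-stationary phase may be degenerate, and I would instead work from the $2\times 2$ Hessian of $\Psi^\pm_{N,a,\gamma}$ in $(\Upsilon,A)$: where it is non-degenerate, stationary phase gives $\lambda_\gamma^{-1}$, hence $\lambda_\gamma^{-3/2}$ overall, more than enough; along the curve where it degenerates, a non-degenerate stationary phase in the transverse direction of that curve leaves a one-dimensional integral whose phase has non-vanishing \emph{third} derivative — the degeneracy is no worse than a fold — so Van der Corput again gives $\lambda_\gamma^{-1/3}$ and the total is $\lambda_\gamma^{-5/6}$. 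The term $N=0$ (for $1\le|T|\le 9$, not covered by Proposition~\ref{transverseTpp1}) is treated by the same scheme and is strictly simpler, there being no reflection. Summing the $O(1)$ resulting pieces over $|N|\le 2$ and the two $\pm$ branches gives the claim.

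I expect the main obstacle to be exactly the two derivative lower bounds above: establishing $|\partial^3_A(\text{reduced phase})|\gtrsim 1$, and showing that no degeneracy of order $\ge 4$ (cusp or swallowtail point of the phase) can occur for $1\le|T|\le 9$, $|N|\le 2$, $a<\gamma/4$, since such a point would force the strictly worse bound $\lambda_\gamma^{-5/4}$. Both are carried out by the device already used in the proof of Proposition~\ref{propdisptangN=0} for $N=1$: one solves the critical-point relations \eqref{critSig}--\eqref{criteta} perturbatively around the explicitly solvable model $\gamma=0$, $a/\gamma=0$, uses $\gamma\ll 1$ and $a/\gamma\le\tfrac14$ to control every correction term, and checks that the relevant determinant, respectively third derivative, cannot vanish to the forbidden order in the stated range. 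It is precisely here that the cutoffs $|T|\le 9$ and $|N|\le 2$ are used; for larger $|N|$ or $|T|$ the cusp and swallowtail geometry appears and is handled in the subsequent propositions.
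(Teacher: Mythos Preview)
Your overall arithmetic is right: non-degenerate stationary phase in $S$ (the transverse gain from $a/\gamma<\tfrac14$) for $\lambda_\gamma^{-1/2}$, then $\lambda_\gamma^{-5/6}$ from the $(\Upsilon,A)$-integral, totalling $\tfrac{\gamma^2}{h^3}\lambda_\gamma^{-4/3}=\tfrac{h^{1/3}}{h^2}$. The paper, however, reverses your order in the last two variables, and this is not cosmetic. After $S$, the paper applies stationary phase in $A$ \emph{first}: the point of \eqref{secderphiN} is that $\partial^2_A\Psi^\pm_{N,a,\gamma}$ is bounded away from zero for every $|N|\le 2$ (the $N/\sqrt A$ term dominates for $|N|\ge 1$, the $\pm\tfrac1{2\sqrt{A-a/\gamma}}$ term for $N=0$), uniformly and with no dichotomy on $X$. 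This yields a clean second factor $\lambda_\gamma^{-1/2}$. The remaining one-dimensional $\Upsilon$-integral is then controlled by Lemmas~\ref{lemN=0}, \ref{lemNpm1}, \ref{lemN>2}: for $N=0$ (and for the branches $\Psi^{\pm}_{\pm 1}$) one has $\partial^2_\Upsilon(\cdot)|_{A_c}\sim T$, non-degenerate since $|T|\ge 1$; for $\Psi^{\mp}_{\pm 1}$ and $|N|=2$ the $\Upsilon$-critical point is degenerate of order exactly two, so Van der Corput gives $\lambda_\gamma^{-1/3}$.

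Your order ($\Upsilon$ first when $A-X\ge c$, then $A$) forces you to analyse the twice-reduced phase $\phi(A)=\mp\tfrac23(A-X)^{3/2}\mp\tfrac23(A-a/\gamma)^{3/2}+\tfrac T2 A-\tfrac43 NA^{3/2}+O(\gamma)$, whose higher $A$-derivatives are sums of $\pm(A-X)^{-k/2}$, $\pm(A-a/\gamma)^{-k/2}$ and $NA^{-k/2}$ with \emph{independent} signs from the four $(\epsilon_1,\epsilon_2)$ branches. Your asserted uniform lower bound on $|\phi'''|$ is false as stated: take $N=0$, opposite signs, and $X=a/\gamma$; then every $A$-derivative of order $\ge 2$ vanishes identically, and one must instead invoke $|\phi'|=|T|/2\ge\tfrac12$. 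This is recoverable, but it shows that your route requires further sub-casework (non-stationary phase versus Van der Corput in $A$, depending on sign pattern and on $|X-a/\gamma|$) that the paper's order avoids altogether by pushing the fold into the $\Upsilon$-variable, where it is already catalogued.
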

\begin{prop}
    \label{transverseTpg9}
  For $9\leq |T|\lesssim \lambda_{\gamma}^{2}$, we have
  \[
 |W_{N,\gamma}(T,X,Y)|\lesssim  \frac{\sqrt \gamma}{h^2} \frac{h^{1/3}}{\sqrt \gamma N^{1/2}}=  \frac{h^{1/3}\gamma^{1/4}}{{h^{2}}\sqrt t}\,,
\]
\begin{equation}
  \label{eq:7}
  |G^{+,\flat}_{h,\gamma}(t,x,y,a,0,0)|\lesssim  \frac{h^{1/3}\gamma^{1/4}}{{h^{2}}\sqrt t}\,.
\end{equation}
\end{prop}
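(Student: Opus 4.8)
The plan is to estimate $W_{N,\gamma}$ from \eqref{eq:bis488ter999999bis} by a stationary/non-stationary phase analysis entirely parallel to the tangential estimates of Propositions~\ref{propdispNpetitloin} and \ref{propdispNgrand}, the crucial simplification being that in the transverse regime $a<\gamma/4$ the phase $\Psi_{N,a,\gamma}$ never degenerates in the $S$ variable: on $\mathrm{supp}\,\psi_{2}\subset[\tfrac12,\tfrac32]$ the $S$-critical points $S=\pm\sqrt{A-a/\gamma}$ satisfy $|S|\ge1/2$, bounded away from $0$. Accordingly, write the $S$-integral as $(\eta\lambda_{\gamma})^{-1/3}Ai\big((\eta\lambda_{\gamma})^{2/3}(a/\gamma-A)\big)$; since its argument is $\le-\tfrac14(\eta\lambda_{\gamma})^{2/3}$, split $Ai(-z)=A_{+}(z)+A_{-}(z)$ using \eqref{eq:ApmAE}, so each term provides a genuine oscillatory phase together with a gain $(\eta\lambda_{\gamma})^{-1/2}$. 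Exactly as in the proof of Proposition~\ref{propcardN} (case $2a\lesssim\gamma$), this yields $W_{N,\gamma}=\sum_{\varepsilon=\pm}W^{\varepsilon}_{N,\gamma}$ with phases $\lambda_{\gamma}\Psi^{\varepsilon}_{N,a,\gamma}$ from \eqref{Psivarepsdfn} in the variables $(\Upsilon,A,\eta)$ and a symbol carrying the factor $\lambda_{\gamma}^{-1/2}$.

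Next, by Lemma~\ref{lemmeNtpetit} the sum over $N$ restricts to $|N|\lesssim|T|$, and since $\Upsilon,S$ are bounded and $A\sim1$, the $A$-critical point equation forces $|N|\sim|T|/4\gtrsim2$ once $|T|\ge9$. For $9\le|T|\lesssim\lambda_{\gamma}$ we move $e^{iNB(\eta\lambda_{\gamma}A^{3/2})}$ into the symbol (Remark~\ref{remNpetitlambda}); then $\partial_{A}^{2}\Psi^{\varepsilon}_{N,a,\gamma}\sim-N$ (the $\varepsilon$-term contributes only $O(1)$ and the $T$-term $O(\gamma T)=O(\sqrt\gamma)$), so stationary phase in $A$ gains $(\lambda_{\gamma}N)^{-1/2}$ and the remaining phase is linear in $\eta$. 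For $\lambda_{\gamma}\lesssim|T|\lesssim\lambda_{\gamma}^{2}$ the $B$-term oscillates and, exactly as in Proposition~\ref{prop3}, $\det\mathrm{Hess}_{(A,\eta)}\Psi^{\varepsilon}_{N,a,\gamma}\sim N^{2}/\lambda_{\gamma}^{2}$, so stationary phase in $(A,\eta)$ gains $\lambda_{\gamma}^{-1}(N^{2}/\lambda_{\gamma}^{2})^{-1/2}=N^{-1}$. In either case we are left with a one-dimensional oscillatory integral in $\Upsilon$ of large parameter $\lambda_{\gamma}$ (plus, in the first range, a harmless $O(1)$ integration in the linear variable $\eta$). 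By the envelope identity, at the critical value the $\Upsilon$-derivative of the phase equals $\Upsilon^{2}+X-A_{c}(\Upsilon)$, hence its third derivative is $2+O(1/N)\neq0$; Van der Corput's lemma then gives a factor $\lambda_{\gamma}^{-1/3}$, uniformly in $\eta\sim1$ and all other parameters. (When $\Upsilon$ stays away from $0$ — e.g.\ using the $x\leftrightarrow a$ symmetry of $G^{+,\flat}_{h,\gamma}$ to assume $X<a/\gamma<1/4\le A$, so $\Upsilon^{2}=A-X\ge1/4$ — the second derivative is already non-vanishing and one gains $\lambda_{\gamma}^{-1/2}$, a refinement not needed here.)

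Multiplying the prefactor $\gamma^{2}/h^{3}$ by $\lambda_{\gamma}^{-1/2}$, by $(\lambda_{\gamma}N)^{-1/2}$ (resp.\ $N^{-1}$), and by $\lambda_{\gamma}^{-1/3}$, and recalling $\lambda_{\gamma}=\gamma^{3/2}/h$, $\gamma>h^{2/3-\varepsilon}$, $|N|\sim|T|$ and (in the second range) $|N|\gtrsim\lambda_{\gamma}$, one checks in each case that
\[
|W_{N,\gamma}(T,X,Y)|\lesssim\frac{h^{1/3}}{h^{2}N^{1/2}}=\frac{h^{1/3}\gamma^{1/4}}{h^{2}\sqrt{t}}\,,\qquad t=\sqrt\gamma\,T.
\]
Finally, Proposition~\ref{propcardN} gives $|\mathcal{N}_{1}(t,x,y)|=O(1)$ for $|T|\lesssim\lambda_{\gamma}^{2}$ (indeed $|t|\gamma^{-1/2}(\gamma^{3}/h^{2})^{-1}=|T|/\lambda_{\gamma}^{2}\lesssim1$), and the contribution of $N\notin\mathcal{N}_{1}$ is $O(h^{\infty})$, hence $|G^{+,\flat}_{h,\gamma}(t,x,y,a,0,0)|\le\sum_{N\in\mathcal{N}_{1}}|W_{N,\gamma}|\lesssim h^{1/3}\gamma^{1/4}h^{-2}t^{-1/2}$, which is \eqref{eq:7}.

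The delicate point is the last reduction: one must verify that, after substituting the implicitly defined $A_{c}(\Upsilon)$ (which enters through $\sqrt{A_{c}-a/\gamma}$ in the $\varepsilon$-term of $\Psi^{\varepsilon}_{N,a,\gamma}$, together with $\eta_{c}(\Upsilon)$ in the range $|N|\geq\lambda_{\gamma}$), the composed phase in $\Upsilon$ still has third derivative bounded away from $0$ uniformly in $N,a,\gamma,h,\eta$ — i.e.\ that no swallowtail-type degeneracy survives. This is exactly where the hypothesis $a<\gamma/4$ is used: it keeps both $\sqrt{A-a/\gamma}$ and, after symmetry, $\sqrt{A-X}$ bounded below, which is why transverse waves avoid the cusp/swallowtail phenomena that make the tangential regime $\gamma\sim a$ genuinely harder.
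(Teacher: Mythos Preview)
Your proposal is correct and follows essentially the same route as the paper: non-degenerate stationary phase in $S$ (factor $\lambda_{\gamma}^{-1/2}$, since $|S_{\pm}|\ge 1/2$), then in $A$ (factor $(N\lambda_{\gamma})^{-1/2}$, from $\partial_A^2\Psi^{\pm}_{N,a,\gamma}\sim -N/\sqrt{A}$ as in \eqref{secderphiN}), then Van der Corput in $\Upsilon$ via the non-vanishing third derivative (factor $\lambda_{\gamma}^{-1/3}$, which is precisely the content of Lemma~\ref{lemN>2}), and finally Proposition~\ref{propcardN} to bound $|\mathcal{N}_1|=O(1)$ in this range of $T$.

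The one organizational difference: you split $9\le |T|\lesssim\lambda_\gamma^2$ into $|N|\lesssim\lambda_\gamma$ (where $e^{iNB}$ goes into the symbol) and $\lambda_\gamma\lesssim|N|\lesssim\lambda_\gamma^2$ (where you do stationary phase in $(A,\eta)$ jointly, as in Proposition~\ref{prop3}, gaining $N^{-1}$). The paper does not make this split here: it simply performs stationary phase in $A$ at fixed $\eta$ throughout the whole range, and then bounds the remaining $\eta$-integral by its compact support. Since $\frac{\gamma^2}{h^3}\lambda_\gamma^{-1/2}(N\lambda_\gamma)^{-1/2}\lambda_\gamma^{-1/3}=h^{1/3}/(h^2 N^{1/2})$ already, no further gain from $\eta$ is needed. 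Your split is more careful and yields the sharper factor $N^{-1}$ when $N\gtrsim\lambda_\gamma$ (which the paper only exploits later, in Proposition~\ref{WgrandTN2} and Lemma~\ref{lemT>lambdasquare}), but for the bound stated here it is not required. Both arguments are valid; the paper's is shorter, yours anticipates the next proposition.
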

\begin{prop}
  \label{WgrandTN2}
  For $|T|\gtrsim \lambda_{\gamma}^{2}$, we have
\begin{equation}
  \label{eq:6}
  |W_{N,\gamma}(T,X,Y)|\lesssim  \frac{\gamma^2}{h^{3}\lambda_{\gamma}^{5/6} N}\,,
\end{equation}
\begin{equation}
  \label{eq:8}
  |G^{+,\flat}_{h,\gamma}(t,x,y,a,0,0)|\lesssim \frac {\sqrt \gamma} {h^{2}} \left(\frac{h}{{\gamma}^{3/2}}\right)^{11/6}= \frac {h^{1/3}}{h^{2}\lambda_{\gamma}^{3/2}}\,.
\end{equation}
\end{prop}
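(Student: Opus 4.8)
The plan is to run, in the transverse setting $a<\gamma/4$, the same argument as for the large‑$N$ part of Proposition~\ref{propdispNgrand} and Corollary~\ref{cordispNgrand}. The structural fact to exploit is that for $|T|\gtrsim\lambda_{\gamma}^{2}$ the stationarity of $\Psi_{N,a,\gamma}$ in $A$ forces $A_{c}^{1/2}\sim T/(4N)$, so (since $A_{c}\sim1$ on $\mathrm{supp}\,\psi_{2}$) one has $N\sim T/4\gtrsim\lambda_{\gamma}^{2}\gg\lambda_{\gamma}$, using $\gamma>h^{2/3-\varepsilon}$. In this range the term $\tfrac{N}{\lambda_{\gamma}}B(\eta\lambda_{\gamma}A^{3/2})$ genuinely oscillates, so one may perform stationary phase in \emph{both} $A$ and $\eta$ and then be left with a two–dimensional integral in $(S,\Upsilon)$ which, thanks to $a<\gamma/4$, reduces to a one–dimensional van der Corput estimate with a \emph{non-vanishing third derivative} — no swallowtail occurs.

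First I would apply stationary phase in $(A,\eta)$ to \eqref{eq:bis488ter999999bis}, exactly as in Proposition~\ref{prop3}: at critical points $\partial_{A}^{2}\Psi_{N,a,\gamma}\sim-\eta N/\sqrt{A}\sim N$, $\partial_{\eta}^{2}\Psi_{N,a,\gamma}=N\lambda_{\gamma}A^{3}B''(\eta\lambda_{\gamma}A^{3/2})\sim N\lambda_{\gamma}^{-2}$ and $\partial_{\eta,A}^{2}\Psi_{N,a,\gamma}\sim N\lambda_{\gamma}^{-2}$, hence $\det\mathrm{Hess}_{(A,\eta)}\Psi_{N,a,\gamma}\sim N^{2}\lambda_{\gamma}^{-2}$, which is non-degenerate with $\lambda_{\gamma}|\det|^{1/2}\sim N\gg1$. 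This yields
\[
W_{N,\gamma}(T,X,Y)=\frac{\gamma^{2}}{h^{3}N}\int_{\R^{2}}e^{i\lambda_{\gamma}\Psi_{N,a,\gamma}(T,X,Y,\Upsilon,S,A_{c},\eta_{c})}\,\chi_{3}(S,\Upsilon,a,1/N,h)\,dS\,d\Upsilon+O(h^{\infty}),
\]
with $\chi_{3}$ a symbol with uniformly bounded $(S,\Upsilon)$–derivatives.

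Next I would bound the remaining $(S,\Upsilon)$ integral by $O(\lambda_{\gamma}^{-5/6})$. By the envelope theorem $\partial_{S}\bigl[\Psi_{N,a,\gamma}(\ldots,A_{c},\eta_{c})\bigr]=\eta_{c}\bigl(S^{2}+a/\gamma-A_{c}(S,\Upsilon)\bigr)$, whose zeros $S=\pm\sqrt{A_{c}-a/\gamma}$ are separated ($A_{c}-a/\gamma\in(1/4,3/2)$ since $a/\gamma<1/4$ and $A_{c}\sim1$) and non-degenerate ($\partial_{S}^{2}\sim2\eta_{c}S_{\pm}+O(1/N)\neq0$, uniformly); non-degenerate stationary phase in $S$ gives a factor $\sim\lambda_{\gamma}^{-1/2}$ and two smooth phase branches $\phi_{\pm}(\Upsilon)$. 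Again by the envelope theorem $\partial_{\Upsilon}\phi_{\pm}=\Upsilon^{2}+X-A_{c}\bigl(S_{\pm}(\Upsilon),\Upsilon\bigr)$, and since $A_{c}^{1/2}=K_{\infty}(a)-\tfrac{S+\Upsilon}{2N}(1-a\E)$ (as in Lemma~\ref{lemAcsol}) and $dS_{\pm}/d\Upsilon=O(1/N)$, one gets $\partial_{\Upsilon}A_{c}(S_{\pm}(\Upsilon),\Upsilon)=O(1/N)$, hence $\partial_{\Upsilon}^{3}\phi_{\pm}=2-O(1/N)$ is bounded away from $0$ for $N\gtrsim\lambda_{\gamma}^{2}$. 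Van der Corput's lemma with non-vanishing third derivative then contributes $\lesssim\lambda_{\gamma}^{-1/3}$, so $|W_{N,\gamma}(T,X,Y)|\lesssim\gamma^{2}/(h^{3}\lambda_{\gamma}^{5/6}N)$, which is \eqref{eq:6}. Finally, Lemma~\ref{lemmeNtpetit} and Proposition~\ref{propcardN} (transverse case $2a\lesssim\gamma$) give that $\sum_{N\notin\mathcal{N}_{1}}W_{N,\gamma}=O(h^{\infty})$ and $|\mathcal{N}_{1}(t,x,y)|\lesssim1+|t|\gamma^{-1/2}(\gamma^{3}/h^{2})^{-1}\sim|T|\lambda_{\gamma}^{-2}$ for $|T|\gtrsim\lambda_{\gamma}^{2}$; since each contributing $N$ satisfies $N\sim T/4$, summing \eqref{eq:6} over $\mathcal{N}_{1}$ yields
\[
|G^{+,\flat}_{h,\gamma}(t,x,y,a,0,0)|\lesssim\frac{|T|}{\lambda_{\gamma}^{2}}\cdot\frac{\gamma^{2}}{h^{3}\lambda_{\gamma}^{5/6}|T|}=\frac{\gamma^{2}}{h^{3}\lambda_{\gamma}^{17/6}}=\frac{h^{1/3}}{h^{2}\lambda_{\gamma}^{3/2}}=\frac{\sqrt\gamma}{h^{2}}\Bigl(\frac{h}{\gamma^{3/2}}\Bigr)^{11/6},
\]
the last two identities being the elementary check $\gamma^{2}=h^{4/3}\lambda_{\gamma}^{4/3}$ for $\lambda_{\gamma}=\gamma^{3/2}/h$; this is \eqref{eq:8}.

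The main obstacle is the third step: one must genuinely exclude a higher-order (swallowtail) degeneracy of the reduced phase in the last variable, which is precisely the phenomenon responsible for the weaker bound in the complementary range $9\le|T|\lesssim\lambda_{\gamma}^{2}$ (Proposition~\ref{transverseTpg9}). The resolution is that $|T|\gtrsim\lambda_{\gamma}^{2}$ forces $N$ to be so large that the dependence of the critical value $A_{c}$ on $(S,\Upsilon)$ has size $O(1/N)$ and is invisible at the level of third derivatives, leaving the reduced phase at worst cubic in $\Upsilon$. The remaining care is routine: uniformity in the parameters $(X,Y,a/\gamma,1/N,\eta)$ of both the stationary phase in $S$ and the van der Corput estimate in $\Upsilon$, which holds because the relevant critical points stay uniformly away from $0$ and from the edge of $\mathrm{supp}\,\chi_{2}$ and because $\chi_{3}$ and its $S$-reduction are symbols with uniformly bounded derivatives.
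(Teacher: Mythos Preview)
Your proof is correct and follows essentially the same route as the paper: stationary phase in $(A,\eta)$ (carried out in the paper as Lemma~\ref{lemT>lambdasquare}), non-degenerate stationary phase in $S$, van der Corput with non-vanishing third derivative in $\Upsilon$, and finally Proposition~\ref{propcardN} for the sum over $N$. The only cosmetic differences are the order of operations (the paper eliminates $S$ first, obtaining $\Psi^{\pm}_{N,a,\gamma}$, and then does $(A,\eta)$) and your appeal to Lemma~\ref{lemAcsol} for the form of $A_c$, which strictly applies only to $N\lesssim\lambda_\gamma$; for $N\gtrsim\lambda_\gamma^2$ the pair $(A_c,\eta_c)$ solves the full system with $B'$-terms, but your conclusion $\partial_{\Upsilon}A_c=O(1/N)$ remains correct, as the paper's Hessian computation shows.
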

We start with $W_{N,\gamma}$. As $a/\gamma <1/4$ and $A>1/2$, $|S_{\pm}|=\sqrt{A-\frac{a}{\gamma}}>1/2$ and the usual stationary phase applies in this variable; the critical values of the phase at $S_{\pm}$ are $\{ \Psi^{\pm}_{N,a,\gamma}(T,X,Y,\Upsilon,A,\eta)\}$, where $\Psi^{\pm}_{N,a,\gamma}(T,X,Y,\Upsilon,A,\eta)$ have been defined in \eqref{Psivarepsdfn} and accordingly we set $W_{N,\gamma}=W^{+}_{N,\gamma}+W^{-}_{N,\gamma}$ . The phases $\Psi^{\pm}_{N,a,\gamma}$ are stationary in $A$ when 
\begin{equation}\label{critAtranv}
\frac{T}{2\sqrt{1+\gamma A}}-\Upsilon\pm\sqrt{A-\frac{a}{\gamma }}=2N\sqrt{A}\Big(1-\frac 34B'(\eta\lambda_{\gamma}A^{3/2})\Big).
\end{equation}
The second derivative with respect to $A$ is
\begin{equation}\label{secderphiN}
\partial^2_{A}\Psi^{\pm}_{N,a,,\gamma}=-\frac{\gamma T}{4\sqrt{1+\gamma A}^3}\pm\frac{1}{2\sqrt{A-a/\gamma}}-\frac{N}{\sqrt{A}}\Big(1-\frac 34B'(z)-\frac 94zB''(z)\Big)_{\bigl|z=\eta\lambda_{\gamma}A^{3/2}}\,.
\end{equation}
If $N=0$, the middle term dominates the first one (as $\gamma\ll 1$). If $|N|\geq 1$, the last term overcomes the next to last: this is obvious unless $|N|=1$, in which case $\sqrt A\sim 2\sqrt{A-a/\gamma}$ would imply $3A\sim a/\gamma<1/4$ which is excluded by the support condition on $A$. Therefore $|\partial^2_{A}\Psi^{\pm}_{N,a,\gamma}|\gtrsim 1$.

The stationary phase in $A$ applies so
we are left to deal with the remaining variables $\Upsilon, \eta$. When $|T|$ is not too large, we will only integrate in $\Upsilon$ and then simply discard the integral in $\eta$ which is bounded as it has compact support; when $|T|> \lambda_{\gamma}^2$ the stationary phase in $\eta$ turns out to be particularly useful.
The critical value of the phase functions $\Psi^{\pm}_{N,a,\gamma}(T,X,Y,\Upsilon,A_c,\eta)$ satisfies
\begin{equation}\label{critUps2}
\partial_{\Upsilon}\Psi^{\pm}_{N,a,\gamma}(T,X,Y,\Upsilon,A_c,\eta)=\Upsilon^2+X-A,
\end{equation}
\[
\partial^2_{\Upsilon}\Psi^{\pm}_{N,a,\gamma{|}A_c}=(\partial^2_{\Upsilon}\Psi^{\pm}_{N,a,\gamma}+2\partial_{\Upsilon}A_c\partial^2_{\Upsilon,A}\Psi^{\pm}_{N,a,\gamma}+(\partial_{\Upsilon}A_c)^2\partial^2_{A}\Psi^{\pm}_{N,a,\gamma})_{|A_c}.
\]
From \eqref{critAtranv} (equation on $A_c=A_c(\Upsilon,T,N,\eta)$), we have $(\partial_{\Upsilon}A_c)(\partial^2_{A}\Psi^{\pm}_{N,a,\gamma}|_{A_c})=1$. Then,
\begin{equation}\label{critsecUps}
\partial^2_{\Upsilon}\Psi^{\pm}_{N,a,\gamma}|_{A_c}=2\Upsilon+\partial_{\Upsilon}A_c(1+2\partial^2_{\Upsilon,A}\Psi^{\pm}_{N,a,\gamma})=2\Upsilon-\partial_{\Upsilon}A_c.
\end{equation}

\begin{lemma}
For a given $|N|\geq 2$ and a given point $(T,X,Y)$ with $T\sim 4N$, the phase $\Psi^{\pm}_{N,a,\gamma|A_{c}}$ has at most one degenerate critical point of order two with respect to $\Upsilon$.

\end{lemma}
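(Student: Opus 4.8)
The plan is to reduce everything to a one‑variable analysis of the reduced phase $\Phi(\Upsilon):=\Psi^{\pm}_{N,a,\gamma|A_{c}}(T,X,Y,\Upsilon,\eta)$ at fixed $(T,X,Y,\eta)$, and to express its first three $\Upsilon$–derivatives through the $A$–derivatives of $\Psi^{\pm}_{N,a,\gamma}$. From \eqref{critUps2} and \eqref{critsecUps} one has, up to the harmless positive factor $\eta$,
\[
\Phi'(\Upsilon)=\Upsilon^{2}+X-A_{c}(\Upsilon),\qquad \Phi''(\Upsilon)=2\Upsilon-\partial_{\Upsilon}A_{c},\qquad \Phi'''(\Upsilon)=2-\partial^{2}_{\Upsilon}A_{c}.
\]
Differentiating the relation $(\partial_{\Upsilon}A_{c})\,\partial^{2}_{A}\Psi^{\pm}_{N,a,\gamma}(A_{c})=1$ recorded just after \eqref{critAtranv}, and using that $\partial^{2}_{A}\Psi^{\pm}_{N,a,\gamma}$ depends on $\Upsilon$ only through $A_{c}$, one gets $\partial^{2}_{\Upsilon}A_{c}=-\partial^{3}_{A}\Psi^{\pm}_{N,a,\gamma}(A_{c})/\big(\partial^{2}_{A}\Psi^{\pm}_{N,a,\gamma}(A_{c})\big)^{3}$, hence
\[
\Phi'''(\Upsilon)=\frac{2\big(\partial^{2}_{A}\Psi^{\pm}_{N,a,\gamma}(A_{c})\big)^{3}+\partial^{3}_{A}\Psi^{\pm}_{N,a,\gamma}(A_{c})}{\big(\partial^{2}_{A}\Psi^{\pm}_{N,a,\gamma}(A_{c})\big)^{3}}=:\frac{G(A_{c})}{\Delta^{3}},\qquad \Delta:=\partial^{2}_{A}\Psi^{\pm}_{N,a,\gamma}(A_{c}).
\]

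The second step is to show that $\Phi'''$ has a fixed (positive) sign and is bounded away from $0$ on the support of the symbol; this forces $\Phi''$ to be strictly monotone, hence to vanish at most once, which simultaneously yields "at most one degenerate critical point" and "of order exactly two". On the support we have $A\in\operatorname{supp}\psi_{2}\subset[\tfrac12,\tfrac32]$, so $A-a/\gamma>1/4$ (since $a/\gamma<1/4$); the hypothesis $T\sim4N$ makes the $\gamma T$–term in \eqref{secderphiN} of size $O(\gamma N)$, negligible for $\gamma<\gamma_{0}\ll1$, and $B'(z),\,zB''(z)=O(\lambda_{\gamma}^{-2})$. Combined with the bounds $\partial^{2}_{A}\Psi^{\pm}_{N,a,\gamma}<0$ and $|\partial^{2}_{A}\Psi^{\pm}_{N,a,\gamma}|\gtrsim1$ already established in the paragraph preceding the lemma, a quick inspection of \eqref{secderphiN} and of its $A$–derivative gives $|\partial^{2}_{A}\Psi^{\pm}_{N,a,\gamma}(A)|\ge c_{0}(N)$ with $c_{0}(N)\gtrsim N$ and $c_{0}(2)>1$, while $|\partial^{3}_{A}\Psi^{\pm}_{N,a,\gamma}(A)|\lesssim N$. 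Since the cube wins, $2\big(\partial^{2}_{A}\Psi^{\pm}_{N,a,\gamma}(A)\big)^{3}+\partial^{3}_{A}\Psi^{\pm}_{N,a,\gamma}(A)<0$ with absolute value $\gtrsim N^{3}$ for all $|N|\ge2$; thus $G(A_{c})<0$, and as $\Delta<0$ too, $\Phi'''=G(A_{c})/\Delta^{3}>0$ and bounded below.

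The main obstacle is precisely this quantitative point, i.e. verifying $2\Delta^{3}+\partial^{3}_{A}\Psi^{\pm}_{N,a,\gamma}(A_{c})\neq0$ uniformly in $A_c$, $N$ and the parameters. For the "$-$" branch all three terms of $\partial^{2}_{A}\Psi^{\pm}_{N,a,\gamma}$ in \eqref{secderphiN} are negative and the inequality is comfortable; for the "$+$" branch with $|N|=2$ the margin is thin, and one must use that $|\partial^{3}_{A}\Psi^{\pm}_{N,a,\gamma}(A)|$ decays like $A^{-3/2}$ exactly where $|\partial^{2}_{A}\Psi^{\pm}_{N,a,\gamma}(A)|$ is smallest (large $A$), together with $A-a/\gamma>1/4$ to bound the $(A-a/\gamma)^{-3/2}$ term by $2$. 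Once the sign of $G$ is secured, strict monotonicity of $\Phi''$ closes the argument: $\{\Phi'=\Phi''=0\}\subset\{\Phi''=0\}$ has at most one point, and at such a point $\Phi'''\neq0$, so the (unique) degenerate critical point of $\Psi^{\pm}_{N,a,\gamma|A_{c}}$ in $\Upsilon$ is of order two.
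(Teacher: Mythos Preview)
Your argument is correct and rests on the same core computation as the paper: writing $\Phi'''=2+\partial^{3}_{A}\Psi^{\pm}/(\partial^{2}_{A}\Psi^{\pm})^{3}$ and showing this is bounded away from zero for $|N|\geq 2$. The organizational difference is that the paper first \emph{solves} $\Phi''=0$ explicitly for $\Upsilon_{c}$ (using the implicit equation \eqref{critAtranv} to write $\Upsilon_{c}$ as a well-defined function of $(T,N,\gamma)$, hence unique), and only then verifies $\Phi'''\neq 0$ at that point; you instead prove $\Phi'''>0$ on the whole support and deduce that $\Phi''$ is strictly monotone, so its zero set is at most a singleton. Your route is a little more economical for the lemma as stated, while the paper's explicit formula for $\Upsilon_{c}$ carries extra information that is reused later. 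On the quantitative side, the paper packages the estimate more cleanly as $\partial^{3}_{A}\Psi^{\pm}/\Delta^{3}\sim -\tfrac{1}{2(N\mp 1/2)^{2}}$, which makes the $|N|\geq 2$ threshold transparent; your ``the cube wins'' heuristic is correct but, as you note, requires the additional observation at $N=2$ (the $+$ branch) that $|\partial^{3}_{A}\Psi^{\pm}|$ is smallest exactly where $|\Delta|$ is smallest, which is what the paper's asymptotic captures directly.
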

\begin{proof}\label{lemN>2}
  Let $|N|\geq 2$. Using \eqref{secderphiN}, $\partial^2_{\Upsilon}\Psi^{\pm}_{N,a,\gamma,|A_c}=0$ becomes $2\Upsilon=\partial_{\Upsilon}A_c$:

  \[
\Upsilon=\frac{-\sqrt{A_c}}{(2N\mp1)(1-\frac 34 B'(z)-\frac 94 zB''(z))\mp \frac 34(B'(z)+3zB''(z))\pm \frac{a/(\gamma\sqrt{A_c-a/\gamma})}{(\sqrt{A_c}+\sqrt{A_c-a/\gamma})}+\frac{\gamma TA_c^{1/2}}{2(1+\gamma A_c)^{3/2}}},
\]
with $z=\eta\lambda_{\gamma}A_c^{3/2}$. From \eqref{critAtranv} we also get
\begin{equation}
\sqrt{A_c}=\frac{\frac{T}{2\sqrt{1+\gamma A_c}}-\Upsilon\mp \frac{a/\gamma}{(\sqrt{A_c}+\sqrt{A_c-a/\gamma})}-\frac 34\sqrt{A_c}B'(\eta\lambda_{\gamma}A_c^{3/2})}{(2N\mp 1)(1-\frac 34 B'(\eta\lambda_{\gamma}A_c^{3/2}))}
\end{equation}
hence a solution to $\partial^2_{\Upsilon}\Psi^{\pm}_{N,a,\gamma,\pm|A_c}=0$ is such that
\[
\Upsilon=\frac{\Upsilon-\frac{T}{2}(1+O(\gamma))+O(a/\gamma)+O(\lambda_{\gamma}^{-2})}{(2N\mp 1)^2(1+O(\lambda_{\gamma}^{-2})+O(\gamma))}\,.
\]
Notice that for every given $T, N$ we obtain an unique solution $\Upsilon_c(T,N,\gamma)$. Asking $\Upsilon_c$ to be a critical point gives also $\Upsilon_c^2=A_c-X$, which provides the relation between $T$ and $X$ at those points where the phase is degenerate of order at least two. To check the order of degeneracy we consider higher order derivatives. We have $\partial^3_{\Upsilon}\Psi^{\pm}_{N,a,\gamma|{A_c}}=2-\partial^2_{\Upsilon}A_c=2+(\partial_{\Upsilon}A_c)^3\partial^3_{A}\Psi^{\pm}_{N,a,\gamma|A_c}$. Compute
\begin{equation}
  \label{thirdderiv}
\partial^3_{A}\Psi^{\pm}_{N,a,,\gamma}|_{A_c}=\frac{3\gamma^2 T}{8\sqrt{1+\gamma A}^5}\mp\frac{1}{4\sqrt{A-a/\gamma}^3}+\frac{N}{2\sqrt{A}^3}(1+O(\lambda_{\gamma}^{-2})),
\end{equation}
hence $\partial^3_{A}\Psi^{\pm}_{N,a,\gamma}|_{A_c}\sim \frac{(N\mp 1/2)}{2A_c^{3/2}}$, while $\partial_{\Upsilon}A_c\sim (\partial^2_{A}\Psi^{\pm}_{N,a,\gamma}|_{A_c})^{-1}\sim -\frac{\sqrt{A_c}}{(N\mp 1/2)}$ and $(\partial_{\Upsilon}A_c)^3\sim -\frac{A_c^{3/2}}{(N\mp 1/2)^3}$ which means that for $|N|\geq 2$ the critical point $\Upsilon_c$ is degenerate of order exactly two and the absolute value of the third derivative is bounded from below by a constant. 
\end{proof}
\begin{lemma}\label{lemN=0}
For $N=0$ and $T\neq 0$,  we have $\partial^2_{\Upsilon}\Psi^{\pm}_{0,a,\gamma}|_{A_c}= T(1+O(\gamma))$.
\end{lemma}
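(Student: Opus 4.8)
The plan is to read off $\partial^2_{\Upsilon}\Psi^{\pm}_{0,a,\gamma}|_{A_c}$ from the two identities recorded just above the statement: \eqref{critsecUps} gives $\partial^2_{\Upsilon}\Psi^{\pm}_{0,a,\gamma}|_{A_c}=2\Upsilon-\partial_{\Upsilon}A_c$, and the implicit‑function relation coming from \eqref{critAtranv} gives $(\partial_{\Upsilon}A_c)(\partial^2_{A}\Psi^{\pm}_{0,a,\gamma}|_{A_c})=1$, hence $\partial_{\Upsilon}A_c=1/\partial^2_{A}\Psi^{\pm}_{0,a,\gamma}|_{A_c}$ once the denominator is checked to be nonzero. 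So everything reduces to evaluating $\partial^2_{A}\Psi^{\pm}_{0,a,\gamma}$ at a critical point in $A$.

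First I would set $N=0$ in \eqref{secderphiN}, which kills the $-\frac N{\sqrt A}(\cdots)$ term and leaves $\partial^2_{A}\Psi^{\pm}_{0,a,\gamma}=\pm\frac{1}{2\sqrt{A-a/\gamma}}-\frac{\gamma T}{4(1+\gamma A)^{3/2}}$. Next I would use \eqref{critAtranv} with $N=0$, namely $\pm\sqrt{A-a/\gamma}=\Upsilon-\frac{T}{2\sqrt{1+\gamma A}}=:u$, to rewrite $\pm\frac1{2\sqrt{A-a/\gamma}}=\frac1{2u}$ (the two $\pm$ signs agree). On the support of $\psi_{2}\chi_{2}$ and in the transverse regime $a/\gamma<1/4$ we have $A\in[1/2,3/2]$, so $|u|=\sqrt{A-a/\gamma}\in(1/2,\sqrt{3/2})$ is bounded and bounded away from zero; moreover at such a critical point \eqref{critAtranv} forces $|T|\lesssim|\Upsilon|+\sqrt A\lesssim1$. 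Hence $\partial^2_{A}\Psi^{\pm}_{0,a,\gamma}|_{A_c}=\frac1{2u}\bigl(1+O(\gamma T)\bigr)\neq0$ for $\gamma\ll1$, and $\partial_{\Upsilon}A_c=2u+O(\gamma T)$.

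Plugging back, $\partial^2_{\Upsilon}\Psi^{\pm}_{0,a,\gamma}|_{A_c}=2\Upsilon-2u+O(\gamma T)$, and the point is the exact cancellation $2\Upsilon-2u=2\Upsilon-2\bigl(\Upsilon-\frac{T}{2\sqrt{1+\gamma A}}\bigr)=\frac{T}{\sqrt{1+\gamma A}}$; together with $\frac1{\sqrt{1+\gamma A}}=1+O(\gamma)$ this yields $\partial^2_{\Upsilon}\Psi^{\pm}_{0,a,\gamma}|_{A_c}=T(1+O(\gamma))$. There is no real obstacle here; the only thing to watch is that every correction is proportional to $T$ — the $\gamma T$ error from the first line of $\partial^2_A\Psi^{\pm}_{0,a,\gamma}$ and the $O(\gamma)$ from $1/\sqrt{1+\gamma A}$ — which is exactly what lets one factor $T$ out of the error term and conclude $T(1+O(\gamma))$ rather than the weaker $T+O(\gamma)$; all the "$O(1)$" claims used along the way follow from the support conditions $A\in[1/2,3/2]$, $|\Upsilon|<3/2$, $a/\gamma<1/4$, $\eta\sim1$ and $\gamma\ll1$.
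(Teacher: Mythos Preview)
Your proof is correct and follows essentially the same route as the paper: both use the identity $\partial^2_{\Upsilon}\Psi^{\pm}_{0,a,\gamma}|_{A_c}=2\Upsilon-\partial_{\Upsilon}A_c$ from \eqref{critsecUps}, compute $\partial_{\Upsilon}A_c$ via the implicit relation coming from \eqref{critAtranv} (the paper differentiates \eqref{critAtranv} directly, you invoke $(\partial_{\Upsilon}A_c)(\partial^2_{A}\Psi^{\pm}_{0,a,\gamma}|_{A_c})=1$, which is the same computation), and then observe the cancellation $2\Upsilon-2u=T/\sqrt{1+\gamma A_c}$. The paper writes out the exact closed form $\frac{T}{\sqrt{1+\gamma A_c}}\bigl(1\mp\frac{\gamma\sqrt{A_c-a/\gamma}}{2(1+\gamma A_c)}\bigr)$ rather than packaging the correction as $O(\gamma T)$, but this is cosmetic.
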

\begin{proof}
Let $N=0$ and $A_c=A_c(\Upsilon,T,N,\eta)$ be the solution to \eqref{critAtranv}; taking the derivative with respect to $\Upsilon$ yields
\[
\partial_{\Upsilon}A_c=\pm \frac{2\sqrt{A_c-a/\gamma}}{1\mp\frac{\gamma T\sqrt{A_c-a/\gamma}}{2(1+\gamma A_c)^{3/2}}}=\frac{2(\Upsilon-\frac{T}{2\sqrt{1+\gamma A_c}})}{1+\frac{\gamma T}{2(1+\gamma A_c)^{3/2}}(\frac{T}{2\sqrt{1+\gamma A_c}}-\Upsilon)},
\]
and we obtain, after replacing $\Upsilon$ with its expression from \eqref{critAtranv}
\begin{equation}
\partial^2_{\Upsilon}\Psi^{\pm}_{0,a,\gamma}|_{A_c} =2\Upsilon-\frac{2(\Upsilon-\frac{T}{2\sqrt{1+\gamma A_c}})}{1+\frac{\gamma T}{2(1+\gamma A_c)^{3/2}}(\frac{T}{2\sqrt{1+\gamma A_c}}-\Upsilon)} =\frac{T}{\sqrt{1+\gamma A_c}}\Big(1\mp\frac{\gamma \sqrt{A_c-a/\gamma}}{2(1+\gamma A_c)}\Big)\,,
\end{equation}
which ends the proof.
\end{proof}
\begin{lemma}\label{lemNpm1} For $N=\pm 1$ the following holds:
\begin{enumerate}
\item each phase function $\Psi^{\mp}_{\pm 1,a,\gamma}(T,X,Y,\Upsilon,A_c,\eta)$ has at most one degenerate critical point  $\Upsilon_c$ of order exactly two. Moreover, $|\Upsilon_c|\leq A_c$ and $|T|\gtrsim \sqrt{A_{c}}$.
\item for $T\neq 0$, we have $\partial^2_{\Upsilon}\Psi^{\pm}_{\pm,a,\gamma}|_{A_c}= T(1+O(\gamma))$.
\end{enumerate}
\end{lemma}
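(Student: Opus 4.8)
The plan is to reduce both items to the two identities already recorded before the statement: by \eqref{critsecUps}, $\partial^2_{\Upsilon}\Psi^{\varepsilon}_{N,a,\gamma}|_{A_c}=2\Upsilon-\partial_{\Upsilon}A_c$, and differentiating the critical equation \eqref{critAtranv} in $\Upsilon$ gives $(\partial_{\Upsilon}A_c)\bigl(\partial^2_{A}\Psi^{\varepsilon}_{N,a,\gamma}|_{A_c}\bigr)=1$, so that $\partial_{\Upsilon}A_c$ is read off from \eqref{secderphiN}. The organizing observation is that, to leading order, \eqref{critAtranv} for $\Psi^{\varepsilon}$ reads $\tfrac T2-\Upsilon=(2N-\varepsilon)\sqrt{A_c}+O(\gamma,a/\gamma,\lambda_{\gamma}^{-2})$; hence for $N=1$ the branch $\varepsilon=-1$ (and for $N=-1$ the branch $\varepsilon=+1$) has coefficient $\pm3$ — the genuinely transverse situation of Lemma~\ref{lemN>2} — while the branch $\varepsilon=\mathrm{sgn}(N)$ has coefficient $\pm1$ and is governed by the $N=0$ analysis of Lemma~\ref{lemN=0}. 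This is exactly the split between items (1) and (2).

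For (1) (the combinations $\Psi^{\mp}_{\pm1}$): from \eqref{secderphiN} the two dominant terms of $\partial^2_A\Psi^{\mp}_{\pm1}|_{A_c}$, namely $\mp\tfrac1{2\sqrt{A_c-a/\gamma}}$ and $-\tfrac{N}{\sqrt{A_c}}$, carry the \emph{same} sign, so $|\partial^2_A\Psi^{\mp}_{\pm1}|_{A_c}|\sim A_c^{-1/2}$ with a fixed sign and $\partial_{\Upsilon}A_c=1/\partial^2_A\Psi^{\mp}_{\pm1}|_{A_c}$ is bounded and of constant sign. I would first compute, as in the proof of Lemma~\ref{lemN>2} but now with $(2N-\varepsilon)^2=9$, that the degeneracy equation $2\Upsilon=\partial_{\Upsilon}A_c$ has a single solution $\Upsilon_c=-T/16+(\text{lower order})$; combined with $\Upsilon_c^2=A_c-X$ this gives at most one degenerate critical point. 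That it has order exactly two follows from $\partial^3_{\Upsilon}\Psi^{\mp}_{\pm1}|_{A_c}=2-\partial^2_{\Upsilon}A_c=2+(\partial_{\Upsilon}A_c)^3\,\partial^3_A\Psi^{\mp}_{\pm1}|_{A_c}$, where by \eqref{thirdderiv} the second summand equals $-\tfrac1{2(N-\varepsilon/2)^2}+O(\gamma,a/\gamma,\lambda_{\gamma}^{-2})=-\tfrac29+\cdots$, so $\partial^3_{\Upsilon}\Psi^{\mp}_{\pm1}|_{A_c}$ stays near $\tfrac{16}{9}$, uniformly over $a/\gamma\in[0,1/4)$, $A_c\in[\tfrac12,\tfrac32]$, $\gamma\ll1$; in particular $\partial^2_{\Upsilon}\Psi^{\mp}_{\pm1}|_{A_c}$ is then strictly monotone in $\Upsilon$, which re-proves uniqueness. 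The bounds drop out of \eqref{critAtranv} at $A_c$: since on the symbol support $|\Upsilon|<\tfrac32$ while $\sqrt{A_c}\ge1/\sqrt2$, the term $3\sqrt{A_c}$ dominates $|\Upsilon|$ in $\tfrac T2=\Upsilon+3\sqrt{A_c}(1+O(\cdots))$, giving $|T|\gtrsim\sqrt{A_c}$; and $\Upsilon_c^2=A_c-X\le A_c$ together with $A_c\ge\tfrac12$ (distinguishing $A_c\gtrless1$, and using $|\Upsilon_c|\approx|T|/16$, $\sqrt{A_c}\approx 3|T|/16$ in the small-$A_c$ branch) gives $|\Upsilon_c|\le A_c$.

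For (2) (the combinations $\Psi^{\pm}_{\pm1}$): here \eqref{critAtranv} has coefficient $\pm1$, i.e.\ it takes the form $\tfrac{T}{2\sqrt{1+\gamma A_c}}-\Upsilon=\pm\sqrt{A_c}-\sqrt{A_c-a/\gamma}+O(\lambda_{\gamma}^{-2})\sqrt{A_c}$, structurally that of the $N=0$ equation of Lemma~\ref{lemN=0} with $\pm\sqrt{A_c-a/\gamma}$ replaced by $\pm\sqrt{A_c}-\sqrt{A_c-a/\gamma}$. Running the computation of that lemma (write $\partial_{\Upsilon}A_c=1/\partial^2_A\Psi^{\pm}_{\pm1}|_{A_c}$ from \eqref{secderphiN}, then use \eqref{critAtranv} to eliminate $\sqrt{A_c-a/\gamma}$ in $2\Upsilon-\partial_{\Upsilon}A_c$) I expect to obtain $\partial^2_{\Upsilon}\Psi^{\pm}_{\pm1}|_{A_c}=T+\frac{4\,(\sqrt{A_c}-\sqrt{A_c-a/\gamma})^2}{2\sqrt{A_c-a/\gamma}-\sqrt{A_c}}+O(\gamma,\lambda_{\gamma}^{-2})=T(1+O(\gamma))$, the extra summand being $O((a/\gamma)^2)$ hence $\ll|T|$. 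Since, as in (1), a critical point forces $|T|\gtrsim\sqrt{A_c}\gtrsim1$, this shows the phase is non-degenerate in $\Upsilon$ for these branches.

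The main obstacle is purely uniform control of perturbations: one must check that the $\gamma$-terms coming from $\sqrt{1+\gamma A}$, the $B',B''$-terms (which are $O(\lambda_{\gamma}^{-2})$), and — the one feature absent for $N=0$ — the mismatch between the coefficient $\sqrt{A_c}$ of $2N$ and the coefficient $\sqrt{A_c-a/\gamma}$ of $\varepsilon$ in \eqref{critAtranv}, are all small enough, uniformly in $a/\gamma\in[0,1/4)$, $A_c\in[\tfrac12,\tfrac32]$ and $\gamma<\gamma_0$, that in (1) the third derivative stays bounded away from $0$ and in (2) the second derivative stays comparable to $T$. In particular one should verify that $2\sqrt{A_c-a/\gamma}-\sqrt{A_c}$ is bounded below on this range (it is $\gtrsim\tfrac14$), so that $\partial^2_A\Psi^{\pm}_{\pm1}|_{A_c}$ never vanishes despite its two leading terms having opposite signs.
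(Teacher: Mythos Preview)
Your overall strategy is exactly the paper's: reduce item~(1) for $\Psi^{\mp}_{\pm1}$ to the computation of Lemma~\ref{lemN>2} (with effective coefficient $2N\mp1=\pm3$, hence third derivative $\approx 2-\tfrac{2}{9}=\tfrac{16}{9}$ bounded away from zero), and item~(2) for $\Psi^{\pm}_{\pm1}$ to the computation of Lemma~\ref{lemN=0}. Your identification of the extra summand $\dfrac{4(\sqrt{A_c}-\sqrt{A_c-a/\gamma})^{2}}{2\sqrt{A_c-a/\gamma}-\sqrt{A_c}}$ in item~(2), together with the lower bound $2\sqrt{A_c-a/\gamma}-\sqrt{A_c}\gtrsim\tfrac14$ on the relevant range, is correct and in fact more explicit than what the paper writes.

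There is one genuine slip. Your closing sentence in item~(2), ``Since, as in~(1), a critical point forces $|T|\gtrsim\sqrt{A_c}\gtrsim1$,'' is false for the branches $\Psi^{\pm}_{\pm1}$. The lower bound $|T|\gtrsim\sqrt{A_c}$ comes from the coefficient $\pm3$ in item~(1); for $\Psi^{+}_{1}$ (coefficient $+1$) the paper itself notes that \eqref{critAtranv} with $\pm=+$ only forces $|T|\lesssim1$, and indeed $T$ can be as small as $O(a/\gamma)$ (take $\Upsilon\approx-\sqrt{A_c}$, a legitimate value when $X$ is small). So you cannot conclude that the $O((a/\gamma)^{2})$ remainder is $\ll|T|$ from a size bound on $T$ alone. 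What you can say, and what suffices for the way the lemma is used afterwards (namely $\partial^{2}_{\Upsilon}\Psi^{\pm}_{\pm1}|_{A_c}\sim T$ to run stationary phase with parameter $\lambda_{\gamma}|T|$), is that your extra summand has the \emph{same sign} as $T$: for $N=1,\varepsilon=+1$ the critical relation gives $T/2\ge\Upsilon+2\sqrt{A_c}-\sqrt{A_c-a/\gamma}\ge-\sqrt{A_c}+2\sqrt{A_c}-\sqrt{A_c-a/\gamma}>0$ at any $\Upsilon$-critical point, and the remainder is manifestly positive; the case $N=-1,\varepsilon=-1$ is the mirror image. Hence $|\partial^{2}_{\Upsilon}\Psi^{\pm}_{\pm1}|_{A_c}|\ge|T|(1+O(\gamma))$. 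If one wants the upper bound as well, note that at a $\Upsilon$-critical point $|T|\gtrsim\sqrt{A_c}-\sqrt{A_c-a/\gamma}\sim a/\gamma$, so the remainder is $O(a/\gamma)\cdot|T|$; this yields $T(1+O(a/\gamma))$ rather than the $T(1+O(\gamma))$ written in the statement, which is adequate for every later use.
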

\begin{proof}
Let $N=1$ : for $\Psi^-_{1,a,\gamma}$ the proof follows the one of Lemma \ref{lemN>2}, except for the lower bound on $T$ that is a direct consequence of \eqref{critAtranv}, with $\pm=-$. For $\Psi^+_{1,a,\gamma}$ we proceed as in the proof of Lemma \ref{lemN=0}; notice that in that case, \eqref{critAtranv}, with $\pm=+$ forces $|T|\lesssim 1$.
\end{proof}
\begin{lemma}\label{lemT>lambdasquare}
Let $\lambda_{\gamma}^2\lesssim |N|$ and $\Psi^{\pm}_{N,a,\gamma}$ given in \eqref{Psivarepsdfn}. Then stationary phase applies for $\Psi^{\pm}_{N,a,\gamma}$ in $(A,\eta)$ and, moreover, its critical value $\Psi^{\pm}_{N,a,\gamma}(T,X,Y,\Upsilon,A_c,\eta_c)$ has at most one critical point of order exactly two in $\Upsilon$.
\end{lemma}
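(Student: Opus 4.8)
The plan is to follow the template of Proposition~\ref{prop3} for the $(A,\eta)$ stationary phase and of Lemma~\ref{lemN>2} for the analysis of the residual $\Upsilon$--integral, using throughout that $|N|\geq\lambda_{\gamma}^{2}\gg 1$. Note first that at any critical point of $\Psi^{\pm}_{N,a,\gamma}$, equation~\eqref{critAtranv} forces $\tfrac T2\sim 2N\sqrt A$, hence $|T|\sim|N|$, since $\Upsilon$ and $(A-a/\gamma)^{1/2}$ are bounded (by the $\chi_{2}$ cutoffs) and $A\sim 1$ on the support of the symbol. For the stationary phase in $(A,\eta)$ I would compute, exactly as in Proposition~\ref{prop3} and using the asymptotics of $B$ from Lemma~\ref{lemL} (so $B''(u)=O(u^{-3})$): at an $A$--critical point $\partial^{2}_{\eta}\Psi^{\pm}_{N,a,\gamma}=N\lambda_{\gamma}A^{3}B''(\eta\lambda_{\gamma}A^{3/2})\sim N/\lambda_{\gamma}^{2}$, then $\partial^{2}_{\eta,A}\Psi^{\pm}_{N,a,\gamma}\sim N/\lambda_{\gamma}^{2}$ (the $\eta$--linear part contributes $\eta^{-1}\partial_{A}\Psi^{\pm}_{N,a,\gamma}=0$ there), and from~\eqref{secderphiN} $\partial^{2}_{A}\Psi^{\pm}_{N,a,\gamma}\sim-\eta N/\sqrt A$, because for $|N|\gg1$ the term $-N/\sqrt A$ dominates both $\pm(2\sqrt{A-a/\gamma})^{-1}=O(1)$ and $-\gamma T/(4(1+\gamma A)^{3/2})=O(\gamma N)$. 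Hence $\det\mathrm{Hess}_{(A,\eta)}\Psi^{\pm}_{N,a,\gamma}\sim N^{2}/\lambda_{\gamma}^{2}-O(N^{2}/\lambda_{\gamma}^{4})\sim N^{2}/\lambda_{\gamma}^{2}\geq\lambda_{\gamma}^{2}\gg1$, which is where $|N|\geq\lambda_{\gamma}^{2}$ enters (and is more than enough), so non-degenerate stationary phase applies and yields, by the implicit function theorem, locally unique critical points $A_{c}=A_{c}(\Upsilon,T,X,Y)$ and $\eta_{c}=\eta_{c}(\Upsilon,T,X,Y)$.

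For the residual phase in $\Upsilon$, since $\nabla_{(A,\eta)}\Psi^{\pm}_{N,a,\gamma}=0$ at $(A_{c},\eta_{c})$, the envelope identity gives $\partial_{\Upsilon}(\Psi^{\pm}_{N,a,\gamma}|_{A_{c},\eta_{c}})=(\partial_{\Upsilon}\Psi^{\pm}_{N,a,\gamma})|_{A_{c},\eta_{c}}=\eta_{c}(\Upsilon^{2}+X-A_{c})$, so critical points in $\Upsilon$ are exactly the zeros of $\Upsilon^{2}+X-A_{c}$, and there $\partial^{2}_{\Upsilon}(\Psi^{\pm}_{N,a,\gamma}|_{A_{c},\eta_{c}})=\eta_{c}(2\Upsilon-\partial_{\Upsilon}A_{c})$. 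Differentiating the system $\nabla_{(A,\eta)}\Psi^{\pm}_{N,a,\gamma}=0$ in $\Upsilon$ and inverting the Hessian by Cramer's rule, with $\partial^{2}_{\Upsilon,A}\Psi^{\pm}_{N,a,\gamma}=-\eta$ and $\partial^{2}_{\Upsilon,\eta}\Psi^{\pm}_{N,a,\gamma}=\Upsilon^{2}+X-A$ (which vanishes at a critical $\Upsilon$), one gets there $\partial_{\Upsilon}A_{c}=\eta\,\partial^{2}_{\eta}\Psi^{\pm}_{N,a,\gamma}/\det\mathrm{Hess}_{(A,\eta)}\Psi^{\pm}_{N,a,\gamma}\sim 1/N$. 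Hence a degenerate critical point solves $2\Upsilon=\partial_{\Upsilon}A_{c}=O(1/N)$; since (see below) $\partial^{2}_{\Upsilon}A_{c}=O(1/N)$, the map $\Upsilon\mapsto 2\Upsilon-\partial_{\Upsilon}A_{c}$ is strictly monotone, so there is at most one such $\Upsilon_{c}$, necessarily with $\Upsilon_{c}\sim 1/(2N)$. Finally, differentiating once more, $\partial^{3}_{\Upsilon}(\Psi^{\pm}_{N,a,\gamma}|_{A_{c},\eta_{c}})|_{\Upsilon_{c}}=\eta_{c}(2-\partial^{2}_{\Upsilon}A_{c})$; differentiating the Cramer expression for $\partial_{\Upsilon}A_{c}$ again, every term carries a factor $1/\det\mathrm{Hess}_{(A,\eta)}\Psi^{\pm}_{N,a,\gamma}\sim\lambda_{\gamma}^{2}/N^{2}$, the third $(A,\eta)$--derivatives of $\Psi^{\pm}_{N,a,\gamma}$ are controlled by $\partial^{3}_{A}\Psi^{\pm}_{N,a,\gamma}\sim N/(2A_{c}^{3/2})$ from~\eqref{thirdderiv} together with the analogous $B''$-- and $B'''$--terms, and the chain-rule pieces bring $\partial_{\Upsilon}A_{c}=O(1/N)$ and $\partial_{\Upsilon}\eta_{c}=O(1/N)$ at $\Upsilon_{c}$ (the a priori $O(1)$ contribution $\partial^{2}_{A}\Psi^{\pm}_{N,a,\gamma}(\Upsilon^{2}+X-A_{c})/\det\mathrm{Hess}$ vanishes there); thus $\partial^{2}_{\Upsilon}A_{c}=O(1/N)$ and $\partial^{3}_{\Upsilon}(\Psi^{\pm}_{N,a,\gamma}|_{A_{c},\eta_{c}})\sim 2\eta_{c}\neq0$ since $\eta_{c}\sim1$, so the unique degenerate critical point has order exactly two.

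The delicate step is the last one. The $(A,\eta)$--Hessian has size only $N^{2}/\lambda_{\gamma}^{2}$, which is marginal precisely in the threshold range $|N|\sim\lambda_{\gamma}^{2}$ relevant to Proposition~\ref{WgrandTN2}; consequently individual derivatives such as $\partial_{\Upsilon}\eta_{c}$ are merely bounded, not small, away from the manifold $\{\Upsilon^{2}+X=A_{c}\}$, so one must carefully exploit the simultaneous vanishing of $\Upsilon^{2}+X-A_{c}$ and of $2\Upsilon-\partial_{\Upsilon}A_{c}$ at $\Upsilon_{c}$, and keep track that all the $B,B',B'',B'''$ corrections coming from the Airy asymptotics of Lemma~\ref{lemL} stay genuinely of lower order through these near-cancellations.
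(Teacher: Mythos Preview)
Your proof is correct and follows essentially the same approach as the paper: both compute $\det\mathrm{Hess}_{(A,\eta)}\Psi^{\pm}_{N,a,\gamma}\sim N^{2}/\lambda_{\gamma}^{2}$ from the same size estimates on the second derivatives, then reduce the residual $\Upsilon$--analysis to that of Lemma~\ref{lemN>2} via the envelope identities. The paper is slightly terser, observing only that ``the critical point $\eta_{c}$ doesn't interfere here'' (the off-diagonal contribution to the Hessian being $O(\lambda_{\gamma}^{-2})$ smaller than the diagonal one, so $\partial_{\Upsilon}A_{c}$ reduces to the one-variable formula from Lemma~\ref{lemN>2}), whereas you carry out the full Cramer inversion explicitly and track the $\partial_{\Upsilon}\eta_{c}$ terms; both routes give $\partial_{\Upsilon}A_{c}\sim 1/N$ and $\partial^{3}_{\Upsilon}(\Psi^{\pm}_{N,a,\gamma}|_{A_{c},\eta_{c}})\sim 2\eta_{c}$ at the degenerate point.
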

\begin{proof}
The derivatives with respect to $A,\eta$ of $\Psi^{\pm}_{N,a,\gamma}$ are given by
\begin{gather}\nonumber
\partial_A(\Psi^{\pm}_{N,a,\gamma})=\eta\Big(\frac{T}{2\sqrt{1+\gamma A}}+\Upsilon^3/3+\Upsilon(X-A)\pm\sqrt{A-a/\gamma}-2N(1-\frac 34 B'(\eta\lambda_{\gamma}A^{3/2}))\Big),\\
\nonumber
\partial_{\eta}(\Psi^{\pm}_{N,a,\gamma})=Y+\Upsilon^3/3+\Upsilon(X-A)\pm \frac 23(A-\frac a \gamma)^{3/2}+T \frac{(\sqrt{1+\gamma A}-\sqrt{1+\gamma})}\gamma\\
\nonumber
-\frac 43NA^{3/2}(1-\frac 34B'(\eta\lambda_{\gamma}A^{3/2})).
\end{gather}
The second order derivatives are 
\begin{gather}\nonumber
\partial^2_{A}(\Psi^{\pm}_{N,a,\gamma})=\text{ RHS term in \eqref{secderphiN}}\,,\nonumber\quad\quad 
\partial^2_{\eta}(\Psi^{\pm}_{N,a,\gamma})=N\lambda_{\gamma}A^3B''(\eta\lambda_{\gamma}A^{3/2})\sim \frac{N}{\lambda_{\gamma}^2},\\
\nonumber
\partial^2_{\eta,A}(\Psi^{\pm}_{N,a,\gamma})=\eta^{-1}\partial_A\Psi^{\pm}_{N,a,\gamma}+\frac 32\eta\lambda_{\gamma}N A^2B''(\eta\lambda_{\gamma}A^{3/2}),
\end{gather}
and when $\nabla_{A,\eta}\Psi^{\pm}_{N,a,\gamma}=0$, the determinant of the Hessian matrix of $\Psi^{\pm}_{N,a,\gamma}$ is
\[
\text{det Hess }\Psi^{\pm}_{N,a,\gamma|{\nabla_{A,\eta}\Psi^{\pm}_{N,a,\gamma}=0}}\sim \frac{N^2}{\lambda_{\gamma}^2},\quad N\geq \lambda_{\gamma}^2\,.
\]
It remains to deal with the integral in $\Upsilon$. We proceed exactly like in the proof of Lemma \ref{lemN>2}: the critical point $A_c$ is given by the same formula while the critical point $\eta_c$ doesn't interfere here, as
\begin{gather*}
  \partial_{\Upsilon}\Big(\Psi^{\pm}_{N,a,\gamma}(T,X,Y,\Upsilon,A_c,\eta_c)\Big)=\eta_c(\Upsilon^2+X-A_c)\,\\
\partial^2_{\Upsilon}\Big(\Psi^{\pm}_{N,a,\gamma}(T,X,Y,\Upsilon,A_c,\eta_c)\Big)=\partial_{\Upsilon}\eta_c(\Upsilon^2+X-A_c)+\eta_c(2\Upsilon-\partial_{\Upsilon}A_c)
\end{gather*}
and at the stationary point $\Upsilon^2+X-A_c=0$, we get the same formula as in \eqref{critsecUps} (with a factor $\eta_c$ near $1$). In the same way, the third derivative is
\[
\partial^3_{\Upsilon}\Big(\Psi^{\pm}_{N,a,\gamma}
(T,X,Y,\Upsilon,A_c,\eta_c)\Big)=\partial^2_{\Upsilon}\eta_c(\Upsilon^2+X-A_c) 
{}+2\partial_{\Upsilon}\eta_c(2\Upsilon-\partial_{\Upsilon}A_c)
+\eta_c(2-\partial^2_{\Upsilon}A_c),
\]
hence when the first and second derivative vanish, the third one behaves exactly like \eqref{thirdderiv}.
\end{proof}
We now consider several cases, depending on $T$. If $|T|\lesssim 1$, we can easily see that $\Psi^{\pm}_{N,a,\gamma}$ can be stationary only for $N\in \{0,\pm 1\}$, since for $|N|\geq 2$ \eqref{critAtranv} has no solution for $A$: $(2N-1)\sqrt A(1-C/\lambda^{2}_{\gamma})\leq 1/2+3/2$ yields for, $|N|=2$, $A<4/9(1+\tilde C/\lambda^{2})$, which is outside the support of $A$ for $\lambda_{\gamma}$ large enough.

Let first $N=0$. Replacing $N=0$ in \eqref{secderphiN} gives $|\partial^2_{A}\Psi^{\pm}_{0,a,\gamma}|\sim 1/2$, hence stationary phase applies in $A$ with large parameter $\lambda_{\gamma}$.
From Lemma \ref{lemN=0} it follows that $\partial^2_{\Upsilon}\Psi^{\pm}_{0,a,\gamma}|_{A_c}\sim T$, where $T=t/\sqrt{\gamma}$ and here $|T|\lesssim 1$: to apply the stationary phase in $\Upsilon$ we need $(\gamma^{3/2}/h)\times (t/\sqrt{\gamma})\gg 1$:
therefore, when $h/\gamma \ll t(\leq \sqrt{\gamma})$, the stationary phase applies and gives
\[
|W_{0,\gamma}(T,X,Y)|\lesssim \frac{\gamma^2}{h^3}\lambda_{\gamma}^{-1}\times \Big(\frac{t\gamma}{h}\Big)^{-1/2} = \frac{\sqrt \gamma}{h^2}\Big(\frac{t\gamma}{h}\Big)^{-1/2}=\frac{1}{h^2}\Big(\frac ht\Big)^{1/2}.
\]
When $t\gamma/h\leq M^2$ for some constant $M>0$, the integration in $A$ doesn't help anymore and 
\[
|W_{0,\gamma}(T,X,Y)|\lesssim \frac{\gamma^2}{h^3}\lambda_{\gamma}^{-1}=\frac{\sqrt{\gamma}}{h^2}\leq \frac{M}{h^2}\Big(\frac ht\Big)^{1/2}.
\]
We have just obtained the first part of \eqref{eq:decW0} in Proposition \ref{transverseTpp1}.
\begin{rmq}
  Notice that this last bound is the same as on a domain without boundary. More precisely, it matches the boundary-less case for a frequency localized Dirac data, with $\eta\sim 1/h$ and $\xi\sim \sqrt\gamma/h$, where dispersion takes over Sobolev embedding for $t>h/\gamma$.
\end{rmq}
Let $N=\pm 1$, then, according to Lemma \ref{lemNpm1}, $\partial^{2}_{\Upsilon}\Psi^{\pm}_{\pm 1,a,\gamma}\sim T$; we conclude as for $N=0$ by splitting according to whether $t\gamma/h\lesssim M^2$ or $t\gamma/h>M^2$. On the other hand, the phases $\Psi^{\mp}_{\pm 1,a,\gamma}$ may have critical points degenerate of order (exactly) two, but they only contribute if $|T|\geq 1$ from Lemma \ref{lemNpm1}. As such, for small $|T|$, the significant contribution to $G^{+}_{h,\gamma}$ is $W_{0,\gamma}+W^{+}_{1,\gamma}+W^{-}_{-1,\gamma}$ and we obtain the second term in \eqref{eq:decW0}, completing the proof of Proposition \ref{transverseTpp1}. \qed

Now, let $1\leq |T|\leq 9$; equation \eqref{critAtranv} has solutions for $N\in \{0,\pm 1,\pm 2\}$ and stationary phase applies in $A$ with large parameter $\lambda_{\gamma}$. When $N=0$, the usual stationary phase applies in $\Upsilon$ since $|T|\geq 1$; when $N=\pm 1$, stationary phase would apply for $W^{\pm}_{\pm 1,\gamma}$ but for $|T|\gtrsim 1$ they are non stationary and do not contribute significantly; according to Lemma \ref{lemNpm1} the phase functions $\Psi^{\mp}_{\pm 1,a,\gamma}$ have degenerate critical points of order exactly two, which provides a bound of the integral in $\Upsilon$ in $W^{\mp}_{\pm 1,\gamma}$ of the form $\lambda_{\gamma}^{-1/3}$. When $N=\pm 2$, the phase functions have degenerate critical points of order two. In this regime we obtain, for $t\in [\sqrt{\gamma},9\sqrt{\gamma}]$
\[
\sum_{|N|\leq 2}|W_{N,\gamma}(T,X,Y)|\leq \frac{\gamma^2}{h^3}\lambda_{\gamma}^{-1-1/3}\sim \frac{1}{h^2}h^{1/3}\sim \frac{1}{h^2}\gamma^{1/6}\Big(\frac ht\Big)^{1/3}\,,
\]
and this completes the proof of Proposition \ref{transverseTpp9}.\qed

Then, when $9\leq |T|\lesssim 4\lambda_{\gamma}^2 $, equation \eqref{critAtranv} has a solution only for $|N|\geq 2$ and the third order derivative with respect to $A$ is bounded from below, therefore we get
$$
|W_{N,\gamma}(T,X,Y)|\lesssim \frac{\gamma^{2}}{h^{3}} \frac{1}{\lambda_{\gamma}^{1/2}}\frac 1 {\sqrt{N\lambda_{\gamma}}} \frac{1}{\lambda_{\gamma}^{1/3}}\,,
$$
where the first factor $\lambda_{\gamma}^{-1/2}$ comes from the stationary phase in $S$, the factor $(N\lambda_{\gamma})^{-1/2}$ from the stationary phase with respect to $A$ and $\lambda_{\gamma}^{-1/3}$ from the degenerate critical point of order two in $\Upsilon$. Using that $t/\sqrt{\gamma}\sim 4N$ we find
\begin{gather*}
  |W_{N,\gamma}(T,X,Y)|\lesssim \frac{\gamma^2}{h^{3}} \frac{h}{\gamma^{3/2}} \frac 1 {\sqrt N} \frac{h^{1/3}}{\gamma^{1/2}}\lesssim \frac 1 {h^{2}} \frac{h^{1/3}}{\sqrt N} \sim \frac 1 {h^{2}} \frac {\gamma^{1/4} h^{1/3}}{t^{1/2}}\,,\\
|G^{+,\flat}_{h,\gamma}(t,x,y,a,0,0)|\lesssim  \sum_{N\in\mathcal{N}_1(x,y,t)}|W_{N,\gamma}(T,X,Y)| \lesssim \frac{1}{h^{2}} \frac {\gamma^{1/4}h^{1/3}}{t^{1/2}},
\end{gather*}
since $ |\mathcal{N}_1|\lesssim O(1)+T/\lambda_{\gamma}^2=O(1)$ as here we consider only $9\lesssim T\lesssim 4\lambda_{\gamma}^2$. Finally, let $4\lambda_{\gamma}^2\lesssim |T|$ hence $\lambda_{\gamma}^2\lesssim |N|$. Since $|T|\lesssim 1/\sqrt{\gamma}$, this regime corresponds to $\gamma\lesssim h^{4/7}$ that hasn't been dealt with in \cite{Annals}. Here, stationary phase applies in $(A,\eta)$. 
Using Lemma \ref{lemT>lambdasquare}, we get
$$
|W_{N}(T,X,Y)|\lesssim \frac{\gamma^{2}}{h^{3}}\lambda_{\gamma}^{-1/2}\lambda_{\gamma}^{-1}(N^2/\lambda_{\gamma}^2)^{-1/2}\lambda_{\gamma}^{-1/3}\lesssim \frac{\gamma^2}{h^{3}}\frac 1 { N} \frac{1}{\lambda_{\gamma}^{5/6}},
$$
where the first factor $\lambda_{\gamma}^{-1/2}$ comes from the integration in $S$, $\lambda_{\gamma}^{-1}(N^2/\lambda_{\gamma}^2)^{-1/2}$ from the stationary phase in $A,\eta$ and $\lambda_{\gamma}^{-1/3}$ from the integral in $\Upsilon$.
For $\lambda_{\gamma}^{2}\lesssim |N|$ 
$$
|G^{+,\flat}_{h,\gamma}(t,x,y,a,0,0)|\lesssim \sum_{N\in \mathcal{N}_{1}(x,y,t)} |W_{N}(T,X,Y)|\lesssim \frac{\gamma^2}{h^{3}} \frac 1 { T} \frac{1}{\lambda_{\gamma}^{5/6}} \frac{T}{\lambda_{\gamma}^{2}}\lesssim \frac 1 {h^{2}} \frac{h^{1/3}}{\lambda_{\gamma}^{\frac {3}{2}}}\,,
$$
and this achieves the proof of Proposition \ref{WgrandTN2}.\qed
\section{Strichartz estimates}
\label{sec:strichartz-estimates}
We intend to prove Theorem \ref{thm+1}. We may reduce ourselves to half-wave frequency localized operators $G^{\pm}_{h}$, and then, to a sum of operators $G^{\pm}_{h,\gamma}$. Assuming we get a bound with a constant $\gamma^{\varepsilon}$, $\varepsilon>0$, we will have the same bound (with fixed constant) on $G^{\pm}_{h}$. We proceed as usual by duality, hereby reducing ourselves to an inhomogeneous estimate on
\begin{equation}
  \label{eq:114}
  u^{\pm}_{h,\gamma}(t,x,y)=\int G^{\pm}_{h,\gamma}(x,y,t,a,b,s) f(a,b,s)\,da db ds\,.
\end{equation}

\subsection{The parametrix regime: $\gamma\gtrsim h^{2/3-\varepsilon}$}

We search for the smallest $q$ such that, for $|t|\lesssim 1$,
\begin{equation}
  \label{eq:115}
  \|u^{\pm}_{h,\gamma}\|_{L^{q}_{t}L^{\infty}_{x,y}} \lesssim \frac {h^{2/q}}{h^{2}} C_{q}(\gamma) \|f\|_{L^{q'}_{s} L^{1}_{a,b}}\,,
\end{equation}
with $C_{q}(\gamma)\lesssim 1$. It will turn out to be convenient to prove
\begin{equation}
  \label{eq:116}
  \|u^{\pm}_{h,\gamma}\|_{L^{p}_{t}L^{\infty}_{x,y}} \lesssim \frac {h^{1/p}}{h^{2}} C_{q}(\gamma) \|f\|_{L^{1}_{s,a,b}}\,,
\end{equation}
with $p=q/2$. As the adjoint of $G^{\pm}_{h,\gamma}$ is $G^{\mp}_{h,\gamma}$, we will recover the previous estimate by duality and interpolation: duality yields
\begin{equation}
  \label{eq:117}
    \|u^{\mp}\|_{L^{\infty}_{t,x,y}} \lesssim \frac {h^{1/p}}{h^{2}} C_{2p}(\gamma) \|f\|_{L^{p'}_{s}L^{1}_{a,b}}\,,
\end{equation}
and interpolation midway between \eqref{eq:116} and \eqref{eq:117} provides \eqref{eq:115}. We may now replace $f\in L^{1}_{s,a,b}$ in \eqref{eq:116} by a Dirac at $(s,a,b)$: then we have $ v^{\pm}(x,y,t)=G^{\pm}_{h,\gamma}(x,y,t,a,b,s)$, and we are left to prove that, for $s\in (-1,1)$, we have $\sup_{a,b,s}  \|v^{\pm}\|_{L^{p}_{t}L^{\infty}_{x,y}} \lesssim \frac {h^{1/p}}{h^{2}} C_{2p}(\gamma)$.
\begin{rmq}
  The whole point of replacing \eqref{eq:115} by \eqref{eq:116} is now clear : we have the $\sup_{a}$ after time integration and we will take advantage of our refined bounds around a discrete time sequence.
\end{rmq}
Notice that $s$ is actually irrelevant, and so is $b$: set $s=b=0$, we would like to prove
\begin{equation}
  \label{eq:120}
  \sup_{a}  \int |G^{\pm }_{h,\gamma}(t,x,y,a)|^{p}_{L^{\infty}_{x,y}}\, dt \lesssim \left (\frac {h^{1/p}}{h^{2}} C_{2p}(h,\gamma)\right)^{p}\,.
\end{equation}
We recall that, at fixed $\gamma$, we know that $0<a\lesssim \gamma$ and $G^{+}_{h,\gamma}=G^{+,\sharp}_{h,\gamma}+G^{+,\flat}_{h,\gamma}+O(h^{\infty})$. Now, if $|t|<\sqrt \gamma$, the operator $G^\pm_{h,\gamma}(t)$ only sees at most one reflection, and as such it satisfies the free case dispersion estimate, as proved in \cite{blsmso08} with a generic boundary. In our particular setting we do have Propositions \ref{propdisptangN=0} and \ref{transverseTpp1} that provide the correct decay estimate, and we get
\begin{equation}
    \label{eq:1201}
  \sup_{a}  \int_{|t|\lesssim \sqrt \gamma } |G^{\pm }_{h,\gamma}(t,x,y,a)|^{p}_{L^{\infty}_{x,y}}\, dt \lesssim \int_{0}^{h/\gamma} \left(\frac {\sqrt \gamma}{h^{2}} \right)^{p}\,dt +\int_{h/\gamma}^{\sqrt \gamma} \left(\frac {h^{1/2}}{h^{2}t^{1/2}} \right)^{p}\,dt
\end{equation}
which is bounded by $h^{1/2}/h^{2}$ for $p=2$ (except for an irrelevant $\log$ that may be removed by computing the weak $L^{2}$ norm). Next, we record the bound from \eqref{eq:1201} but for any $2<p$:
\begin{equation}
    \label{eq:1202}
  \sup_{a}  \left(\int_{|t|\lesssim \sqrt \gamma } |G^{\pm }_{h,\gamma}(t,x,y,a)|^{p}_{L^{\infty}_{x,y}}\, dt\right)^{\frac 1p} \lesssim \frac {h^{\frac 1 p}}{h^{2}} \gamma^{\frac 1 2-\frac 1 p}\,.
\end{equation}
We then split the operator defined by \eqref{eq:114}: $u^{\pm}_{h,\gamma}=u^{\pm,0}_{h,\gamma}+w^{\pm}_{h,\gamma}$, where the kernel of $u^{\pm,0}_{h,\gamma}$ is restricted to $|t-s|\lesssim \gamma$. Having the usual dispersion bound for this truncated kernel, we get
\begin{equation}
  \label{eq:10bis}
  \|u^{\pm,0}_{h,\gamma}\|_{L^{4}_{t}L^{\infty}} \lesssim \frac{h^{1/2}}{h^{2}} \|f\|_{L^{4/3}_{t}L^{1}}\,.
\end{equation}
Hence on such a short time scale we recover the classical $L^{4/3}L^{1}\rightarrow L^{4}L^{\infty}$ bound on the 2D wave equation in $\R^{2}$. Going back to an homogeneous estimate and using conservation of energy, we may pile up $1/\sqrt\gamma$ estimates to obtain an homogeneous estimate on a longer time interval, at the expense of a large constant $1/(\sqrt\gamma)^{1/4}$, and then convert that estimate to an inhomogeneous estimate again, and it will hold for both $u^{\pm}_{h,\gamma}$ and $w^{\pm}_{h,\gamma}$ (using \eqref{eq:10bis} for $w^{\pm}_{h,\gamma}=u^{\pm}_{h,\gamma}-u^{\pm,0}_{h,\gamma }$): 
\begin{equation}
  \label{eq:10}
\|u^{\pm}_{h,\gamma}\|_{L^{4}_{t}L^{\infty}}+  \|w^{\pm}_{h,\gamma}\|_{L^{4}_{t}L^{\infty}} \lesssim \frac{h^{1/2}}{h^{2}} \gamma^{-\frac 1 4}\|f\|_{L^{4/3}_{t}L^{1}}\,.
\end{equation}
\begin{rmq}
Recall we have (from \eqref{eq:bornesup}), $\|u^{\pm}_{h,\gamma}\|_{L^{\infty}_{t,x}}\lesssim h^{-2}\sqrt\gamma  \|f\|_{L^{1}_{t,x}}$. Interpolating this with \eqref{eq:10} (for $u^{\pm}_{h,\gamma}$) yields an $L^{6}L^{\infty}$ bound, which is nothing but the bound from \cite{blsmso08}.
\end{rmq}

We now proceed with larger times, and start with $\gamma>h^{1/3}$, where one may easily
check that the condition $N<\lambda^{1/3}$ is always satisfied. Moreover, in this regime, there are no overlaps between waves. We  first consider the tangential part, $G^{+,\sharp}_{h,\gamma}=\sum_{N} W_{N,a}$. From Propositions \ref{propdispNpetitpres} and \ref{propdispNpetitloin}, set $N_{max}\sim 1/\sqrt \gamma \lesssim \lambda^{1/3}$ (the maximum number of reflections we may observe on our given time interval of size comparable to one), and write 
\begin{align*}
  \int_{\sqrt\gamma}^{1} \sup_{X,Y}|\sum_{N} W_{N,a}(t/\sqrt a,X,Y)|^{p}\,dt  \lesssim &   \sqrt \gamma\sum_{1\leq N\lesssim \frac 1 {\sqrt \gamma}} \int_{4N}^{4N+4} \sup_{X,Y}| W_{N,a}(T,X,Y)|^{p}\,dT \\
    \lesssim  & \frac{h^{\frac p 3} \sqrt \gamma}{h^{2p}} \sum_{1\leq N\lesssim \frac {1} {\sqrt \gamma}}\left( \int_{4N+1/N}^{4N+4} \frac{1}{N^{p/4}|(T-4N)|^{p/4}} \,dT \right.\\
  & {}+ \left.\int_{4N}^{4N+1/N}   \frac{1}{((N/\lambda^{1/3})^{\frac 14}+|N(T-4N)|^{1/6})^{p}} \,dT\right)\,.
\end{align*}
Changing variables, we compute the right-hand side, for $3\leq p <6$: 
\begin{align*}
\int_{\sqrt \gamma}^{1}(\cdots)\,dt  \lesssim  & \frac{h^{\frac p 3} \sqrt \gamma}{h^{2p}} \sum_{N\lesssim \frac {1} {\sqrt \gamma}}\left( \int_{1/N}^{4} \frac{1}{N^{p/4}| \theta|^{p/4}} \,d\theta  
  + \int_{0}^{1/N}   \frac{1}{(N^{\frac 14}\lambda^{-\frac 1 {12}}+|N\theta|^{1/6})^{p}} \,d\theta \right)\\
    \lesssim  & \frac{h^{\frac p 3} \sqrt \gamma}{h^{2p}} \sum_{N\lesssim \frac {1} {\sqrt \gamma}}\left( \frac 1 {N}  \int_{1}^{4N} \frac{1}{| z|^{p/4}} \,dz 
+ \int_{0}^{{\sqrt N} \over {\sqrt\lambda}}   \frac{\lambda^{\frac p {12}}}{N^{\frac p4}} \,d\theta   + \frac 1 N \int_{N^{\frac 3 2}/\lambda^{\frac 1 2}}^{1}   \frac{1}{|z|^{p/6}} \,dz\right)\\
  \lesssim & \frac{h^{\frac p 3} \sqrt \gamma}{h^{2p}} \sum_{N\lesssim \frac {1} {\sqrt \gamma}} \left(
 \sup(N^{-\frac p 4},\frac{\log N} N)+  \frac{\lambda^{\frac {p-6} {12}}}{N^{\frac {p-2}4}}  + \frac 1 N\right)\\
 \lesssim & \frac{h^{\frac p 3} \sqrt \gamma}{h^{2p}} \left(  \frac{1}{(\lambda\gamma^{3/2})^{\frac{6-p} {12}}}+\sup({{\sqrt \gamma}^{\frac p 4-1}},|\log \gamma |^{2})\right)
\end{align*}
where $1\lesssim \lambda \gamma^{3/2}=\gamma^{3}/h$ in our regime.
We now do the same computation but for the transverse part, $G^{+,\flat}_{h,\gamma}=\sum_{N} W_{N,\gamma}$, using Propositions \ref{transverseTpp9} and \ref{transverseTpg9}, for $T>1$:
\begin{align*}
  \int_{\sqrt\gamma}^{1} \sup_{X,Y}|\sum_{N} W_{N,\gamma}(t/\sqrt a,X,Y)|^{p}\,dt  \lesssim &   \sqrt \gamma\sum_{N\lesssim \frac 1 {\sqrt \gamma}} \int_{4N}^{4N+4} \sup_{X,Y}| W_{N,\gamma}(T,X,Y)|^{p}\,dT \\
    \lesssim  & \frac{h^{\frac p 3} \sqrt \gamma}{h^{2p}} \left( \sum_{N\lesssim 9} \int_{4N}^{4N+4} \frac{1}{N^{p/3}} \,dT + \sum_{9\lesssim N\lesssim \frac {1} {\sqrt \gamma}} \int_{4N}^{4N+4} \frac{1}{N^{p/2}} \,dT \right)
\end{align*}
and, for $2< p$ the sum is finite.  Therefore, summing 
both tangential and transverse estimates, for all $a\lesssim \gamma$ and choosing $p=4$ yields
\begin{equation}
  \label{eq:122}
  \left(\int_{\sqrt\gamma}^{1} \sup_{X,Y}|G^{+}_{h,\gamma}(\cdots)|^{4}\,dt\right)^{\frac 1 4}  \lesssim \frac{h^{\frac 1 3}}{h^{2}}  \gamma^{1/8}|\log \gamma|^{1/2}\,.
\end{equation}
Using \eqref{eq:122} we get an estimate for $w^{\pm}_{h,\gamma}$: first from $L^{1}_{s,a,b}$ to $L^{4}_{t}L^{\infty}_{x,y}$ and then by duality and interpolation,
\begin{equation}
  \label{eq:123}
  \|w^{\pm}_{h,\gamma}\|_{L^{8}_{t}L^{\infty}} \lesssim \frac{h^{1/4}}{h^{2}} h^{1/12}\gamma^{1/8}|\log \gamma|^{1/2}\|f\|_{L^{8/7}_{t}L^{1}}\,.
\end{equation}
We notice that $h^{1/12}\gamma^{1/8}\lesssim \gamma^{3/8}$. We may now interpolate between \eqref{eq:123} and \eqref{eq:10}: pick an interpolating exponent $\theta=3/5$, we have $1/5=\theta/4+(1-\theta)/8$ and $3 (1-\theta)/8- \theta/4=0$, we get
\begin{equation}
  \label{eq:12bisbisder}
  \|w^{\pm}_{h,\gamma}\|_{L^{5}_{t}L^{\infty}} \leq \frac {h^{2/5}} {h^{2}} (\log \gamma)^{1/5} \|f_{\gamma}\|_{L^{5/4}_{t}L^{1}}\,.
\end{equation}
The same bound holds for $u^{\pm,0}_{h,\gamma}$, with a much better $\gamma^{1/5}$ replacing the $\log$ factor. If in the interpolation step, we pick $\theta<3/5$, there is an additional positive power of $\gamma$ left, making the $\log$ irrelevant, and we can even sum over $\gamma \gtrsim h^{1/3}$: denote this sum by $G^{\pm}_{h,\gtrsim h^{1/3}}$, we proved
\begin{prop}
  \label{prop:p5}
  The half-wave operator $G^{\pm}_{h,\gtrsim h^{1/3}}$ is such that, for any $q>5$,
  \begin{equation}
    \| G^{\pm}_{h,\gtrsim h^{1/3}} u_{0}\|_{L^{q}_{t}L^{\infty}_{x,y}} \lesssim
    h^{1/q-1} \|u_{0}\|_{2}\,.
  \end{equation}
\end{prop}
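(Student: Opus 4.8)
The plan is to assemble, for each dyadic $\gamma$ with $h^{1/3}\lesssim\gamma\lesssim 1$, the dispersive estimates already obtained for the pieces $G^{\pm}_{h,\gamma}=G^{\pm,\sharp}_{h,\gamma}+G^{\pm,\flat}_{h,\gamma}+O(h^{\infty})$ into a single fixed-$\gamma$ Strichartz bound carrying a gain $\gamma^{\varepsilon(q)}$, with $\varepsilon(q)>0$ for $q>5$, and then to sum the resulting geometric-type series in $\gamma$. Following the $TT^{\ast}$/duality reduction set up in this section, it suffices to establish, for $f$ supported in $\{|s|<1\}$ and any $q>5$,
\[
\|u^{\pm}_{h,\gamma}\|_{L^{q}_{t}L^{\infty}_{x,y}}\lesssim \gamma^{\varepsilon(q)}\,\frac{h^{2/q}}{h^{2}}\,\|f\|_{L^{q'}_{s}L^{1}_{a,b}}\,;
\]
the positive power of $\gamma$ then makes $\sum_{h^{1/3}\lesssim\gamma\lesssim1}\gamma^{\varepsilon(q)}\lesssim1$, so the same bound (with a fixed constant) holds for $G^{\pm}_{h,\gtrsim h^{1/3}}$, and undoing the $TT^{\ast}$ argument turns $h^{2/q-2}$ into $h^{1/q-1}$, which is the claimed homogeneous estimate. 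Both $u^{\pm,0}_{h,\gamma}$ (kernel restricted to $|t-s|\lesssim\gamma$) and $w^{\pm}_{h,\gamma}=u^{\pm}_{h,\gamma}-u^{\pm,0}_{h,\gamma}$ are to be treated; since $u^{\pm,0}_{h,\gamma}$ obeys the free-type bound \eqref{eq:10bis} with room to spare, the heart of the matter is $w^{\pm}_{h,\gamma}$.

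For $w^{\pm}_{h,\gamma}$ I would interpolate between two estimates already in hand. The first is \eqref{eq:10}, i.e. the $L^{4/3}_{t}L^{1}\to L^{4}_{t}L^{\infty}$ bound with the mild loss $\gamma^{-1/4}$, obtained by piling up $\sim1/\sqrt\gamma$ one-reflection pieces via conservation of energy. The second is \eqref{eq:123}, i.e. the $L^{8/7}_{t}L^{1}\to L^{8}_{t}L^{\infty}$ bound: this rests on the long-time $L^{4}$-in-time estimate \eqref{eq:122}, itself produced by feeding the sharp pointwise bounds on the tangential waves $W_{N,a}$ from Propositions \ref{propdispNpetitpres} and \ref{propdispNpetitloin}, and on the transverse waves $W_{N,\gamma}$ from Propositions \ref{transverseTpp9} and \ref{transverseTpg9}, into the sum over the $N\lesssim1/\sqrt\gamma$ reflections, using Proposition \ref{propcardN} to count overlapping waves (which, for $\gamma>h^{1/3}$, costs nothing). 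Interpolating \eqref{eq:10} with \eqref{eq:123} at exponent $\theta$ yields an $L^{p}_{t}L^{\infty}$ bound for $w^{\pm}_{h,\gamma}$; the choice $\theta=3/5$ lands exactly at $p=5$ and the powers of $\gamma$ cancel, leaving only $|\log\gamma|^{1/5}$, while any $\theta<3/5$ (equivalently $q>5$) leaves a surplus $\gamma^{\varepsilon(q)}$, $\varepsilon(q)>0$, dominating the logarithm and rendering the dyadic sum in $\gamma$ convergent. The same reasoning, with a genuine power $\gamma^{1/5}$ in place of the log, handles $u^{\pm,0}_{h,\gamma}$.

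The main obstacle, and the reason the refined parametrix construction of this section was needed, is securing the long-time bound \eqref{eq:122} with a \emph{gain} $\gamma^{1/8}|\log\gamma|^{1/2}$ rather than a loss: this is where the $N$-dependent decay of the tangential pieces near the swallowtail times $T\sim 4N$ (the factors $|N(T-4N)|^{1/6}$ and $|N(T-4N)|^{1/4}$ in Propositions \ref{propdispNpetitpres}, \ref{propdispNpetitloin}), together with the fact that the genuinely singular times form a discrete set over which the time integral is taken, is decisive; the $\sup_{a}$ lying outside the $t$-integral in \eqref{eq:120} is exactly what lets us exploit this. Everything else is bookkeeping of Lebesgue exponents and a convergent geometric series in $\gamma$.
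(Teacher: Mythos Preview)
Your proposal is correct and follows essentially the same route as the paper: the same split $u^{\pm}_{h,\gamma}=u^{\pm,0}_{h,\gamma}+w^{\pm}_{h,\gamma}$, the same pair of endpoint bounds \eqref{eq:10} and \eqref{eq:123} for $w^{\pm}_{h,\gamma}$ (the latter derived from the long-time $L^{4}_{t}$ estimate \eqref{eq:122} built out of Propositions \ref{propdispNpetitpres}, \ref{propdispNpetitloin}, \ref{transverseTpp9}, \ref{transverseTpg9}), and the same interpolation at $\theta=3/5$ landing on $q=5$ with a log, with any $\theta<3/5$ giving the summable $\gamma^{\varepsilon(q)}$. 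You have also correctly identified why the $\sup_{a}$ sitting outside the time integral in \eqref{eq:120} is the mechanism that allows the swallowtail-time refinements to be exploited.
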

\begin{rmq}
Considering that the counterexample in \cite{ILP4} precludes $q<5$, Proposition \ref{prop:p5} is optimal up to the endpoint $q=5$. In fact, one may refine our argument to get an optimal $L^{q}L^{\infty}$ estimate for $G^{\pm}_{h,\gamma}$, depending on $h^{1/3}\lesssim \gamma\lesssim 1$, and connecting $q=5$ and $q=4$.
\end{rmq}
We now proceed with the lower regime $h^{2/3}<\gamma<h^{1/3}$. Here we essentially get (at most) three regimes: $|t|<\sqrt \gamma \lambda^{1/3}$, $\sqrt \gamma \lambda^{1/3}<|t|<\sqrt \gamma \lambda^{2}$ and $|t|>\sqrt \gamma \lambda^{2}$.  Denote by $N^{*}=\lambda^{1/3}$, and start with $|t|<t^{*}=\sqrt \gamma N^{*}$: we reproduce the previous argument, except
now we evaluate the $L^{p}_{t}$ norm, for $p>5/2$ on a
time interval $(0,\sqrt \gamma N^{*})$.
\begin{align*}
  \int_{\sqrt\gamma}^{t^{*}} \sup_{X,Y}|\sum_{N} W_{N,a}(t/\sqrt a,X,Y)|^{p}\,dt
  \lesssim & \frac{h^{\frac p 3} \sqrt \gamma}{h^{2p}} \sum_{N\lesssim  N^{*}} \left(
 \sup(N^{-\frac p 4},\frac{\log N} N)+  \frac{\lambda^{\frac {p-6} {12}}}{N^{\frac {p-2}4}}  + \frac 1 N\right)\\
 \lesssim & \frac{h^{\frac p 3} \sqrt \gamma}{h^{2p}} \left(  {1}+\sup({({N^{*})}^{1-\frac p 4}},|\log N^{*} |^{2})\right)
\end{align*}
while for the transverse part, replacing $1/\sqrt{\gamma}$ by $N^{*}$ does not change the estimate. Hence, summing tangential and transverse estimates, and choosing $p=4$ again,
\begin{equation}
  \label{eq:122bis}
  \left(\int_{\sqrt\gamma}^{t^{*}} \sup_{X,Y}|G^{+}_{h,\gamma}(\cdots)|^{4}\,dt\right)^{\frac 1 4}  \lesssim \frac{h^{\frac 1 3}}{h^{2}}  \gamma^{1/8}|\log N^{*}|^{1/2}\,.
\end{equation}
Let us go back to \eqref{eq:10} for $w^{\pm}$: piling up $N^{*}$ intervals of length $\sqrt \gamma$, we may replace the corresponding inhomogeneous estimate, with $|J|=\sqrt \gamma N^{*}$,
\begin{equation}
  \label{eq:10bisbis}
\|w^{\pm}_{h,\gamma}\|_{L^{4}_{J}L^{\infty}} \lesssim \frac{h^{1/2}}{h^{2}} \sqrt{N^{*}}\|f\|_{L^{4/3}_{J}L^{1}}\,.
\end{equation}
 By the same interpolation we did before, between the $L^{8}_{J}L^{\infty}$ estimate resulting from \eqref{eq:122bis} and \eqref{eq:10bisbis}, and duality, we get, for any $q>5$, for $w^{\pm}_{h,\gamma}$ and then $u^{\pm}_{h,\gamma}$,
  \begin{equation}\label{eq:derdur}
    \| u^{\pm}_{h,\gamma}\|_{L^{q}_{(0,t^{*})}L^{\infty}_{x,y}} \lesssim \frac{h^{2/q}}h  \gamma^{\varepsilon(q)}\|f \|_{L^{q'}_{(0,t^{*})}L^{1}}\,,
  \end{equation}
  where $\varepsilon(q)>0$ (so we can later sum over $\gamma$). Note that $\gamma<h^{1/3}$ and therefore $\lambda^{1/3}=N^{*}< h^{-1/6}$, which explains the numerology.

We now proceed with $N^{*}<|t|/\sqrt \gamma <N_{1}=\inf(1/\sqrt \gamma, \lambda)$, and set $t_{1}=\sqrt \gamma N_{1}$. We compute again the $L^{p}_{t}$ norm, for $2< p<4$. After
rescaling in time, and with $\mu=1-p/4>0$, we have for tangential waves
\begin{align*}
  \nonumber  \int_{\sqrt\gamma N^{*}}^{t_{1}} \sup_{X,Y}|\sum_{N} W_{N,a}(\cdots)|^{p}\,dt    & \lesssim 
{h^{\frac p 3 -2p}\sqrt \gamma} \sum_{N^{*}<N<N_{1}} \int_{4N}^{4N+4} \frac {1} {((\frac N{N^{*}})^{1/2} +{N}^{1/4}|T-4N|^{1/4})^{p}}\, dT\\
& \lesssim 
{h^{\frac p 3-2p }\sqrt \gamma} \sum_{N^{*}<N<N_{1}} \int_{0}^{4} \frac {1} {((\frac N{N^{*}})^{2} +{N}\theta)^{p/4}}\,d\theta\\
& \lesssim 
{\frac {h^{\frac p 3}\sqrt \gamma }{h^{2p} } } \sum_{N^{*}<N<N_{1}}   \frac {1}{N} \left(\Bigl((\frac N{N^{*}})^{2}+4N)^{\mu}-(\frac N{N^{*}})^{2\mu}\Bigr)\right) \\
& \lesssim 
{\frac {h^{\frac p 3}\sqrt \gamma }{h^{2p} } } \sum_{N^{*}<N<N_{1}}   \frac {4} {\Bigl((\frac N{N^{*}})^{2}+4N)^{1-\mu}+(\frac N{N^{*}})^{2(1-\mu)}\Bigr)}\,.
\end{align*}
We compute the last sum: if $N_{1} \lesssim (N^{*})^{2}$, then $N_{1}=1/\sqrt \gamma$ and
$$
\sum_{N^{*}<N<\frac 1 {\sqrt\gamma}}\left( \cdots\right)\lesssim
\sum_{N^{*}<N<\frac 1 {\sqrt\gamma}} \frac {1} { N^{1-\mu}}\lesssim \left(\frac {1} { \sqrt \gamma} \right)^{\mu}\,,
$$
and if $N_{1} \gtrsim (N^{*})^{2}$, then $(\frac N{N^{*}})^{2}+4N)^{1-\mu}+(\frac N{N^{*}})^{2(1-\mu)}\gtrsim (\frac  N {N^{*}} ) ^{2(1-\mu)}$ and with $2(1-\mu)=p/2>1$, 
\begin{equation}
  \label{eq:9}
  \sum_{N^{*}<N<N_{1} }\left( \cdots\right)\lesssim
( N^{*})^{2\mu}+\sum_{(N^{*})^{2}<N<N_{1} } \frac {1} { (\frac N {N^{*}})^{2(1-\mu)}}\lesssim ( N^{*})^{2-p/2}=\lambda^{2/3-p/6}\,.
\end{equation}
For $t_{1}\leq |t|\lesssim 1$, which corresponds to $N>N_{1}= \lambda$, $\gamma\lesssim h^{1/2}$, we may use the improved bound \eqref{eq:1ff>},
 \begin{align}
\nonumber    \int^{1}_{t_{1}} \sup_{X,Y}|\sum_{N} W_{N,a}(\cdots)|^{p}\,dt    & \lesssim 
   {h^{\frac p 3 -2p}\sqrt \gamma} \sum_{N_{1}<N<1/\sqrt \gamma} \int_{4N}^{4N+4} \frac {\sqrt \lambda} {(\sqrt N (\frac N{N^{*}})^{1/2})^{p}}\, dT\\
       & \lesssim 
 {h^{\frac p 3 -2p}\sqrt \gamma}   {\lambda^{3/2-5p/6}}\,.\label{N>lambda}
 \end{align}
 For the transverse part, we get a straightforward estimate, directly on $\sqrt \gamma N^{*}<|t|\lesssim 1$,
\begin{align*}
 \int_{\sqrt\gamma N^{*}}^{1} \sup_{X,Y}|\sum_{N} W_{N,\gamma}(\cdots)|^{p}\,dt  &  \lesssim {h^{\frac p 3 -2p} \gamma^{p/4}} \int_{\sqrt\gamma N^{*}}^{1} \frac{dt}{t^{p/2}}\\
                                                                                   &  \lesssim {h^{\frac p 3 -2p} \gamma^{p/4}} (\sqrt \gamma N^{*})^{1-p/2} 
                   \lesssim {h^{\frac p 3 -2p} \sqrt \gamma} / (\lambda^{1/3})^{p/2-1}
\end{align*}
which is better than \eqref{N>lambda}.  
Therefore,
\begin{equation}
  \label{eq:122bister}
  \left(\int_{\sqrt\gamma N^{*}}^{1} \sup_{X,Y}|G^{+}_{h,\gamma}(\cdots)|^{p}\,dt\right)^{\frac 1 p}  \lesssim \frac{h^{\frac 1 p}}{h^{2}} {h^{1/3-1/p}}\sqrt \gamma^{1/p} \left(\frac{\inf(1/\sqrt\gamma, \lambda^{2/3}) } {\lambda^{1/3}}\right)^{(2-p/2)/p}\,,
\end{equation}
and for $p=18/7$ which is of particular interest to us, when $\gamma\geq h^{4/9}$ (e.g. $1/\sqrt \gamma\lesssim \lambda^{2/3}$)
\begin{equation}
  \label{eq:33bis}
  \left(\int_{\sqrt\gamma N^{*}}^{1} \sup_{X,Y}|G^{+}_{h,\gamma}(\cdots)|^{\frac{18}7}\,dt\right)^{\frac 7 {18}}  \lesssim \frac{h^{\frac 7{18}}}{h^{2}} \left(\frac{h^{\frac 4 9}}\gamma \right)^{\frac 1 {12}}\,.
\end{equation}
while for $h^{4/7}<\gamma<h^{4/9}$, 
\begin{equation}
  \label{eq:33}
  \left(\int_{\sqrt\gamma N^{*}}^{1} \sup_{X,Y}|G^{+}_{h,\gamma}(\cdots)|^{\frac{18}7}\,dt\right)^{\frac 7 {18}}  \lesssim \frac{h^{\frac 7{18}}}{h^{2}} \left(\frac{\gamma}{h^{\frac 4 9}}\right)^{\frac 1 3}\,.
\end{equation}
The worst case scenario is $\gamma\sim h^{4/9}$, and summing for $\gamma$ above or below, using duality we get an inhomogeneous $L^{36/7}_t L^{\infty}$ estimate for $w^{\pm}_{h,\gamma}$, but where the kernel is restricted to $|t-s|\gtrsim t^{*}$: summing with \eqref{eq:derdur}, we recover the same estimate for $u^{\pm}_{h,\gamma}$. The homogeneous estimate then writes
\begin{prop}
  \label{prop:p55}
  The half-wave operator $\sum_{h^{4/7}<\gamma<h^{1/3}}G^{\pm}_{h,\gamma}$ is such that, for any $q\geq 36/7$,
  \begin{equation}
   \| \sum_{h^{4/7}<\gamma<h^{1/3}}G^{\pm}_{h,\gamma} u_{0}\|_{L^{q}_{t}L^{\infty}_{x,y}} \lesssim
   h^{1/q-1} \|u_{0}\|_{2}\,.
  \end{equation}
  \end{prop}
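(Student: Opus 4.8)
The plan is to assemble the time-integrated dispersive bounds already established — \eqref{eq:derdur} for short times and \eqref{eq:33bis}--\eqref{eq:33} for the longer range — through exactly the duality-and-interpolation scheme that produced Proposition \ref{prop:p5}, and then to sum the resulting per-$\gamma$ estimates over dyadic $\gamma$ with $h^{4/7}<\gamma<h^{1/3}$. I would fix such a $\gamma$, take $q=36/7$ (so $p=q/2=18/7$), and split the kernel of $u^{\pm}_{h,\gamma}$ into the part $u^{\pm,t^{*}}_{h,\gamma}$ with $|t-s|\lesssim t^{*}:=\sqrt\gamma\,\lambda^{1/3}$ and the complementary part $w^{\pm}_{h,\gamma}$ with $|t-s|\gtrsim t^{*}$. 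For $u^{\pm,t^{*}}_{h,\gamma}$, estimate \eqref{eq:derdur} already provides, for every $q>5$ and hence for $q=36/7$, the inhomogeneous bound \eqref{eq:115} with a constant that is a positive power $\gamma^{\varepsilon(q)}$.

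For $w^{\pm}_{h,\gamma}$ I would run the reduction \eqref{eq:116}--\eqref{eq:117}: replacing $f\in L^{1}_{s,a,b}$ by a Dirac mass, and using $G^{\pm}_{h,\gamma}=G^{+,\sharp}_{h,\gamma}+G^{+,\flat}_{h,\gamma}+O(h^{\infty})$ together with the long-time bounds \eqref{eq:33bis} (valid for $h^{4/9}\leq\gamma<h^{1/3}$) and \eqref{eq:33} (valid for $h^{4/7}<\gamma<h^{4/9}$), one gets, uniformly in $a\lesssim\gamma$,
\[
\Big(\int_{|t|\gtrsim t^{*}}\sup_{x,y}|G^{\pm}_{h,\gamma}(t,x,y,a,0,0)|^{18/7}\,dt\Big)^{7/18}\lesssim \frac{h^{7/18}}{h^{2}}\,\delta(\gamma),\qquad \delta(\gamma)=\Big(\tfrac{\gamma}{h^{4/9}}\Big)^{1/3}+\Big(\tfrac{h^{4/9}}{\gamma}\Big)^{1/12}\,.
\]
Since $h^{7/18}=h^{1/p}$ with $p=18/7$, this is precisely \eqref{eq:116} with $C_{q}(\gamma)=\delta(\gamma)$ for $w^{\pm}_{h,\gamma}$; because the adjoint of $G^{\pm}_{h,\gamma}$ is $G^{\mp}_{h,\gamma}$, duality yields \eqref{eq:117} and interpolation midway gives $\|w^{\pm}_{h,\gamma}\|_{L^{36/7}_{t}L^{\infty}_{x,y}}\lesssim \tfrac{h^{7/18}}{h^{2}}\,\delta(\gamma)\,\|f\|_{L^{(36/7)'}_{s}L^{1}_{a,b}}$, i.e. \eqref{eq:115} at $q=36/7$ (note $h^{7/18}=h^{2/q}$).

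Adding the two pieces, $u^{\pm}_{h,\gamma}$ satisfies \eqref{eq:115} at $q=36/7$ with constant $C(\gamma)\lesssim\gamma^{\varepsilon(q)}+\delta(\gamma)$. Next I would sum over dyadic $\gamma$: splitting the range at the critical scale $\gamma=h^{4/9}$, the two sums $\sum_{h^{4/9}\leq\gamma<h^{1/3}}(h^{4/9}/\gamma)^{1/12}$ and $\sum_{h^{4/7}<\gamma<h^{4/9}}(\gamma/h^{4/9})^{1/3}$ are geometric, hence bounded by an absolute constant, and $\sum_{\gamma<h^{1/3}}\gamma^{\varepsilon(q)}\lesssim1$; so $\sum_{h^{4/7}<\gamma<h^{1/3}}u^{\pm}_{h,\gamma}$ obeys \eqref{eq:115} at $q=36/7$ with a fixed constant, which by the $TT^{*}$ argument is the claimed homogeneous $L^{36/7}_{t}L^{\infty}_{x,y}$ estimate. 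Finally, for $q>36/7$ I would interpolate this endpoint with the trivial bound $\|\sum_{\gamma}G^{\pm}_{h,\gamma}u_{0}\|_{L^{\infty}_{t}L^{\infty}_{x,y}}\lesssim h^{-1}\|u_{0}\|_{2}$ (energy conservation together with the Sobolev embedding $\|v\|_{L^{\infty}}\lesssim h^{-1}\|v\|_{L^{2}}$ for data frequency-localized at $|\xi|\sim 1/h$), obtaining the full range $q\geq 36/7$ with constant $\lesssim h^{1/q-1}$.

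The individual computations are routine; the one delicate point — and the reason for introducing both the short-time/long-time split at $|t-s|=t^{*}$ and the split of the $\gamma$-range at $h^{4/9}$ — is that at $\gamma\sim h^{4/9}$ the constant $\delta(\gamma)$ degenerates to $O(1)$, so one must genuinely exploit the positive powers of $\gamma/h^{4/9}$ and $h^{4/9}/\gamma$ in \eqref{eq:33bis}--\eqref{eq:33} (and the power $\gamma^{\varepsilon(q)}$ in \eqref{eq:derdur}) to sum over $\gamma$ without a logarithmic loss; the rest is the duality and interpolation machinery already used to prove Proposition \ref{prop:p5}.
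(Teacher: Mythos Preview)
Your proposal is correct and follows essentially the same route as the paper: combine \eqref{eq:derdur} for the part of the kernel with $|t-s|\lesssim t^{*}$ with the duality/interpolation upgrade of \eqref{eq:33bis}--\eqref{eq:33} for $|t-s|\gtrsim t^{*}$, then sum over dyadic $\gamma$ using the geometric decay on each side of $\gamma=h^{4/9}$. One minor notational quibble: your $\delta(\gamma)$ written as a \emph{sum} is not literally a valid upper bound (in each range the ``wrong'' summand exceeds $1$), but since you correctly use the piecewise bound in the summation step the argument stands.
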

\begin{rmq}
This is $q=5+1/7$ from Theorem \ref{thm+1}; the numerology relates to the bound \eqref{eq:1ff}, which saturates for all $T$'s where the corresponding wave $W_{N,a}$ is significant around $N\sim \lambda^{2/3}$ 
\end{rmq}
\subsection{Overlapping waves: $h^{2/3-\varepsilon}<\gamma \leq  h^{4/7}$}
  \label{sec:overl-waves:-h23}
For various reasons we explained earlier, this worst case scenario (in terms of overlap) occurs for $N_{max}\geq \lambda^{2}$, with $N_{max}\sim 1/\sqrt \gamma$ and $\lambda=\gamma^{3/2}/h$ (recall that this translates into $\gamma < h^{4/7}$.)

For $ |t|<\lambda^{2}\sqrt \gamma$, we may do the same argument as before (no overlap): in other words, \eqref{eq:122bis} holds and in our regime of $\gamma$, the infimum in the bound is $\lambda^{2/3}$. We actually compute the bound for $\gamma<h^{1/2}$, for $p=5/2$:
\begin{equation}
  \label{eq:122bisbis}
  \left(\int_{\sqrt\gamma N^{*}}^{\inf(1,\sqrt \gamma \lambda^{2})} \sup_{X,Y}|G^{+}_{h,\gamma}(\cdots)|^{\frac 5 2}\,dt\right)^{\frac 2 5}  \lesssim \frac{h^{\frac 2 5}}{h^{2}} \left( \frac{\gamma} {h^{20/42}}\right)^{\frac 7 {20}}\,.
\end{equation}
From Propositions \ref{cordispNgrand} and \ref{WgrandTN2}, we get a uniform bound for $\sqrt \gamma \lambda^{2}< t < 1$, and therefore
\begin{equation}
  \label{eq:14}
  \| G^{+,\sharp}_{h,\gamma}(t,\cdot)+G^{+,\flat}_{h,\gamma}(t,\cdot)\|_{L^{5/2}_{t>\sqrt \gamma \lambda^{2}}}\lesssim \frac{h^{1/3}}{h^{2}} (\lambda^{-4/3}+\lambda^{-3/2})\lesssim\frac {h^{\frac 2 5}}{h^{2}} \left(\frac{h^{\frac 2 3-\frac {1}{30}}}{\gamma}\right)^{2}\,.
\end{equation}
Here we pushed this direct computation as far as it goes: we proved
\begin{prop}
  \label{prop:p555}
  The half-wave operator $\sum_{h^{\frac 2 3-\frac 1{30}}<\gamma<h^{\frac 1 2}}G^{\pm}_{h,\gamma}$ is such that, for any $q> 5$,
  \begin{equation}
   \| \sum_{h^{\frac 2 3-\frac 1 {30} }<\gamma<h^{\frac 1 2 }}G^{\pm}_{h,\gamma} u_{0}\|_{L^{q}_{t}L^{\infty}_{x,y}} \lesssim
   h^{1/q-1} \|u_{0}\|_{2}\,.
  \end{equation}
  \end{prop}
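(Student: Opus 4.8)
The plan is to run the duality machinery already set up in this section. The key steps are: (i) fix $\gamma$ in the stated range $h^{2/3-1/30}<\gamma<h^{1/2}$ and a small $h$, write $\lambda=\gamma^{3/2}/h>1$, and recall $G^{\pm}_{h,\gamma}=G^{\pm,\sharp}_{h,\gamma}+G^{\pm,\flat}_{h,\gamma}+O(h^{\infty})$ with $0<a\lesssim\gamma$; (ii) prove \eqref{eq:116} with $p=5/2$ and a constant $C_{q}(\gamma)\lesssim\gamma^{\varepsilon}$ for some $\varepsilon>0$, which — by the reductions already made — amounts to establishing \eqref{eq:120} at $p=5/2$, i.e. $\sup_{a}\int_{|t|\lesssim1}\|G^{\pm}_{h,\gamma}(t,\cdot,a)\|_{L^{\infty}_{x,y}}^{5/2}\,dt\lesssim\big(\tfrac{h^{2/5}}{h^{2}}\gamma^{\varepsilon}\big)^{5/2}$; (iii) deduce \eqref{eq:117} by duality (the adjoint of $G^{\pm}_{h,\gamma}$ being $G^{\mp}_{h,\gamma}$), interpolate midway to get \eqref{eq:115} with $q=2p=5$, then use the factor $\gamma^{\varepsilon}$ to sum the dyadic family over $h^{2/3-1/30}<\gamma<h^{1/2}$; (iv) pass from the Duhamel bound to the homogeneous estimate and sum over dyadic $h$ (Littlewood--Paley) to reach $\|\sum_{\gamma}G^{\pm}_{h,\gamma}u_{0}\|_{L^{q}_{t}L^{\infty}_{x,y}}\lesssim h^{1/q-1}\|u_{0}\|_{2}$; the strict inequality $q>5$ will be used only to absorb the logarithmic losses appearing below, by one more interpolation with \eqref{eq:1202}.

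For step (ii), I would split the time integral according to how many reflected waves overlap. On $|t|\lesssim\sqrt\gamma$ there is at most one reflection, Propositions~\ref{propdisptangN=0} and \ref{transverseTpp1} apply, and \eqref{eq:1202} bounds this piece by $\tfrac{h^{2/5}}{h^{2}}\gamma^{1/10}$. On $\sqrt\gamma\lesssim|t|<\inf(1,\sqrt\gamma\lambda^{2})$ there is still no overlap (here $N^{*}=\lambda^{1/3}<1/\sqrt\gamma$ since $\gamma<h^{1/3}$), so the computations behind \eqref{eq:122bis} and \eqref{eq:122bisbis} go through — using Propositions~\ref{propdispNpetitpres}, \ref{propdispNpetitloin}, \ref{propdispNgrand} for $G^{+,\sharp}_{h,\gamma}$, Propositions~\ref{transverseTpp9}, \ref{transverseTpg9} for $G^{+,\flat}_{h,\gamma}$, and the infimum in \eqref{eq:122bister} being $\lambda^{2/3}$ because $\gamma<h^{1/2}<h^{4/9}$ — so this piece is at most the right-hand side of \eqref{eq:122bisbis}, $\tfrac{h^{2/5}}{h^{2}}\big(\gamma/h^{10/21}\big)^{7/20}$, which since $\gamma<h^{1/2}$ is $\le\tfrac{h^{2/5}}{h^{2}}\gamma^{1/60}$ (up to a harmless power of $|\log\gamma|$). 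It remains to treat $\sqrt\gamma\lambda^{2}<|t|<1$, nonempty precisely when $\gamma<h^{4/7}$: this is the genuine overlapping regime, $|\mathcal N_{1}(t,x,y)|\sim|t|/(\sqrt\gamma\lambda^{2})$ growing, and here I would combine Corollary~\ref{cordispNgrand} (tangential) and the bound \eqref{eq:8} of Proposition~\ref{WgrandTN2} (transverse), i.e. $\|G^{+,\sharp}_{h,\gamma}(t,\cdot)+G^{+,\flat}_{h,\gamma}(t,\cdot)\|_{L^{\infty}_{x,y}}\lesssim\tfrac{h^{1/3}}{h^{2}}(\lambda^{-4/3}+\lambda^{-3/2})$ uniformly there, so that the $L^{5/2}_{t}$ norm over an interval of length $\le1$ is bounded by the right-hand side of \eqref{eq:14}, equal to $\tfrac{h^{2/5}}{h^{2}}\big(h^{2/3-1/30}/\gamma\big)^{2}$.

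Summing the three pieces gives \eqref{eq:120} at $p=5/2$ with $C_{q}(\gamma)\lesssim\gamma^{1/60}$ (up to logarithms), which completes step (ii), and then steps (iii)--(iv) finish the proof. I expect the last time interval $\sqrt\gamma\lambda^{2}<|t|<1$ to be the main obstacle: it is precisely where the number of overlapping reflected waves is largest, so its control rests on the sharp pointwise decay of Corollary~\ref{cordispNgrand} and Proposition~\ref{WgrandTN2} — themselves consequences of the refined degenerate stationary phase / swallowtail analysis — and on the optimal count of Proposition~\ref{propcardN}. It is the balance of these inputs that pins down the lower threshold $\gamma=h^{2/3-1/30}$: the contribution $\tfrac{h^{2/5}}{h^{2}}(h^{2/3-1/30}/\gamma)^{2}$ is $\le\tfrac{h^{2/5}}{h^{2}}$ exactly when $\gamma\ge h^{2/3-1/30}$, and its dyadic sum over the admissible range is $O(1)$, saturated at $\gamma\sim h^{2/3-1/30}$; below this threshold the parametrix sum must be abandoned in favour of the gallery-mode analysis.
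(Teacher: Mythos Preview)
Your proposal is correct and follows essentially the same route as the paper: split the time interval into the free regime $|t|\lesssim\sqrt\gamma$, the non-overlapping regime $\sqrt\gamma<|t|<\inf(1,\sqrt\gamma\lambda^{2})$ controlled by \eqref{eq:122bisbis}, and the overlapping regime $|t|>\sqrt\gamma\lambda^{2}$ controlled by Corollary~\ref{cordispNgrand} and \eqref{eq:8}, all at $p=5/2$, then pass to $q=5$ by duality/interpolation and sum over dyadic $\gamma$.

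One inaccuracy to flag: your claim that the three pieces combine to give $C_{q}(\gamma)\lesssim\gamma^{1/60}$ is false as written. Pieces (1) and (2) do obey this (your arithmetic there is fine), but piece (3) gives $(h^{19/30}/\gamma)^{2}$, which at $\gamma=h^{19/30}$ equals $1\gg\gamma^{1/60}$. You in fact correct yourself in the last paragraph, observing that this piece is individually $\le 1$ on the admissible range and that its dyadic sum is $O(1)$ (a geometric series saturated at the bottom). That is the right statement: the honest bound is $C_{5}(\gamma)\lesssim\gamma^{1/60}+(h^{19/30}/\gamma)^{2}$, and both terms sum to $O(1)$ over dyadic $\gamma\in(h^{19/30},h^{1/2})$, the first geometrically from the top, the second from the bottom. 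With that fix, steps (iii)--(iv) go through exactly as you say. A minor cosmetic point: the ``sum over dyadic $h$ (Littlewood--Paley)'' in step (iv) is not part of this Proposition, which is stated at fixed $h$.

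Compared to the paper, your treatment is slightly more uniform: you run the direct $L^{5/2}_{t}$ computation across the whole interval $\sqrt\gamma<|t|<t^{*}$, whereas the paper handles that short-time portion via the interpolated estimate \eqref{eq:derdur} (itself obtained from \eqref{eq:10bisbis} and the $L^{8}$ consequence of \eqref{eq:122bis}). Both lead to the same conclusion, and the restriction $q>5$ in the statement simply reflects the fact that piece~(3) saturates at the lower endpoint $\gamma=h^{2/3-1/30}$.
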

Collecting all the Propositions in this section, we obtain Theorem \ref{thm+1}, but only for the operator $\sum_{h^{\frac 2 3-\varepsilon}<\gamma} G^{\pm}_{h,\gamma}$ with $\varepsilon=1/30$ at the moment. For lower values of $\gamma$, we will now turn to gallery modes.

\subsection{Strichartz estimates for gallery modes}
We refine a result of \cite{doi}: let
\begin{equation}
  \label{eq:1}
u_{k,\pm}(t,x,y)=  \frac 1h\int_{\mathbb{R}}e^{\pm it\sqrt{\lambda_k(\eta/h)}}
e^{iy\eta/h} \psi(\eta) e_k(x,\eta/h)\,d\eta\,.
\end{equation}
These solutions to the wave equation and are the so-called gallery modes. Using Lemma \ref{lemorthog}, $(u_{k,\pm})_{k}$is an  orthogonal family  in $L^2(\Omega_2)$.
We also set $u_{k,0}(x,y)=u_{k,\pm}(0,x,y)$.
\begin{thm}\label{thGM}
For any data of the form $u_{k,0}$, $k\geq 1$, Strichartz estimates hold, uniformly in the parameter $k$: there exists a universal constant $C$ such that for all $(q,r)$ satisfying $q\geq 4$, $\frac 1 q=\frac 1 2 (\frac 1 2-\frac 1 r)$, and with $\beta=2(1/2-1/r)-1/q$, we have
\begin{equation}
  \label{eq:2}
  \|u_{k,\pm}\|_{L^{q}_{t}(L^{r}(\Omega_2))} \leq C h^{-\beta}\|u_{k,0}\|_{L^2(\Omega_2)}\,,
\end{equation}
\end{thm}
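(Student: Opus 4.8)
\textbf{Plan of proof for Theorem \ref{thGM}.} The strategy is to reduce everything to a one-dimensional (in the $y$ variable) dispersive estimate for the frequency-localized gallery mode, with constants \emph{uniform in $k$}, and then invoke the standard Keel--Tao machinery in the form already used throughout the paper. First I would write $u_{k,\pm}(t,x,y)=\int K_{k,\pm}(t,y-y')\,e_k(x,\eta/h)\cdots$; more precisely, since $e_k(\cdot,\eta/h)$ factors out of the $\eta$-integral only after one observes that $\lambda_k(\eta/h)=|\eta/h|^2+\omega_k|\eta/h|^{4/3}$ depends on $\eta$ alone (with $\omega_k$ a fixed constant), one is led to study the scalar kernel
\begin{equation}
  \label{eq:GMkernel}
  K_{k,\pm}(t,y)=\frac1h\int_{\mathbb R}e^{\pm it\sqrt{\lambda_k(\eta/h)}}e^{iy\eta/h}\psi(\eta)\,d\eta\,,
\end{equation}
and the contribution of $x$ is carried entirely by $\|e_k(\cdot,\eta/h)\|_{L^r_x}$, which is uniformly bounded in $k$ and $\eta\in\mathrm{supp}\,\psi$ (in fact $\lesssim h^{-1/3(1/2-1/r)}\cdot(\text{something uniform in }k)$; the Airy profile has $L^r$ norm controlled by Lemma \ref{lemorthog} together with the asymptotics \eqref{eq:ApmAE}, with the worst contribution near the turning point and the normalization $L'(\omega_k)\sim 2\omega_k^{1/2}$ exactly compensating). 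The key point to extract is a dispersive bound of the form
\begin{equation}
  \label{eq:GMdisp}
  \|K_{k,\pm}(t,\cdot)\|_{L^\infty_y}\lesssim \frac1{h^2}\inf\Big(1,\Big(\frac{h}{|t|}\Big)^{1/2}\Big)\,,
\end{equation}
uniformly in $k\geq1$, which is the genuine 1D wave dispersion (in the single tangential variable $y$) and matches the exponent $\beta=2(1/2-1/r)-1/q$ and the admissibility $1/q=\frac12(1/2-1/r)$, $q\geq4$ appearing in the statement.

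To prove \eqref{eq:GMdisp} I would apply stationary phase in $\eta$ to \eqref{eq:GMkernel}. The phase is $\Phi(\eta)=\pm t\sqrt{\lambda_k(\eta/h)}+y\eta/h = \frac1h\big(\pm t\,\eta\sqrt{1+\omega_k(\eta/h)^{-2/3}}\cdot(\ldots)+y\eta\big)$; more cleanly, set $\hbar=h$ and write $\sqrt{\lambda_k(\eta/h)}=\frac{|\eta|}{h}\sqrt{1+\omega_k(h/|\eta|)^{2/3}}$. Since on $\mathrm{supp}\,\psi$ we have $|\eta|\sim1$ and $\omega_k(h/|\eta|)^{2/3}$ is small exactly when $k\lesssim h^{-1}$ (and, for larger $k$, the group velocity $\partial_\eta\sqrt{\lambda_k(\eta/h)}$ stays comparable to $1$ but may be large — this is precisely the regime where the gallery mode "travels faster" and where uniformity is delicate), the large parameter is $1/h$ and the second derivative of the phase in $\eta$ is $\sim t/h$ times a quantity bounded above and below uniformly in $k$: indeed $\partial_\eta^2\big(|\eta|\sqrt{1+\omega_k(h/|\eta|)^{2/3}}\big)$ has a fixed sign and size $\sim1$ on $\mathrm{supp}\,\psi$ for \emph{all} $k$ (one checks that the $\omega_k$-dependent terms contribute a convergent correction; the function $\zeta\mapsto\sqrt{1+\zeta^{2/3}}$ is smooth and concave on $(0,\infty)$ with controlled derivatives). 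Hence non-stationary phase gives $O(h^\infty/h)$ decay when $|y|$ is far from the group-velocity window, a single non-degenerate critical point otherwise yields the $(h/|t|)^{1/2}$ gain, and the trivial bound $h^{-2}$ covers small $|t|$; assembling these is \eqref{eq:GMdisp}. One must double check that the cutoff $\psi$ does not interfere: $\psi\in C_0^\infty$ and its derivatives are harmless in the integration-by-parts estimates, all constants depending only on finitely many seminorms of $\psi$ and on the uniform bounds for $\zeta\mapsto\sqrt{1+\zeta^{2/3}}$, hence independent of $k$.

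With \eqref{eq:GMdisp} and the uniform-in-$k$ bound $\|e_k(\cdot,\eta/h)\|_{L^r_x}\lesssim h^{-\frac13(\frac12-\frac1r)}$ in hand, I would combine them: by Minkowski and the factorized structure, $\|u_{k,\pm}(t,\cdot)\|_{L^\infty_{x,y}}$ inherits the dispersive decay (with an extra $h^{-1/3}$ from the $x$-profile at $r=\infty$), while the $L^2_x$ norm is exactly $\|e_k(\cdot,\eta/h)\|_{L^2}=1$ by Lemma \ref{lemorthog}, so the $TT^*$ argument of Keel--Tao applies verbatim to the operator $u_{k,0}\mapsto u_{k,\pm}$, producing \eqref{eq:2} for the sharp line $1/q=\frac12(1/2-1/r)$, $q\geq4$, with $\beta$ as stated and a constant independent of $k$. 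The main obstacle is precisely the uniformity in $k$ at two places: (i) showing $|\partial_\eta^2\sqrt{\lambda_k(\eta/h)}|\sim|t|/h$ with constants free of $k$, i.e. that the gallery dispersion relation does not degenerate as $k$ grows (this is where one uses $\lambda_k(\eta)=|\eta|^2+\omega_k|\eta|^{4/3}$ explicitly and the fact that $\partial^2(\eta\mapsto\eta\sqrt{1+c\eta^{-2/3}})$ is bounded below uniformly in $c\ge0$ on $|\eta|\sim1$); and (ii) controlling $\|Ai(|\eta|^{2/3}\cdot-\omega_k)\|_{L^r(\mathbb R_+)}/\sqrt{L'(\omega_k)}$ uniformly, which reduces to the $L^r$ norm of a shifted Airy function near its turning point divided by $\omega_k^{1/4}$ — a classical computation using \eqref{eq:ApmAE} and $L'(\omega_k)\sim 2\omega_k^{1/2}$, already invoked in \cite{doi} and in the proof of \eqref{eq:bornesup}. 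Once these two uniformities are secured, the rest is the standard reduction already used several times in the paper.
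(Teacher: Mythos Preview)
Your proof has a genuine gap at the exact point you flag as delicate: the claim that $\partial_\eta^2\big(\eta\sqrt{1+\omega_k(h/\eta)^{2/3}}\big)$ has size $\sim 1$ uniformly in $k$ is false. Writing $f(\eta)=\sqrt{\eta^2+c\eta^{4/3}}$ with $c=\omega_k h^{2/3}$, a direct computation gives
\[
f''(\eta)=\frac{-\tfrac{4c}9\eta^{4/3}-\tfrac{8c^2}9\eta^{2/3}}{4(\eta^2+c\eta^{4/3})^{3/2}}\,,
\]
so $|f''(\eta)|\sim c=\omega_k h^{2/3}$ on $\mathrm{supp}\,\psi$ when $c\lesssim 1$. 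At $c=0$ the relation is linear and $f''\equiv 0$: the lower bound you assert simply does not exist. Consequently the stationary-phase bound on $w_k$ carries an \emph{unavoidable} $\omega_k$-dependent factor, $|w_k(t,y)|\lesssim h^{-1}(h/t)^{1/2}\omega_k^{-1/2}h^{-1/3}$ (this is \eqref{eq:bis16}), and your uniform bound \eqref{eq:GMdisp} is not what stationary phase delivers. Relatedly, your $L^r_x$ claim on $e_k$ is off at $r=\infty$: one has $\|e_k(\cdot,\eta/h)\|_{L^\infty_x}\sim h^{-1/3}\omega_k^{-1/4}$, not $h^{-1/6}$ uniformly.

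The paper's proof proceeds by \emph{tracking} these $\omega_k$ powers rather than suppressing them. It treats $e_k(x,\eta/h)$ as a Fourier multiplier in $y$ (it does not factor out of the $\eta$-integral, as you half-acknowledge), bounds the associated convolution kernel by $\sup_x\|\Gamma_{x,\omega_k}\|_{L^1_y}\lesssim \omega_k^{1/4}h^{-1/3}$ via a separate stationary-phase analysis (this costs an $\omega_k^{1/4}$), and then combines with the $w_k$ estimate whose Strichartz constant gains $\omega_k^{-1/4}$. The two powers cancel exactly in the $L^4_tL^\infty$ bound, yielding the $k$-independent constant $h^{-3/4}$. Your scheme tries to make each piece uniform in $k$ separately; neither piece is, and the proof only closes through this cancellation.
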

\begin{proof}
Since $u_{k,-}(t,x,y)=u_{k,+}(-t,x,y)$, we prove for \eqref{eq:2} for $u_k:=u_{k,+}$. By definition, $\sqrt{\lambda_{k}(\eta/h)}=\frac{\eta}{h}\sqrt{1+\omega_{k}(h/\eta)^{2/3}}$, then $u_{k}(t,x,y)=\frac 1 h \int e^{i\frac t h \eta\sqrt{1+\omega_{k}(h/\eta)^{2/3}}} e_{k}(x,\eta/h) \psi(\eta) e^{i\frac {y\eta} h}\,d\eta$. At $t=0$, we get (recall $e_{k}$ is $L^{2}$-normalized), after Fourier transform in $y$, $\|u_k(0,\cdot)\|^{2}_{L^{2}(\Omega_2)}\sim h \|\psi\|^{2}_{L^{2}_{\eta}}$. We aim at controlling the Fourier multiplier (w.r.t. $y$, at fixed $x$) $e_{k}(x,\eta/h)$: if we can obtain good $L^{2}_{y}$ bounds on this multiplier, dispersion will reduce to an equation on 
\begin{equation}
  \label{eq:bis3}
  w_{k}(t,y):=\frac 1h \int  e^{i\frac t h \sqrt{\eta^2+\omega_{k}\eta^{4/3}h^{2/3}}} \psi(\eta) e^{i\frac {y\eta} h}\,d\eta\,.
\end{equation}
A simple computation yields
\begin{equation}\label{eq:simukwy0}
\|w_k(0,\cdot)\|^2_{L^2_y}\sim h\|\psi\|^2_{L^2_{\eta}}\sim  \|u_k(0,\cdot)\|^{2}_{L^{2}(\Omega_2)}.
\end{equation}
Since $e_k(x,\theta)=\frac{\theta^{1/3}}{\sqrt{L'(\omega_{k})}}Ai(\theta^{2/3}x-\omega_k)$, let $\theta=\eta/h$, we are left with a convolution with 
\begin{equation}
  \label{eq:bis6}
  \Gamma_{x,\omega_k}(y)=\frac{1}{\sqrt{L'(\omega_{k})}} \frac 1h \int e^{i\frac{y \eta}{h}} (\eta/h)^{1/3} Ai((\eta/h)^{2/3}x-\omega_{k}) \psi(\eta)\,d\eta\,,
\end{equation}
which easily maps $L^{2}_{y}$ to $L^{2}(\Omega_2)$. To map $L^{\infty}_{y}$ to $L^{\infty}(\Omega_{2})$, we compute $\sup_{x}\|\Gamma_{x,\omega_k}(y)\|_{L^{1}_{y}}$. We set $y=h Y$, $x=h^{2/3}z$ and $G_{z,\omega_k}(Y)=h \Gamma_{h^{2/3} z,\omega_k}(h Y)$, then, replacing $Ai$ by its integral formula \eqref{eq:bis47},
\begin{equation}
  \label{eq:bis7bis}
 2\pi G_{z,\omega_k}(Y)=\frac{h^{-1/3}}{\sqrt{L'(\omega_{k})}} \int e^{i(Y\eta+\frac {s^{3}}3 + s(\eta^{2/3}z-\omega_{k}))} \eta^{1/3}\psi(\eta) \,d\eta ds\,.
\end{equation}
Set $s=\omega_{k}^{1/2} \sigma$, $Y=\omega_{k}^{3/2} \Upsilon$, $ z=\omega_{k} Z$, and define $H_{Z,\omega_k}(\Upsilon):= \omega_{k}^{3/2}  G_{z,\omega_k}(\omega_k^{3/2}\Upsilon)$. We have
\begin{equation}\label{eq:L1}
\sup_x \|\Gamma_{x,\omega_k}(y)\|_{L^1_y}  =\sup_{z, x=h^{2/3}z}\int |G_{z,\omega_k}(Y)|dY =\sup_{Z,x=h^{2/3}\omega_k Z}\|H_{Z,\omega_k}(.)\|_{L^1_{\Upsilon}}\,.
\end{equation}
To estimate $\sup_{Z,x=h^{2/3}\omega_k Z}\|H_{Z,\omega_k}(.)\|_{L^1_{\Upsilon}}$ we compute $H_{Z,\omega_{k}}$. Thinking of $\omega_{k}^{3/2}$ as our large parameter, the phase function of $H_{Z,\omega_k}(\Upsilon)$ becomes
  $\Phi=\Upsilon \eta +\frac{\sigma^{3}}3+\sigma(\eta^{2/3}Z-1)$, and
\begin{equation}
  \label{eq:bis9}
  \partial_{\eta}\Phi=\Upsilon+\frac 2 3 \sigma Z \eta^{-1/3},\,\,\,\,\partial_{\sigma}\Phi=\sigma^{2}+\eta^{2/3}Z-1\,.
\end{equation}
If $Z$ large, then the phase is non-stationary in $\sigma$ and we can perform repeated integrations by parts in this variable to obtain a small contribution. In order $\Phi$ to be stationary, we need
\begin{equation}
  \label{eq:bis10}
  \sigma^{2}=1-\eta^{2/3}Z\,\text{ which gives }\Upsilon=\pm \frac 2 3 Z \eta^{-1/3}\sqrt{1-\eta^{2/3}Z}
\end{equation}
so both $Z$, $\sigma$ and $\Upsilon$ are bounded on the stationary set (indeed, from \eqref{eq:bis10} we must have $\sigma^2\lesssim 1$, $|Z|\lesssim \eta^{-2/3}$ and $\eta\in [\frac 1 2,\frac 3 2]$). The matrix of second order derivatives is
\[
\text{Hess } \Phi =
\left(
\begin{array}{cc}
 -\frac 29\sigma Z\eta^{-4/3} & \frac 23\eta^{-1/3}Z   \\
\frac 23\eta^{-1/3}Z    &  2\sigma   \\
 \end{array}
\right)
\]
and, using \eqref{eq:bis10}, at the stationary points $\Big|\det \text{Hess }(\Phi)\Big|=\frac 49 \eta^{-4/3}Z$.
Stationary phase applies provided that $\omega_{k}^{3/2}\sqrt{Z}\gg 1$. In this case,
\begin{equation}
  \label{eq:bis12}
  |H_{Z,\omega_k}(\Upsilon)|\sim  \frac{h^{-1/3}}{\sqrt{L'(\omega_{k})}}\omega_{k}^{2} \times \frac{\omega_k^{-3/2}}{\sqrt{Z}}.
\end{equation}
Using \eqref{eq:bis10}, it turns out that the main contribution in the integral defining $H_{Z,\omega_k}(\Upsilon)$ comes from values $|\Upsilon| \lesssim Z$, for larger values of $|\Upsilon|$ the contribution of $H_{Z,\omega_k}(\Upsilon)$ being $h^{-1/3}\times O(k^{-M})$ for any $M\geq 1$ by non-stationary phase. Therefore we can estimate
 \begin{equation}
  \label{eq:bis13}
 \sup_{\omega_k^{-3}\ll Z\lesssim 1} \|H_{Z,\omega_k}(\Upsilon)\|_{L^{1}_{\Upsilon}}\sim   \frac{h^{-1/3}}{\sqrt{L'(\omega_{k})}}\omega_{k}^{2}\int_{|\Upsilon|<CZ}\frac 1 {\omega_{k}^{3/2}\sqrt{Z}}\, d\Upsilon\sim \frac {\omega_{k}^{1/4}}{h^{1/3}}\sqrt{Z}.
\end{equation}
When $Z$ (hence $x$) is very small, we can no longer apply stationary phase. However, for values $|Z|\lesssim \omega_k^{-3}$ notice that non-stationary phase applies in $\sigma$, as, for $Z$ sufficiently small,
\[
|\partial^2_{\sigma}\Phi|\Big|_{\partial_{\sigma}\Phi=0}=
|2\sigma|\Big|_{\sigma=\pm\sqrt{1-\eta^{2/3}Z} } \geq \frac 14\,.
\]
The integral in $\sigma$, which is nothing but the Airy function, yields two main contributions, corresponding to $A_{\pm}$ defined in \eqref{eq:Apm}. In other words, when $x=h^{2/3}\omega_kZ$ is very small, we directly split the Airy function in \eqref{eq:bis6} as follows 
\[
Ai((\eta/h)^{2/3}x-\omega_{k}) =\sum_{\pm} A_{\pm}(\omega_k-(\eta/h)^{2/3}x)=\sum_{\pm}e^{\mp \frac 23 i (\omega_k-(\eta/h)^{2/3}x)^{3/2}}\Psi(\omega_k-(\eta/h)^{2/3}x),
\]
where, according to \eqref{eq:ApmAE}, $|\Psi(W)|\lesssim 1/(1+W^{1/4})$. We obtain 
\[
\Gamma_{x,\omega_k}(y)=\sum_{\pm}\frac{h^{-4/3}}{\sqrt{L'(\omega_k)}}\int e^{\frac ih (y\eta\mp \frac 23 (h^{2/3}\omega_k-x\eta^{2/3})^{3/2})}\Psi(\omega_k-(\eta/h)^{2/3}x) \eta^{1/3}\psi(\eta)d\eta,
\]
and we notice that $h^{2/3}\omega_k-x\eta^{2/3}\geq \frac 12 h^{2/3}\omega_k$ for small values of $x$. The phase is stationary when $y\pm \frac 23x\eta^{-1/3}(h^{2/3}\omega_k-x\eta^{2/3})^{1/2}=0$; for values $|y|> Cx (h^{2/3}\omega_k)^{1/2}$ for $C>0$ independent of $\omega_k$, the phase is non-stationary in $\eta$ and we obtain an $O(h^{\infty})$ contribution. We can now estimate
\begin{align*}
\int |\Gamma_{x,\omega_k}(y)| dy &\lesssim \int_{|y|\leq Cx h^{1/3}\sqrt{\omega_k}}\frac{h^{-4/3}}{\sqrt{L'(\omega_k)}}\times \frac{1}{\omega_k^{1/4}}dy=\frac{h^{-4/3}}{\sqrt{\omega_k}}\times Cxh^{1/3}\sqrt{\omega_k}\\
&\lesssim C h^{-1}\times \frac{h^{2/3}}{\omega_k^2} =C\frac{h^{-1/3}}{\omega_k^2},
\end{align*}
using that $x=h^{2/3}\omega_k Z$ and $Z\lesssim \omega_k^{-3}$. Since this last bound is smaller than the one in \eqref{eq:bis13} it follows that
\begin{equation}\label{eq:Gamest}
\sup_{x}\|\Gamma_{x,\omega_k}(.)\|_{L^1_{y}}\lesssim \frac{\omega_k^{1/4}}{h^{1/3}},
\end{equation}
and this bound saturates for $x\sim h^{2/3}\omega_k$. It remains to estimate $\|w_k\|_{L^{\infty}_y}$: going back to \eqref{eq:bis3}, stationary phase in $\eta$ does apply when $t\gg h^{1/3}$ since
\begin{equation}
  \label{eq:bis15}
|\partial^2_{\eta}(y\eta+t\sqrt{\eta^2+\omega_k \eta^{4/3}h^{2/3}})| \sim t \omega_{k} h^{2/3}\eta^{4/3}(1+O(h^{2/3})).
\end{equation}
Indeed, for $t\gg h^{1/3}$ we obtain a bound for $w_k(t,y)$ of the form
\begin{equation}
  \label{eq:bis16}
|w_k(t,y)|\sim  \frac 1 h \frac{\sqrt{h}}{\sqrt{t\omega_{k} h^{2/3}}}\sim \frac1h \Big(\frac ht\Big)^{1/2} \frac{1}{\omega_{k}^{1/2} h^{1/3}}.
\end{equation}
When $t\lesssim Mh^{1/3}$ for some constant $M$, the phase doesn't oscillate and we simply bound $w_k$ by $\frac 1h$. It follows that for every $t>h$, $\|w_k(t,\cdot)\|_{L^{\infty}}$ is bounded by 
\begin{equation}\label{eq:wkest}
\|w_k(t,\cdot)\|_{L^{\infty}}\lesssim  \frac1h \Big(\frac ht\Big)^{1/2} \frac{1}{\omega_{k}^{1/2} h^{1/3}}
\end{equation}
which then provides
\begin{equation}
  \label{eq:bis29}
  \|w_k\|_{L^{4}(L^{\infty}_{y})}\lesssim  \frac 1 {h^{1/4} h^{1/6} \omega_{k}^{1/4}} \|w_{k}(0,\cdot )\|_{L^{2}_y}.
\end{equation}
Returning to $u_k$ and using \eqref{eq:Gamest}, \eqref{eq:wkest} and \eqref{eq:simukwy0}, we obtain
\begin{equation}
  \label{eq:bis29bis}
  \|u_k\|_{L^{4}(L^{\infty}_{x,y})}\lesssim \frac{\omega_{k}^{1/4}}{h^{1/3}} \|w_k\|_{L^{4}(L^{\infty}_{y})}\lesssim  \frac 1 {h^{3/4}}\|u_k(0,\cdot)\|_{L^{2}(\Omega_2)}\,.
\end{equation}
This proves that gallery modes satisfy the usual Strichartz estimates in $\R^{2}$.
\end{proof}
Theorem \ref{thGM} has an interesting consequence: consider the spectral decomposition of a given function $u_{0}$, (where the $h^{-1/2}$ accounts for the $L^{1}$ normalization of $u_{k}$)
\begin{equation}
  \label{eq:3}
  u_{0}(x,y)=\sum_{k\geq 1} c_{k} h^{-1/2 }u_{k}(0,x,y)\,,
\end{equation}
where $u_k(0,x,y)$ is given in \eqref{eq:1} for $t=0$ and where $c_k:=<u_0, h^{-1/2}u_k(0,\cdot)>_{L^2(\Omega_2)}$. We may then evaluate the Strichartz norm of the solutions to the half-wave operators with data $u_{0}$,
\begin{equation}
  \label{eq:4}
u_{\pm}(t,x,y)= \sum_{k} c_{k} u_{k,\pm}(t,x,y)
\end{equation}
provided we restrict ourselves to $\phi_{\gamma}(Q_{x,y}) u_{\pm}=u_{\gamma,\pm}$, where we define the (tangential) pseudo-differential operator $Q_{x,y}=x+D^{-2}_{y}D^{2}_{x}=-D_{y}^{-2}\Delta_{F}-Id$ and where $\gamma\geq h^{2/3}$ and $\phi_{\gamma}(\cdot)=\phi(\cdot/\gamma)$ with $\phi\in C^{\infty}_0([-1,1])$. To do this, we let $u_{\gamma,\pm}$ be defined as follows
\begin{equation}
  \label{eq:1gamma}
u_{\gamma,\pm}(t,x,y)=  \sum_{k\geq 1} \frac 1h\int_{\mathbb{R}}e^{\pm it\sqrt{\lambda_k(\eta/h)}}
e^{iy\eta/h} \psi(\eta) \phi_{\gamma}(h^{2/3}\omega_{k}/\eta^{2/3})e_k(x,\eta/h)\,d\eta\,.
\end{equation}
As in Section \ref{sectspectralloc}, the cut-off $\phi_{\gamma}$ reduces the sum over $k$ to $k\lesssim \gamma^{3/2}/h=\lambda_{\gamma}$.
\begin{prop}\label{lemmaGMStrichartz} 
There exists a universal constant $C$ such that 
\begin{equation}
  \label{eq:5}
  \| u_{\gamma,\pm}\|_{L^{4}_{t}(L^{\infty}(\Omega_2))} \leq C h^{-1+1/4} \lambda_{\gamma}^{1/2} \| u_{\gamma, \pm}(0,\cdot )\|_{L^2(\Omega_2)}\,.
\end{equation}
\end{prop}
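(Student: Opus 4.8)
The plan is to derive \eqref{eq:5} from the mode-by-mode Strichartz estimate of Theorem \ref{thGM}, paying only the price of the square root of the number of gallery modes that survive the spectral cut-off $\phi_{\gamma}$. First I would identify \eqref{eq:1gamma} as $\phi_{\gamma}(Q_{x,y})$ applied to the half-wave solution \eqref{eq:4}: since $Q_{x,y}=-D_{y}^{-2}\Delta_{F}-\mathrm{Id}$ and $-\Delta_{F}(\psi(h\theta)e^{i\theta y}e_{k}(x,\theta))=\lambda_{k}(\theta)(\cdots)$ with $\theta^{-2}\lambda_{k}(\theta)-1=\omega_{k}\theta^{-2/3}$, the operator $Q_{x,y}$ acts on the $k$-th mode (with $\theta=\eta/h$ under the $\eta$-integral) as multiplication by $h^{2/3}\omega_{k}\eta^{-2/3}$. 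Hence, with $c_{k}=\langle u_{0},h^{-1/2}u_{k}(0,\cdot)\rangle$ as in \eqref{eq:3},
\[
u_{\gamma,\pm}=\sum_{k\geq 1}c_{k}\,\tilde u_{k,\pm},\qquad \tilde u_{k,\pm}(t,x,y)=\frac{1}{h}\int_{\mathbb{R}}e^{\pm it\sqrt{\lambda_{k}(\eta/h)}}e^{iy\eta/h}\psi(\eta)\,\phi_{\gamma}\!\big(h^{2/3}\omega_{k}/\eta^{2/3}\big)\,e_{k}(x,\eta/h)\,d\eta,
\]
and the factor $\phi_{\gamma}(h^{2/3}\omega_{k}/\eta^{2/3})$ vanishes unless $\omega_{k}\lesssim\gamma$, i.e. (using $\omega_{k}\sim k^{2/3}$ and $\eta\in[\tfrac12,\tfrac32]$) unless $k\lesssim\gamma^{3/2}/h=\lambda_{\gamma}$: the sum runs over only $O(\lambda_{\gamma})$ indices.

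The heart of the argument will be to show that Theorem \ref{thGM} holds verbatim for the modified modes $\tilde u_{k,\pm}$, with a constant uniform in \emph{both} $k$ and $\gamma$, that is $\|\tilde u_{k,\pm}\|_{L^{4}_{t}(L^{\infty}(\Omega_{2}))}\lesssim h^{-3/4}\|\tilde u_{k,\pm}(0,\cdot)\|_{L^{2}(\Omega_{2})}$. The only features of $\psi$ used in the proof of Theorem \ref{thGM} are that it is a smooth bump supported in $\{\eta\in[\tfrac12,\tfrac32]\}$; inserting $\phi_{\gamma}(h^{2/3}\omega_{k}/\eta^{2/3})$ produces, for each fixed $k$, a new $\eta$-amplitude that is still smooth, still supported in $[\tfrac12,\tfrac32]$, and has all $\eta$-derivatives bounded uniformly in $k,\gamma$ (because $\partial_{\eta}^{j}[\phi(h^{2/3}\omega_{k}\eta^{-2/3}/\gamma)]=O(1)$ for $\eta\sim1$ and $h^{2/3}\omega_{k}\lesssim\gamma$ on its support). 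The one point that needs a word is that this amplitude is never a narrow spike: as $\eta$ runs over $[\tfrac12,\tfrac32]$ the argument $h^{2/3}\omega_{k}\eta^{-2/3}/\gamma$ varies by a bounded factor, so the $\eta$-support of $\psi\,\phi_{\gamma}(h^{2/3}\omega_{k}/\eta^{2/3})$ is a fixed fraction of $[\tfrac12,\tfrac32]$ and thus $\|\psi\,\phi_{\gamma}(\cdots)\|_{L^{\infty}}\sim\|\psi\,\phi_{\gamma}(\cdots)\|_{L^{2}}$. Granting this, the factorization $\tilde u_{k,\pm}(t)=\Gamma_{x,\omega_{k}}\ast_{y}\tilde w_{k}(t)$ of the proof of Theorem \ref{thGM} (with $\Gamma_{x,\omega_{k}}$ as in \eqref{eq:bis6} and $\tilde w_{k}$ the analogue of \eqref{eq:bis3} carrying the amplitude $\psi\,\phi_{\gamma}$) goes through with the same $\sup_{x}\|\Gamma_{x,\omega_{k}}\|_{L^{1}_{y}}\lesssim\omega_{k}^{1/4}h^{-1/3}$, the same dispersion bound \eqref{eq:bis16} for $\tilde w_{k}$, and the same normalization \eqref{eq:simukwy0}, now in the form $\|\tilde w_{k}(0,\cdot)\|_{L^{2}_{y}}^{2}\sim h\|\psi\,\phi_{\gamma}(\cdots)\|_{L^{2}}^{2}\sim\|\tilde u_{k,\pm}(0,\cdot)\|_{L^{2}(\Omega_{2})}^{2}$.

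With the uniform per-mode estimate in hand, the proposition follows from a crude $\ell^{1}\hookrightarrow\ell^{2}$ summation over the $O(\lambda_{\gamma})$ surviving modes. The triangle inequality gives $\|u_{\gamma,\pm}\|_{L^{4}_{t}L^{\infty}}\leq\sum_{k\lesssim\lambda_{\gamma}}|c_{k}|\,\|\tilde u_{k,\pm}\|_{L^{4}_{t}L^{\infty}}\lesssim h^{-3/4}\sum_{k\lesssim\lambda_{\gamma}}|c_{k}|\,\|\tilde u_{k,\pm}(0,\cdot)\|_{L^{2}}$, and Cauchy--Schwarz in $k$ over $\lesssim\lambda_{\gamma}$ terms, together with the orthogonality of the $\tilde u_{k,\pm}(0,\cdot)$ (a consequence of Lemma \ref{lemorthog} after a Fourier transform in $y$, which yields $\sum_{k}|c_{k}|^{2}\|\tilde u_{k,\pm}(0,\cdot)\|_{L^{2}}^{2}=\|u_{\gamma,\pm}(0,\cdot)\|_{L^{2}}^{2}$), gives
\[
\|u_{\gamma,\pm}\|_{L^{4}_{t}L^{\infty}}\lesssim h^{-3/4}\lambda_{\gamma}^{1/2}\Big(\sum_{k\lesssim\lambda_{\gamma}}|c_{k}|^{2}\|\tilde u_{k,\pm}(0,\cdot)\|_{L^{2}}^{2}\Big)^{1/2}=h^{-3/4}\lambda_{\gamma}^{1/2}\|u_{\gamma,\pm}(0,\cdot)\|_{L^{2}},
\]
which is \eqref{eq:5}. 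I expect the only real obstacle to be the uniformity claim of the middle paragraph, and within it the "never a narrow spike" observation: without it one would only get the per-mode bound with $h^{-1/4}$ in place of $h^{-3/4}\|\tilde u_{k,\pm}(0,\cdot)\|_{L^{2}}$, and the final Cauchy--Schwarz step would fail to reconstruct $\|u_{\gamma,\pm}(0,\cdot)\|_{L^{2}}$ on the right-hand side. Everything else is soft.
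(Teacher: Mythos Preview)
Your proposal is correct and follows the same route as the paper: restrict to the $O(\lambda_\gamma)$ surviving modes, apply the per-mode Strichartz estimate of Theorem~\ref{thGM}, and finish with the triangle inequality together with Cauchy--Schwarz over $k$. The paper's own argument is exactly this three-line computation, carried out somewhat more tersely (it simply writes $\|u_\gamma\|_{L^4L^\infty}\lesssim\sum_{k\lesssim\lambda_\gamma} h^{-3/4}|c_k|$ and applies Cauchy--Schwarz).

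The one place you add content is the middle paragraph, where you argue that Theorem~\ref{thGM} continues to hold, with the same constant, once the amplitude $\psi(\eta)$ is replaced by $\psi(\eta)\phi_\gamma(h^{2/3}\omega_k\eta^{-2/3})$. The paper glosses over this entirely. Your handling is essentially fine, but the ``never a narrow spike'' claim is both slightly shaky (for $h^{2/3}\omega_k/\gamma$ near the edge of $\operatorname{supp}\phi$, the $\eta$-support can in fact shrink) and unnecessary: the Strichartz bound \eqref{eq:bis29} for $w_k$ comes from a $TT^*$ argument using the dispersive estimate \eqref{eq:bis16} for the propagator $e^{it\sqrt{\lambda_k(D_y)}}$ localized to $\eta\sim 1$, and that estimate depends only on uniform $C^\infty$ bounds on the amplitude, not on the size of its support. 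So the per-mode bound $\|\tilde u_{k,\pm}\|_{L^4L^\infty}\lesssim h^{-3/4}\|\tilde u_{k,\pm}(0,\cdot)\|_{L^2}$ holds automatically, and your final Cauchy--Schwarz step goes through exactly as written.
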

The Proposition is a simple consequence of the Cauchy-Schwarz inequality: as remarked earlier on, the localization w.r.t. $Q_{x,y}$ restricts the sum over $k$ to $(hk)^{2/3}\lesssim \gamma$, and therefore
\begin{align*}
  \| u_{\gamma}\|_{L^{4}_{t}(L^{\infty}(\Omega_{2}))} & \lesssim \sum_{k\lesssim \lambda_{\gamma} }  h^{-1+1/4} |c_{k}| \lesssim  h^{-1+1/4} \lambda_{\gamma}^{1/2} \left(\sum_{k\lesssim \lambda_{\gamma} }  |c_{k}|^{2}\right)^{1/2}\\
 & \lesssim  h^{-1+1/4} \lambda_{\gamma}^{1/2} \| u_{\gamma} (0,\cdot)\|_{2}\,.
\end{align*}
\subsection{The gallery mode regime: $\gamma\lesssim h^{2/3-\varepsilon}$}
\label{sec:gallery-mode-regime}
Using \eqref{eq:5} and recalling \eqref{eq:bornesup} we have
\begin{equation}\label{upmL4Linf}
  \|u^{\pm}_{h,\gamma}\|_{L^{4}_{t}L^{\infty}_{x,y}} \lesssim \frac {h^{1/2}}{h^{2}} \lambda_{\gamma} \|f\|_{L^{4/3}_{s} L^{1}_{a,b}}     \,\,\text{ and }\,\,\|u^{\pm}_{h,\gamma}\|_{L^{\infty}_{t,x,y}} \lesssim \frac {\sqrt \gamma}{h^{2}} \|f\|_{L^{1}_{s,a,b}}\,.
\end{equation}
Chose $q>4$: by interpolation between our two bounds, we can sum over all $h^{2/3}\lesssim \gamma \lesssim h^{2/3-\varepsilon}$: consider $q>4$ and $\theta=1-4/q$, then the summability condition on $\varepsilon$ and $q$ reads, with $\mu>0$
$$
\left(\frac{\gamma^{3/2}} h \right)^{1-\theta} \sqrt \gamma^{\theta} \lesssim h^{\mu}\,, \quad\text{ e.g.}\quad q> 4(1+\frac{9\varepsilon}{2-3\varepsilon})\,.
$$
For $\varepsilon=1/30$, we get $q> 4+9/57$, so that
\begin{prop}
  \label{prop:p5555}
  The half-wave operator $\sum_{\gamma\leq h^{\frac 2 3-\frac 1{30}}}G^{\pm}_{h,\gamma}$ is such that, for any $q\geq 4+3/19$,
  \begin{equation}
   \| \sum_{\gamma\leq h^{\frac 2 3-\frac 1 {30}}}G^{\pm}_{h,\gamma} u_{0}\|_{L^{q}_{t}L^{\infty}_{x,y}} \lesssim
   h^{1/q-1} \|u_{0}\|_{2}\,.
  \end{equation}
  \end{prop}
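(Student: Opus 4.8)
The plan is to obtain Proposition~\ref{prop:p5555} by interpolating the two a priori bounds \eqref{upmL4Linf} for a single dyadic block $G^{\pm}_{h,\gamma}$ and then summing over the admissible values of $\gamma$. First I would make the reduction explicit: exactly as in the previous subsections it is enough to bound the inhomogeneous operator $f\mapsto u^{\pm}_{h,\gamma}$ of \eqref{eq:114}, and I would record that the sum over $\gamma\le h^{2/3-1/30}$ is really a sum over the $O(|\log h|)$ dyadic scales $h^{2/3}\lesssim\gamma\lesssim h^{2/3-1/30}$, since the cut-off $\psi_{2}$ (together with $\omega_{1}>0$) makes $G^{\pm}_{h,\gamma}$ vanish up to $O(h^{\infty})$ when $\gamma\ll h^{2/3}$. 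The first estimate in \eqref{upmL4Linf} I would deduce from Proposition~\ref{lemmaGMStrichartz} via the standard $TT^{\ast}$ argument (the $\psi_{2}$-localization defining $G^{\pm}_{h,\gamma}$ is dominated by $\phi_{\gamma}$, the adjoint of $G^{+}_{h,\gamma}$ is $G^{-}_{h,\gamma}$, and the dual Strichartz inequality carries the same constant); the second is the immediate consequence of the crude bound $\sup|G^{\pm}_{h,\gamma}|\lesssim\sqrt\gamma/h^{2}$ from \eqref{eq:bornesup}.

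Next I would interpolate these two bounds in the time variable. Writing $\vartheta=4/q\in(0,1)$, interpolation between $L^{4/3}_{s}L^{1}_{a,b}\to L^{4}_{t}L^{\infty}_{x,y}$ (constant $\tfrac{h^{1/2}}{h^{2}}\lambda_{\gamma}$, with $\lambda_{\gamma}=\gamma^{3/2}/h$) and $L^{1}_{s,a,b}\to L^{\infty}_{t,x,y}$ (constant $\sqrt\gamma/h^{2}$) produces an inhomogeneous estimate $L^{q'}_{s}L^{1}_{a,b}\to L^{q}_{t}L^{\infty}_{x,y}$ for $u^{\pm}_{h,\gamma}$ with constant $\bigl(\tfrac{h^{1/2}}{h^{2}}\lambda_{\gamma}\bigr)^{\vartheta}\bigl(\tfrac{\sqrt\gamma}{h^{2}}\bigr)^{1-\vartheta}$. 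Substituting $\lambda_{\gamma}=\gamma^{3/2}/h$ this equals $\tfrac{h^{2/q}}{h^{2}}$ times $h^{-\vartheta}\gamma^{(2\vartheta+1)/2}$. Since the exponent of $\gamma$ is positive, the binding scale is the largest one, $\gamma\sim h^{2/3-1/30}$, and the residual power of $h$ there is positive precisely when $\vartheta<\tfrac{2-3\varepsilon}{2+6\varepsilon}$ with $\varepsilon=1/30$, that is, for $q$ above the threshold $4\bigl(1+\tfrac{9\varepsilon}{2-3\varepsilon}\bigr)$ recorded in Section~\ref{sec:gallery-mode-regime} (a value comfortably below $36/7$, so it does not affect the admissible range in Theorem~\ref{thm+1}). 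Under this restriction each of the $O(|\log h|)$ dyadic terms carries a spare positive power of $h$, the triangle inequality in $L^{q}_{t}L^{\infty}_{x,y}$ lets me sum over $\gamma$, and I obtain $\|\sum_{\gamma}u^{\pm}_{h,\gamma}\|_{L^{q}_{t}L^{\infty}_{x,y}}\lesssim\tfrac{h^{2/q}}{h^{2}}\|f\|_{L^{q'}_{s}L^{1}_{a,b}}$.

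Finally I would convert this inhomogeneous estimate to the homogeneous one by the usual $TT^{\ast}$/duality step (the adjoint of $G^{+}_{h,\gamma}$ is $G^{-}_{h,\gamma}$, and $\|u_{0}\|_{H^{1}}\sim h^{-1}\|u_{0}\|_{2}$ on data frequency-localized at $1/h$), which gives exactly $\|\sum_{\gamma\le h^{2/3-1/30}}G^{\pm}_{h,\gamma}u_{0}\|_{L^{q}_{t}L^{\infty}_{x,y}}\lesssim h^{1/q-1}\|u_{0}\|_{2}$ for $q$ in the stated range. The main obstacle is the interpolation-and-summation bookkeeping of the second paragraph: the loss $\lambda_{\gamma}^{1/2}$ in \eqref{eq:5}, which comes from applying Cauchy--Schwarz over the $\lesssim\lambda_{\gamma}$ gallery modes active at scale $\gamma$, is exactly what stops this elementary argument from reaching $\gamma$ much larger than $h^{2/3-\varepsilon}$, and the particular value $\varepsilon=1/30$ is dictated by the requirement that this gallery-mode range overlap the parametrix range handled by Propositions~\ref{prop:p55} and~\ref{prop:p555}; it is the matching of these two regimes, rather than any single estimate here, that is the delicate part.
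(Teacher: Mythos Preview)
Your argument is exactly the paper's: interpolate the two bounds in \eqref{upmL4Linf} with weight $\vartheta=4/q$ on the $L^4$ endpoint, obtain $C_q(\gamma)=h^{-\vartheta}\gamma^{(2\vartheta+1)/2}$, and sum over the dyadic $\gamma\in[h^{2/3},h^{2/3-\varepsilon}]$; the summability condition $\vartheta<(2-3\varepsilon)/(2+6\varepsilon)$, i.e.\ $q>4(1+9\varepsilon/(2-3\varepsilon))$, is the same as the paper's. One remark: for $\varepsilon=1/30$ this threshold evaluates to $4+12/19$, not $4+3/19$ (the paper's ``$4+9/57$'' appears to drop the factor $4$ in front of $9\varepsilon/(2-3\varepsilon)$), so neither your argument nor the paper's actually reaches the stated range $q\ge 4+3/19$; this is harmless for Theorem~\ref{thm+1} since $4+12/19<36/7$, but you should be aware that the numerical endpoint in the Proposition as written is not what the interpolation delivers.
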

This completes the proof of Theorem \ref{thm+1}.\qed

\end{document}